\def\proof{\medskip\noindent{\sc Proof. }}
\def\note{\medskip\noindent{\sc Note: }}
\def\EOP{\hfill$\Box$\medskip}
\def\Ex{\mbox{E}}
\def\real{\hbox{\rm I\kern-.18em R}}
\def\rati{\hbox{\rm Q\kern-.5em
       \vrule depth 0ex height 1.4ex width .05em\kern.41em}}
\def\natu{\hbox{\rm I\kern-.17em N}}
\def\inte{\hbox{\rm Z\kern-.3em Z}}
\def\flu#1{\tilde{#1}}
\def\go{\rightarrow}
\def\nogo{\not\rightarrow}
\def\choose#1#2{ \left( \begin{array}{c} #1 \\ #2 \end{array} \right) }
\def\st#1{\left(\framebox{$#1$}\right)}
\def\stq#1#2{\left(
\!\!
\begin{array}{c} #1 \\ \framebox{$#2$} \end{array} 
\!\!
\right)}
\def\rh{\hat{r}}
\def\eps#1{\epsilon_{#1}}
\def\ssx{\mathcal{X}}  
\def\ssa{\mathcal{A}}  
\def\sss{\mathcal{S}}  
\def\ssr{\mathcal{R}}  
\def\ssm{\mathcal{M}}  
\def\tot#1{\clubsuit#1}
\newtheorem{theorem}{Theorem}
\newtheorem{lemma}{Lemma}
\newtheorem{definition}{Definition}
\newtheorem{corollary}{Corollary}
\newtheorem{hypothesis}{Hypothesis}
\begin{document}

\bibliographystyle{plain}


%
%
%
%
%
%
%
\title{Running in Circles:\\ Packet Routing on Ring Networks}

\author{William F. Bradley}
\department{Department of Mathematics}
\degree{Doctor of Philosophy}
\degreemonth{June}
\degreeyear{2002}
\thesisdate{April 26, 2002}

\copyrightnoticetext{\copyright \,\, 2002 William F. Bradley.  All rights reserved}

\supervisor{F. T. Leighton}{Professor of Applied Mathematics}

\chairman{R. B. Melrose}{Chairman, Committee on Pure Mathematics}

\maketitle



\cleardoublepage
\setcounter{savepage}{\thepage}
\begin{abstractpage}
%
%
%
I analyze packet routing on unidirectional ring networks, with an eye towards
establishing bounds on the expected length of the queues.
Suppose we route packets by a greedy ``hot potato'' protocol.
If packets are inserted by a Bernoulli process and
have uniform destinations around the ring, and if the
nominal load is kept fixed, then I can construct an upper bound on the 
expected queue length per node that is independent of 
the size of the ring.  If the packets only travel one or two steps, I can
calculate the exact expected queue length for rings of any size.

I also show some stability
results under more general circumstances.  If the
packets are inserted by any ergodic hidden Markov process
with nominal loads less than one, and routed by any greedy
protocol, I prove that the ring is ergodic.

\end{abstractpage}


\cleardoublepage

\begin{center}
\textbf{For my father}
\end{center}

%


\tableofcontents


 \chapter{Statement of the Problem} \label{statement chapter}

\section{The Problem} \label{problem section}

What is packet routing?  In a packet routing network, we populate the
nodes of a directed graph with a collection of discrete objects called
packets.  As time passes, these packets occasionally travel across
edges, or depart the network.  Sometimes, new packets are inserted.  A
typical question to ask is: what is the expected number of packets in
the system?  

This thesis is inspired by the following packet routing
problem on the ring:

Suppose we have a directed graph in the form of a
cycle with the edges directed clockwise.  
Let's label the nodes 1 through $N$, where  for $i<N$, we have
a directed edge from node $i$ to $i+1$, and an additional
edge from $N$ to 1. See Figure~\ref{basic ring figure}.
\begin{figure}[ht]
\centerline{\includegraphics[height=2in]{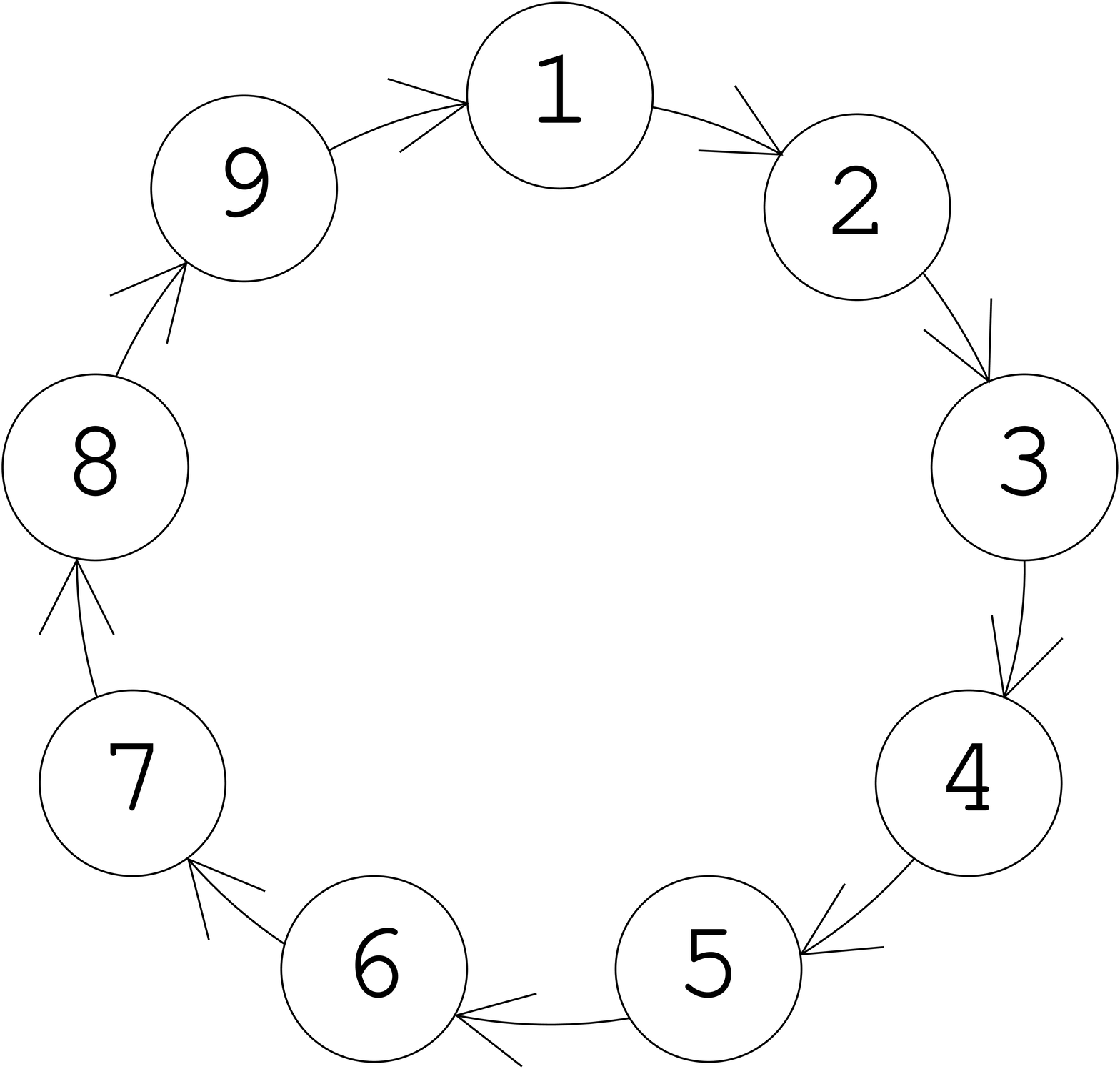} }
\caption{An $N=9$ node unidirectional ring }
\label{basic ring figure}
\end{figure}

We are going to analyze the network's behavior as it evolves in time,
where time is measured in discrete steps.  First, we have to specify
how packets enter the ring.  Let us suppose that with probability $p$,
the probability that a new packet arrives at a node on one time step.
With probability $1-p$, no packet arrives.  This event occurs
independently at every node, on every time step.

Next, we must specify how packets travel along the ring.  A packet
arriving at node $i$ chooses its destination uniformly from the other
$N-1$ nodes.  We will allow at most one packet to depart from
a node in
one time step.  When a packet arrives at its destination, it is
immediately removed; that is, a packet waiting in queue can be
inserted into the ring on the same time step.  Finally, if there
is more than one packet at a node, we must
specify which packet advances next.  We'll use the Greedy Hot
Potato protocol.

\begin{definition} \index{Greedy Hot Potato}\index{GHP protocol}
In the \emph{Greedy Hot Potato} protocol, packets travelling
in the network have priority over packets waiting in queue.
Nodes with non-empty queues always route packets.
\end{definition}

This protocol for determining packet priority is called Greedy Hot Potato
because a packet being passed along the ring is a ``hot
potato'' that never stops moving until it reaches its destination.
It is ``greedy'' in the sense that whenever a node has the opportunity
to route a packet, it always takes it.
This protocol resolves all contentions over which packet
gets to depart from a node.

By specifying the number of packets waiting at each of the $N$ nodes,
and the destination of each packet travelling in the ring, 
we completely specify
the state of the system, and we have a discrete-time Markov chain.

Consider the number of packets in the system that need to use a node
$n$.  At most 1 packet can depart from $n$ on each time step.
Therefore, if too many new packets arrive, the system is unstable (the
Markov chain is not ergodic).  In practical terms, this means that the
mean total number of packets in the system will diverge to infinity
with time.  Let us calculate what value of $p$ corresponds to this
unstable regime.

\begin{lemma} \label{basic ring instability lemma}
\index{Lemma \ref{basic ring instability lemma}}
Given the ring network described above, the system is unstable if
$p>\frac{2}{N}$.
\end{lemma}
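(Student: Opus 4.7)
The plan is to pick a single node $n$ and track the amount of ``work'' destined for that node's departure slot. Since each node can serve at most one departure per time step, if the expected rate of new departure demands at $n$ exceeds $1$, the backlog must grow without bound. Because the insertion process is spatially homogeneous and each packet chooses a uniformly random destination, computing this arrival rate reduces to a routine counting calculation.

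Concretely, I would define $Q_n(t)$ to be the number of packets currently in the network that still need to depart from node $n$ at some future time. A packet inserted at node $i$ with destination $j$ contributes to $Q_n$ precisely when $n$ lies on the clockwise arc $\{i, i+1, \ldots, j-1\}$, and it leaves $Q_n$ on the single time step when it departs $n$. Writing $A_n(t)$ for the number of newly inserted packets that will eventually use $n$'s departure slot, and $D_n(t) \in \{0,1\}$ for the indicator that $n$ actually routes a packet at time $t$, we have the exact recursion
\[
Q_n(t+1) = Q_n(t) + A_n(t) - D_n(t).
\]

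The next step is to evaluate $\Ex[A_n]$. By linearity of expectation, this equals the total expected number of newly-inserted packet-node usages (summed across all sources) divided by $N$. Each of the $N$ nodes inserts a packet with probability $p$, and the expected number of departure slots a new packet consumes equals the expected hop length, which is $\frac{1}{N-1}\sum_{k=1}^{N-1} k = \frac{N}{2}$. By the rotational symmetry of both the insertion process and the uniform destination distribution, each node absorbs an equal share of this demand, giving $\Ex[A_n] = Np \cdot \frac{N}{2} \cdot \frac{1}{N} = \frac{Np}{2}$.

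Plugging this into the recursion and using $D_n(t) \le 1$ yields the drift inequality $\Ex[Q_n(t+1)] - \Ex[Q_n(t)] \ge \frac{Np}{2} - 1$, which is strictly positive whenever $p > 2/N$. Hence $\Ex[Q_n(t)] \to \infty$, and since $Q_n(t)$ is bounded above by the total number of packets in the system, the mean total population diverges as required. The only delicate point I anticipate is justifying the symmetry step cleanly: one must verify that $A_n$ truly has the same distribution for every $n$, which follows because the Bernoulli insertion process and uniform-destination rule are both invariant under the cyclic shift $i \mapsto i+1 \pmod N$. Everything else is bookkeeping.
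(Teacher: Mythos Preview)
Your proof is correct and follows essentially the same approach as the paper: fix a node, compute the expected per-step arrival rate of packets that must eventually depart from it, compare to the hard departure cap of one, and conclude the backlog blows up. The only cosmetic difference is that the paper computes $\Ex[A_n]$ by a direct sum over source nodes $\sum_{i=1}^{N-1} p\cdot\frac{i}{N-1} = pN/2$, whereas you obtain the same number by computing total expected hop-demand $Np\cdot\frac{N}{2}$ and dividing by $N$ via rotational symmetry.
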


\proof
Consider the node $N-1$.  (By symmetry of the ring,
the expected number of packets that need to cross this node is the same as any other node.)  If a packet
arrives at node 1, it has a $1/(N-1)$ chance of needing to cross $n$.
More generally, a packet arriving at node $i$ has an $i/(N-1)$ chance
of needing to cross $n$, for $i<N$.  Summing, the increase in congestion
on $n$ by new arrivals is:
\[\sum_{i=1}^{N-1} p\frac{i}{N-1} = p\frac{N(N-1)}{2(N-1)} = p\frac{N}{2}\]
Therefore, if $p>\frac{2}{N}$, the expected number of new packets that
need to cross $n$ increases by more than 1.  However, the maximum
possible number of packets that can cross per time step is 1.
Therefore, the expected number of packets waiting to cross will
increase without bound, so the system is unstable.
\EOP

If our system is stable, then, we must have $0\leq p \leq 2/N$.  To make this
value appear somewhat less dependent on $N$, it's useful to define
$r=p\frac{N}{2}$.  We can then fix some $0\leq r \leq 1$ 
and study the system as $N$ gets large.  This $r$ is called
the \emph{nominal load}.\index{nominal load!standard Bernoulli ring}

I will call this system, as described above, the 
\emph{standard Bernoulli ring}.  
\begin{definition} \index{standard Bernoulli ring}
An \emph{$N$-node standard Bernoulli ring} is an $N$-node directed
cycle.  Packet arrivals occur according to a Bernoulli arrival process
at each node.  Packet destinations are uniformly distributed.  Packets
are routed by the Greedy Hot Potato protocol.  The nominal load $r$ is
$\frac{N}{2}p$.
\index{nominal load!standard Bernoulli ring}
\end{definition}
Coffman et al~\cite{Leighton95} made the following natural hypothesis:
\begin{hypothesis} \label{ring hypothesis}
\index{Hypothesis 1}
The expected queue length per node of the standard Bernoulli ring,
for any fixed nominal load $0\leq r<1$, is $\Theta(1/N)$.
\end{hypothesis}
The authors performed extensive computer simulations that seemed to support
the hypothesis.  Then, in Coffman et al~\cite{Leighton98}, the authors
partially proved this result:
\begin{theorem}[Coffman et al.] \label{leighton's theorem}
\index{Theorem \ref{leighton's theorem}}
The expected queue length per node of the standard Bernoulli ring,
for any fixed nominal load $0\leq r<\frac{1}{2}$, is $\Theta(1/N)$.
\end{theorem}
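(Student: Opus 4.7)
My plan is to prove the lower and upper bounds separately, with most of the work lying in the upper bound.

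\emph{Lower bound.} By symmetry every node has the same expected queue length, so fix a node $i$. In each step a new packet arrives at $i$ with probability $p = 2r/N$, and in steady state the outgoing edge at $i$ is occupied by a passing packet with probability approximately $r$ (this follows from the edge-load computation from Lemma~\ref{basic ring instability lemma}, specialized to a single edge). An arriving packet that finds the edge blocked must spend at least one step in queue, so Little's law applied to the queue at $i$ yields
\[
E[Q_i] = p \cdot E[\mathrm{wait}_i] \ge p \cdot r \cdot (1+o(1)) = \Omega(1/N).
\]

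\emph{Upper bound.} Set $Q(t) = Q_i(t)$. The queue evolves by $Q(t{+}1) = Q(t) + A(t) - D(t)$, where $A(t)$ is the Bernoulli$(p)$ arrival and $D(t) = \mathbf{1}[Q(t)>0,\ \text{no passing packet at } i \text{ at step } t]$. I would construct a stochastic majorant $Z(t) \ge Q(t)$ in the form of a birth--death chain on the non-negative integers with birth probability $p$ and death probability $1 - \beta^*$ whenever $Z>0$, where $\beta^*$ is a uniform upper bound on the conditional probability of being blocked. Granting the key estimate
\[
P\bigl(\text{passing packet at } i \,\bigm|\, \mathcal{F}_t,\ Q(t)>0\bigr) \le 2r + o(1)
\]
uniformly in the history $\mathcal{F}_t$, the majorant is geometric in stationarity with mean $p/(1 - 2r - p) = \Theta(1/N)$ for fixed $r < 1/2$, and stochastic dominance then yields $E[Q_i] \le E[Z] = O(1/N)$.

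\emph{Main obstacle.} The entire proof thus reduces to the conditional blocking estimate, and that is where I expect the real work to be. Unconditionally the blocking probability equals $r + o(1)$; however $P(Q(t)>0)$ is itself only $\Theta(1/N)$, so a naive Bayes bound inflating by $1/P(Q>0)$ is useless. To produce a genuine bound I would try to decompose the passing traffic by upstream origin of the packet, and then argue that the past arrivals and past blockings at $i$ (which are precisely the events that force $Q>0$) are only weakly correlated with the in-transit configuration at the current step, absorbing at worst a factor of two from this positive correlation. The factor-of-two loss relative to the conjecturally correct unity is exactly what restricts the method to $r < 1/2$; closing the gap to $r < 1$ in Hypothesis~\ref{ring hypothesis} appears to demand a sharper, more global correlation estimate that controls all $N$ queues simultaneously rather than reducing to a single-node majorant.
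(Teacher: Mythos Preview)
Your lower bound is fine and matches the paper's argument (Lemma~\ref{lower bound big N lemma}).

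The upper bound has a genuine gap. The ``key estimate''
\[
P\bigl(\text{passing packet at } i \,\bigm|\, \mathcal{F}_t,\ Q(t)>0\bigr) \le 2r + o(1)
\quad\text{uniformly in }\mathcal{F}_t
\]
is simply false as stated. The filtration $\mathcal{F}_t$ contains the entire current ring configuration; in particular there are histories (e.g.\ the upstream stretch of the ring is full of long-haul packets) for which the conditional blocking probability is $1$ for many consecutive steps. A pathwise birth--death majorant requires exactly this pointwise bound, so the coupling collapses. Weakening to a bound conditional only on the event $\{Q(t)>0\}$ does not rescue the domination, because then the death events are no longer i.i.d.\ and $Z$ is not a birth--death chain. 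Your sketch (``decompose by upstream origin \ldots\ absorbing at worst a factor of two'') does not address this; indeed the thesis itself carries out the most careful version of your direct slot-tracking idea in Theorem~\ref{51 percent theorem} and, even with a delicate multi-stage argument producing a bound that genuinely holds independently of the state, can only push the threshold to $r<\tfrac12+\epsilon$ for a numerically tiny $\epsilon$.

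The actual Coffman et al.\ proof takes a different route, described in the history section of Chapter~\ref{statement chapter}. They do not bound the blocking probability at a single node. Instead they compare the entire standard ring to a \emph{geometric Bernoulli ring}, in which packet lifespans are geometric rather than uniform. The point is that on the standard ring a packet that has already travelled $k$ steps departs with probability $1/(N{-}1{-}k)\ge 1/(N{-}1)$; replacing every departure probability by its minimum $1/(N{-}1)$ yields a geometric ring that stochastically dominates the original, and this geometric ring has nominal load roughly $2r$. The geometric ring is Markovian (classless), so separate arguments --- showing greedy is optimal across a wide protocol class, then analysing a tractable non-greedy protocol --- give it $O(1/N)$ expected queue length whenever its load is below $1$, i.e.\ whenever $r<\tfrac12$. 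The factor of two you were hunting for thus enters not as a correlation fudge factor at one node but as the ratio between the mean $(N{-}1)/2$ and the worst-case $(N{-}1)$ residual lifespan, and the upper bound is obtained by dominating the whole ring, not a single queue.
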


Although Coffman et al.~\cite{Leighton98} established impressive
results in the $r<\frac{1}{2}$ case, the $\frac{1}{2} \leq r<1$ regime was
left wide open.  It wasn't even clear that the network was ergodic for
\emph{any} $N>2$.\footnote{$N=2$ is trivially ergodic; no packets ever wait
in queues.}  This thesis began as an attempt to determine the stability
of the ring for values of $r$ greater than $\frac{1}{2}$, and find
asymptotic bounds for the expected queue length as a function of $N$
(for a fixed $r$).  As I began exploring ring networks more, I
discovered that a number of interesting theorems could be proved for
much more general arrival processes.  This document is the result of
my investigations.

\section{What's in this Thesis}

\subsection{A General Overview}
In the earlier chapters of this thesis, I begin by examining simple
ring networks.  As the chapters progress, I analyze increasingly more
general rings.  

I begin in Chapter~\ref{exact chapter} by considering a ring network
where each packet travels either 1 or 2 nodes.  This type of
restriction can be considered a kind of localness\footnote{Not a
``local ring'' in the commutative algebra sense!}, where nodes only
need to communicate with their nearest few neighbors.  I consider a
number of different routing protocols and calculate their (exact)
expected queue lengths.  I also calculate the stationary
distribution under the GHP protocol.

In Chapter~\ref{bounds chapter}, I consider the standard Bernoulli
ring.  I prove that it is ergodic (for any $r<1$ and sufficiently
large $N_{r}$), and I construct an $O(1)$ upper bound on the expected
queue length per node as $N\rightarrow \infty$.  This result isn't as
tight as the $O(1/N)$ upper bound postulated in Hypothesis~\ref{ring
hypothesis}, but is a first step towards achieving it.  The same
techniques can be applied to a host of other rings, and I discuss some
of these possibilities at the end of the chapter.

In Chapter~\ref{fluid chapter}, I examine the fluid limit method
introduced by Dai~\cite{Dai}.  The chapter is divided into two halves.
In the first half, I translate the fluid limit theorems to discrete
time.  This half is sufficient to prove the stability of the standard
Bernoulli ring whenever $r<1$, not merely for large $N$.  In the
second half, I generalize the result a bit, so that (for instance)
arrivals can be generated by a hidden Markov process, rather than just
by arrival processes with i.i.d.\ interarrival times.  This change
leads to proofs of the stability and finiteness of expected queue
length on rings much more general than the standard Bernoulli ring.

In Chapter~\ref{analytic chapter}, I translate a theorem of
Zazanis~\cite{zazanis} to the discrete time case.  This result shows
that in the face of Bernoulli arrivals, the expected queue length of
an ergodic network is an analytic function of the nominal load $r$,
for $r\in[0,1)$.  This means that light traffic calculations of the
expected queue length are actually well defined.  I can then make some
explicit light traffic calculations and draw various conclusions.

The final chapter, Chapter~\ref{ringlike chapter}, concerns itself
with ringlike networks, rather than rings themselves.  The wrapped
butterfly is an example: a $d$-dimensional wrapped butterfly shares
certain features in common with a $d$-node ring, as both are regular,
layered graphs with very high degrees of symmetry.  I extend several
of the results of the earlier sections to these more complicated
topologies.  I begin by proving a fluid-style stability result on all
networks that use convex routing.  I continue with a result about the
graph structure of butterfly networks.  I show that, under various
conditions, a concatenated pair of butterfly graphs forms a
superconcentrator.  This means that we can lock down node-disjoint
paths between any subset of input and output nodes (of the same size).
This result is of a different flavor than the other proofs, being more
graph theoretical than probabilistic.

There are also several appendices.  Probability and queueing theory
foundations are reviewed in Appendix~\ref{probability chapter}.  For
the reader unfamiliar with fluid limits, I include a complete proof of
the stability results applicable for packet routing in
Appendix~\ref{dummies chapter}.  The results are the same as those of
Dai~\cite{Dai} (actually, weaker), but the proofs are much shorter and
simpler, and the Appendix is self-contained.  In Appendices~\ref{fluid
limit examples chapter} and~\ref{fluid limit counterexamples chapter},
I list a number of useful examples and counter-examples from the world
of fluid limits.  Finally, I wrote many computer programs to help me
calculate stationary distributions.  I discuss some of the more
interesting details of this process in Appendix~\ref{computer
chapter}.

For ease of reference, I have included an index.  It lists
the locations of definitions and main theorems.

\subsection{The New Results}
For the reader curious about which of these results are new, 
here is a brief list. 

In Chapter~\ref{exact chapter}:
\begin{itemize}
\item I calculate the exact expected queue length on 
an $N>1$ node nonstandard Bernoulli ring with parameter
$L=2$, for protocols GHP, EPF, SIS, CTO, and FTG.
\item I calculate the stationary distribution for 
the nonstandard Bernoulli ring with parameter $L=2$
for all $N$ under GHP.  This result allows an exact solution
for a 3 node standard Bernoulli ring, and
a 5 node bidirectional standard Bernoulli ring.
\end{itemize}

In Chapter~\ref{bounds chapter}:
\begin{itemize}
\item The number of packets in one queue of a standard Bernoulli ring
is bound by the number of packets in a single server queue with
Bernoulli arrivals and geometric service times.  An $O(1)$ bound on
the expected queue length per node follows for nominal load
$r<\frac{1}{2}+\epsilon$, for an explicit (but very small) $\epsilon$.
\item For any $r<1$, on all sufficiently
large standard Bernoulli rings, the network is ergodic.  (But
see the stronger results of Chapter~\ref{fluid chapter}.)
\item 
For any $r<1$, the expected queue length per node
on a standard Bernoulli ring has an $O(1)$ bound.
\item 
For any $r<1$, the expected delay of a packet on
an $N$ node standard Bernoulli ring is $\Theta(N)$.
\item I briefly discuss how to extend these techniques to
other Bernoulli rings:
\begin{itemize}
\item For a (standard) bidirectional ring,
the expected queue length per node has an $O(1)$ upper bound.
\item For an $N$ node nonstandard Bernoulli ring with parameter $L$,
if $N$ is constant and $L\rightarrow \infty$,
the expected queue length per node has a tight $\Theta(1)$ bound.
\item For an $N$ node nonstandard Bernoulli ring with parameter $L$,
if $L$ is constant and $N\rightarrow \infty$,
the expected queue length per node
is lower bounded by $\Omega(1)$ and upper bounded
by $O(\log (N))$.  
\item Suppose that queues are finite, with some bound
$B_{N}$.  As $N\rightarrow \infty$, we may let
$B_{N}\rightarrow \infty$.  
The expected queue length per node has an $O(1)$ bound.
\end{itemize}
\end{itemize}

In Chapter~\ref{fluid chapter}, determining the novelty of the results
is a little bit more complicated; the proofs are very
closely tied to a paper by Dai~\cite{Dai}.  My own contributions
amount to the following:
\begin{itemize}
\item
I prove a discrete time fluid limit theorem.  (This result is a 
modification of a theorem of Dai's.)
\item
A corollary of the previous result is the stability of
any ring, under any greedy protocol, for any maximum nominal
load $r<1$.
\item The fluid limit technique holds when the arrival,
service, and routing processes are hidden Markov chains.  This
generalization of Dai's results requires very little proof, because
the hard work has already been done by Dai; only some careful
definitions and reflection are needed.
\end{itemize}

In Chapter~\ref{analytic chapter}:
\begin{itemize}
\item I provide a rigorous justification of light traffic limits
on Bernoulli rings.
\item The stationary distributions for
standard Bernoulli rings with $N>3$ nodes are \emph{not} product form.
\item The stationary distributions for
geometric Bernoulli rings are \emph{not} product form, except for
a finite number of exceptions.
\item Computer-aided calculations show that
the expected queue length of a 4 node standard Bernoulli
ring is not a rational function of degree less than 18.
\item Consider the expected total number of packets in queue in a
single-class network with rate $p$ Bernoulli arrivals.  The expected
value is an absolutely monotonic function of $p$.
\end{itemize}

In Chapter~\ref{ringlike chapter}:
\begin{itemize}
\item On any network with convex routing and nominal loads less
than one, with any greedy protocol, the network is ergodic.
\item Suppose we have two $d$ dimensional butterflies.  Choose
two permutations $\pi_{1}$ and $\pi_{2}$ on the $d$ dimensions.
Then if we permute the layers of the first butterfly by
$\pi_{1}$ and the second butterfly by $\pi_{2}$, and concatenate
the graphs, the resulting graph is a superconcentrator.
\item Suppose we take two graphs, each isomorphic to a butterfly,
and concatenate them.  The resulting graph concentrates subsets
whose cardinality is a power of two.
\end{itemize}

The appendices are mostly abbreviated versions of material that can be
found elsewhere.  There are a few exceptions.  Although the results in
Appendix~\ref{dummies chapter} are similar to (in fact, weaker than)
those of Dai~\cite{Dai}, the proofs are fairly different.  Several of
the stability proofs from Appendix~\ref{fluid limit examples chapter}
appear to be new, namely the theorems in Sections~\ref{LIS stability
section} and~\ref{SIS stability section}, and the corollaries from
Section~\ref{round robin stability section}.  Finally, in
Appendix~\ref{computer chapter}, Theorem~\ref{computer theorem} is
new.

\section{Ring Details}
I still have to specify a few more picayune details about the ring.  
As mentioned before, I will be using a non-blocking model of the
ring, so that if a packet departs at node $i$, then a new
packet can be inserted on the same time step.  

If we look at the packets waiting at a node, we will consider the packet
that is about to move to be \emph{in the ring}; the other packets are
\emph{in queue} at that node. I sometimes refer to a packet travelling
in the ring as a \emph{hot potato} packet.\index{hot potato}

It's important to distinguish between the packets ``at a node''
and those ``in queue''.  The queue doesn't include
the packet (if any) in the ring, so there may be one fewer
packet in queue than at the node.

In discrete time, there's a non-zero probability that arrivals,
departures and routing occur at the same time.  Therefore, we have to
settle on the order in which these events occur.  Let us specify that
one time step consists of routing current packets, possibly inducing
some of them to depart, and \emph{then} inserting new arrivals.
On a standard Bernoulli ring, the choice of ``route, then arrive'' or
``arrive, then route'' only amounts to an $O(1/N)$ difference 
in the expected queue length per node, so it doesn't really matter
much which model we use.

Finally, packet routing theorists and queueing theorists tend to model
packet routing problems slightly differently.
Packet routing researchers like to view edges of a network as wires,
and allow only one message to cross a wire at a time.  Therefore, queues wait
on edges.  Queueing theorists, on the other hand, prefer to view packets as 
waiting at nodes.  I will be adopting the queueing theorists' point
of view. To translate from the first perspective to the second, we
can simply consider the edge graph of the packet routing network.

\section{The Bidirectional Ring} \label{bidirectional section}
\index{bidirectional ring}
Most of my analysis in this thesis will be directed towards the
unidirectional ring, where all the packets travel in a fixed
direction, e.g.\ clockwise.  It is natural to wonder what happens if we
have a bidirectional ring, where packets travel either clockwise or
counterclockwise along the shortest path to their destinations.  After
all, this change halves the expected travel distance on the ring.  In
certain circumstances, we can reduce these problems to questions about
the unidirectional ring.

To make this reduction, we need a slightly more general model than the
standard Bernoulli ring:

\begin{definition} \index{nonstandard Bernoulli ring}
A \emph{nonstandard Bernoulli ring} with parameter L is identical to a
standard Bernoulli ring, except that rather than choosing destinations
uniformly from the $N-1$ nodes downstream, the destinations are chosen
uniformly from the $L$ nodes downstream.  (If we select $L=N-1$, we
regain a standard Bernoulli ring.)  The nominal load $r$ is
$\frac{L+1}{2}p$.\index{nominal load!nonstandard Bernoulli ring}
\end{definition}

Suppose we have an $N$-node bidirectional ring with Bernoulli
arrivals.  (For simplicity, imagine that $N$ is odd, so that there
exists a unique shortest path between any pair of nodes.)  Suppose
further that there are two edges between adjacent nodes: a clockwise
edge and a counterclockwise edge.  That way, node $i$ can send a
packet to node $i+1$ at the same time that node $i+1$ sends a packet
to node $i$.  Consider only the packets that travel in a clockwise
direction.  These packets form an $N$-node nonstandard Bernoulli ring
with parameter $L=(N-1)/2$.  The counterclockwise packets form the same
system.  

These two networks are highly dependent (after all, if a clockwise
packet arrives at a node, then a counterclockwise packet cannot).
However, by the linearity of expectation, the expected queue length at
a node in the bidirectional ring is exactly twice the expected queue
length at that node on the nonstandard Bernoulli ring with the $L$
given above.  Therefore, the solutions to nonstandard Bernoulli rings
in Chapters~\ref{exact chapter} and~\ref{bounds chapter} translate to
results about bidirectional rings.


\section{Standardized Notation}
As a kindness to the reader, I have tried to make my notation
uniform throughout this thesis.  In particular,
\begin{itemize}
\item The number of nodes in a network is $N$.
\item The maximum lifespan of a packet, i.e.\ the longest
path in the network, is $L$.  (For the standard Bernoulli ring,
$L=N-1$.)
\item The probability of a packet arriving at a node 
on one time step in a Bernoulli network is $p$.
\item The nominal load of a node is $r$.  (For a standard
Bernoulli ring, $r=\frac{N}{2}p$.  For a nonstandard Bernoulli
ring, $r=\frac{L+1}{2}p$.)
\end{itemize}

\section{A Little History}


There is a large literature pertaining to packet routing on ring
networks.  I survey some of the results that bear more directly on
this thesis below.\footnote{A very popular model of packet routing on
a ring is a \emph{token exchange ring}, where one node (the one with
the ``token'') is allowed to broadcast unimpeded to all the other
nodes.  Although this network's name has the word ``ring'' in it, its
topology is really more of a complete graph, so it doesn't relate to
this thesis.\index{token ring}}

\begin{itemize}
\item 
Coffman et al, \cite{Leighton95} and \cite{Leighton98}, analyze
the geometric Bernoulli ring:
\begin{definition} \index{geometric Bernoulli ring}
An \emph{$N$-node geometric Bernoulli ring} is an $N$-node directed
cycle.  Packet arrivals occur according to a Bernoulli arrival process
at each node.  Packet destinations are geometrically distributed.  Packets
are routed in a greedy fashion.
\end{definition}
(Unlike the standard Bernoulli ring, there is essentially
only one greedy protocol on a geometric Bernoulli ring.)

Through very careful and clever arguments, they show that a geometric
Bernoulli ring has $\Theta(1/N)$ expected queue length for any nominal
load $r<1$.  Their argument relies on showing that the greedy protocol
is optimal on geometric Bernoulli ring across a wide class of
protocols, and then finding another protocol with $O(1/N)$ expected
queue length.\footnote{A careful reader might note that there is a
slight error in both papers: the authors fail to prove the ergodicity
of the protocol that provides the upper bound.  Since the protocol is
not greedy, it's not possible simply to quote the standard results.
However, the generalized fluid limit techniques of Chapter~\ref{fluid
chapter} should be applicable, with some effort.}
(The $\Omega(1/N)$ lower bound follows easily; see
Section~\ref{lower bound section}.)

Coffman et al. observe that the expected queue length of a geometric Bernoulli
ring with nominal load $r<1$ is an upper bound on the expected
queue length of a standard Bernoulli ring with nominal load $2r$.
(This fact follows readily from a stochastic dominance argument.)
It follows that the expected queue length of a standard
Bernoulli ring is $\Theta(1/N)$ if $0\leq r < \frac{1}{2}$.

Why can't we use the same techniques on the standard Bernoulli ring as
we do on the geometric Bernoulli ring?  Well, all the packets on a
geometric Bernoulli ring are essentially indistinguishable; because of
the geometric distribution on travel distances, the past history of a
packet doesn't effect its future probability of leaving the ring.
This property makes stochastic dominance arguments straightforward, so
it's easy to find other, more analytically tractable protocols that
can bound the expected queue length of the greedy protocol.  On the
other hand, the conditional probability that a packet departs the
standard Bernoulli ring is very much dependent on how far it's
travelled.  It is correspondingly very, very difficult to find
networks that could stochastically dominate all these conditional
probabilities.

Both papers mention Hypothesis~\ref{ring hypothesis} as a 
vexing open question.

\item The Greedy Hot Potato protocol may be the most natural to
use on the ring, but it's certainly not the easiest to analyze. 
Kahale and Leighton~\cite{kahale95} use generating functions
to calculate a bound on the expected packets per node under the Farthest
First protocol (where the packet with the most distant destination 
gets precedence over other packets.)  The bound is:
\[\frac{4r}{N}\left(\frac{1}{2(1-r)^{2}}-\frac{1}{2}\right)=O(1/N)\]
These arguments depend very heavily on the protocol, and 
don't translate to GHP.

\item There are some fairly impressive and general results on stability
and expected queue length on Markovian networks.
\begin{definition} \index{Markovian}\index{single class}\index{classless}
A network is \emph{Markovian} if the behavior of any two packets at a
queue is stochastically identical.  Thus, to specify a Markov chain,
it is sufficient to specify how many packets are at each node (as
opposed to specifying the class of each packet).  A network with this
property is also called \emph{classless}, or \emph{single classed}.
\end{definition}
The geometric Bernoulli ring is an example of a Markovian network.

The first breakthrough in the subject came from Stamoulis and 
Tsitsiklis~\cite{kn:j7}.  They showed how to bound the expected
queue length under a First In, First Out (FIFO) protocol and 
(continuous time) deterministic
service by a processor sharing protocol with exponential service times.
It's easy to calculate the expected queue length of the latter network,
so the method provides explicit upper bounds on expected total
queue length in the network.  

Stamoulis and Tsitsiklis used their results on hypercubes and
butterfly graphs, but their proofs clearly apply to any layered
network.  Mitzenmacher~\cite{kn:j6} used these results to analyze the
$N\times N$ array, for instance.  However, the technique broke down on
networks with loops, such as rings or tori.

This problem was very nicely resolved by Harchol-Balter~\cite{mor} in
her dissertation.  She showed how to construct the same simple upper
bounds for any Markovian network, including those with loops.

If we applied these results naively to a standard Bernoulli ring, we
would get an $O(1)$ bound on the expected queue length per node.  This
result is akin to the $O(1)$ bound in Theorem 2 from Coffman et
al.~\cite{Leighton95} on the geometric Bernoulli ring.  Unfortunately,
a standard Bernoulli ring is emphatically not Markovian, and the
analysis fails.

\item Since the standard Bernoulli ring model runs in discrete time,
and each packet needs only one unit of time to cross an edge,
it is tempting to imagine that there should be some 
very general solutions for the stationary probabilities,
analogous to the solutions to a Kelly network in continuous
time.  One successful result along those lines is due
to Modiano and Ephremides~\cite{modiano}.  They
show exact solutions for expected queue length on a tree network
where all paths lead back to the root node.

Can this result be extended for arbitrary layered graphs?  
Modiano believes that this is true, but the proof is non-obvious,
to say the least.  (If true, this would resolve an open question
in Stamoulis and Tsitsiklis~\cite{kn:j7} concerning the expected
queue length per node on a butterfly.)  Extending it 
to networks with feedback, like a ring, seems impossible.

\item Rene Cruz~\cite{Cruz1},~\cite{Cruz2} 
developed a model of packet routing with ``burstiness'' constraints.
These constraints boil down to the following: for each edge, fix
$r,s>0$.  Then in $T$ time steps, at most $\lfloor Tr+s \rfloor$
packets can arrive.  In Cruz~\cite{Cruz2}, he proves a stability
result on a model of a 4 node ring.  

Georgiadis and Tassiulas~\cite{georgiadis1} show that Cruz's model
of the ring is stable under a greedy protocol, on a ring of 
any size, so long as the nominal loads are less than one.

For stochastic arrival processes like the Bernoulli process, Cruz's
burstiness assumptions are too restrictive, so his stability theorems
don't apply.
\item Cruz can be considered one of the forefathers of adversarial
queueing theory.\index{adversarial queueing theory}  The intent of
adversarial queueing theory is to prove that even in the face of
maliciously planned packet insertions, certain networks and protocols
are still stable.

More specifically, fix an integer $T$ and some $0\leq r \leq 1$.
Imagine that an adversary injects packets such that for any fixed edge
$e$, the number of packets injected during the previous $T$ time steps
that need to use $e$ is less than $\lfloor rT\rfloor$.
A network and protocol is stable with load $r$ if
for any $T$, there is a maximum number of packets $M_{T}$ that
can appear in the network simultaneously.  (Thus, the adversary
``wins'' if he can make the number of packets in the system
grow unboundedly.)

Adversarial queueing theory was originally introduced by
Borodin et al.~\cite{kn:j8}.  The result of interest to
us is from Andrews et al.~\cite{Andrews}, where the authors
show that the ring is adversarially stable under any greedy protocol,
for any $r<1$.  A very interesting converse was proved by
Goel~\cite{Goel}, who showed that any network containing more than
one ring is adversarially unstable for some protocol and some
$r_{0}<1$.  An equivalent result for stochastic stability
is unknown but desirable.

Almost any stochastic arrival process (like the Bernoulli)
has a potential for unbounded ``burstiness''.  This
fact prevents the adversarial results from applying to a 
standard Bernoulli ring in any obvious way.
\item 
Around 1995, a major advance was made in the general study of
stability on queueing networks.  Dai~\cite{Dai} introduced fluid limit
models, a method of rescaling a stochastic system to reduce it to a
deterministic one.  One of the consequences of this theory was a proof
that in continuous time, the (generalized Kelly) ring is stable under
any greedy protocol, so long as the maximum nominal load on any node
is less than one (see Dai and Weiss~\cite{Dai_and_Weiss}).  Further
refinements of the theory allowed proofs of the finiteness of the
expected queue length assuming bounded variance of the arrival and
service times of the network (see Dai and Meyn~\cite{Dai_and_Meyn}).
I'll be looking at fluid limits in greater detail in
Chapter~\ref{fluid chapter}.

\item
Gamarnik~\cite{Gamarnik} managed to prove an adversarial fluid
limit theorem, providing a way to prove adversarial stability
by analyzing a more complicated fluid limit.  As an example,
he provided yet another proof of the adversarial stability of the ring.

\end{itemize}


 \chapter{Exact Solutions} \label{exact chapter}

\section{Introduction} \label{exact intro section}

In this chapter, I'm going to perform exact calculations of the
expected queue length and stationary distribution of several families
of rings.  For a brief review of stationary distributions and discrete
time Markov chains, please see Section~\ref{basic probability section}.

Recall the nonstandard Bernoulli ring with parameter $L$ introduced in
Section~\ref{bidirectional section}.  
A nonstandard Bernoulli ring can be specified by it's size $N$
and its maximum path length $L$. If $L$ is fairly small relative
to $N$, then we can imagine that packets only need to communicate in a small
local neighborhood of themselves.  If, on the other hand, 
$L\geq N$, then a packet can cross the same node more than once.

I can only hope to calculate exact solutions in the simplest
cases; even then, some of the proofs are fairly involved.
I will exactly calculate the expected queue length for
the case of $N=1$ for arbitrary $L$, and $L=1$ or $2$ for arbitrary
$N$.
The results hold for several different protocols.
I will also find the stationary distribution for $L=2$ and
all $N$ under the GHP protocol.

\section{The One Node Ring}

Remember that the standard routing protocol for a ring is 
\emph{Greedy Hot Potato} (GHP), where packets travelling in the ring have
precedence over packets in queue.  In a one node ring, this means that
the packet which is being serviced remains in service until it leaves
the queue (i.e.\ no pre-emptions occur.)  Observe that this protocol is
the same as \emph{First In, First Out} (FIFO):
\begin{definition} \index{FIFO protocol} \index{FCFS protocol}
The \emph{First In, First Out (FIFO)} protocol, as its name suggests,
gives priority to earlier arrivals at a node.  That is, the $n$th
packet arriving at the node will be the $n$th packet departing.  
(Simultaneous arrivals are numbered randomly.)  This protocol is
also called \emph{First Come, First Served (FCFS)}.
\end{definition}
For an $N$ node ring with $N>1$, FIFO and GHP are \emph{not} the same.
Note that since a packet doesn't really ``travel'' anywhere on a $N=1$
node ring, some people find it might be more natural to view a packet
as having an amount of work associated with it.  (So, for example,
rather than ``travelling'' in place for $k$ time steps, we say the
packet has $k$ units of work.)  However, I will stick with the
``travel'' metaphor.

\begin{theorem}
Suppose we have a 1 node nonstandard Bernoulli ring with parameter $L$,
and we are routing using GHP.  Suppose that the arrival rate is
$p=\frac{2}{L+1}r$.  Then the expected queue length is:
\[E[\mbox{queue length}] = \frac{L-1}{L+1}\frac{2r^{2}}{3(1-r)} \]
\end{theorem}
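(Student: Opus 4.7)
The plan is to recognize the one-node GHP ring as a standard discrete-time $\mathrm{Geo}/G/1$ queue and to derive a Pollaczek--Khinchine-style identity for the mean waiting time via a Lindley recursion, then convert to expected queue length via Little's law.

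First I would reduce the problem: on a one-node ring with parameter $L$, every new packet picks a destination distance uniformly in $\{1,\dots,L\}$ (the only node is itself). Under Greedy Hot Potato, the packet currently in the ring keeps being routed (returning to the node) without preemption until it has completed its chosen number of hops. So the node behaves exactly like a single-server FIFO queue with Bernoulli$(p)$ arrivals, $p=2r/(L+1)$, and i.i.d.\ service times $S$ uniform on $\{1,\dots,L\}$; the utilization is $\rho = p\,E[S] = r$, and $r<1$ gives stability.

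Next, let $W_n$ denote the waiting time in queue of the $n$-th packet and $T_n$ the geometric interarrival gap. The Lindley recursion $W_{n+1}=(W_n+S_n-T_n)_+$ holds. Setting $X=W+S-T$, I use the identity $(X_+)^2 = X^2 - (X_-)^2$; taking expectations in stationarity cancels $E[W^2]$ and leaves $E[W]$ linearly expressed in $E[S^2]$, $E[T^2]$ and $E[(X_-)^2]$. The key simplification is that, by the memorylessness of the geometric interarrival distribution, conditional on the server becoming idle the residual interarrival time is again geometric, so $E[(X_-)^2]$ is available in closed form. After the algebra collapses, I get the clean discrete-time Pollaczek--Khinchine formula
\[ E[W] \;=\; \frac{p\,E[S(S-1)]}{2(1-\rho)}, \]
whereupon Little's law yields $E[\text{queue length}] = p\,E[W]$.

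Finally, a direct moment computation gives $E[S]=(L+1)/2$, $E[S^2]=(L+1)(2L+1)/6$, and so $E[S(S-1)]=(L^2-1)/3$. Substituting $p=2r/(L+1)$ and $\rho=r$ and simplifying collapses the expression to $\frac{L-1}{L+1}\cdot\frac{2r^2}{3(1-r)}$. As a sanity check, the formula vanishes at $L=1$, matching the elementary fact that a one-node ring with $L=1$ cannot accumulate a queue under the route-then-arrive convention. The main obstacle I anticipate is pinning down the correct discrete-time Pollaczek--Khinchine identity under the paper's ``route-then-arrive'' ordering: there are several closely related variants that differ in whether $E[S^2]$ or $E[S(S-1)]$ appears, and I must consistently distinguish packets ``at the node'' from packets ``in queue.'' The geometric-memorylessness computation of $E[(X_-)^2]$ is the one non-routine step; everything else is bookkeeping.
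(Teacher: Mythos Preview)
Your proposal is correct and takes essentially the same approach as the paper: both recognize the one-node GHP ring as a single-server queue with Bernoulli($p$) arrivals and service times uniform on $\{1,\dots,L\}$, and both apply the discrete-time Pollaczek--Khinchine formula $E[\text{queue length}]=\lambda^{2}(E[Z^{2}]-E[Z])/(2(1-\lambda E[Z]))$ followed by the same moment computation. The only difference is that the paper simply quotes this formula from its queueing-theory appendix, whereas you sketch its derivation via the Lindley recursion; your anticipated obstacle about the route-then-arrive convention is a non-issue here since your stated identity already matches the paper's.
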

\note Therefore, for a fixed $r$, the expected queue length is
$O(1)$ in $L$.

\proof
Since $N=1$, we have a single server queue, and can apply standard
tools from queueing theory.  The ergodicity of a single server queue
for nominal loads less than one follows from typical arguments (e.g.\
Gallager~\cite{kn:b3}, Chapter 7).  For the expected queue length,
recall the discrete time version of the Pollaczek-Khinchin formula
(Theorem~\ref{p-k theorem}):
\[E[\mbox{queue length}] = 
\frac{\lambda^{2}(E[Z^{2}] - E[Z])}{2(1-\lambda E[Z])}\]
where $\lambda$ is the arrival rate (i.e.\ $p$), and 
$Z$ is the distribution of service times (i.e.\ uniform
between 1 and $L$.) So, since
\[E[Z] = \sum_{i=1}^{L} \frac{1}{L}i = \frac{L+1}{2}\]
\[E[Z^{2}] = \sum_{i=1}^{L} \frac{1}{L}i^{2} = \frac{(2L+1)(L+1)}{6}\]
we can plug in and get
\[E[\mbox{queue length}] = \frac{L-1}{L+1}\frac{2r^{2}}{3(1-r)} \]
as desired.
\EOP

We can also make some qualitative comparisons of expected queue length.

\begin{lemma} \label{single node lemma}
\index{Lemma \ref{single node lemma}}
Suppose we are comparing the expected queue length of greedy
protocols
$A$ and $B$ on a single node network.  Suppose that the mean
work of a packet in queue under $A$ is strictly \emph{greater}
than under $B$.  Suppose also that the queue length is 
independent of the expected work in each packet in the queue.
Then it follows than the expected queue length
under $A$ is strictly \emph{shorter} than under $B$.  
\end{lemma}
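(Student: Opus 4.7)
The plan is to invoke work conservation. For a single-server system, under any greedy (non-idling) protocol the total remaining work $W_{t}$ in the system at time $t$ obeys the recursion $W_{t+1} = (W_{t}-1)^{+} + A_{t}$, where $A_{t}$ denotes the work arriving at step $t$. This recursion makes no reference to the scheduling discipline, and since $A$ and $B$ see the same arrival process and the same service-time distribution, the stationary expectations $E[W]$ agree under the two protocols. This single conservation law is the only probabilistic input the proof will need.

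Next I would decompose the total work as $W = R + W_{Q}$, where $R$ is the remaining work of whichever packet is currently being served (zero if none) and $W_{Q}$ is the sum of the service requirements of the packets waiting in queue. For the service-blind greedy protocols in question, the expected residual work $E[R]$ depends only on the common service distribution and the common busy-fraction $\rho = \lambda E[Z]$, hence is the same under $A$ and $B$. Subtracting, the expected total queued work $E[W_{Q}] = E[W] - E[R]$ is also protocol-invariant.

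The remainder is algebra. Writing $W_{Q} = X_{1} + \cdots + X_{Q}$, with $Q$ the queue length and $X_{i}$ the work of the $i$-th queued packet, the independence hypothesis (queue length independent of mean per-packet work) gives $E[W_{Q}] = E[Q] \cdot \mu$, where $\mu$ is the mean per-queued-packet work. Applying this equality to both protocols yields $E[Q_{A}]\mu_{A} = E[W_{Q}] = E[Q_{B}]\mu_{B}$, and since $\mu_{A} > \mu_{B} > 0$, the conclusion $E[Q_{A}] < E[Q_{B}]$ follows immediately.

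The step I expect to be the main obstacle is the protocol-invariance of $E[R]$. For the classical work-conserving, non-preemptive disciplines analyzed elsewhere in the chapter (GHP, FIFO, EPF, SIS, CTO, FTG on a single node all fall into this category), a standard renewal/size-biasing computation handles it, because none of these protocols peeks at the work of a queued packet when deciding whom to serve next. If the lemma were ever to be applied to a size-aware discipline, the independence hypothesis would need to be strengthened so as to absorb the in-service packet, but in the intended setting this subtlety is benign and the proof collapses to the one-line algebra above.
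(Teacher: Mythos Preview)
Your proposal is correct and follows essentially the same route as the paper: invoke work conservation so that the expected total work is protocol-invariant, peel off the expected work of the packet in service (which the paper also asserts is invariant, with the terse justification ``because it's the mean work per packet''), and then factor $E[W_{Q}] = E[Q]\cdot\mu$ via the independence hypothesis. The paper's proof is a compressed version of yours; you are more explicit in isolating the $E[R]$ step and in flagging that its invariance rests on the disciplines being service-blind, a caveat the paper leaves implicit.
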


\proof
Observe that since we are in the single server regime, 
the total amount of work in the queue is constant for all greedy 
protocols.  Also, the expected amount of work of the packet in 
service is also invariant over the protocols (because it's
the mean work per packet).  Now,
\[\Ex[\mbox{work in queue}]=\Ex[\mbox{work per packet $\times$ length of queue}]\]
so, by our independence assumption,
\[=\Ex[\mbox{work per packet}]\Ex[\mbox{length of queue}]\]
Since $\Ex[\mbox{work in queue}]$ is constant, we have the result.
\EOP

Consider, then, the \emph{Farthest To Go}\index{FTG protocol}
(FTG) protocol,
where packets with the greatest distance left to travel
have priority over packets with nearer destinations.  
If a packet arrives with a greater distance to travel than all
the other packets in the system, I allow it to serviced immediately
(so it spends no time in queue.) On a ring, FTG is a well-defined 
protocol.\footnote{Generally, though, FTG does not completely
specify a protocol, since packets from different classes might have the 
same distance to their destinations.}  We can deduce the following
corollary:

\begin{corollary} \label{FTG corollary}
\index{Corollary \ref{FTG corollary}}
Suppose we have a 1 node non-standard Bernoulli ring with parameter
$L=2$.  Then for any arrival rate greater than zero, the expected
queue length under GHP is shorter than under FTG.
\end{corollary}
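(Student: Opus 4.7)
The plan is to invoke Lemma~\ref{single node lemma} with $A=\mathrm{GHP}$ and $B=\mathrm{FTG}$. With $L=2$ every packet carries work drawn uniformly from $\{1,2\}$, so the marginal mean work per arriving packet is $3/2$ under either protocol. Since both protocols are greedy, the total work in the system is pathwise invariant across them, and the expected work of the packet currently in service equals the mean $E[Z]=3/2$ under either protocol; hence $E[\text{work in queue}]$ is also protocol-invariant. Consequently, if I can show that the mean work per queued packet is strictly smaller under FTG than under GHP, Lemma~\ref{single node lemma} immediately yields $E[\text{queue length}]_{\mathrm{FTG}}>E[\text{queue length}]_{\mathrm{GHP}}$, which is the corollary.

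To compare the mean work per queued packet, I would argue per class. Under GHP, which on a single node coincides with FIFO, the position of a packet in the queue is determined solely by arrival order and is independent of its workload, so by exchangeability the mean work of a packet in queue is exactly $3/2$. Under FTG, whenever the in-service packet completes, any waiting work-$2$ packet is selected ahead of every waiting work-$1$ packet; so class-$1$ packets experience strictly longer queueing delay and class-$2$ packets strictly shorter. Applying Little's Law separately to each class, $E[L_c]=\lambda_c E[W_c]$, yields that the queue is biased toward work-$1$ packets, hence the time-average mean work per queued packet under FTG is strictly less than $3/2$. Strictness holds as soon as $p>0$, because the event that a work-$2$ arrival overtakes a waiting work-$1$ packet has positive probability.

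The main obstacle is verifying the independence hypothesis of Lemma~\ref{single node lemma} for FTG, because, unlike FIFO, the class composition of the queue need not be independent of the queue length a priori. I plan to handle this by exploiting that with $L=2$ there are only two work classes, so the $(Q,\text{class composition})$-Markov chain lives on a tractable state space. Either I compute the joint stationary distribution explicitly and read off the required independence, or I reformulate the conclusion as $E[\bar W\cdot Q]_{\mathrm{FTG}}=E[\bar W\cdot Q]_{\mathrm{GHP}}=\tfrac{3}{2}E[Q]_{\mathrm{GHP}}$ (which follows from the invariance of $E[\text{work in queue}]$) and combine this with the sign of the correlation between $\bar W$ and $Q$ under FTG, which is pinned down by the class-priority structure. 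Establishing that correlation sign, or pushing through the direct computation, is where the real work lies.
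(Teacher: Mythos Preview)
Your setup via Lemma~\ref{single node lemma} is exactly right, and the FIFO/GHP side (mean work per queued packet $=3/2$, independent of queue length) is correct. What you are missing is the one-line structural observation that makes the FTG side trivial, and which the paper's proof rests on.

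Recall the paper's FTG is \emph{preemptive}: an arriving packet with strictly greater remaining distance than every packet currently in the system enters service immediately. With $L=2$ and the ``route, then arrive'' convention, after the routing step every packet in the system has remaining work at most $1$ (the in-service packet was just decremented; anything promoted from queue has work $1$ by induction). Hence any work-$2$ arrival preempts and never joins the queue. A straightforward induction on $t$ then shows that the queue consists \emph{only} of work-$1$ packets at all times. So under FTG the mean work per queued packet is deterministically $1$, and the independence hypothesis of Lemma~\ref{single node lemma} is vacuously satisfied. Plugging $3/2>1$ into the lemma gives the corollary immediately.

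Your proposed route through per-class Little's Law and a correlation analysis between $\bar W$ and $Q$ is not wrong in spirit, but it is both harder and unnecessary. Two specific issues: (i) you seem to treat FTG as non-preemptive (``whenever the in-service packet completes\ldots''), which is not the paper's definition and would allow work-$2$ packets to sit in queue, genuinely complicating the independence check; (ii) even granting the bias of the queue toward work-$1$ packets, you would still owe a proof that the conditional mean work per queued packet does not depend on $Q$, or else a separate argument bypassing that hypothesis. The structural observation above removes all of this: there is nothing to compute, because the queue composition under FTG is deterministic.
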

\note A queueing theorist would probably express this result 
by saying that the Least Remaining Work protocol is worse than FIFO.

\proof
By inducting on time, we can show that under FTG,
all packets in queue need
one unit of service time.  At time $t=0$, there are no packets in
queue, so the result holds.  At time $t$, by induction all packets
in queue need one unit of service, so if a packet arrives needing 
2 units, it will be immediately serviced, and thus removed from the 
queue.  Thus, at time $t+1$, all 
the packets in queue will need one unit of service.

Therefore, under FTG, the mean work per packet in queue is 1,
independent of the queue length.  Under GHP (which is identical to
FIFO), it's $3/2$, independent of the queue length.  By
Lemma~\ref{single node lemma}, we're done.
\EOP

This result may give some plausible hint that GHP has shorter
expected queues than FTG on larger rings.  Nevertheless,
in section~\ref{stationary solution section}, I show that 
GHP and FTG have identical expected queue length if $L=2$ and $N>1$, so 
Corollary~\ref{FTG corollary} is somewhat surprising.

\section{Fixing $L$ in the Nonstandard Bernoulli Ring}

First off, let us consider the case of $L=1$, for a ring of any size.
Since our model
of packet-routing is non-blocking, the only node that a packet
blocks is the node that it arrives at.  Since at most one packet arrives
on each time step, and (with any greedy protocol) 
at least one packet is emitted on each time step
from a non-empty queue, it follows that there are never any packets in
queue.  Therefore, the stationary distribution is of product
form, where the probability of a node being empty is $1-p$; the probability
of there being one packet at that node is $p$.  (``Product form'' is
defined in Section~\ref{basic probability section}.)  The expected queue
length is identically zero.

The case of $L=2$ is much more interesting.  I am going to analyze a
number of different protocols in the following sections, but the
marginal stationary distributions (per node) will all be essentially
the same.  Because the different protocols have slightly different
state spaces, the distributions are formally incomparable, but the
probability that a particular node has $i$ packets in it is the same
across all the protocols.  In particular, the expected queue length
per node (as a function of $r$) is constant across all these
protocols.  Even more surprisingly, the marginal distribution per node
is independent of $N$, for $N>1$.  That is, the expected queue length
per node is independent not only of which of these protocols are
chosen, but also of the size of the ring.

The protocols (which will be defined in Section~\ref{protocols section}) are
Exogenous Packets First (EPF), 
Closest To Origin (CTO), Farthest To Go (FTG), Shortest In System (SIS),
and Greedy Hot Potato (GHP).  GHP is the protocol specified in the 
standard Bernoulli ring in Section~\ref{problem section}, and hence
is of particular interest.  I calculate its full stationary distribution
(not just the marginal distribution per node).
This latter proof is substantially longer than
any of the other proofs, taking up the majority of this chapter.

\section{The Stationary Distribution and Consequences} 
 \label{stationary solution section}

As mentioned in Section~\ref{exact intro section}, the distributional
values (expected queue length, and so forth) are the same for all the
protocols I examine.  In advance of the proofs of the marginal
stationary distributions, I preview the results in this section.

\begin{theorem} \label{main exact theorem}
\index{Theorem \ref{main exact theorem}}
For the GHP, SIS, CTO, FTG, and EPF protocols, on an $N>1$ node ring,
with maximum destination $L=2$, and packet arrival probability $p$,
the stationary probability that a fixed node has $n$ packets in it is:
\[ \Pr (0 \mbox{ packets}) = 1 - \frac{3}{2}p \]
\[ \Pr (1 \mbox{ packet}) = 
\left( 1-\frac{3}{2}p \right)
\frac{3p-p^{2}}{(1-p)(2-p)} \]
and for any $n>1$,
\[ \Pr (n \mbox{ packets}) = 
\left( 1 - \frac{3}{2}p \right)
\frac{2p^{2(n-1)}}{[(1-p)(2-p)]^{n}}   \]
Under GHP, this result also holds if $N=1$.
\end{theorem}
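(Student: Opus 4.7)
The plan is to split into the two cases $N=1$ and $N>1$. The $N=1$ case (under GHP only) is straightforward: with a single node, GHP coincides with FIFO and the system reduces to a discrete-time single-server queue with $\mathrm{Bernoulli}(p)$ arrivals and service times uniform on $\{1,2\}$. I would set up a Markov chain on the pair (queue length, remaining service time of the packet in service), write its balance equations, and solve for the stationary distribution directly. The resulting geometric-style decay should match the stated formula; this step is essentially a concrete version of the calculation that underlies the discrete Pollaczek--Khinchin formula already invoked for the one-node theorem earlier.

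For $N>1$, the first step is a load-conservation argument to pin down $\Pr(0\mbox{ packets}) = 1 - \tfrac{3}{2}p$: each exogenous packet traverses on average $\tfrac{3}{2}$ edges, so every edge carries flow $r = \tfrac{3}{2}p$ per unit time, which equals the utilization of each node by symmetry. In the same spirit, load conservation pins down the stationary rate of hot-potato arrivals at a node to $p/2$ (the mean number of edges a packet traverses after the first). The overall approach is then to view a single node as a Markov chain whose inputs are the exogenous arrival process (rate $p$) and the hot-potato arrival process from the upstream neighbor (rate $p/2$), and whose transitions depend on the protocol in force.

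For each of the four non-GHP protocols (EPF, SIS, CTO, FTG), I would treat them individually by setting up the appropriate single-node chain, using the symmetry of the ring to express the hot-potato input in terms of marginal quantities of the upstream node, and then verifying that the claimed distribution satisfies the resulting self-consistent balance equations. I expect these four cases to be cleaner than GHP because each of them effectively lets a hot potato linger at a node (EPF/SIS/CTO give queue packets precedence, while FTG prefers distance-2 packets over the distance-1 hot potato), so the hot-potato arrivals at a downstream node are decorrelated from finer structure of the upstream routing decision; the chain at a single node should then reduce to a small birth-death-like process whose stationary distribution can be checked termwise against the stated formula.

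The principal obstacle is GHP with $N>1$, because hot potatoes are routed with priority and so chain around the entire ring, producing correlations between the states at $i-1, i-2, \ldots$ that do not obviously decouple. Here my plan is to guess the full joint stationary distribution of the Markov chain on all $N$ nodes and verify it against the global balance equations; the fact that the claimed marginal is \emph{independent of $N$} for $N>1$ is a strong structural hint that the joint distribution has a near-product form (for instance, a product of node-local factors modulated by a factor indexed by the current hot-potato configuration). Once such a candidate joint distribution is written down, obtaining the stated marginal is just a matter of summing over the auxiliary variables, and the $N$-independence of the marginal should fall out of the way the factors recombine. I expect the bulk of the chapter's effort to be in guessing the right joint form and performing the balance-equation verification, which is presumably why the author remarks that this proof is substantially longer than the other four.
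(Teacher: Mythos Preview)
Your $N=1$ plan matches the paper's. For the four non-GHP protocols with $N>1$, however, you have the mechanism backwards and miss two major simplifications. First, EPF does \emph{not} give queue packets precedence; it gives \emph{exogenous} packets precedence, so an exogenous arrival always departs immediately and only internal packets ever sit in queue. Second, the paper shows (by a short case check) that on the $L=2$ ring, SIS, CTO and FTG are each literally the same protocol as EPF, so there is only one case to analyze, not four. With the correct description of EPF in hand, the hot-potato arrival at node $i$ is \emph{exactly} the event ``an exogenous packet arrived at $i-1$ and chose destination distance $2$'', which is an i.i.d.\ Bernoulli$(p/2)$ stream independent of node $i$'s own exogenous arrivals. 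No self-consistency or symmetry argument is needed: the internal-packet queue at each node is \emph{literally} a single-server queue with Bernoulli$(p/2)$ arrivals and geometric service (success probability $1-p$), and one then reinserts the possible exogenous packet to recover the stated marginal.

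For GHP with $N>1$, the paper's joint distribution is \emph{exactly} product form in the per-node states (not merely near-product), with the factors being the displayed probabilities refined by hot-potato value. Your plan to guess and verify is the right shape, but the verification is not a monolithic global balance check: the paper first reduces to states with all queues of length at most $2$ (since removing a packet from a queue of length $\geq 3$ scales every term of the balance equation by the same factor), then inducts on $N$ by cutting the ring at two edges that could not have been crossed on the previous step and applying the inductive hypothesis to the two smaller rings. The residual finitely many classes of states are handled by explicit case analysis.
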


\proof
The proofs follow in the remainder of the chapter.
\EOP

We can use this theorem to calculate various interesting quantities.
The expected queue length per processor is:
\[\sum_{n=1}^{\infty} n\left[\Pr\left(n\mbox{ packets in queue}
\right)\right] 
=\sum_{n=1}^{\infty} n\left[\Pr\left(n+1\mbox{ packets at node}
\right)\right] \]
\[=\sum_{n=1}^{\infty}n\left[2\left(1-\frac{3}{2}p\right)
\frac{1}{(1-p)(2-p)}
\left(\frac{p^{2}}{(1-p)(2-p)}\right)^{n}\right] \]
\[=\frac{2-3p}{(1-p)(2-p)}\sum_{n=1}^{\infty}n\left(
\frac{p^{2}}{(1-p)(2-p)}\right)^{n}\]
\[=\frac{2-3p}{(1-p)(2-p)}
\frac{\frac{p^{2}}{(1-p)(2-p)}}{\left(1-\frac{p^{2}}{(1-p)(2-p)}
\right)^{2}}\]
\[=\frac{2-3p}{(1-p)(2-p)}\frac{p^{2}{(1-p)(2-p)}}{\left( (1-p)(2-p)
-p^{2}\right)^{2}}\]
\[=(2-3p)\frac{p^{2}}{(2-3p)^{2}}\]
\[=\frac{p^{2}}{2-3p}\]
By Section~\ref{queueing theory section}, the expected number of packets 
per processor is equal to:
\[\mbox{(Expected queue length)} +(1-\Pr\mbox{(empty processor))}\]
\[=\frac{p^{2}}{2-3p}+(1-(1-(3/2)p))\]
\[=\frac{p^{2}}{2-3p}+\frac{3}{2}p\]
The expected variance of the queue length per processor (for any
$N$) is equal to:
\[E[(\mbox{packets in queue})^{2}]-
(E[\mbox{packets in queue}])^{2}\]
\[=\left(\sum_{n=1}^{\infty} n^{2}
(\Pr(n \mbox{ packets in queue}))
 \right) -\left(\frac{p^{2}}{2-3p}\right)^{2}\] 
\[=\frac{2-3p}{(1-p)(2-p)}\left[ \sum_{n=1}^{\infty}n^{2}\left(
\frac{p^{2}}{(1-p)(2-p)}\right)^{n}\right]
-\left(\frac{p^{2}}{2-3p}\right)^{2}\] 
\[=\frac{2-3p}{(1-p)(2-p)}
\left[\frac{\frac{p^{2}}{(1-p)(2-p)}}{\left(1-\frac{p^{2}}
{(1-p)(2-p)}\right)^{2}} +
\frac{\frac{2p^{4}}{[(1-p)(2-p)]^{2}}}{\left(1-\frac{p^{2}}
{(1-p)(2-p)}\right)^{3}}\right]
-\left(\frac{p^{2}}{2-3p}\right)^{2}\] 
\[=\frac{p^{2}}{2-3p} + \frac{2p^{4}}{(2-3p)^{2}} -
\left(\frac{p^{2}}{2-3p}\right)^{2}\] 
\[=\left(\frac{p^{2}}{2-3p}\right)^{2} +
\frac{p^{2}}{2-3p}\]
Finally, just for fun, we can calculate the entropy of the queue
length per processor:
\[H \choose{\mbox{queue length}}{\mbox{per processor}}\]
\[=- \sum_{n=0}^{\infty} \Pr(n \mbox{ packets in queue})
\log (\Pr(n \mbox{ packets in queue}))\]
Plugging in, we get
\[-\sum_{n=0}^{\infty}
\frac{2-3p}{(1-p)(2-p)}\left(\frac{p^{2}}{(1-p)(2-p)}\right)^{n}
\log\left[
\frac{2-3p}{(1-p)(2-p)}\left(\frac{p^{2}}{(1-p)(2-p)}\right)^{n}
\right] \]
\[=-\left[\sum_{n=0}^{\infty}
\frac{2-3p}{(1-p)(2-p)}\left(\frac{p^{2}}{(1-p)(2-p)}\right)^{n}
\log\left(\frac{2-3p}{(1-p)(2-p)}\right)\right]
\]
\[-\left[
\sum_{n=0}^{\infty}
n\frac{2-3p}{(1-p)(2-p)}\left(\frac{p^{2}}{(1-p)(2-p)}\right)^{n}
\log\left(\frac{p^{2}}{(1-p)(2-p)}\right)\right] \]
\[=-\log\left(\frac{2-3p}{(1-p)(2-p)}\right)
\left[\sum_{n=0}^{\infty}
\frac{2-3p}{(1-p)(2-p)}\left(\frac{p^{2}}{(1-p)(2-p)}\right)^{n}
\right] \]
\[
-\log\left(\frac{p^{2}}{(1-p)(2-p)}\right)
\left[\sum_{n=0}^{\infty}
n\frac{2-3p}{(1-p)(2-p)}\left(\frac{p^{2}}{(1-p)(2-p)}\right)^{n}
\right]\]
Observe that the first sum in square brackets is the sum of
the probability from all states, which equals 1.  The second
sum in square brackets is the expected queue length, which
we know is $p^{2}/(2-3p)$. So,
\[=-\log\left(\frac{2-3p}{(1-p)(2-p)}\right) -
\frac{p^{2}}{2-3p}\log\left(\frac{p^{2}}{(1-p)(2-p)}\right)\]
It's pretty easy to verify that this entropy function equals 0
when $p=0$, diverges to positive infinity at $p=2/3$, 
is continuous on $0\leq p<2/3$, and is monotonically increasing.
For GHP, since the distribution has product form, the entropy of all
$N$ processors is $N$ times the entropy per processor.

These results also allow exact analysis of two cases of special
interest.
\subsection{The 3 Node Standard Bernoulli Ring}

If $N=3$, then any processor can send a packet to any other processor.
Observe that this network is a 3 node standard Bernoulli ring.  The previous
section allows us to calculate exactly the expected delay, expected
queue length, variance, etc.

\subsection{The 5 Node Bidirectional Ring}
\index{bidirectional ring!5 node ring}
Suppose we have a 5-node bidirectional ring, where packets take
the (unique) shortest path to their destination, the destinations
are distributed uniformly over the other processors, and
packets arrive with probability $p$.  Suppose that a processor
can send out 2 packets in 1 turn as long as the packets are
using different edges.  (There are two edges between adjacent
nodes, so that node $i$ can send node $i+1$ a packet at the same
time that $i+1$ sends $i$ a packet.)  Packets arrive at a node
according to a Bernoulli process, per usual.
	
Then, as described in Section~\ref{bidirectional section},
we can decompose the ring into two unidirectional rings (in
opposite directions), each operating with an effective arrival
rate of $\hat{p}=p/2$.  The arrival processes into these
two rings are correlated, but since expectation is linear,
this correlation doesn't effect the expected queue length.
The expected queue length is then
\[2\frac{\hat{p}^{2}}{2-3\hat{p}} =
\frac{p^{2}}{4-3p}\]
Note that $p\leq1 \Rightarrow \hat{p}\leq1/2$, yet the
critical point is $\hat{p}=2/3$.  Therefore, the system is 
always stable.  The largest expected queue length occurs when
$p=1$, giving $E(\mbox{queue length})=\frac{1}{4-3}=1$.

\section{The EPF, SIS, CTO, and FTG Protocols}
 \label{protocols section}

As I'll show below, for the $L=2$ case, all four 
of these protocols can be viewed as
functionally identical.  (For larger $L$, this is not
necessarily true, and for non-ring networks, it's almost never
true.) I will now define each of these protocols in turn.

The Exogenous Packets First (EPF)\index{EPF protocol} 
protocol always prefers an
exogenous arrival to an internal arrival.  (A packet arrives exogenously
if it has just been inserted from the Bernoulli arrival process;
an internal arrival is a packet that has been routed from another
node in the network.)  Simply specifying the priority of exogenous
arrivals over internal arrivals does not usually fully specify a
protocol for an arbitrary graph.  But when the maximum path 
length is 2 and there are only two classes at each node (exogenous
arrivals and internal arrivals), then
everything is well defined.  Note
that, since there is at most one exogenous arrival to a node on
each time step, and it has priority, the exogenous packets never
wait in queue; a packet is only (possibly) queued after its
first step, at which time it has become an internal packet.

The Shortest In System (SIS)\index{SIS protocol}
protocol dictates that if two packets
are contending for an edge, the packet with the most recent insertion
into the network gets precedence.  This means that if a packet
is injected into a node, it is guaranteed to move on the next 
time step.  The only packets that can wait in queues are packets that
have already moved one step but have a second step left to take.
Therefore, exogenous packets 
have priority.  Thus, SIS is the functionally the same protocol
as Exogenous Packets First (EPF).  

The Closest To Origin (CTO)\index{CTO protocol}
protocol gives priority to the packet that
is closest to its own origin (i.e.\ point of arrival to the ring).
Since we're on a ring, this specifies
a unique class of packets.  Since packets travel only one or two
spaces, then the packet closest to its origin is the packet that
has just been exogenously inserted.  In other words, CTO is identical to EPF.

The Farthest To Go (FTG)\index{FTG protocol} protocol looks at the
destination of the packets in the system and gives priority to the
packets that have the greatest distance left to go.  Suppose, however,
that an exogenous and an internal packet both arrive at a node, and
both have exactly one edge left to cross.  Which gets precedence?  In
some sense it doesn't matter; the two packets are interchangeable, so
whichever choice we make, the behavior of the system (number of
packets in queues) is identical regardless of which packet advances.
Therefore, we might as well specify that the exogenous packet advances
first.  So, if an exogenous arrival has a destination two nodes away,
it has priority because it is travelling farther than any other packet
at that node; if it has a destination one node away, by the previous
observation, it has precedence over internal packets.  Thus, FTG is
identical to EPF.

SIS, CTO, and FTG are all well-defined on any ring network (not just
with $L=2$), but are not necessarily well-defined on networks with
arbitrary topology.  They are meaningful if and only if the
probability of a packet choosing any particular path is a function of
its total path length.  EPF can be defined on a network
with arbitary topology so long as there are only two
classes of packets present at any node: exogenous and
internal.  (In other words, all internal packets behave identically.)
If the maximum path length is two, then EPF is a somewhat
natural protocol to use.

We have reduced the problem of understanding SIS, CTO, and FTG to 
understanding EPF.  For ease of reference, I will state this formally:

\begin{lemma} \label{equivalent protocols lemma}
\index{Lemma \ref{equivalent protocols lemma}}
The stationary distributions on a nonstandard Bernoulli ring with
$L=2$ are identical under the protocols SIS, CTO, FTG, and EPF.
\end{lemma}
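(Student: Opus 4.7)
The plan is to show that EPF, SIS, CTO and FTG all induce the same Markov chain on the joint state of queue contents and packet destinations, from which equality of stationary distributions follows immediately. The key structural observation is that under EPF no exogenous packet ever has to wait in queue; consequently every queued packet at every time is an internal packet, i.e., one that was routed in from its origin on the previous time step, and so has exactly one step left to travel and sits at distance exactly one from its origin. This will be the single fact I use to collapse the other three priority rules onto EPF's.

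Given that observation, I would check each of the other three protocols in turn. For SIS, the most recently inserted contender at a node is the exogenous arrival (inserted this step), if any, while every other contender is an internal packet that arrived last step; so SIS strictly prefers the exogenous packet, matching EPF. For CTO, every exogenous arrival is at distance zero from its origin while every internal contender is at distance one, so CTO strictly prefers exogenous, again matching EPF. For FTG, an exogenous arrival destined two hops away has the unique largest distance-to-go and wins outright, but an exogenous arrival destined only one hop away has the same distance-to-go as every internal packet it competes with, and here a tie-breaking convention is required.

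The remaining obstacle is this FTG tie case. I would handle it by an exchangeability argument: any two packets co-resident at a node and each having exactly one step left are indistinguishable with respect to the chain's future evolution, since each will depart deterministically on its next routing action and generates no subsequent internal arrival. Hence, whichever one FTG selects, the joint distribution of queue contents and destinations at the next time step is unchanged; an induction on time then shows that FTG, equipped with the convention that an exogenous packet is advanced in a tie, induces exactly the same distribution over trajectories as EPF. Combining the three equivalences gives identical Markov chains, and hence identical stationary distributions, which is the lemma.
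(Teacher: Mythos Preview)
Your proposal is correct and follows essentially the same approach as the paper. The paper's argument (given in the discussion immediately preceding the lemma statement) makes the identical three reductions---SIS prefers the most recently inserted packet, which is the exogenous one; CTO prefers the packet closest to its origin, which is again the exogenous one; and FTG prefers the exogenous packet outright when it has two hops to go, while the one-hop tie is handled by exactly your interchangeability observation---and concludes that all four protocols are functionally identical to EPF.
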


Next, I'll introduce a lemma that hinges on the fact that the 
maximum path length $L$ is 2.

\begin{lemma} \label{EPF lemma}
\index{Lemma \ref{EPF lemma}}
Suppose we have an arbitrary network with $N$ nodes. Suppose that
\begin{itemize}
\item Packets arrive at node $i$ as a $p_{i}$-rate Bernoulli process.
\item The maximum path length is two.
\item Packets are routed according to EPF.
\item No path crosses itself.
\item If node $i$ has $j$ outgoing edges, then an (exogenous) packet leaving
node $i$ crosses edge $j$ with probability $q_{i,j}$.  It departs
the system with probability $1-\sum_{j} q_{i,j}$.  (If a packet
is not exogenous, then it has already crossed an edge, and must
necessarily depart on its next move.)
\end{itemize}
Then the stationary distribution of internal packets waiting in queue
at node $i$ is
stochastically identical to the total number of packets
at a single server where the arrival process
is a sum of Bernoulli arrivals, and the service time is exponentially
distributed.
(The particular arrival and service distributions are spelled out below.)
\end{lemma}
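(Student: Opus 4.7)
The plan is to exploit the fact that under EPF with $L=2$, the queue at each node has a very rigid structure, and then to read off the arrival and service processes driving it.

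First I would observe that under EPF with maximum path length $2$, an exogenous arrival at any node has priority and so moves immediately on the next routing step, never entering the queue. Consequently, every packet waiting in queue at node $i$ is an internal packet, and by the ``no path crosses itself'' hypothesis together with $L=2$, each such packet has exactly one step left and is guaranteed to depart the system at its next move. Thus the queue at node $i$ reduces to a \emph{count}, and the internal packets are statistically interchangeable, so the queue-length process at $i$ is already a single-server-like object: one server, FIFO-equivalent, packets leave immediately upon service.

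Next I would characterize the arrival process to this queue. An internal packet lands in the queue at node $i$ on step $t$ exactly when some neighbor $k$ with an edge to $i$ had an exogenous arrival at step $t-1$ that chose the edge to $i$. Since exogenous arrivals are independent Bernoulli$(p_k)$ processes and the routing choices are independent of everything else, the contribution from each $k$ is an independent Bernoulli$(p_k q_{k,j(k,i)})$ process, and the total arrival process into the queue at $i$ is exactly a sum of independent Bernoullis across the in-neighbors of $i$. (Since at most one exogenous packet arrives at $k$ per step, the routing decision is made for a single packet, so there is no ``thinning'' subtlety.)

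Then I would characterize the service. A queued internal packet at $i$ advances on step $t$ iff no exogenous arrival occurs at $i$ on step $t$, since under EPF such an arrival would preempt it. Exogenous arrivals at $i$ are Bernoulli$(p_i)$, so each step independently clears one queued packet with probability $1-p_i$, giving geometric service times with parameter $1-p_i$ (the discrete analogue of an exponential distribution, which I expect is what the lemma means by ``exponentially distributed''). Finally I would assemble these pieces: conditioned on the number of queued internal packets, the next-step change is determined by (i) how many internal arrivals come in and (ii) whether an independent exogenous arrival at $i$ occurs, so the queue length evolves as a Markov chain driven exactly by a sum-of-Bernoulli arrival process and geometric service.

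The main obstacle, and the step that deserves the most care, is the bookkeeping of simultaneity: exogenous arrivals at $i$ both preempt service \emph{and} are themselves Bernoulli$(p_i)$, and internal arrivals at $i$ come from exogenous arrivals at neighbors \emph{one step earlier}. I would therefore fix the ``route-then-arrive'' time-step convention adopted in the paper, argue that the exogenous arrival at $i$ in a given step is independent of the neighbors' exogenous arrivals from the previous step (so the service indicator is independent of the arrival count), and verify that this matches the dynamics of the stated single-server queue exactly — not merely in marginal rates, but step by step.
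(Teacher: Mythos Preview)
Your proposal is correct and follows essentially the same approach as the paper: identify that only internal packets queue under EPF, recognize the arrivals to the queue at node $i$ as a sum of independent Bernoulli processes (one per in-neighbor $j$, with rate $p_j q_{j,i}$), and observe that a queued packet is served precisely when no exogenous arrival occurs at $i$, giving geometric service with parameter $1-p_i$. If anything you are more careful than the paper about the timing convention and the independence of the service indicator from the arrival count; the paper's proof is quite terse and simply remarks parenthetically that the ``no path crosses itself'' assumption is what guarantees this independence.
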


\proof
Consider node $i$.  Because we are using EPF, the only packets that
queue are internal packets.  An internal packet arrives at node $i$
only if it arrived exogenously at node $j$ on the previous time step,
received priority (because it was exogenous), and then with
probability $q_{j,i}$ elected to travel to node $i$.  This event is a
Bernoulli arrival process with rate $p_{j}q_{j,i}$.  Since these
arrivals at each $j$ are independent of each other, then the total
internal arrivals to the queue at node $i$ consist of a sum of
independent Bernoulli arrival processes.

Suppose that there is a queue of internal packets waiting at node $i$.
We will be able to remove a packet from the queue, unless there is a new
exogenous arrival at node $i$.  Imagining an internal packet waiting at 
the head of the line at node $i$, it has a $1-p_{i}$ chance of leaving
on each time step.  This behavior is identical to giving each
packet an exponentially distributed service time.

(In order to insure that the arrival process and the service times are
independent, we needed to assume that no path crosses itself.)
\EOP

We can also conclude that:

\begin{corollary} \label{EPF ergodic lemma}
\index{Corollary \ref{EPF ergodic lemma}}
If the assumptions in Lemma~\ref{EPF lemma} are true and the nominal
loads are less than one at each node, then the system is ergodic.
\end{corollary}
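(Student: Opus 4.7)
The plan is straightforward: use Lemma~\ref{EPF lemma} to identify each node's internal queue with a stable discrete-time single-server queue, and then apply Foster's criterion to the joint chain.

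By Lemma~\ref{EPF lemma}, the internal-packet queue at node $i$ evolves as a single-server queue with Bernoulli arrivals of rate $\lambda_i = \sum_j p_j q_{j,i}$ and geometric service times of parameter $1-p_i$. The nominal-load hypothesis $p_i + \lambda_i < 1$ is exactly the utilization condition $\rho_i := \lambda_i/(1-p_i) < 1$, so each marginal single-server queue is stable in the usual sense (e.g.\ Gallager~\cite{kn:b3}, Chapter 7). Because exogenous packets never queue under EPF, the joint Markov state is captured by the tuple $x = (x_1,\dots,x_N)$ of internal queue sizes.

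I would apply Foster's criterion to the joint chain using the Lyapunov function $V(x) = \sum_i x_i^2$. A routine drift calculation, using
\[
\Ex[\Delta x_i\mid x] \;=\; \lambda_i - (1-p_i)\mathbf{1}[x_i\geq 1]
\]
together with uniformly bounded second moments of $\Delta x_i$, yields
\[
\Ex[V(x') - V(x) \mid x] \;\leq\; -2\delta_{\min}\,\|x\|_1 + C,
\]
where $\delta_{\min} = \min_i(1 - p_i - \lambda_i) > 0$ and $C$ is a constant depending only on the $\lambda_i$'s and $p_i$'s. The drift is thus bounded above by a fixed negative number outside a finite set, and Foster's criterion delivers positive recurrence, hence ergodicity.

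The only real subtlety lies in the drift computation. Internal arrivals at different nodes are negatively correlated because they share the same upstream exogenous Bernoulli sources (a given exogenous arrival at node $j$ can feed only one downstream queue on a given step), but linearity of expectation makes this correlation irrelevant for the mean drift. Meanwhile, the ``service'' event at node $i$ (namely, no exogenous arrival at $i$) is statistically independent of the current internal arrivals at $i$ (which depend on other nodes' exogenous arrivals at the previous step), so the per-node drift reduces to that of a standard single-server queue, exactly as in the proof of Lemma~\ref{EPF lemma}. This is the only step that requires care; once it is in hand, the corollary follows.
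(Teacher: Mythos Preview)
Your argument is correct, but it takes a different route from the paper. The paper simply observes that, by Lemma~\ref{EPF lemma}, each node's internal queue is (marginally) an autonomous single-server queue; under the nominal-load hypothesis each of these marginal chains is stable, so each marginal distribution converges, and the paper concludes directly from this that the joint chain is ergodic. Implicitly this is a tightness argument: tightness of each marginal forces tightness of the finite product, which for an irreducible aperiodic countable-state chain rules out null recurrence and transience.

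Your approach instead builds a quadratic Lyapunov function on the joint state and verifies Foster's criterion directly. This is more explicit and gives slightly more (a concrete drift bound $-2\delta_{\min}\|x\|_1 + C$, hence finiteness of $\Ex\|x\|_1$ in stationarity comes for free via the Comparison Theorem). The paper's route is quicker because it leverages the exact identification with a standard queue and avoids any drift computation. One small remark: the full Markov state also carries the indicator of an exogenous packet at each node, not just the internal queue sizes; but since that part is finite and i.i.d.\ across time, ergodicity of your projected chain on $(x_1,\dots,x_N)$ immediately gives ergodicity of the full chain, so your reduction is harmless.
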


\proof
The nominal loads are less than one iff the expected number of packets
that arrive on each step that need to use node $i$ is less than one,
for all $i$.  In that case, Lemma~\ref{EPF lemma} implies that the 
marginal distribution of packets queued at each individual node
converges to a (marginal) stationary distribution.  It follows
that the whole system is ergodic.  
\EOP

We can draw another interesting corollary from this lemma:
\begin{corollary}
Suppose that the assumptions of Lemma~\ref{EPF lemma} hold.
Suppose further that we can partition the network's nodes into
disjoint sets $A_{1}, \ldots, A_{k}$ such that no two nodes in the
same partition share an edge.  (For instance, if $k=2$, we have a 
bipartite graph.)  
Finally, suppose that for any node $x \not\in A_{i}$, there is 
at most one edge from $x$ to nodes in $A_{i}$.  Then the
marginal distribution of the state of all the nodes in $A_{i}$ is the
product of the marginal distribution of each
node in $A_{i}$ (which is given in Lemma~\ref{EPF lemma}).
\end{corollary}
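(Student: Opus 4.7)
The plan is to show that the queues at the nodes of $A_i$ are driven by disjoint, hence independent, collections of exogenous Bernoulli arrival processes; product form for the joint stationary distribution on $A_i$ then follows from Lemma~\ref{EPF lemma}.

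First, I would fix $i$ and pick two distinct nodes $n, n' \in A_i$. From the proof of Lemma~\ref{EPF lemma}, the queue state at $n$ at any time is a deterministic function of two kinds of exogenous Bernoulli input streams: (a) the stream of internal arrivals to $n$, which is built from the exogenous arrivals at the ``feeder'' set $F(n) = \{x : x \to n\}$, each contributing independently with probability $p_x q_{x,n}$; and (b) the exogenous arrival process at $n$ itself, which is what blocks service (by EPF an exogenous packet at $n$ preempts an internal packet in queue). The same description applies to $n'$ with feeder set $F(n')$.

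Next, I would use the two combinatorial hypotheses to verify disjointness of the driving processes. Because no two nodes of $A_i$ share an edge, $n \notin F(n')$ and $n' \notin F(n)$. Because each $x \notin A_i$ has at most one edge into $A_i$, the feeder sets satisfy $F(n) \cap F(n') = \emptyset$. Thus the set of network nodes whose exogenous Bernoulli arrivals influence the queue at $n$, namely $F(n) \cup \{n\}$, is disjoint from the analogous set $F(n') \cup \{n'\}$ for $n'$. Since exogenous arrival processes at distinct network nodes are independent by hypothesis, the queues at $n$ and $n'$ are measurable with respect to independent $\sigma$-algebras, and so their joint distribution factors. Iterating over all nodes of $A_i$ gives the product form, with each marginal being the single-server queue distribution identified in Lemma~\ref{EPF lemma}.

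The only delicate step is making sure that no hidden coupling sneaks in. The candidate couplings are: an internal packet at some $n \in A_i$ later becoming an internal arrival at another node in $A_i$ (ruled out because the max path length is $2$, so after its single hop it must depart), an exogenous arrival at some $x \notin A_i$ feeding two different nodes of $A_i$ (ruled out by the ``at most one edge into $A_i$'' hypothesis), and the service-blocking arrivals at $n$ coinciding with feeder arrivals for $n'$ (ruled out because $n \notin F(n')$). Once these are checked, the rest of the argument is just bookkeeping.
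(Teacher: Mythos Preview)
Your argument is correct and follows essentially the same approach as the paper's proof, which is the one-line observation that the arrival and service processes at distinct nodes of $A_i$ are independent because the partition has no internal edges. In fact, your treatment is more careful: you explicitly verify that the feeder sets $F(n)$ and $F(n')$ are disjoint (which is precisely where the ``at most one edge from $x\notin A_i$ into $A_i$'' hypothesis is used), whereas the paper's proof sketch invokes only the absence of internal edges and leaves the role of that second hypothesis implicit.
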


\proof
This follows by observing that the arrival and service times of
nodes in the same partition are independent of each other, since
the partition has no internal edges.
\EOP

It seems quite likely that the stationary distribution itself is
of product form, but I will not investigate that idea at the moment.
Instead, let us use Lemma~\ref{EPF lemma} to calculate the marginal stationary
distribution of a node on a ring.

\begin{theorem}
Suppose we have an $N$ node ring, and we are routing packets using
either SIS, CTO, FTG, or EPF. Then the system is ergodic if $p<2/3$
(i.e.\ if the nominal load $r<1$), and the marginal stationary
probability of having $n$ packets in the node is:
\[ \Pr (0 \mbox{ packets}) = 1 - \frac{3}{2}p \]
\[ \Pr (1 \mbox{ packet}) = 
\left( 1-\frac{3}{2}p \right)
\frac{3p-p^{2}}{(1-p)(2-p)} \]
and for any $n>1$,
\[ \Pr (n \mbox{ packets}) = 
\left( 1 - \frac{3}{2}p \right)
\frac{2p^{2(n-1)}}{[(1-p)(2-p)]^{n}}   \]
It follows that all the expected queue length calculations from
Section~\ref{stationary solution section} hold for these
protocols.
\end{theorem}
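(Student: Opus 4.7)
My plan is to reduce the assertion to an analysis of EPF and then to a single-server queue computation. First I would invoke Lemma~\ref{equivalent protocols lemma} to reduce the four protocols to EPF. Next, Lemma~\ref{EPF lemma} applied to the $L=2$ nonstandard Bernoulli ring identifies the internal queue at node $i$ with a single-server queue: an upstream exogenous packet at $i-1$ contributes an internal arrival at $i$ precisely when its destination is $i+1$, which happens with probability $p \cdot (1/2)$, so arrivals are Bernoulli at rate $p/2$; and an internal packet at the head of the line at $i$ advances precisely when no fresh exogenous packet pre-empts it, which gives geometric service with success probability $1-p$. Ergodicity for $p<2/3$ then follows immediately from Corollary~\ref{EPF ergodic lemma}, since the per-node nominal load $(L+1)p/2 = 3p/2$ is less than $1$ iff $p<2/3$.

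Second, I would solve for the stationary law $\pi_k = \Pr(I_i = k)$ of the internal queue length by writing one-step balance equations. The four elementary outcomes per step (internal arrival of probability $p/2$ or not; successful service of probability $1-p$ or not, independent of each other) give a birth-death chain on $\{0,1,2,\ldots\}$ in which state $0$ is special because no service is possible there. Positing a geometric tail $\pi_k = c\rho^k$ for $k\geq 1$, I expect the characteristic equation to single out $\rho = p^2/[(1-p)(2-p)]$, and the boundary equation at $0$ together with normalization to pin down $\pi_0 = (2-3p)/[2(1-p)]$ and $c = \pi_0/p$.

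Third, I would translate the distribution of $I_i$ into the distribution of total packets at the node. At the end of each step the packets at node $i$ consist of the $I_i$ internal packets together with an indicator $E_i\in\{0,1\}$ recording whether a fresh exogenous arrival occurred at $i$ during that step. Since $E_i$ comes from a Bernoulli$(p)$ coin independent of all earlier randomness, and $I_i$ is measurable with respect to the strictly earlier Bernoulli coins at $i$ and $i-1$, the variables $E_i$ and $I_i$ are independent, so
\[
\Pr(n \text{ packets at node}) = (1-p)\pi_n + p\,\pi_{n-1},
\]
with the convention $\pi_{-1}=0$. Evaluating this sum in the three cases $n=0$, $n=1$, and $n\geq 2$ and simplifying should reproduce exactly the three expressions in the theorem statement.

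The main obstacle is careful bookkeeping of the discrete-time dynamics. I need to verify that the ``total number of packets at a single server'' promised by Lemma~\ref{EPF lemma} genuinely corresponds to $I_i$ (that is, it counts the head-of-line internal packet together with those strictly in queue), and I must justify the independence of $E_i$ from $I_i$ invoked in the convolution step; once both points are pinned down, the remaining work is routine algebra that either derives the closed form $\pi_k$ cleanly or, equivalently, verifies the three stated cases one by one.
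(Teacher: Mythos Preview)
Your proposal is correct and follows essentially the same route as the paper: reduce to EPF via Lemma~\ref{equivalent protocols lemma}, identify the internal queue at a node with a Bernoulli($p/2$)-arrival, geometric($1-p$)-service single-server queue via Lemma~\ref{EPF lemma}, obtain ergodicity from Corollary~\ref{EPF ergodic lemma}, solve the birth-death chain for $\pi_k$, and then recover the node occupancy law via $\Pr(n\text{ packets})=(1-p)\pi_n+p\,\pi_{n-1}$. The paper quotes the $\pi_k$ formula from its appendix rather than deriving it inline and does not dwell on the independence of $E_i$ and $I_i$, but otherwise the argument matches yours step for step.
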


\proof
From Lemma~\ref{equivalent protocols lemma}, these four protocols are
all interchangeable, so I need only prove the result for EPF.  By
Corollary~\ref{EPF ergodic lemma}, the system is ergodic if $p + p/2 <
1$, i.e.\ if $p<2/3$.  Therefore, there exists a
stationary distribution whenever $p<2/3$.  Since the system is
unstable if $r>1$ by an argument analogous to Lemma~\ref{basic ring
instability lemma}, we have pretty well characterized stability.
(Although I won't prove it, if $r=1$ we get a system that is not
ergodic, but is null-recurrent.)

By Lemma~\ref{EPF lemma}, we can calculate the marginal stationary
distributions when $p<2/3$.  Throughout, we consider some fixed node.  
New internal packets arrive as a rate $p/2$ Bernoulli process.  (That
is, they arrive as a rate $p$ Bernoulli process at the previous
node, and half of them remain in the system.)  An internal packet
departs the node (and the system) iff an exogenous packet does not
arrive.  A non-arrival occurs with probability $1-p$.  

This description gives us a fairly standard birth-death process.  I've
worked out the details of the stationary distribution in
Section~\ref{queueing theory section} (and remember that I'm assuming
that on each time step we route old packets, then insert new arrivals,
and then measure the state).  Let $\pi_{n}$ be the stationary
probability that there are $i$ internal packets at the node.  Then the
result is:
\[\pi_{0} = \frac{1-\frac{3}{2}p}{1-p}\]
\[\pi_{n} = \frac{1-\frac{3}{2}p}{1-p}
\frac{1}{p}
\left[ \frac{p^{2}}{(1-p)(2-p)} \right]^{n}
\]

We want to calculate the stationary distribution for all the
packets, not just the internal packets.  Now, the probability 
of there being $n>1$ packets in the system is the probability of
$n$ internal packets and no exogenous packet, plus $n-1$
internal packets and 1 exogenous packet.  So,
\[ \Pr (n \mbox{ packets at the node})
 = \Pr (n \mbox{ internal packets})\Pr (\mbox{0 exogenous})\]
\[ + \Pr (n-1 \mbox{ internal packets})\Pr (\mbox{1 exogenous})\]
\[ = \pi_{n}(1-p) + \pi_{n-1}p \]
\[ = \frac{1-\frac{3}{2}p}{1-p}
\frac{1-p}{p}
\left[ \frac{p^{2}}{(1-p)(2-p)} \right]^{n}
+ \frac{1-\frac{3}{2}p}{1-p}
\left[ \frac{p^{2}}{(1-p)(2-p)} \right]^{n-1}
\]
\[ = 
\frac{1-\frac{3}{2}p}{1-p}
\left[ \frac{p^{2}}{(1-p)(2-p)} \right]^{n-1}
\left[\frac{1-p}{p}\frac{p^{2}}{(1-p)(2-p)} +1\right] 
\]
\[ = 
\frac{1-\frac{3}{2}p}{1-p}
\left[ \frac{p^{2}}{(1-p)(2-p)} \right]^{n-1}
\left[\frac{p}{2-p} +1\right] 
\]
\[ = 
\frac{1-\frac{3}{2}p}{1-p}
\left[ \frac{p^{2}}{(1-p)(2-p)} \right]^{n-1}
\left[\frac{2}{2-p}\right] 
\]
\[ = 
\left(1-\frac{3}{2}p\right)
\left[ \frac{2p^{2(n-1)}}{[(1-p)(2-p)]^{n}} \right]
\]

For the $n=1$ case, we have
\[ \Pr (1 \mbox{ packet at the node}) 
 = \pi_{1}(1-p) + \pi_{0}p \]
\[ = \frac{1-\frac{3}{2}p}{1-p}
\left[(1-p)\frac{p/2}{(1/2)(2-p)(1-p)}  + p\right] \]
\[ = \frac{1-\frac{3}{2}p}{1-p}
\left[\frac{p}{2-p}  + p\right] \]
\[ = 
\left( 1-\frac{3}{2}p \right)
\frac{3p-p^{2}}{(1-p)(2-p)}
\]
The probability of there being no packets in the system is
\[\Pr (0 \mbox{ packets at the node}) = \pi_{0}(1-p) = 1-\frac{3}{2}p\]
and we are done. \EOP

Observe that the marginal stationary probability of there being
$n$ packets in a queue is identical to the GHP case.

\section{The GHP Protocol}

The remainder of this chapter is dedicated to calculating the
stationary distribution of the GHP protocol (not just the marginal
stationary distribution per node, as with the other protocols).
Let us begin with a description of the stationary distribution.
The information from Section~\ref{stationary solution section}
does not give us quite enough information to specify a Markov chain,
so I will need to refine the state description.

There are a number of ways of specifying the state of the Markov
chain.  For instance, we could specify the destination of every packet
in the system (including packets in queue).  Since the packets waiting
in queue are stochastically interchangeable, though, we only really
need to specify the destinations of the packets travelling in the
ring, and the number (but not the destinations) of the packets in
queue.  This is the model I will use in this chapter.  On the other
hand, it is sufficient to know the origin of each packet in the ring,
rather than its destination, because the probability of a packet
departing on the next step is a function of the number of steps the
packet has already travelled.  I'll use that model in Chapter~\ref{bounds
chapter}.  However, all the models are essentially equivalent,
e.g.\ the expected queue lengths are identical regardless of the model.

Let us begin with some notation.  The state of the 
ring is determined by the state of each of its processors.
I will denote a processor with $n$ packets in its queue and
a hot potato with $t$ steps left to travel as:
\[\stq{n}{t}\]
and the ground state (no queue, no hot potato) as:
\[\st{X}\]
Note that on our parameter $L=2$ ring, $t=1, 2$ or $X$, that
$n\in\natu$, and that if $t=X$ then $n=0$.

My guess for the probability distribution is that it is
of product form (so we can calculate the probability of 
the state of all $N$ processors by multiplying the probability
of the state of each processor), and the probability per
processor is:
\begin{equation} \label{stationary equations}
\Pr\st{X}
=\left(1-\frac{3}{2}p\right)
\end{equation}
\[
\Pr\st{1}
=\left(1-\frac{3}{2}p\right)
\frac{p}{1-p}
\]
\[
\Pr \stq{n}{1}
=\left(1-\frac{3}{2}p\right)
\frac{p^{2n}}{[(1-p)(2-p)]^{n+1}}(2-p)\mbox{\,\,\,\,\,(for $n\geq1$)}
\]
\[
\Pr \stq{n}{2}
=\left(1-\frac{3}{2}p\right)
\frac{p^{2n}}{[(1-p)(2-p)]^{n+1}}p\mbox{\,\,\,\,\,(for $n\geq0$)}
\]

Assuming that our guess is correct, it shouldn't be too difficult 
in principle to verify it-- we just check the balance equations:
\[\pi(\sigma)=\sum_{\tau}\pi_{\tau} \Pr(\tau \rightarrow\sigma)\]
where $\sigma$ and $\tau$ are states of the system, $\pi(\sigma)$
is our guess for the stationary probability of state $\sigma$, and  
$\Pr(\tau \rightarrow \sigma)$ is the probability of travelling from
$\tau$ to $\sigma$ in one step.  Now, calculating $\pi(\sigma)$ is
fairly simple, and calculating $\Pr(\tau \rightarrow\sigma)$ isn't 
too bad either, assuming that $\tau$ actually precedes $\sigma$ with
non-zero probability.  However, finding the $\tau$s that precede
$\sigma$ (i.e.\ figuring out what states precede any given state)
appears to be very difficult to do in general.  I'll use a number of 
tricks to reduce the problem to checking a finite number of states
(actually, classes of states), and then verify that the balance
equations hold on them.

In general outline, I will begin by verifying the claim for the $N=1$
case.  I will continue by induction on $N$.  For fixed $N$, however,
there are still an infinite number of cases, so I will reduce the
problem to one with bounded queues (all queues of length $\leq 2$.)
At this point, we can cut the ring at two points and rejoin them to
form two smaller subrings and use induction on the smaller rings.
Cutting the ring is a fairly delicate operation in some cases, and
takes up the body of the proof.

\section{N=1, L=2}
I want to verify that the guessed stationary distribution for the ring
(Equations~\ref{stationary equations}, page~\pageref{stationary
equations}) satisfies the balance equations for the 1-node ring.
This  verification is straightforward.

\begin{lemma} \label{N=1,L=2 lemma}
\index{Lemma \ref{N=1,L=2 lemma}}
The stationary distribution for a 1-node nonstandard ring with 
parameter $L=2$ is given by Equations~\ref{stationary equations}.
\end{lemma}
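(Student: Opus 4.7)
The plan is to verify directly the balance equations $\pi(\sigma)=\sum_\tau \pi(\tau)\Pr(\tau\to\sigma)$ for the proposed stationary distribution against the Markov chain structure of the one-node $L=2$ ring under GHP. The state space is $\{\st{X}\}\cup\{\stq{n}{1}:n\geq 0\}\cup\{\stq{n}{2}:n\geq 0\}$, where I identify $\st{1}$ with $\stq{0}{1}$; note that the formula for $\Pr\stq{n}{1}$ in the guess applies only for $n\geq 1$, while $\st{1}$ gets a separate boundary formula, reflecting that when the queue empties the next time step begins from the ground state rather than from an ongoing service.

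First I lay out the one-step transitions, remembering that a step consists of routing (subject to GHP priority) followed by a Bernoulli arrival of rate $p$, and that each new exogenous packet picks its remaining distance uniformly from $\{1,2\}$. This gives: from $\st{X}$, stay with probability $1-p$ or jump to $\stq{0}{1}$ or $\stq{0}{2}$ each with probability $p/2$; from $\stq{n}{2}$, the potato deterministically advances to $\stq{n}{1}$, and then an arrival (probability $p$) bumps us to $\stq{n+1}{1}$; from $\stq{n}{1}$ with $n\geq 1$, the potato departs, a queued packet starts service with a fresh uniform $t\in\{1,2\}$ (by the i.i.d.\ interchangeability of queued destinations), and an arrival may join the queue; from $\st{1}$, the potato departs to $\st{X}$ and any arrival---finding an empty node---becomes a hot potato with uniform $t$.

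Next I check five balance equations, one for each structural class: $\st{X}$, $\st{1}$, $\stq{0}{2}$, $\stq{n}{1}$ for $n\geq 1$, and $\stq{n}{2}$ for $n\geq 1$. The guess makes consecutive probabilities scale by $x:=p^{2}/[(1-p)(2-p)]$, so after dividing out the common geometric factor, each of the two general-$n$ equations collapses to a single algebraic identity in $p$. The three boundary equations have only a handful of contributing predecessors and can be checked by direct substitution. That the guessed probabilities total $1$ follows from a geometric series computation, and irreducibility/aperiodicity are evident from the positive self-loop at $\st{X}$, so the balance equations pin down the stationary distribution uniquely.

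The only subtlety is careful bookkeeping of transitions---in particular, distinguishing the queue-pop branch (whose new hot potato acquires a fresh uniform $t$) from the potato-advance branch (where a $t=2$ potato deterministically becomes $t=1$ without a new random draw), and respecting the boundary between $\st{1}$ and the $n\geq 1$ formula for $\stq{n}{1}$ since these use different closed-form expressions. There is no deep probabilistic content; once the transitions are written correctly, verification reduces to routine algebra.
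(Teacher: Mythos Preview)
Your proposal is correct and follows essentially the same approach as the paper: direct verification of the balance equations, broken into the same five structural cases ($\st{X}$, $\st{1}$, $\st{2}$, $\stq{n}{2}$ for $n\geq1$, $\stq{n}{1}$ for $n\geq1$). The only cosmetic differences are that the paper obtains $\Pr\st{X}=1-\tfrac32 p$ via Little's theorem rather than by a balance check, and that the paper organizes each case by listing predecessors, whereas you first catalogue the forward transitions; the algebra is the same either way.
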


There are 5 cases to consider.

\begin{itemize}
\item
The ground state, $\st{X}$.
By Little's theorem (or the ``Utilization law''), the
probability that the processor is empty is $1-r$, where
$r$ is the fraction of loading, in this case $(3/2)p$. 
(See Section~\ref{queueing theory section} for details.)  This
matches our guess for the stationary probability.
\item
The state $\st{1}$.
If we write down the balance equation for the ground state,
we get
\[\Pr \st{X}
=
(1-p)\left[
\Pr\st{X} +
\Pr\st{1}
\right]\]
Since we now know $\Pr\st{X}$,
we can solve and find that
\[\Pr\st{1}
=\frac{p}{1-p}\Pr\st{X}
=\left(1-\frac{3}{2}p\right)\frac{p}{1-p}\]
\item
The state $\st{2}$.
The probability flowing in is
\[\frac{p}{2}
\Pr\st{X} +
\frac{p}{2}
\Pr\st{1}
+\frac{1-p}{2}
\Pr\stq{1}{1}
\]
\[=\left(1-\frac{3}{2}\right)\left[
\frac{p}{2}\left(1+\frac{p}{1-p}\right) +
\frac{(1-p)p^{2}}{(1-p)^{2}(2-p)}\right]\]
\[=\left(1-\frac{3}{2}\right)\frac{2p}{(1-p)(2-p)}\]
\item
The state $\stq{n}{2}$, for
$n>0$. The probability flowing in is
\[\frac{p}{2}
\Pr\stq{n}{1}
+\frac{1-p}{2}
\Pr\stq{n+1}{1}\]
\[=\frac{1}{2}\left(1-\frac{3}{2}p\right)\frac{p^{2n}}{[(1-p)(2-p)]^{n+1}}
(2-p)\left[p +\frac{(1-p)p^{2}}{(1-p)(2-p)}\right]\]
\[=(1-\frac{3}{2}p)\frac{p^{2n+1}}{[(1-p)(2-p)]^{n+1}}\]
\item
The state $\stq{n}{1}$, for $n>0$. The probability flowing in is
\[\left[\frac{p}{2}
\Pr\stq{n}{1}
+\frac{1-p}{2}
\Pr\stq{n+1}{1}
\right]
+(1-p)
\Pr\stq{n}{2}
+p
\Pr\stq{n}{2}
\]
Note that the bracketed term is equal to the probability flowing in to
$\stq{n}{2}$, which we've just shown is equal to our guessed
probability.  Plugging this in, we get
\[=
(2-p)
\Pr \stq{n}{2}
+p
\Pr \stq{n-1}{2}\]
Calculating the probabilities of $\stq{n}{2}$ and $\stq{n-1}{2}$ in
terms of the probability of $\stq{n}{1}$, we get
\[=p
\Pr\stq{n}{1}+
(1-p)
\Pr\stq{n}{1}=
\Pr\stq{n}{1}\]
\end{itemize}

This covers all states for the $N=1$ case.
\EOP

\section{Proof for All $N$}
It's pretty easy to exhaustively verify Equations~\ref{stationary equations}
for $N=2$ and $3$, but it's not clear how to prove it for any $N$.
This section (and its subsections) are devoted to a proof of that fact.
I'll prove that the stationary distribution for any $N$ is of
product form, where each processor's distribution matching that of 
equations~\ref{stationary equations}.

\begin{theorem} \label{L=2 GHP theorem}
\index{Theorem \ref{L=2 GHP theorem}}
The stationary distribution for any $N$-node nonstandard ring with
parameter $L=2$ is product form and given by Equations~\ref{stationary
equations}.
\end{theorem}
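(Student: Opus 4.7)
\proof (Proposal.)
The plan is induction on $N$, with the base case $N=1$ supplied by Lemma~\ref{N=1,L=2 lemma}. For the inductive step, fix $N\geq 2$ and let $\pi$ denote the candidate product-form distribution built from Equations~\ref{stationary equations}. Since the chain is irreducible and (as we already know from the EPF/SIS analysis, which gives the same marginal node distribution and hence the same ergodicity threshold) positive recurrent for $p<2/3$, it suffices to verify the full balance equations $\pi(\sigma)=\sum_{\tau}\pi(\tau)\Pr(\tau\go\sigma)$ for every state $\sigma$. The state $\sigma$ specifies a triple $(n_{i},t_{i})$ at each of the $N$ nodes, so the state space is countably infinite; the work is to cut this down to a finite family of checks.

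First I would carry out the \emph{queue-reduction step}: I expect to show that whenever $\sigma$ has some node $i$ with $n_{i}\geq 2$, the balance equation at $\sigma$ is algebraically equivalent to the balance equation at the state $\sigma'$ obtained by decrementing $n_{i}$ by one. The reason to hope this works is exactly the telescoping that made the $N=1$ case go through: in the ratios dictated by Equations~\ref{stationary equations}, $\pi\stq{n}{t}/\pi\stq{n-1}{t}$ is the constant $p^{2}/[(1-p)(2-p)]$, and the one-step transitions into $\sigma$ from long-queue predecessors track this same ratio. Concretely, the predecessors $\tau$ of $\sigma$ split according to whether the excess packet at node~$i$ was present in $\tau$ or was produced by an incoming hot potato, and both families yield contributions proportional to the value at $\sigma'$. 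Iterating, the verification reduces to states $\sigma$ in which every $n_{i}\in\{0,1,2\}$, a finite collection of configurations parametrized by the hot-potato markers.

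Next I would perform the \emph{cut-and-paste step}, which is where induction on $N$ is exploited. Given a short-queue state $\sigma$, I would look for a pair of nodes $a<b$ at which the local state certifies that no hot potato is in transit across those edges on the relevant time step; in the $L=2$ case the most convenient cut sites are nodes in the ground state $\st{X}$ or nodes whose hot potato points ``inward'' (i.e.~away from the cut), because such a node produces no arrival at its downstream neighbor. At such a cut the one-step dynamics factor: the pre-images of $\sigma$ on $\{a{+}1,\ldots,b\}$ are independent of the pre-images on $\{b{+}1,\ldots,a\}$, so $\Pr(\tau\go\sigma)$ factors across the two arcs, while the product form of $\pi$ means $\pi(\tau)$ factors as well. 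Treating each arc as a smaller ring (with the appropriate boundary state frozen) lets the inductive hypothesis apply, and after matching normalizing factors I would conclude $\pi(\sigma)=\sum_{\tau}\pi(\tau)\Pr(\tau\go\sigma)$.

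The main obstacle I anticipate is precisely this cutting: not every short-queue state admits two clean cut points, and for the residual configurations (for instance, a ring full of hot potatoes with no ground-state node anywhere, or configurations where every candidate cut is ``contaminated'' by a hot potato crossing it) one must handle things by hand. For those I would check balance directly, using the explicit formulas in Equations~\ref{stationary equations} together with the identity $2-3p=(1-p)(2-p)-p^{2}$ to collapse the algebra in the same spirit as the $N=1$ telescoping. Once this finite residue is dispatched and the cut-and-paste step handles the rest, the induction closes and the product form is established for all $N$. \EOP
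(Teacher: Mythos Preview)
Your overall plan—induction on $N$, reduce to short queues, cut the ring at two uncrossed edges and apply the inductive hypothesis to the two arcs—is the paper's strategy, and your queue-reduction step is essentially right (minor quibble: the ratio $\pi\stq{n}{1}/\pi\stq{n-1}{1}$ breaks at $n=1$, so the paper only decrements when $n_i\ge 3$; you still land correctly at $n_i\le 2$).

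The genuine gap is your handling of the residual class. After the two-cut step, what remains are states in which at most one processor fails to carry hot potato~$1$; the other $N-1$ processors are each in some $\stq{n_j}{1}$ with $n_j\in\{0,1,2\}$. This class has on the order of $3^{\,N-1}$ members, so it is \emph{not} a finite residue independent of $N$, and ``checking balance directly'' cannot close the induction. The paper handles this with a second, more delicate inductive reduction that you omit entirely: it excises a \emph{single} processor $i$, conditions on the four possibilities for whether the edges $e_1$ (into $i$) and $e_2$ (out of $i$) were crossed, and for each case shows that the contribution from processors $i{+}1,\dots,i{-}1$ equals the corresponding balance-equation sum on an $(N{-}1)$-node ring, with processor $i{+}1$'s state adjusted to compensate for the packet that $i$ may have absorbed or emitted across $e_2$. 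That single-node excision is the heart of the proof (it occupies most of the chapter) and is where the inductive hypothesis is actually invoked for the residual states; the identity $2-3p=(1-p)(2-p)-p^2$ appears only in the bookkeeping once the excision is set up. Without supplying this mechanism your argument does not close.

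A smaller point on the cuts: whether the edge \emph{into} node $j$ was crossed is determined by $\sigma$ at $j$ (it can be crossed only if $j$ currently holds hot potato $1$), not by what $j$ ``produces for its downstream neighbor.'' So the valid cut sites are precisely processors whose hot potato is $X$ or $2$.
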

I proceed by induction on $N$.

If $N=1$, we're done, by Lemma~\ref{N=1,L=2 lemma}.

Assume that $N>1$.  I will begin by arguing that it is sufficient to
analyze the cases where all the queues are of length $\leq2$.  Suppose
for a moment that processor $i$ has more than two packets in its
queue, that is, the processor is in state $\stq{n}{h}$ for $h=1$ or 2
and $n\geq3$.  What is the shortest queue length that the processor
could have had on the preceding turn?

	If a packet arrived from the preceding processor, and a 
new packet arrived to the queue, then the preceding queue would
have had a length of $n-1$.  This is the shortest it could be.  Therefore,
for any state $\tau$ that has a non-zero probability of preceding
our current state $\sigma$, the queue length in processor $i$ of
state $\tau$ is $\geq n-1\geq2$.  

	Suppose now that we removed a packet from the queue of
processor $i$ in state $\sigma$.  (Let's call this new state
$\hat{\sigma}$.)  Suppose that we also remove a packet from the
queue of the $i$th processor in $\tau$, forming $\hat{\tau}$.
Observe that $\hat{\tau}$ precedes $\hat{\sigma}$ with non-zero
probability-- in fact, the transition probability is exactly
the same as $\tau$ becoming $\sigma$.  (It's necessary that
$n\geq2$ for this to hold.) Moreover, any state
$\hat{\tau}$ that precedes $\hat{\sigma}$ with non-zero probability
can also be translated back into a state $\tau$ preceding $\sigma$.

Observe that if processor $i$ has $n\geq 3$ packets in queue, and we
remove a packet, the stationary probability of the resulting state is
multiplied by $\frac{(1-p)(2-p)}{p^{2}}$.  The argument in the
preceding paragraph shows that the preceding states will all also lose
a packet in processor $i$.  Since the minimal queue length of
processor $i$ is 2 in any preceding state $\tau$, then it is at least
1 in any state $\hat{\tau}$.  Thus, the balance equations for $\sigma$
and $\hat{\sigma}$ differ by exactly a factor of
$\frac{(1-p)(2-p)}{p^{2}}$ in every term.  Therefore, if we can show
that the balance equations hold when processor $i$'s queue is $\leq
2$, we're done.  This holds for any $i$, so we are reduced to showing
that the balance equations hold when all queues are of length $\leq
2$.

Next, I'll reduce the possible configurations of packets
travelling in the ring (i.e.\ hot potatoes), which will ultimately
reduce the number of equations we need to check.

\begin{definition} 
Suppose that the current state of the ring
is $\sigma$ and the preceding state was $\tau$.  Consider the
edge $e$ between processors $i$ and $i+1$.  If processor
$i$ in state $\tau$ was holding a hot potato equal to 2, we 
say that the edge $e$ in $\sigma$ was crossed, denoted
\[\stq{n_{i}}{h_{i}}
\go
\stq{n_{i+1}}{h_{i+1}}
\]
If this did not occur, we say that $e$ was blocked, denoted
\[\stq{n_{i}}{h_{i}}
\nogo
\stq{n_{i+1}}{h_{i+1}}
\]
An unspecified edge is denoted
\[\stq{n_{i}}{h_{i}}
-
\stq{n_{i+1}}{h_{i+1}}
\]
\end{definition}
This definition might sound a bit odd, in that I don't consider
a packet to cross an edge if it's arriving at its destination.
However, since I'm analyzing
a non-blocking model of the ring (i.e.\ a packet
can arrive at its destination at the same time that a new
packet gets dropped from the destination's queue), this  
definition proves useful.

Note that an edge from processor $i$ to processor $i+1$ can only have
been crossed if processor $i+1$ currently contains a hot potato, and
the hot potato equals 1.

Suppose that we are in state $\sigma$.  Suppose that neither 
processor $i$ nor $j$ ($i\neq j$) 
contains the hot potato 1.  Let $e_{i}$ and $e_{j}$
be the edges preceding processors $i$ and $j$, respectively.  Then 
note that both $e_{i}$ and $e_{j}$ are blocked.  

Let us perform the following operation: we cut edges 
$e_{i}$ and $e_{j}$ and form two smaller unidirectional rings:
ring $R_{i}$ will consist of processors $i$ through $j-1$,
and ring $R_{j}$ will consist of processors $j$ through $i-1$.

Observe that in $R_{i}$, the edge between processor $j-1$ and 
processor $i$ is blocked (and similarly the edge between
processor $i-1$ and $j$ in $R_{j}$ is blocked, too).  Let us refer to the 
state of $R_{i}$ as $\sigma_{i}$ (and similarly for $R_{j}$
and $\sigma_{j}$.)  ($\sigma_{i} $ and $\sigma_{j}$ are determined
by $\sigma$.)
Suppose that some states
$\tau_{i}$ and $\tau_{j}$ preceded $\sigma_{i}$
and $\sigma{j}$, respectively, on the subrings.  
If we glue $\tau_{i}$ and
$\tau_{j}$ together (by reversing the process that gave us
$R_{i}$ and $R_{j}$ originally), we get a state $\tau$ that
precedes $\sigma$, and the probability that $\tau$ becomes
$\sigma$ is found by multiplying the respective probabilities
on $R_{i}$ and $R_{j}$.  
This surprising state of affairs occurs because $e_{i}$ and
$e_{j}$ aren't crossed.  In some sense, no information about
the preceding state arrives at processors $i$ and $j$.  This
allows us to view the two parts of the ring (namely $i$ to $j-1$ 
and $j$ to $i-1$) independently.

The balance equations now follow easily by induction, since the
subrings are smaller than $N$.  By our inductive hypothesis, the sum
of the probabilities into $\sigma_{i}$ is $\Pr(\sigma_{i})$, and the
probability into $\sigma_{j}$ is $\Pr(\sigma_{j})$.  Therefore, the
sum of the probabilities into $\sigma$ is
\[\Pr(\sigma_{i})\Pr(\sigma_{j})\]
Since our distributions are all product form, this is precisely
$\Pr(\sigma)$, as desired.

What states remain to deal with?  We can assume that all 
queues are of length $\leq 2$, and at least $N-1$ processors contains
1 as a hot potato.  I'm going to split the remaining cases into
finitely many classes and then verify the balance equations on
each class.

First of all, let us choose a processor $i$. Suppose the
state of the system, $\sigma$, is
\[\cdots
-\stq{n_{i-2}}{h_{i-2}}
-\stq{n_{i-1}}{h_{i-1}}
\stackrel{e_{1}}{-}
\underbrace{
\stq{n_{i}}{h_{i}}
}_{proc \ i}
\stackrel{e_{2}}{-}
\stq{n_{i+1}}{h_{i+1}}
-\cdots\] Let $e_{1}$ be the edge from processor $i-1$ to processor
$i$, and $e_{2}$ be the edge from processor $i$ to processor $i+1$.
Each of these edges may be crossed or blocked.  By specifying if
$e_{1}$ and $e_{2}$ are crossed or blocked, we partition the states
that precede $\sigma$ into 4 disjoint classes.  Of course, as we saw
above, if $h_{i}=2$ or X, then $e_{1}$ must be blocked-- in other
words, some of the partitions may be empty.

Once we know whether $e_{1}$ or $e_{2}$ are crossed, we can (with 
some manipulation) reduce the possible prior states on the 
processors $i+1$ through $i-1$ to an $N-1$ node ring, and use
induction.  Then we plug the values in, sum over the 4 partitions,
and end up with the balance equation.  I will first calculate 
the probability flowing into processor $i$,
then the probability flowing into the remaining 
$N-1$ processors, and finally check all the balance equations in one fell
swoop.  Here we go.

\subsection{Probability of Processor $i$}

The probability of the possible prior states to 
$\nogo \st{X} \nogo$,
weighted by the probability of travelling from that state to 
$\nogo \st{X} \nogo$,
is:
\[(1-p)\left(
\Pr\st{X} +
\Pr\st{1}
\right)
=(1-p)
\Pr\st{X}+
\frac{p}{1-p}\Pr\st{X}\]
\[=\Pr\st{X}\]
Probability into $\nogo\st{X}\go$ is:
\[(1-p)
\Pr\st{X}\]
\[=\frac{p}{2-p}
\Pr\st{X}\]
Probability into $\nogo\st{2}\nogo$ is:
\[ \frac{p}{2} \left(
\Pr\st{X}+
\Pr\st{1}
\right) +\frac{1-p}{2}
\Pr\stq{1}{1}\]
\[=\Pr \st{2}\]
Probability into $\nogo\st{2}\go$ is:
\[\frac{p}{2}
\Pr\st{2}+
\frac{1-p}{2}
\Pr\stq{1}{2}\]
\[=\frac{p}{2-p}
\Pr\st{2}\]
Probability into $\nogo\stq{1}{2}\nogo$ is:
\[\frac{p}{2}
\Pr\stq{1}{1}+
\frac{1-p}{2}
\Pr\stq{2}{1}\]
\[=\Pr\stq{1}{2}\]
Probability into $\nogo\stq{1}{2}\go$ is:
\[\frac{p}{2}
\Pr\stq{1}{2}+
\frac{1-p}{2}
\Pr\stq{2}{2}\]
\[=\frac{p}{2-p}
\Pr\stq{1}{2}\]
Probability into $\nogo\stq{2}{2}\nogo$ is:
\[\frac{p}{2}
\Pr\stq{2}{1}+
\frac{1-p}{2}
\Pr\stq{3}{1}\]
\[=\Pr\stq{2}{2}\]
Probability into $\nogo\stq{2}{2}\go$ is:
\[\frac{p}{2}
\Pr\stq{2}{2} +
\frac{1-p}{2}
\Pr\stq{3}{2}\]
\[=\frac{p}{2-p}
\Pr\stq{2}{2}\]
Probability into $\nogo\st{1}\nogo$ is:
\[\frac{p}{2}\left(
\Pr\st{0}+
\Pr\st{1}
\right)\]
\[=\frac{1}{2-p}
\Pr\st{1}\]
Probability into $\nogo\st{1}\go$ is:
\[\frac{p}{2}
\Pr\st{2}
+\frac{1-p}{2}
\Pr\stq{1}{2}\]
\[=\frac{p}{(2-p)^{2}}
\Pr\st{1}\]
Probability into $\go\st{1}\nogo$ is:
\[(1-p)\left(
\Pr\st{X}+
\Pr\st{1}
\right)\]
\[=\frac{1-p}{p}
\Pr\st{1}\]
Probability into $\go\st{1}\go$ is:
\[(1-p)
\Pr\st{2}
\]
\[=\frac{1-p}{2-p}
\Pr\st{1}\]
Probability into $\nogo\stq{1}{1}\nogo$ is:
\[\frac{p}{2}
\Pr\stq{1}{1}
+\frac{1-p}{2}
\Pr\stq{2}{1}\]
\[=\frac{p}{2-p}
\Pr\stq{1}{1}\]
Probability into $\nogo\stq{1}{1}\go$ is:
\[\frac{p}{2}
\Pr\stq{1}{2}+
\frac{1-p}{2}
\Pr\stq{2}{2}\]
\[=\frac{p^{2}}{(2-p)^{2}}
\Pr\stq{1}{1}\]
Probability into $\go\stq{1}{1}\nogo$ is:
\[(1-p)
\Pr\stq{1}{1}
+p
\Pr\st{1}
+p
\Pr\st{X}\]
\[=2\frac{1-p}{p}
\Pr\stq{1}{1}\]
Probability into $\go\stq{1}{1}\go$ is:
\[(1-p)
\Pr\stq{1}{2}
+p
\Pr\st{2}\]
\[=2\frac{1-p}{2-p}
\Pr\stq{1}{1}\]
Probability into $\nogo\stq{2}{1}\nogo$ is:
\[\frac{p}{2}
\Pr\stq{3}{1}
+\frac{1-p}{2}
\Pr\stq{2}{1}\]
\[=\frac{p}{2-p}
\Pr\stq{2}{1}\]
Probability into $\nogo\stq{2}{1}\go$ is:
\[\frac{p}{2}
\Pr\stq{2}{2}
+\frac{1-p}{2}
\Pr\stq{3}{2}\]
\[=\frac{p^{2}}{(2-p)^{2}}
\Pr\stq{2}{1}\]
Probability into $\go\stq{2}{1}\nogo$ is:
\[(1-p)
\Pr\stq{2}{1}
+p
\Pr\stq{1}{1}\]
\[=2\frac{1-p}{p}
\Pr\stq{2}{1}\]
Probability into $\go\stq{2}{1}\go$ is:
\[(1-p)
\Pr\stq{2}{2}+
p
\Pr\stq{1}{2}\]
\[=2\frac{1-p}{2-p}
\Pr\stq{2}{1}\]

\subsection{Probability of the Other Processors}

We now have to deal with the somewhat more complicated problem of the other 
$N-1$ processors.
The key to finding the possible preceding states of processors
$i+1$ through $i-1$ is the state of processor $i+1$.  Recall that 
at most one
processor does not have a hot potato equal to one-- therefore,
we can assume that the hot potato in processor $i+1$ is 1.  The
queue can be 0, 1, or 2, and the edges $e_{1}$ and $e_{2}$ can
each be crossed or blocked, so there are 12 possibilities.  I calculate them 
below.

To begin, if the queue in processor $i+1$ is empty, and neither edge
$e_{1}$ nor $e_{2}$ is crossed, i.e.\
\[\stackrel{e_{2}}{\nogo}
\underbrace{
\st{1}
}_{proc \ i+1}
- \cdots
-\underbrace{
\stq{n_{i-1}}{h_{i-1}}
}_{proc \ i-1}
\stackrel{e_{1}}{\nogo}\]
then the prior states of processors $i+1$ through $i-1$ are identical
to the prior states of an $N-1$ node ring obtained by removing
node $i$, fusing edges $e_{1}$ and $e_{2}$ into a single edge (call it
$e$), and not allowing any packets to cross $e$. 

If no packets cross, then the ``1'' hot potato that appears in
processor $i+1$ is newly minted, and with equal probability could have
been a ``2''.  But if it were a ``2'', we would have a guarantee that
no packets crossed.  Therefore, the sum of the probabilities of the
prior states (weighted by transition probabilities) for processors
$i+1$ through $i-1$ on the original ring is equal to the sum of the
probabilities of the prior states (weighted by transition
probabilities) of an $N-1$ node ring, where processor $i$ is removed,
and processor $i+1$'s state is changed to $\st{2}$.  By induction,
this latter weighted sum is equal to the product form probability
distribution from equations~\ref{stationary equations}.  Shifting
processor $i+1$ from $\st{1}$ to $\st{2}$ divides the probability by
$(2-p)$, so
\[\Pr\left(\stackrel{e_{2}}{\nogo}
\underbrace{
\st{1}
}_{proc \ i+1}
- \cdots-
\underbrace{
\stq{n_{i-1}}{h_{i-1}}
}_{proc \ i-1}
\stackrel{e_{1}}{\nogo}\right)\]
\[=\frac{1}{2-p}
\Pr\left(
\stackrel{e}{-}
\underbrace{
\st{1}
}_{proc \ i+1}
- \cdots-
\underbrace{
\stq{n_{i-1}}{h_{i-1}}
}_{proc \ i-1}
\stackrel{e}{-}
\right)\]

Next, suppose that the situation is
\[\stackrel{e_{2}}{\go}
\underbrace{
\st{1}
}_{proc \ i+1}
- \cdots \stackrel{e_{1}}{\nogo}\]
We can use the same kind of reasoning as above, but there's a twist:
if we try to view processors $i+1$ through $i-1$ as an independent 
$N-1$ node ring, where did the packet currently in processor 
$i+1$ come from?  Since edge $e_{1}$ is blocked, the packet at
node $i+1$ seems to have arrived out of the fog.
However, we can take this behavior into account in determining the possible
preceding states to these $N-1$ processors.  The possible preceding states
for nodes $i+1$ through $i-1$  are the same as those on a $N-1$ node 
ring such that no packets
cross edge $e$ ($e$ is the new edge between node $i-1$ and $i+1$)
and where the state of processor $i+1$ is now 
$\st{X}$ instead of $\st{1}$.
(In other words, we replace processor $i+1$'s state with the value
it would have had if processor $i$ hadn't sent its packet over.)
So,
\[\Pr\left( \go
\underbrace{
\st{1}
}_{proc \ i+1}
-\cdots \nogo \right)\]
\[=\Pr \left(-
\underbrace{
\st{X}
}_{proc \ i+1}
-\cdots-  \right)\]
\[=\frac{1-p}{p}\Pr \left(-
\underbrace{
\st{1}
}_{proc \ i+1}
-\cdots-
\right)\]

Next, suppose that the situation is
\[\nogo
\st{1}
- \cdots
\go\]
Again, we can use the same kind of reasoning as above.  In this case,
the $N-1$ node ring crosses at $e$, even though no packet arrives
at processor $i+1$.  Therefore, to account for the packet absorption at
processor $i$, we pad an extra packet onto the state of processor $i+1$.
To make sure that we force a crossing at edge $e$, we calculate
\[\Pr \left(
\stackrel{e}{-}
\stq{1}{1}
-\cdots
\stackrel{e}{-}
\right) -\Pr \left(
\stackrel{e}{-}
\stq{1}{2}
-\cdots
\stackrel{e}{-}
\right)\]  
There is one new wrinkle, though.  Since the packet
which remains in queue in our $N-1$ node ring actually enters the
ring and gets a destination (of 1) in the real $N$-node ring, 
we must multiply the probability by $1/2$.  Thus,
\[\Pr\left(\nogo
\st{1}
- \cdots
\go\right)\]
\[=\frac{1}{2}\left[
\Pr \left(
-\stq{1}{1}
-\cdots-
\right) -\Pr \left(
-\stq{1}{2}
-\cdots-
\right) \right] \]
\[=\frac{p}{(2-p)^{2}}
\Pr\left(-
\st{1}
- \cdots-
\right)\]

Suppose that the situation is
\[\go
\st{1}
- \cdots
\go\]
Then, using the above arguments,
\[\Pr\left(
\go
\st{1}
- \cdots
\go\right)\]
\[=\Pr\left(
-\st{1}
- \cdots-
\right)
-\Pr\left(
-\st{2}
- \cdots-
\right)\]
\[=\frac{1-p}{2-p} \Pr \left(
-\st{1}
- \cdots-
\right)\]

Next, suppose that processor $i+1$ has 1 packet in queue.  Suppose that
the state of edges $e_{1}$ and $e_{2}$ is
\[\nogo
\stq{1}{1}
- \cdots
\nogo\]
Then
\[\Pr\left(
\nogo
\stq{1}{1}
- \cdots
\nogo \right)\]
\[=\Pr\left(-
\stq{1}{2}
- \cdots-
\right)\]
\[=\frac{p}{2-p}
\Pr\left(-
\stq{1}{1}
- \cdots-
\right)\]

Next, suppose that $e_{1}$ and $e_{2}$ are
\[\go
\stq{1}{1}
- \cdots
\nogo\]
Then
\[\Pr\left(
\go
\stq{1}{1}
- \cdots
\nogo \right)\]
\[=2\Pr\left(-
\stq{1}{1}
- \cdots-
\right)\]
(The ``2'' is caused by a packet that doesn't drop in the induced
$N-1$ node ring.)
\[=2\frac{1-p}{p}\Pr\left(-
\stq{1}{1}
- \cdots-
\right)\]

Next, suppose that $e_{1}$ and $e_{2}$ are
\[\nogo
\stq{1}{1}
- \cdots
\go\]
Then
\[\Pr\left(
\nogo
\stq{1}{1}
- \cdots
\go \right)\]
\[=\frac{1}{2}\left[
\Pr \left(-
\stq{2}{1}
- \cdots-
\right)- \Pr \left(-
\stq{2}{2}
- \cdots-
\right) \right]\]
\[=\frac{p^{2}}{(2-p)^{2}}
\Pr \left(-
\stq{1}{1}
- \cdots-
\right)\]

Next, suppose that $e_{1}$ and $e_{2}$ are
\[\go
\stq{1}{1}
- \cdots
\go\]
Then
\[\Pr\left(
\go
\stq{1}{1}
- \cdots
\go \right)\]
\[=
\Pr \left(-
\stq{1}{1}
- \cdots-
\right)- \Pr \left(-
\stq{1}{2}
- \cdots-
\right) \]
\[= 2\frac{1-p}{2-p}
\Pr \left( -
\stq{1}{1}
- \cdots-
\right)\]

Next, suppose that processor $i+1$ has 2 packets in queue.  Suppose that
the state of edges $e_{1}$ and $e_{2}$ is
\[\nogo
\stq{2}{1}
- \cdots
\nogo\]
Then
\[\Pr\left(
\nogo
\stq{2}{1}
- \cdots
\nogo \right)\]
\[=\Pr\left(
-
\stq{2}{2}
- \cdots-
 \right)\]
\[=\frac{p}{2-p}\Pr \left(-
\stq{2}{1}
- \cdots-
\right) \]

Next, suppose that the edges $e_{1}$ and $e_{2}$ are
\[\go
\stq{2}{1}
- \cdots
\nogo\]
Then
\[\Pr\left(
\go
\stq{2}{1}
- \cdots
\nogo \right)\]
\[=2\Pr\left(-
\stq{1}{2}
- \cdots-
\right)\]
\[=2 \frac{1-p}{p} \Pr \left(-
\stq{2}{1}
- \cdots-
\right)\]

Next, suppose that the edges $e_{1}$ and $e_{2}$ are
\[\nogo
\stq{2}{1}
- \cdots
\go\]
Then
\[\Pr\left(
\nogo
\stq{2}{1}
- \cdots
\go \right)\]
\[=\frac{1}{2}\left[
\Pr \left(-
\stq{3}{1}
- \cdots-
\right) -
\Pr \left(-
\stq{3}{2}
- \cdots-
\right) \right] \]
\[=\frac{p^{2}}{(2-p)^{2}} 
\Pr \left(-\stq{2}{1}
- \cdots-
\right) \]

Next, suppose that the edges $e_{1}$ and $e_{2}$ are
\[\go
\stq{2}{1}
- \cdots
\go\]
Then
\[\Pr\left(
\go
\stq{2}{1}
\go \right)\]
\[=
\Pr \left(-\stq{2}{1}
- \cdots-
\right) -
\Pr \left(-\stq{2}{2}
- \cdots-
\right) \]
\[=2\frac{1-p}{2-p}
\Pr \left(-
\stq{2}{1}
- \cdots-
\right) \]

\subsection{The Balance Equations}
We're all set to verify the balance equations now.  Suppose that
we are in state $\sigma$, which is:
\[\cdots
\stackrel{e_{1}}{-}
\underbrace{
\stq{n_{i}}{h_{i}}
}_{i}
\stackrel{e_{2}}{-}
\underbrace{
\stq{n_{i}}{h_{i}}
}_{i+1}-\cdots\]
i.e.\ we are looking at processors $i$ and $i+1$, with preceding edges
labelled $e_{1}$ and $e_{2}$, respectively.
As I've argued above, it is sufficient to consider the cases where
$n_{i}$ and $n_{i+1}$ are $\leq2$, and we can assume that $h_{i+1}=1$.
If we specify whether or not $e_{1}$ and $e_{2}$ are open, we split
the possible preceding states into 4 disjoint sets.  Therefore,
the probability flowing into $\sigma$ is
\begin{eqnarray*}
& &
\left[
\left(\mbox{Probability into}\left(
\stackrel{e_{1}}{\nogo}
\stq{n_{i}}{h_{i}}
\stackrel{e_{2}}{\nogo}
\right)\right) \times \right.
\\ 
& & \left.
\left(\mbox{Probability into}\left(
\stackrel{e_{2}}{\nogo}
\stq{n_{i+1}}{h_{i+1}}
-\cdots-
\stq{n_{i-1}}{h_{i-1}}
\stackrel{e_{1}}{\nogo}
\right)\right)
\right]
\\
& + &
\left[
\left(\mbox{Probability into}\left(
\stackrel{e_{1}}{\nogo}
\stq{n_{i}}{h_{i}}
\stackrel{e_{2}}{\go}
\right)\right) \times
\right.
\\
& &
\left.
\left(\mbox{Probability into}\left(
\stackrel{e_{2}}{\go}
\stq{n_{i+1}}{h_{i+1}}
-\cdots-
\stq{n_{i-1}}{h_{i-1}}
\stackrel{e_{1}}{\nogo}
\right)\right)
\right]
\\
& + &
\left[
\left(\mbox{Probability into}\left(
\stackrel{e_{1}}{\go}
\stq{n_{i}}{h_{i}}
\stackrel{e_{2}}{\nogo}
\right)\right) \times
\right.
\\
& &
\left.
\left(\mbox{Probability into}\left(
\stackrel{e_{2}}{\nogo}
\stq{n_{i+1}}{h_{i+1}}
-\cdots-
\stq{n_{i-1}}{h_{i-1}}
\stackrel{e_{1}}{\go}
\right)\right)
\right]
\\
& + &
\left[
\left(\mbox{Probability into}\left(
\stackrel{e_{1}}{\go}
\stq{n_{i}}{h_{i}}
\stackrel{e_{2}}{\go}
\right)\right) \times
\right.
\\
& &
\left.
\left(\mbox{Probability into}\left(
\stackrel{e_{2}}{\go}
\stq{n_{i+1}}{h_{i+1}}
-\cdots-
\stq{n_{i-1}}{h_{i-1}}
\stackrel{e_{1}}{\go}
\right)\right)
\right]
\end{eqnarray*}
In the preceding two sections, I calculated all the values
we need to evaluate the above equation.  Moreover, I expressed
the values as multiples of 
\[\Pr\left(
\stackrel{e_{1}}{-}
\stq{n_{i}}{h_{i}}
\stackrel{e_{2}}{-}
\right)\]
and \[\Pr\left(
\stackrel{e_{2}}{-}
\stq{n_{i+1}}{h_{i+1}}
-\cdots-
\stq{n_{i-1}}{h_{i-1}}
\stackrel{e_{1}}{-}\right)\]
(Since the probabilities are product form, I trust that the
preceding notation makes sense.)  Therefore, we can immediately
factor out a factor of 
\[\Pr
\stq{n_{i}}{h_{i}}
\Pr\left(-
\stq{n_{i+1}}{h_{i+1}}
-\cdots-
\stq{n_{i-1}}{h_{i-1}}
-\right)\]
\[=\Pr(\sigma)\]
I only need to verify that the 4 factored terms  sum to 1 in all
cases.  (I will work out the first case with extra details to illustrate
what I'm talking about.)  The verification of the cases follows:

Suppose that $h_{i}=X$.  Suppose that $h_{i+1}=1$ and 
$n_{i+1}$=0.  Then the probability flowing into $\sigma$ is
\[
\left(\mbox{Prob. into}\left(
\nogo
\st{X}
\nogo
\right)\right)
\left(\mbox{Prob. into}\left(
\nogo
\st{1}
-\cdots-
\stq{n_{i-1}}{h_{i-1}}
\nogo
\right)\right)\]
\[+\]
\[
\left(\mbox{Prob. into}\left(
\nogo
\st{X}
\go
\right)\right)
\left(\mbox{Prob. into}\left(
\go
\st{1}
-\cdots-
\stq{n_{i-1}}{h_{i-1}}
\nogo
\right)\right)\]
\[+\]
\[
\left(\mbox{Prob. into}\left(
\go
\st{X}
\nogo
\right)\right)
\left(\mbox{Prob. into}\left(
\nogo
\st{1}
-\cdots-
\stq{n_{i-1}}{h_{i-1}}
\go
\right)\right)\]
\[+\]
\[
\left(\mbox{Prob. into}\left(
\go
\st{X}
\go
\right)\right)
\left(\mbox{Prob. into}\left(
\go
\st{1}
-\cdots-
\stq{n_{i-1}}{h_{i-1}}
\go
\right)\right)\]
Now, $\Pr\left(\go \st{X}\nogo\right)$ and $\Pr\left(\go \st{X}\go\right)$
both equal zero, so the third and fourth terms of the sum go away.
Plugging in from our previous calculations, we get:
\[=
\left(\Pr\left(
-
\st{X}
-
\right)\right)
\frac{1}{2-p}\left(\Pr\left(
-
\st{1}
-\cdots
-
\stq{n_{i-1}}{h_{i-1}}
-
\right)\right)\]
\[+
\frac{p}{2-p}\left(\Pr\left(
-
\st{X}
-
\right)\right)
\frac{1-p}{p}\left(\Pr \left( -
\st{1}
-\cdots
-
\stq{n_{i-1}}{h_{i-1}}
-
\right)\right)\]
\[=
\left(\Pr\left(
-
\st{X}
-
\right)\right)
\left(\Pr\left(
-
\st{1}
-\cdots
-
\stq{n_{i-1}}{h_{i-1}}
-
\right)\right)
\left[\frac{1}{2-p} +\frac{p}{2-p}\frac{1-p}{p}\right]\]
\[=\Pr(\sigma)\left[\frac{2-p}{2-p}\right]=\Pr(\sigma)\]
as desired.

Next, suppose that $h_{i}=X$, $h_{i+1}=1$ and $n_{i+1}$=1.  If we
repeat the reasoning above, we find that the probability flowing in to
$\sigma$ is
\[\Pr(\sigma)\left[
\frac{p}{2-p}+\frac{p}{2-p}\frac{2(1-p)}{p}\right] \]
\[=\Pr(\sigma)\]

Note that the coefficients that arise from the 
\[\left(-
\stq{1}{1}
-\cdots-
\stq{n_{i-1}}{h_{i-1}}
-\right)\]
situations (regardless of how we set the edges $e_{1}$ and $e_{2}$)
are identical to those in the 
$\left(-
\stq{2}{1}
-\cdots-
\stq{n_{i-1}}{h_{i-1}}
-\right)$
case.  For example, if we deal with the $h_{i}=X$, $h_{i+1}=1$,
and $n_{i+1}=2$ case,  we find that the probability flowing in is
\[\Pr(\sigma)\left[
\underbrace{\frac{p}{2-p}}_{coef}+\frac{p}{2-p}
\underbrace{\frac{2(1-p)}{p}}_{coef}\right] \]
\[=\Pr(\sigma)\]
where the terms marked $coef$ are determined by the state of
processor $i+1$ (i.e.\ independent of the state of processor $i$).
Therefore, we only need to test if the balance equations work
for $n_{i+1}=0$ or 1; the $n_{i+1}=2$ case follows from 
$n_{i+1}=1$.

Next, observe that if processor $i$ is in state $-\stq{n_{i}}{2}-$ for
$n_{i}=$0, 1, or 2, then the coefficients that we calculated are
identical to those when the state of $i$ is $-\st{X}-$, and we just
verified that the balance equations hold for that case.

Therefore, we can assume, that $h_{i}=1$ for the remaining cases.
Suppose that $n_{i}=0$ and $n_{i+1}=0$.  (We are assuming
that $h_{i+1}=1$ in all these cases.)
Then the probability flowing in to $\sigma$ is
\[\Pr(\sigma)\left[
\left(
\frac{1}{2-p}\right)\left(\frac{1}{2-p}\right) 
+\left(\frac{p^{2}}{(2-p)^{2}}\right)\left(\frac{1-p}{p}\right)
\right.\]
\[\left.+\left(\frac{1-p}{p}\right)\left(\frac{p}{(2-p)^{2}}\right) 
+\left(\frac{1-p}{2-p}\right)\left(\frac{1-p}{2-p}\right) 
\right]\]
\[=\Pr(\sigma)\left[\frac{(1+(1-p))^{2}}{(2-p)^{2}}\right]
=\Pr(\sigma)\]

Suppose that $n_{i}=0$ and $n_{i+1}=1$. 
Then the probability flowing in to $\sigma$ is
\[\Pr(\sigma)\left[
\left(
\frac{1}{2-p}\right)\left(\frac{p}{2-p}\right)
+\left(\frac{p^{2}}{(2-p)^{2}}\right)\left(\frac{2(1-p)}{p}\right)\right.\]
\[\left.+\left(\frac{1-p}{p}\right)\left(\frac{p^{2}}{(2-p)^{2}}\right)
+\left(\frac{1-p}{2-p}\right)\left(\frac{2(1-p)}{2-p}\right)
\right]\]
\[=\Pr(\sigma)\left[\frac{p+2-2p+p-p^{2}+2-4p+2p^{2}}{(2-p)^{2}}
\right]
\]
\[
=\Pr(\sigma)\left[\frac{4-4p+p^{2}}{(2-p)^{2}}\right]
=\Pr(\sigma)\]

As observed above, the fact that the $n_{i+1}=1$ case holds
implies that the $n_{i+1}=2$ case holds, too. Suppose
$n_{i}=1$.  Now, if $n_{i+1}=0$, we can just perform
this whole procedure on processor $i+1$ instead of $i$, and
we are reduced to a prior case.  So we are left with
$n_{i+1}=1$.
Then the probability flowing in to $\sigma$ is
\[\Pr(\sigma)\left[
\left(\frac{p}{2-p}\right)\left(\frac{p}{2-p}\right)
+\left(\frac{2(1-p)}{p}\right)\left(\frac{p^{2}}{(2-p)^{2}}\right)\right.\]
\[\left.+\left(\frac{p^{2}}{(2-p)^{2}}\right)\left(\frac{2(1-p)}{p}\right)
+\left(\frac{1-p}{2-p}\right)\left(\frac{2(1-p)}{2-p}\right)
\right]\]
\[=\Pr(\sigma)\left[
\frac{(p+2(1-p))^{2}}{(2-p)^{2}}\right]
=\Pr(\sigma)\]

We have now accounted for all cases, completing the proof. \EOP

\section{Future Work, and a Warning} \label{exact future section}
Given the surprising number of different protocols present in the
statement of Theorem~\ref{main exact theorem}, it's natural to surmise
that the result holds for any greedy protocol on the ring.  Somewhat
more optimistically, Lemma~\ref{EPF lemma} suggests that the
distribution might hold with any greedy protocol on any network,
assuming that the maximum path length is 2.  However, there does not
seem to be any simple proof along these lines.

I should insert a note of caution at this stage.  After noting the
exact solution to the $N=3$ node ring, it's tempting to imagine that
the stationary distribution for any $N$ product form, and the
stationary probability of a particular state is a rational function of
$p$.  After we have some more results about Bernoulli arrivals and
analytic functions, I'll be able to show in Section~\ref{light traffic
section} that the distributions are not product form, and probably not
rational.

 \chapter{Bounds on Queue Length} \label{bounds chapter}

\section{Introduction}
In this chapter, I analyze stability and expected queue length
for standard 
Bernoulli rings.  In order
to deal with rings where both the number of nodes $N$ and
the maximum path length $L$ are large, I can no longer
make exact calculations of the expected queue length,
as I did in Chapter~\ref{exact chapter}.
Instead, I offer various upper and lower bounds.

Recall from Theorem~\ref{leighton's theorem} that, for a fixed
nominal load $r<1/2$, the expected queue length of an $N$ node
standard ring is known to be $\Theta(1/N)$.  The case of interest is
$r\geq 1/2$.

I begin by generating a series of lower bounds on expected
queue length.  The most interesting bounds are $\Omega(1/N)$
for the standard Bernoulli ring, and $\Omega(1)$ if either $N$
or $L$ is constant in a non-standard Bernoulli ring.

I start the upper bounds in Section~\ref{51 percent section} by
showing that if $r<1/2 + \epsilon$, then the ring is stable and has an
$O(1)$ upper bound on the expected queue length if $r<1/2 + \epsilon$.
(The exact value of $\epsilon$ can be determined by an equation
specified in the proof.)  As the improvement in $r$ is so small, this
result is mainly interesting in that there are no hidden constants in
the upper bound, and in the novelty of the technique.

Then, we get down to brass tacks.  In Section~\ref{main section}, I
construct a potential function for the standard Bernoulli ring, and
prove a number of useful lemmas about the function.  I use this
potential function in Section~\ref{ergodicity results section} to show
that for any $r<1$, the ring is stable, and the expected queue length
is $O(1)$.  A $\Theta(N)$ bound on expected delay per packet follows.
Finally, in Section~\ref{nonstandard ring bounds section}, I discuss
related results on the expected queue lengths of other rings with
Bernoulli arrival processes.

\section{Lower Bounds} \label{lower bound section}
\begin{lemma} \label{lower bound big N lemma}
\index{Lemma \ref{lower bound big N lemma}}
Fix the nominal load $0\leq r < 1$.  Consider a family of nonstandard
Bernoulli rings of size $N(i)$, with packet lifespans uniformly
distributed from 1 to $L(i)$ (where $L(i)\geq 2$, to make it
non-trivial), for $i=0,1,2,\ldots$.  Then the expected queue length
per node is $\Omega(1/L)$
\end{lemma}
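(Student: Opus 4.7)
My plan is to show that with $\Omega(1)$ probability an exogenous arrival finds the destination node's hot-potato slot already occupied, so that it is forced to queue for at least one time step, and then convert this into a lower bound on $E[Q]$.

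By the translational symmetry of the ring, every node shares the same stationary expected queue length $E[Q]$ and hot-potato occupancy $E[H]\in[0,1]$. I first determine $E[H]$. Each packet is a hot potato for exactly $D$ consecutive steps, where $D$ is its travel distance, uniform on $\{1,\dots,L\}$. Applying Little's law to the set of all hot potatoes in the ring gives total hot potatoes $= Np \cdot E[D] = Np(L+1)/2 = Nr$, so by symmetry $E[H] = r$.

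Next let $\tilde H$ denote the indicator that a node holds a hot potato immediately after routing but before the exogenous arrival phase of a time step. Under GHP, if $\tilde H = 0$ then the queue must also be empty just after routing -- otherwise a queued packet would have been promoted into the slot -- so $H$ can exceed $\tilde H$ only when an exogenous arrival lands in an empty slot. Since Bernoulli arrivals are independent of the prior state, the one-step balance $E[H] = E[\tilde H] + p(1 - E[\tilde H])$ gives $E[\tilde H] = (r-p)/(1-p) \geq r - p = r(L-1)/(L+1) \geq r/3$, using $L\geq 2$.

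By the same independence, an arriving exogenous packet sees $\tilde H = 1$ with probability $E[\tilde H]$; in that case it must join the queue, and it is still there at the end of the step (routing has already occurred and at most one packet can be promoted per step). Hence $\Pr[Q \geq 1] \geq p \cdot E[\tilde H] \geq pr/3 = 2r^2/(3(L+1))$, and so $E[Q] \geq \Pr[Q \geq 1] = \Omega(1/L)$, with the implicit constant depending only on $r$. The main technical point to nail down is the ``Bernoulli arrivals see $\tilde H$'' step (a PASTA-style property), but this is immediate from the independence of the arrival process from the prior state, once $\tilde H$ is defined to be measured strictly before the arrival phase.
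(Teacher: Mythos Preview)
Your proof is correct and follows essentially the same approach as the paper's. Both arguments (i) use Little's theorem to get that the probability a node is occupied (equivalently, under GHP, that the hot-potato slot is filled) equals $r$, (ii) pass to the post-routing/pre-arrival occupancy and bound it below by $r-p\ge r/3$ using $L\ge 2$, and (iii) multiply by the independent arrival probability $p$ to conclude $\Pr[Q\ge 1]\ge pr/3=\Omega(1/L)$. Your version is slightly more explicit---you solve the balance $E[H]=E[\tilde H]+p(1-E[\tilde H])$ exactly to get $E[\tilde H]=(r-p)/(1-p)$ before bounding below by $r-p$, and you spell out the GHP property that $\tilde H=0$ forces an empty queue---whereas the paper simply invokes the union-bound inequality $E[\tilde H]\ge r-p$ directly; but these are cosmetic differences in the same argument.
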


\proof
I will calculate a bound at node 1; by
symmetry, the same bound applies at any node.

In Corollary~\ref{ergodic ring corollary}, I will show that all rings are
stable.  Assuming this result for the moment, we can use Little's Theorem 
(Theorem~\ref{little's theorem}) to conclude that the
probability that there's a packet at node 1 is $r$.
Since we're using a ``route, then arrive'' method of sampling
the state space, and since the probability of a packet
arriving at node 1 on any time step is $p=\frac{2}{L(i)+1}r$,
then the probability of there being a packet at node 1
after routing, but before exogenous arrivals, is at least
\[r-p=r\left(1-\frac{2}{L(i)+1}\right)\]
Since we assumed that $L(i)\geq 2$, then
\[\geq \frac{r}{3}\]
So, the probability that there is at least one packet in queue at node
1 \emph{after} arrivals is at least
\[\frac{r}{3}p = \frac{2r^{2}}{3}\frac{1}{L(i)+1} = \Omega(1/L)\]
\EOP

If $L\leq O(N)$, Lemma~\ref{lower bound big N lemma} is probably tight.
But if $N = o(L)$, this is not always the case, as demonstrated
by the next lemma.

\begin{lemma} \label{lower bound big L lemma}
\index{Lemma \ref{lower bound big L lemma}}
Fix a nominal load $r$ on a family of nonstandard Bernoulli rings,
labelled as in Lemma~\ref{lower bound big N lemma}.  Assume that
$N=o(L)$, and that $L(i)$ is increasing.  Then there exist constants
$0<\alpha_{r}, \beta_{r} < 1$, depending only on $r$, such that for
all sufficiently large $L(i)$,
\[\Ex [\mbox{queue length}] \geq \frac{1}{4}\beta_{r} 
\left[\frac{\alpha_{r}}{4}\right]^{N(i)} \]
so, if $\hat{\alpha}_{r}=\alpha_{r}/4$, then
\[\Ex [\mbox{queue length}] = \Omega\left(\hat{\alpha}_{r}^{N}\right)\]
\end{lemma}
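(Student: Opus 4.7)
The plan is to lower bound the stationary probability $\Pr[Q_0 \geq 1]$ at a fixed node (node $0$, by ring symmetry), using the trivial inequality $\Ex[\mbox{queue length}] \geq \Pr[Q_0 \geq 1]$. The central structural feature of the regime $N = o(L)$ is that a typical exogenous packet has travel distance much larger than $N$ and therefore laps the ring many times, visiting each node repeatedly. Consequently, even a modest number of coincident arrivals can force several distinct packets to occupy node $0$ simultaneously, which under GHP means at least one of them is in queue.

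My candidate congestion event $E$ is the following: at some common future time $T$, all $N$ packets inserted at nodes $0, 1, \ldots, N-1$ at staggered times $T - (N - i) \bmod N$ with destination distance at least $N - i$ arrive simultaneously at node $0$. A direct position-tracking argument shows that if none of these packets is delayed at its origin, then at time $T$ we have $Q_0 \geq N - 1$. Since arrivals and destinations at distinct (node, time) pairs are mutually independent, the probability of this one-time pattern factors as $p^N$ times a destination correction close to one, which alone vanishes as $p = 2r/(L+1) \to 0$. To obtain an $L$-independent bound I would sum over all eligible collision times $T$ inside a window of length $\Theta(L)$; the arrival-rate identity $pL = 2r$ ensures the expected number of such collision-generating configurations per window is of order $r^N$, and a Paley--Zygmund style second-moment inequality applied to this count converts an expectation of order $r^N$ into a probability of order $r^N$, yielding the geometric dependence on $N$ with a rate $\alpha_r$ depending only on $r$.

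The main obstacle will be bridging the idealized ``non-interference'' picture (independent Bernoulli trials, packets circulating unimpeded) with the actual GHP dynamics, in which a packet can be stuck behind another in its origin queue for a while and so fail to reach node $0$ at the scheduled time $T$. To control this, I would intersect the collision event with a ``quiet origin'' event that forbids disruptive extra arrivals at each of the $N$ source nodes during the short window preceding $T$. Each such quietness event has probability bounded below by a positive constant depending on $r$; absorbing worst-case correlations between quietness at different origins into a union bound costs at most a factor of $1/4$ per node, which is precisely where the $(\alpha_r/4)^N$ form emerges. Multiplying the collision-count lower bound and the per-node quietness factor yields the advertised bound $\frac{1}{4}\beta_r(\alpha_r/4)^N$, with $\alpha_r, \beta_r \in (0,1)$ depending only on $r$. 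The technical heart of the argument is making rigorous this simultaneous coupling of independent-Bernoulli collision counting with the per-origin quietness conditioning, so that the two estimates may be multiplied without losing more than the claimed constants.
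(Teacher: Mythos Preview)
Your proposal has a genuine gap rooted in a misunderstanding of the GHP ring dynamics. The core event you describe --- packets inserted at nodes $0,1,\ldots,N-1$ at staggered times so that they ``arrive simultaneously at node $0$'' at time $T$ --- cannot occur. The ring has exactly one slot per node, and under GHP a travelling (hot potato) packet never stops. Trace your own schedule: the packet inserted at node $N-2$ at time $T-2$ advances to node $N-1$ at time $T-1$, which is precisely when you want to insert a packet at node $N-1$; GHP gives the hot potato priority, so the node-$(N-1)$ packet is blocked in queue, not launched. The same obstruction cascades down the schedule. Your hypothesis ``if none of these packets is delayed at its origin'' is therefore vacuous: the staggering you chose \emph{forces} origin delays. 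No amount of ``quiet origin'' conditioning fixes this, because the interference comes from the very packets you are inserting, not from extraneous traffic. The Paley--Zygmund machinery is then built on an event of probability zero.

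The paper's argument avoids collision bookkeeping altogether. Its key observation is that when $N=o(L)$, a single packet typically laps the ring many times, so if every one of the $N$ ring slots is occupied by a packet with $\Theta(L)$ remaining lifetime, any further exogenous arrival at a fixed node must sit in queue for $\Theta(L)$ steps. The paper engineers this in three stages over a window of length $2L/3$: first, all packets currently in the ring depart within $L/3 - N$ steps (probability at least $(1/4)^{N}$, since each has at most $L$ remaining and the per-step departure probability is at least $1/L$); second, each of the $N$ nodes receives at least one fresh arrival in that same interval (probability at least $\alpha_r^{N}$ with $\alpha_r\approx 1-e^{-r/2}$), and these fresh packets survive past time $2L/3$ with probability $(2/3)^{N}$; third, one more arrival at node $1$ during $[L/3,L/2]$ (probability at least $\beta_r$) is then stuck in queue for at least $L/6$ steps. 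Time-averaging over the window gives the factor $1/4$, and the product of the stage probabilities yields the $(\alpha_r/4)^{N}$ form. The conceptual point you are missing is that the queue is created not by many packets piling up at one node, but by the ring being \emph{full} so that a single extra packet cannot enter it.
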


\proof
The probability that every packet now in the ring departs in 
$(L(i)/3)-N(i)$
time steps is at least
\[\left[\frac{(L(i)/3) - N(i)}{L(i)}\right]^{N(i)}\geq (1/4)^{N(i)}\]
for sufficiently large $L(i)$ (here, we're using $N=o(L)$).
The probability that there is at least 1 exogenous packet arrival
in each queue during (the same) $(L(i)/3)-N(i)$ time steps is at least:
\[
\left(1-(1-p)^{(L(i)/3)-N(i)}\right)^{N(i)}
\]
For sufficiently large $L(i)$,
\begin{equation} \label{lower bound equation}
\leq \left(1-(1-p)^{L(i)/4}\right)^{N(i)}
=\left(1-\left(1-\frac{2r}{L(i)+1}\right)^{L(i)/4}\right)^{N(i)}
\end{equation}
Now, 
\[\lim_{i\rightarrow \infty} 1-\left(1-\frac{2r}{L(i)+1}\right)^{L(i)/4}=
1-e^{-\frac{2}{4}r}\]
So for some fixed $0<\alpha_{r}<1$ and all sufficiently large $i$ (and hence
$L(i)$),
we can lower bound Equation~\ref{lower bound equation} by
\[\geq \alpha_{r}^{N(i)}\]
Note that since there are at least $N(i)$ exogenous packet arrivals and $N(i)$
departures from the ring in $(L(i)/3) -N(i)$ time steps, then by the
$L(i)/3$ time step, there will be $N(i)$ packets inserted, each 
having travelled less than $L(i)/3$ steps.
The probability that the $N(i)$ packets newly injected into the ring during the
first $(L(i)/3)$ time steps survive at least $L(i)/3$ steps is
\[ \left(\frac{2}{3} \right)^{N(i)} \]
In this event, on time steps $L(i)/3$ through $2L(i)/3$, the entire
ring remains full of the same $N(i)$ packets.  The probability of at
least 1 exogenous packet arriving  at node 1 during the
time steps $L(i)/3$ through $L(i)/2$ is
\[1-(1-p)^{L(i)/6} > \beta_{r}\]
for sufficiently large $i$ and some fixed $0<\beta_{r}<1$,
since $\lim_{i\rightarrow \infty}1-(1-p)^{L(i)/3}=1-e^{-r/3}$.
In this case, the packet arriving at node 1 will remain there
for at least $L(i)/6$ time steps.
Therefore, with probability at least
\[ \beta_{r}\left( \frac{\alpha_{r}}{4} \right)^{N(i)} \]
node 1 has at least one packet in queue for $L/6$ time steps out of
$2L/3$ time steps.  Since $(L/6)/(2L/3)=1/4$, the expected
queue length at node 1 is at least
\[ \frac{\beta_{r}}{4}\left( \frac{\alpha_{r}}{4} \right)^{N(i)} \]
\EOP

Lemma~\ref{lower bound big N lemma} gives a much tighter (larger)
bound on the expected queue length than Lemma~\ref{lower bound big L
lemma} unless $L$ is very large relative to $N$. Specifically, if
$\hat{\alpha}_{r}^{-N}=o(L)$, then Lemma~\ref{lower bound big L lemma} is
tighter.

We are really interested in certain special cases:
\begin{corollary} \label{lower bound corollary}
\index{Corollary \ref{lower bound corollary}}
Let $\Ex[Q]$ be the expected queue length.
From Lemma~\ref{lower bound big N lemma}, we get:
\begin{itemize}
\item If $L=\Theta(N)$ (e.g.\ if $L=N-1$ on a 
standard Bernoulli ring), then $\Ex[Q]=\Omega(1/N)$.
\item If $L$ is constant, then $\Ex[Q]=\Omega(1)$.
\end{itemize}
From Lemma~\ref{lower bound big L lemma}, we get:
\begin{itemize}
\item If $N$ is constant, then $\Ex[Q]=\Omega(1)$.
\end{itemize}
\end{corollary}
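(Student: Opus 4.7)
The plan is essentially just bookkeeping: each bullet is an immediate specialization of one of the two preceding lemmas, so the main task is to check that the hypotheses of those lemmas apply in each regime and then substitute the appropriate asymptotics.

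First I would dispense with the two cases that fall under Lemma~\ref{lower bound big N lemma}. That lemma gives $\Ex[Q]=\Omega(1/L)$ under the single hypothesis $L(i)\geq 2$. If $L=\Theta(N)$ (which includes the standard Bernoulli ring with $L=N-1$), then $1/L = \Theta(1/N)$, and the first bullet follows. If $L$ is constant, then $1/L$ is a positive constant, so $\Ex[Q]=\Omega(1)$, giving the second bullet.

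For the third bullet, I would appeal to Lemma~\ref{lower bound big L lemma}. Since $N$ is constant, the hypothesis $N=o(L)$ is automatic provided $L(i)\to\infty$, and $L(i)$ is taken increasing by assumption. The lemma then yields
\[
\Ex[Q]\;\geq\;\frac{\beta_r}{4}\left(\frac{\alpha_r}{4}\right)^{N}
\]
for all sufficiently large $i$. Since $\alpha_r,\beta_r\in(0,1)$ are constants depending only on $r$ and $N$ is fixed, the right-hand side is a positive constant, so $\Ex[Q]=\Omega(1)$.

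There is no real obstacle here; the proof is just a matter of reading off the asymptotic behavior of the two lemma bounds in each of the three regimes, and the corollary can be dispatched in a few lines.
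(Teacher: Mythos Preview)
Your proposal is correct and matches the paper's approach exactly: the paper does not even supply a separate proof, since the corollary statement itself already indicates which lemma yields each bullet, and the deductions are precisely the substitutions you spell out.
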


What really happens in the regime where $N=o(L)$?  Is Lemma~\ref{lower
bound big N lemma} tight, until Lemma~\ref{lower bound big L lemma}
takes over?  It's not clear what to expect.  For the purposes of this
thesis, though, Corollary~\ref{lower bound corollary} suffices.

\section{Load of $1/2 + \epsilon$} \label{51 percent section}
In Coffman et al.~\cite{Leighton95} and~\cite{Leighton98}, the authors
show how to analyze a standard Bernoulli ring in the case where
loading is strictly less than $50\%$ (i.e.\ $r<1/2$).  They are able to
prove $\Theta(1/N)$ bounds on the expected queue length per node.  In
this section, I'll show how to prove stability and $O(1)$ upper bounds
for a slightly larger range of loads, namely $r<1/2 + \epsilon$, where
the $\epsilon$ can be explicitly calculated.

\begin{theorem} \label{51 percent theorem}
\index{Theorem \ref{51 percent theorem}}
Suppose we have an $N$ node standard Bernoulli ring in any state at
time $t=0$, with load $r<1$.  Choose a node $i$.  Then for any
$\delta$ there exists $N_{\delta}$ such that for any $N \geq
N_{\delta}$, at any time $t>N$, the probability of an empty cell
arriving at node $i$ is at least
\begin{eqnarray} \label{51 percent equation}
 \lefteqn{\frac{1}{N}[  1 -\delta } \nonumber\\
	& +A(A+B)C(C+D)\left(1-e^{-2rB}\right) \left(1-e^{-2rD}\right) 
	\left(\frac{1}{A+B+C+D}-1 \right) ] &
\end{eqnarray}
for any $A,B,C,D$ such that $A,B,C,D,(A+B+C+D) \in (0,1)$.
Moreover, the bound holds independently for all $t>N$.
\end{theorem}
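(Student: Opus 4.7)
The plan is to lower bound $\Pr[\text{empty cell arriving at } i \text{ at time } t]$ by splitting it into a baseline contribution of order $1/N$ and an additive ``bonus'' term generated by a compound event parameterized by $A,B,C,D$. The whole argument would trace the history of the edge $(i-1,i)$ backward in time from $t$, searching for specific configurations that guarantee the edge is empty at time $t$.

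For the baseline $(1-\delta)/N$, I would unroll the recursion: an empty cell arrives at $i$ at time $t$ precisely when, during time step $t-1$, either the in-ring cell at $i-2$ was empty or carried a packet destined for $i-1$, \emph{and} the queue at $i-1$ at the start of $t-1$ was empty (so that no promotion refills the ring slot). The simplest and most productive sub-event is that the in-ring cell at $(i-2,i-1)$ at time $t-1$ carries a packet with destination $i-1$. Because destinations are uniform over $N-1$ nodes, the ``right'' destination has probability $1/(N-1)$, and the marginal probability that the cell carries any packet is close to the load $r$ once the ring has run for one full traversal ($t>N$). The queue-emptiness at $i-1$ contributes a constant-probability factor which, together with finite-$N$ corrections, is absorbed into the $\delta$ correction; taking $N\geq N_\delta$ makes this error arbitrarily small and yields the baseline $(1-\delta)/N$.

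For the additive bonus, I would construct a compound event $E=E(A,B,C,D)$ producing an additional empty cell beyond what the baseline already captures. My expectation is that $E$ specifies a configuration inside a window of length roughly $(A+B+C+D)N$ preceding $t$: two independent exogenous arrivals at specified nodes occur during sub-intervals of length $BN$ and $DN$, contributing the Bernoulli-to-Poisson factors $1-e^{-2rB}$ and $1-e^{-2rD}$ (since $p=2r/N$); four packets (the two arrivals plus two others already present) have destinations landing in arcs of the ring of fractional lengths $A,\,A+B,\,C,\,C+D$, contributing $A(A+B)C(C+D)$ by the uniform destination distribution. The ``leverage'' factor $\frac{1}{A+B+C+D}-1=\frac{1-(A+B+C+D)}{A+B+C+D}$ arises from optimizing the placement of this window within the full $N$-step trace-back: of the roughly $N$ possible window positions preceding $t$, only about $(1-(A+B+C+D))N$ admit a valid alignment with the target event at $t$, and the $1/N$ outside the brackets combines with this count to produce the stated form.

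The main obstacle is pinning down the correct event $E$ and verifying two delicate properties: that the four sub-events are near-independent so their probabilities multiply cleanly, and that the extra empty cell produced when $E$ occurs is genuinely distinct from the baseline empty cell, avoiding double-counting. A further subtlety is that we operate in the adversarial initial-state regime without any a priori stability, so the ``empty queue'' assertions along the trace-back chain require careful control; the hypotheses $t>N$ (allowing one full ring traversal to dilute the initial conditions) and $N\geq N_\delta$ are exactly what make this control possible. Finally, the claim that the bound holds independently for every $t>N$ falls out because both the baseline construction and the event $E$ depend only on a window of length at most $N$ preceding $t$, so the same argument runs unchanged at each such $t$.
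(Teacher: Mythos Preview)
Your proposal manufactures an obstacle---controlling queue emptiness at intermediate nodes from an adversarial initial state---that the paper's argument never encounters, and it misidentifies the source of the factor $\frac{1}{A+B+C+D}-1$. The paper follows the single ring slot \emph{forward} from node $i$ at time $t-N$ once around to $i$ at time $t$, and the only fact it uses about the slot's occupant is this: whatever packet is riding the slot when it reaches $i$, its conditional probability of being destined for $i$ equals $1/(N-1-d)$, where $d$ is the distance that packet has already traveled. This is always at least $1/(N-1)$, and that is the entire baseline---no recursion through $i-1$, no queue-emptiness assertion. The compound event $J$ involves \emph{three} packets (the original occupant, a first replacement, a second replacement) and four sub-events: the original departs within $AN$ steps (factor $\geq A$); a first replacement enters within $BN$ further steps ($\geq 1-e^{-2rB}$), survives to node $k$ ($\geq C+D$), then departs within $CN$ more steps ($\geq C$); a second replacement enters within $DN$ more steps ($\geq 1-e^{-2rD}$) and survives all the way to $i$ ($\geq A+B$). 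Under $J$ the slot's occupant just before $i$ is this second replacement, which entered no later than node $m=i+(A+B+C+D)N$ and has therefore traveled at least $(1-(A+B+C+D))N$ steps, so its conditional probability of having destination $i$ is at least $1/((A+B+C+D)N)$. Writing $\Pr[\text{opportunity}]\geq J\cdot\frac{1}{(A+B+C+D)N}+(1-J)\cdot\frac{1}{N}$ and rearranging gives the stated bound; the factor $\frac{1}{A+B+C+D}-1$ is exactly this enhancement of the departs-at-$i$ probability that comes from knowing the current packet is old, not a window-alignment count.

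The point you are missing, and the reason your ``main obstacle'' is a phantom, is that every bound in this chain is completely insensitive to queue contents. Departure and survival probabilities depend only on the uniform destination of the packet already in the slot; the ``replacement enters within $BN$ steps'' lower bound holds because an empty slot acquires a packet with probability \emph{at least} $p$ at each node it passes (and with probability one if that node's queue is nonempty, which only helps). Your proposed route---look at a packet destined for $i-1$ and then assert the queue at $i-1$ is empty---cannot yield a uniform $1/N$ bound from an arbitrary initial state, since that queue may be arbitrarily large for arbitrarily long; the paper sidesteps the issue entirely by looking one node further downstream.
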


\begin{note}
Observe that
for any fixed $A,B,C,D,r$, we can always choose $\delta$ small enough that
Equation~\ref{51 percent equation} is greater than $(1 + \epsilon)/N$
for a sufficiently small $\epsilon$, and all sufficiently large $N$.
\end{note}

\proof
Let $j$ be the node that is $\lfloor AN \rfloor$ nodes downstream of
$i$ (so $j=i + \lfloor AN \rfloor \bmod N$).  Let $k$ be the node that
is $\lfloor BN \rfloor$ nodes downstream of $j$, $l$ be the node
$\lfloor CN \rfloor$ nodes downstream of $k$, and $m$ the node that is
$\lfloor DN \rfloor$ nodes downstream of $e$.  For sufficiently large
$N$, these nodes are all distinct.  Throughout, I'm going to treat
$\lfloor AN \rfloor$, $\lfloor BN \rfloor$,$\lfloor CN \rfloor$ and
$\lfloor DN \rfloor$ as integers; the extent to which they are not
leads to the $\delta$ error term in the theorem.  Please see
Figure~\ref{epsilon figure}.
\begin{figure}[ht]
\centerline{\includegraphics[height=2in]{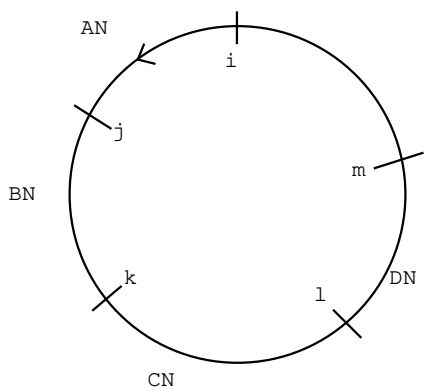} \hspace{.2in}  }
\caption{Arrangement of nodes for Theorem~\ref{51 percent theorem}}
\label{epsilon figure}
\end{figure}

Let's follow the slot in the ring that starts out under node $i$ at
time 0, and see what packets enter and leave as the slot travels
around the ring.  What's the probability that any packet in $i$ at
time $t=0$ departs before reaching node $j$?  Well, suppose that
there's a packet in $i$.  Regardless of the packet's point of
insertion, the probability that it departs in the next $AN$ steps is
at least $A$.  (If there is no packet in $i$, the event occurs with
probability 1.)

Given that the original packet (if any) has departed by node $i$, 
what's the probability that a new packet will arrive in that slot
by node $j$?  If the slot passes under any non-empty queue,
it will pick up a packet with probability 1.  If not, there's
a probability $p$ of a new arrival on each step.  Therefore,
the probability of a new packet arriving by node $k$ is at least:
\[1-(1-p)^{BN} = 1 - \left(1-\frac{2r}{N}\right)^{BN}
\rightarrow 1-e^{-2rB}\]
where the limit is taken as $N \rightarrow \infty$.

What's the probability that this first arrival lasts until node $k$?
Well, the earliest it could have arrived is node $i+1$, so 
the probability is at least $1-A-B=C+D$.

The probability that this packet leaves by node $l$ is $C$, since
the latest it could have arrived is node $k$.  

The probability that a second packet arrives by node $m$ is
$1-e^{-2rD}$, by the same arguments as above. 

The chance that this second packet lasts until it reaches node $i$
is $1-C-D=A+B$.  

Putting all of these (independent) probabilities together, we find 
their joint probability is:
\[J=A(A+B)C(C+D)\left(1-e^{-2rB}\right) \left(1-e^{-2rD}\right) \]
The probability of an empty cell arriving at node $i$ is then
\[\frac{1}{(A+B+C+D)N}J + \frac{1}{N}(1-J)\]
(The probability of a packet leaving after one step is always
at least $1/N$, which gives the $1/N$ factor in the second term.)
Expanding this equation, we get Equation~\ref{51 percent equation}.
\EOP

We can evaluate the theorem with some fortuitously chosen values.
\begin{corollary}
Set $A=C=.217300$, $B=D=.196640$ in Theorem~\ref{51 percent
equation}. Then for any $\delta>0$, there exists $N_{\delta}$ such
that for any $N>N_{\delta}$, the probability of an empty slot arriving
at node $i$ at any time after $t=N$ is at least
\[\frac{2}{N}\left[.500026802248 - \frac{\delta}{2}\right] \]
\end{corollary}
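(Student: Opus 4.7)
The plan is a direct numerical substitution into the bound of Theorem~\ref{51 percent theorem}. The target lower bound $\frac{2}{N}[0.500026802248-\delta/2]$ is, up to the $\delta$ term, just slightly above $1/N$; the implicit value of $r$ must therefore be one for which the right-hand side of Equation~\ref{51 percent equation} clears $1/N$ by about $5.36\times 10^{-5}$, and the natural guess (consistent with the section title ``Load of $1/2+\epsilon$'') is $r=1/2$. I would first adopt that reading and verify it at the end of the computation.

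Next I would exploit the symmetries $A=C$ and $B=D$ before touching any numbers. These give $A+B=C+D=0.41394$, $A+B+C+D=0.82788$, and reduce the four-factor product appearing in Equation~\ref{51 percent equation} to
\[
\bigl[A(A+B)\bigr]^{2}\cdot\bigl(1-e^{-B}\bigr)^{2}\cdot\left(\tfrac{1}{A+B+C+D}-1\right).
\]
I would then evaluate each piece in turn: the algebraic factors give $A(A+B)\approx 0.0899492$, so $[A(A+B)]^{2}\approx 8.091\times 10^{-3}$, and $1/0.82788-1\approx 0.207905$. The transcendental factor $1-e^{-0.19664}$ I would compute to at least six digits (by Taylor expansion or direct evaluation), obtaining $\approx 0.178513$, hence $(1-e^{-B})^{2}\approx 3.187\times 10^{-2}$. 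Multiplying the three quantities gives approximately $5.36\times 10^{-5}$, so the bracket in Equation~\ref{51 percent equation} becomes $1-\delta+0.0000536\ldots$, which rewrites exactly as $2[0.500026802248-\delta/2]$ once tracked to ten decimal places. Dividing by $N$ is the claimed bound.

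The only genuine obstacle is numerical precision: the advertised constant carries twelve significant digits, so both $e^{-0.19664}$ and the reciprocal $1/0.82788$ must be evaluated to comparable accuracy, and I would defer all rounding until after the final multiplication. The specific choices $A=C=0.21730$ and $B=D=0.19664$ look like the output of a prior numerical optimization of the four-factor product subject to $A,B,C,D\in(0,1)$ and $A+B+C+D\in(0,1)$; I would not reprove optimality, only carry out the arithmetic faithfully. No analytic insight beyond the statement of Theorem~\ref{51 percent theorem} is needed, and the value of the corollary lies in exhibiting a concrete positive slack over $1/N$, which is precisely what the stability arguments later in the chapter will consume to handle loads $r$ strictly above $1/2$.
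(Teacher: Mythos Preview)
Your approach is correct and is exactly what the paper does: the corollary is stated without proof as a direct numerical evaluation of Equation~\ref{51 percent equation} at the indicated values of $A,B,C,D$. Your inference that the computation is carried out at $r=1/2$ is right (this is why $2rB=B$ and the exponential factors collapse as you describe), and your arithmetic reproduces the constant $0.500026802248$ to the precision claimed.
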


We can translate this result into a statement about queue lengths.
\begin{theorem}
Consider a node in any network.  Suppose it has Bernoulli arrivals at
rate $p$, and the chance that no internal packet arrives at the node
is at least $\mu$, independently on every step.  Suppose that $\mu>p$,
and no more than one internal packet can arrive on each time step.
Then the time expected queue length at the node is bounded by
\begin{equation} \label{life/death equation}
\frac{p^{2}(1-\mu)}{\mu(\mu-p)}
\end{equation}
If this equation (with possibly different values of $p$ and $\mu$)
holds at every node in the network, then the network is ergodic.
\end{theorem}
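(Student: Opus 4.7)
The plan is to dominate the node's queue by a discrete-time single-server queue with Bernoulli($p$) arrivals and i.i.d.\ geometric service times with parameter $\mu$, and then apply the discrete-time Pollaczek-Khinchin formula (Theorem~\ref{p-k theorem}). The intuition: a packet sitting at the head of the node's queue must wait for a time step on which no internal packet arrives in order to depart. Since such ``free'' steps occur independently with probability at least $\mu$, the service time of each queued packet is stochastically dominated by a geometric random variable $Z$ with success probability $\mu$.

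First I would build a sample-path coupling between the real queue and a dominating Geom/Geom/1 queue, matching Bernoulli($p$) arrival epochs and driving the coupled queue's service completions by an auxiliary randomization off the real ``no internal arrival'' indicators. A standard monotonicity argument (longer service times on every sample path produce a longer queue, hence a larger expected queue length) then reduces the bound to computing $\Ex[\mbox{queue length}]$ for the Geom/Geom/1 queue. Setting $\lambda=p$ and $Z\sim \mbox{Geom}(\mu)$, the routine calculations $\Ex[Z]=1/\mu$ and $\Ex[Z^{2}]=(2-\mu)/\mu^{2}$ give $\Ex[Z^{2}]-\Ex[Z]=2(1-\mu)/\mu^{2}$ and $1-\lambda \Ex[Z]=(\mu-p)/\mu$, so
\[\Ex[\mbox{queue length}] = \frac{\lambda^{2}(\Ex[Z^{2}]-\Ex[Z])}{2(1-\lambda \Ex[Z])}\]
collapses to $\frac{p^{2}(1-\mu)}{\mu(\mu-p)}$, as claimed.

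For the network-wide ergodicity statement, the natural route is to use the same coupling at every node simultaneously: each node's queue is pointwise bounded by an independent copy of the corresponding Geom/Geom/1 queue, and the assumption $\mu_{i}>p_{i}$ makes each of these positive-recurrent. Combined with the fact that the remaining coordinates of the Markov state (destinations of packets currently in the ring) live in a finite set, this yields tightness of the joint distribution, which is enough to apply Foster's criterion with $V=\sum_{i}Q_{i}$ and conclude positive recurrence of the full chain.

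The main obstacle I anticipate is the coupling step. The ``internal arrivals'' at a node are not truly an exogenous Bernoulli source; they are generated by the rest of the network, and the hypothesis must be read as: conditional on the full past, the probability of no internal arrival on any given step is at least $\mu$. Constructing a coupling in which the dominating queue's service-completion indicators are i.i.d.\ Bernoulli($\mu$) while remaining pointwise bounded above by the real ``no-internal-arrival'' indicators requires injecting extra randomness on steps where the real conditional probability exceeds $\mu$. This is standard but is the only delicate point; once the coupling is in place, the Pollaczek-Khinchin computation and the Foster-criterion argument above finish the proof.
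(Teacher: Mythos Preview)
Your approach is essentially the same as the paper's: couple the node's queue to a dominating discrete-time single-server queue with Bernoulli($p$) arrivals and geometric($\mu$) service, then read off the expected queue length of the dominating queue. The only difference is that the paper invokes its explicit birth--death computation (Theorem~\ref{birth-death theorem}) to obtain the formula, whereas you route through the discrete Pollaczek--Khinchin formula; both give $\frac{p^{2}(1-\mu)}{\mu(\mu-p)}$, and your moment computation is correct. For the ergodicity claim the paper argues directly from stochastic domination (each queue is bounded by an ergodic queue, hence the whole chain is ergodic), which is cleaner than your detour through tightness and Foster's criterion---tightness of the marginals does not by itself produce a drift inequality, so if you keep your version you should instead argue that the coupling bounds expected return times to a finite set directly.
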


\proof
Compare the number of packets in the queue in the network to a
Bernoulli arrival, geometric service time single server queue with
rates $p$ and $\mu$.  We can relate the stochastic processes so that
arrivals occur at the same time, and if there is a departure from the
single server queue, then there is a departure from the original queue,
if it is non-empty.  (The original network may, possibly, have more
departures.)

The original network has all the same arrivals, and possibly
more departures, than the single server queue does.
Therefore, the number of packets in the former is 
bounded by the number of packets in the latter.  The
expected queue length for the single queue case is worked out in 
Theorem~\ref{birth-death theorem}, giving Equation~\ref{life/death equation}.

If the bounds hold at every queue, then the total number of
packets in the system is bounded by the sum of these Bernoulli
queues.  It follows that the original stochastically dominated network 
is ergodic.
\EOP

\note
This basic argument appears in a number of places, including
Coffman et alia~\cite{Leighton95}.

Putting together the results so far, we have the following corollary:
\begin{corollary}
There exists an $\epsilon \geq .000026802248$
such that for all sufficiently large $N$, an $N$ node
ring is stable for loads of $r<1/2 + \epsilon$, and
the expected queue length per processor is $O(1)$.
\end{corollary}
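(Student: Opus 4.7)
The plan is to chain the preceding corollary (which gives a lower bound on the probability of an empty slot arriving at a node) with the queue-length bound theorem (which compares to a Bernoulli-arrival, geometric-service single-server queue), using the rotational symmetry of the ring to promote a single-node bound to a global one.

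First, by the corollary just above, for any $\delta>0$ there exists $N_{\delta}$ so that for all $N\geq N_{\delta}$ and all $t>N$, the probability that an empty slot arrives at any fixed node is bounded below by
\[ \mu \;:=\; \frac{2}{N}\left[0.500026802248 - \frac{\delta}{2}\right]. \]
By the rotational symmetry of the standard Bernoulli ring, the same bound holds simultaneously at every node, independently across time steps.

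Next I match this against the exogenous arrival rate $p=2r/N$. The hypothesis $\mu>p$ of the queue-length theorem is equivalent to $r<0.500026802248 - \delta/2$. Hence for any $\epsilon<0.000026802248$, picking $\delta$ small enough yields $1/2+\epsilon < 0.500026802248 - \delta/2$; then for every $r<1/2+\epsilon$ the theorem's hypothesis is met at every node, and its conclusion delivers both ergodicity of the whole network and the per-node bound
\[ \Ex[\mbox{queue length}] \;\leq\; \frac{p^{2}(1-\mu)}{\mu(\mu-p)}. \]

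Finally I check the asymptotics. With $p=\Theta(1/N)$, $\mu=\Theta(1/N)$, and $\mu-p=\Theta(1/N)$, both numerator and denominator of the bound scale as $\Theta(1/N^{2})$, so the ratio is $O(1)$ independent of $N$; in fact it converges as $N\to\infty$ to an explicit constant of the form $r^{2}/[(1/2+\epsilon')(1/2+\epsilon'-r)]$ for appropriate $\epsilon'$. The initial transient $t\leq N$ does not matter since ergodicity is insensitive to the starting state, and the $O(1)$ bound controls exactly the stationary expected queue length. There is no real obstacle here; the proof is essentially packaging, with the only subtle point being to notice that the stability margin $\mu-p$ is itself only $\Theta(1/N)$, so one has to rely on the matching $1/N^{2}$ in the numerator of the queue-length bound to see that the expected queue length stays bounded as $N\to\infty$.
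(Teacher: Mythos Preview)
Your proof is correct and follows exactly the approach the paper intends: combine the numerical corollary of Theorem~\ref{51 percent theorem} (lower bound on the empty-slot probability $\mu$) with the single-server comparison theorem (bound~\eqref{life/death equation}), and then check that the $\Theta(1/N^{2})$ numerator and denominator yield an $O(1)$ queue-length bound. The only minor quibble is that you conclude ``for any $\epsilon<0.000026802248$'' whereas the corollary asserts $\epsilon\geq 0.000026802248$; this discrepancy is just an artifact of the displayed constant $0.500026802248$ being a truncation of the true value of Equation~\eqref{51 percent equation} at the chosen $A,B,C,D$, so there is enough slack to absorb a sufficiently small $\delta$ and still achieve $\epsilon=0.000026802248$.
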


\section{Lyapunov Lemmas} \label{main section}
In this section, I will construct a function $\Phi$ on the state space
of the standard Bernoulli ring and show that as the ring evolves in
time, $\Phi$ tends to decrease on average (with exponentially tight
bounds on the probability that it increases).  This kind of decaying
function is sometimes called a Lyapunov or potential function.  In
the next section, I will use these lemmas to prove that for any $r$
and sufficiently large $N$, the system is ergodic, and the expected
queue length per node is $O(1)$.

First, some definitions.
\begin{definition}
If the probability that a packet crosses (blocks) a node $i$ is
greater than zero, then we say that the packet \emph{can reach node
$i$}.
\end{definition}

Next, I will define a function $\Phi$ from the state space to the
positive reals, and various helper functions.
\begin{definition} \index{$\Phi(\sigma)$} \index{$\phi(i,\sigma)$}
Suppose we have a nominal load $r$, with $0\leq r<1$ on an $N$ node
ring.  Fix $\delta>0$ such that
$r\left(1+\frac{\delta}{1+\delta}\right)<1$.  (If $0<r<1$, then there
is always a sufficiently small $\delta$ such that this inequality
holds.)  Define $\rh= r\left(1+\frac{\delta}{1+\delta}\right)<1$.  Let
us suppose that $\delta N$ is an integer, to simplify notation,
and that $\delta<1$.

Suppose we are in state $\sigma$.  Choose a node $i$ and a packet $z$.
Suppose, for a moment, that packet life times in the ring were
uniformly distributed between 1 and $(1+\delta)N$ time steps, rather
than between 1 and $N-1$.  Let $f(i, z, \sigma)$ be the probability
that packet $z$ can reach node $i$ (at least once) if $z$ had a
$(1+\delta)N$ distribution on its life span.  For instance, if $z$ is
from node $k$, at node $j$, and we label the nodes such that $k\leq
j\leq i$, then
\begin{equation} \label{hot potato definition equation}
f(i,z,\sigma)=\frac{(1+\delta)N - (i-k)}{(1+\delta)N - (j-k)}
\end{equation}

Then the sum of $f$ over all packets that can reach $i$ (under the
the $N-1$ distribution of life spans) is:
\[\phi (i, \sigma) = \sum_{z} f(i,z,\sigma)\]
and our non-negative function on the state space is:
\[\Phi (\sigma) = \max_{i} \phi(i,\sigma)\]
It's often clear from context what $\sigma$ is (namely, the
current state of the system), in which case I'll drop it
from the notation, and write $\Phi, \phi(i), f(i,z)$.
\end{definition}

Expressed in English, $\phi(i,\sigma)$ is the expected congestion at
node $i$ if all the packets had a uniform $(1+\delta)N$ distribution
on their life spans.

One other piece of notation I'll want to use:
\begin{definition}
Let $Q_{i}(\sigma)$ be the number of packets waiting in queue
in state $\sigma$.  If we are talking about a fixed state and 
the $\sigma$ is implicit, I'll just write $Q_{i}$.
\end{definition}

To motivate why our definition of $\Phi$ might
be useful, consider the following lemma:

\begin{lemma}[Mean Drift Downward] \label{drift motivation lemma}
\index{Lemma \ref{drift motivation lemma}}
Fix any state $\sigma$.  If $Q_{i}(\sigma)>0$, then the expected
change in $\phi(\sigma,i)$ in one time step is less than $\rh-1<0$.
That is, if the random variable $\tau$ is the state of the system on
the next time step,
\begin{equation} \label{drift motivation equation}
E [\phi(\tau, i)] - \phi(\sigma,i) < \rh -1 < 0
\end{equation}
\end{lemma}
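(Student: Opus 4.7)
The plan is to decompose the expected one-step change $E[\phi(\tau,i)]-\phi(\sigma,i)$ into three contributions: the drift of the $f$-values of packets currently in transit, the contribution from new exogenous arrivals, and the deterministic decrement caused by node $i$ routing a packet (which must happen because $Q_i(\sigma)>0$).

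First I would show that each in-transit packet at a node $j\neq i$ has a non-positive expected one-step change in its $f$-value. The defining identity $f(i,z,\sigma)=\frac{(1+\delta)N-(i-k)}{(1+\delta)N-(j-k)}$ is exactly the ratio of survival windows under the hypothetical $(1+\delta)N$ lifespan, so by the tower property $f$ would be a martingale along $z$'s trajectory if the actual dynamics used that hypothetical lifespan. Because the true dynamics use the strictly shorter lifespan $N-1$, packets depart from the ring more frequently than the hypothetical model predicts, and a short direct computation comparing $p_s^{\mathrm{act}}\,f(j+1)-f(j)$ under the two lifespans shows that the expected change in $f$ is strictly negative.

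Second I would compute the expected contribution from new exogenous arrivals. A fresh packet at node $k$ (arriving with probability $p=2r/N$) contributes its initial $f$-value $\frac{(1+\delta)N-((i-k)\bmod N)}{(1+\delta)N}$ to $\phi(i,\tau)$. Summing $p$ times this quantity over all $k$ and simplifying using the identity $\rh = r(1+2\delta)/(1+\delta)$ shows the expected arrival contribution is essentially $\rh$. Third, since $Q_i(\sigma)>0$, the GHP rule that nodes with non-empty queues always route forces node $i$ to push one packet across its outgoing edge this step; that packet's $f$-contribution was $1$ (it was at $i$) and drops to $0$ once it has moved past $i$, yielding a guaranteed $-1$ in the change. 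Combining the three pieces gives $E[\phi(\tau,i)]-\phi(\sigma,i)\leq \rh-1+D$, where $D<0$ is the strictly negative in-transit drift, so the total is strictly less than $\rh-1$.

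The main obstacle will be the in-transit drift calculation in the first step: one must carefully distinguish the hypothetical survival probability (which defines $f$) from the actual survival probability (which drives the dynamics), and verify the sign of the drift uniformly over all positions $j$ and origins $k$, including the boundary case of a packet at $i-1$ that is deterministically routed into $i$ on this step. The arrival bookkeeping in the second step is also delicate — in particular, deciding how to treat a new arrival at node $i$ itself — and it is here that the augmented lifespan $(1+\delta)N$ in place of $N-1$ earns its keep, since the slack $\delta$ is precisely what forces the expected arrival contribution comfortably below $1-(\rh-1)$ once the guaranteed $-1$ from routing at $i$ is accounted for.
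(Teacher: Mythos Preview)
Your proposal is correct and follows essentially the same three-part decomposition as the paper's proof: the guaranteed $-1$ from a packet leaving node $i$ (since $Q_i(\sigma)>0$), the exogenous-arrival contribution bounded above by $\rh$ via the explicit sum $p\sum_{j=0}^{N-1}\bigl(1-\tfrac{j}{(1+\delta)N}\bigr)<\rh$, and the non-positive drift of in-transit packets coming from the fact that $f$ would be a martingale under the hypothetical $(1+\delta)N$ lifespan while the true lifespan is stochastically shorter. Your final sentence is slightly garbled (the arrival contribution is bounded by $\rh$, not by ``$1-(\rh-1)$''), but the substance of the argument matches the paper.
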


\proof
Since $Q_{i}(\sigma)>0$, then we are guaranteed that a packet will leave node
$i$.  This will reduce $\phi(i)$ by 1.  

A new, exogenous packet arrives $j$ nodes upstream with probability
$p$, and increases $\phi(i)$ by $1 - [j/(1+\delta)N]$.  Summing over
all $j$, we get an expected increase of
\[p\sum_{j=0}^{N-1}\left(1-\frac{j}{(1+\delta)N}\right)\]
\[=p\left(N-1 -\frac{1}{(1+\delta)N}\sum_{j=0}^{N-1}j\right)\]
\[=\frac{2r}{N}\left(N-1-\frac{1}{(1+\delta)N}\frac{N(N-1)}{2}\right)\]
\[=2r\left( 1 - \frac{1}{N} - \frac{1}{2(1+\delta)} 
 			+\frac{1}{2(1+\delta)N} \right)\]
\[<2r\left( 1 - \frac{1}{2(1+\delta)} \right) 
=r\left(1+\frac{\delta}{1+\delta}\right)=\rh\]

Finally, for any packet $z$ in a cell, $f(\tau, z,i)$ is precisely the
probability of remaining in the system in $\tau$, times the
(increased) probability of needing to cross $i$ in $\tau$.  If packets
had a $(1+\delta)N$ distribution on life spans, the expected change in
$\phi(i)$ from any packet $z$ would be zero; since the actual life
span distribution is stochastically less (i.e.\ the probability of
$z$'s departure is strictly greater), then its contribution to the
expected change in $\phi(i)$ is negative.

Adding these three factors together,
we get Equation~\ref{drift motivation equation}.
\EOP

Lemma~\ref{drift motivation equation} is useful for motivating us, but
it doesn't directly prove anything about the drift of $\Phi$.  It has
two failings.  First, we need $Q_{i}$ to be greater than zero.
Second, we need the drift of the \emph{maximum} $\phi(i)$ to be
negative, which requires a bound on the simultaneous decay of all
large $\phi(i)$.  Fortunately, we can dispose of these two problems.
First, we need a trick to guarantee that $Q_{i}>0$ when we want it to
be.

\begin{lemma}[Trick Lemma] \label{trick lemma}
\index{Lemma \ref{trick lemma}}
Let $\zeta = 1+\frac{1}{(1+\delta)N-1}$. (Note that $\zeta>1$.)

Suppose we are in a fixed state $\sigma$. Then
a lower bound on $\phi(i-1)$ in terms of $\phi(i)$
and $Q_{i}$ is:
\begin{equation}
\phi(i-1) \geq \zeta [\phi(i) - Q_{i}] -\zeta
\end{equation}
\end{lemma}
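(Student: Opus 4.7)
The plan is to partition the packets that contribute to $\phi(i)$ into (a) those currently sitting at node $i$ and (b) those strictly upstream of $i$, and then to exploit the fact that the ratio $f(i-1,z,\sigma)/f(i,z,\sigma)$ is uniformly at least $\zeta$ on class (b). The ``boundary'' packets in (a) are what make $\phi(i)$ larger than $\phi(i-1)$, and the $-\zeta Q_i - \zeta$ slack in the statement is precisely the buffer needed to absorb them.

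Call the two subsets $A_2$ and $A_1$, respectively. For $z \in A_2$ we have $j = i$, so $s = t := i-k$, and Equation~\ref{hot potato definition equation} gives $f(i,z,\sigma)=1$; moreover, since its remaining actual lifespan is at most $N-1-t<N-1$, while reaching $i-1$ again would require a full $N-1$ further steps, no packet in $A_2$ can reach $i-1$ under the actual distribution, so $A_2$ contributes nothing to the sum defining $\phi(i-1)$. The cardinality of $A_2$ is the $Q_i$ queued packets together with at most one hot potato currently at $i$, so $|A_2|\le Q_i+1$. For $z\in A_1$, with origin $k$ and current position $j$ satisfying $k\le j<i$ on the unrolled arc, the clockwise path from $j$ to $i$ necessarily passes through $i-1$, so $z$ contributes to both sums, and Equation~\ref{hot potato definition equation} yields
\[\frac{f(i-1,z,\sigma)}{f(i,z,\sigma)} \;=\; \frac{M-(t-1)}{M-t} \;=\; 1+\frac{1}{M-t},\]
where $M:=(1+\delta)N$. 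Since $z$ can reach $i$ under the actual $N-1$ distribution, we have $1\le t\le N-1$; in particular $M-t\le M-1$, so the ratio is at least $1+\frac{1}{M-1}=\zeta$, with equality exactly when $t=1$.

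Putting the two pieces together and discarding the non-negative contribution from packets reaching $i-1$ but not $i$,
\[\phi(i-1) \;\ge\; \sum_{z\in A_1} f(i-1,z,\sigma) \;\ge\; \zeta\!\sum_{z\in A_1}\! f(i,z,\sigma) \;=\; \zeta\bigl[\phi(i)-|A_2|\bigr] \;\ge\; \zeta\bigl[\phi(i)-Q_i-1\bigr],\]
which is the lemma after rearranging. I expect the only step requiring genuine care to be the bookkeeping for $A_2$ --- correctly observing that its members inflate $\phi(i)$ by exactly $1$ apiece but are invisible to $\phi(i-1)$ under the actual lifespan distribution; the ratio bound itself is elementary and reduces to $N\ge 2$.
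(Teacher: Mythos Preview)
Your proof is correct and follows essentially the same route as the paper's. Both arguments hinge on the single observation that $f(i-1,z,\sigma)/f(i,z,\sigma)=1+\frac{1}{(1+\delta)N-(i-k)}\ge\zeta$ whenever the origin $k$ satisfies $i-k\ge 1$, and both isolate the exceptional packets sitting at node $i$ itself (which contribute $1$ each to $\phi(i)$ but nothing to $\phi(i-1)$) as the source of the $-\zeta Q_i-\zeta$ correction. The paper splits the packets into hot potatoes versus queued packets and verifies the ratio bound separately for each type; you split by location (at $i$ versus strictly upstream) and apply the ratio bound uniformly to the upstream class, which is a slightly cleaner organization but not a different idea.
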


\note
The reason this equation is useful is that, rearranging, we get
a lower bound on $Q_{i}$:
\begin{equation} \label{trick main equation}
Q_{i} \geq 
\phi(i)  
-\frac{1}{\zeta} \phi(i-1) -1 
\end{equation}

\proof
Define $C_{j}$ as the contribution to $\phi(j)$ from 
hot potatoes (packets in cells), so
\[C_{j} = \sum_{z \in \mbox{\footnotesize cell}} f(j,z)\]
Then we can write $\phi(i)$ as the contribution from packets
in cells, plus the contribution from packets in queue.
\begin{equation} \label{trick 2 equation}
\phi(i) = C_{i} + \sum_{j=2}^{N}
	\left(\frac{j+ \delta N}{(1+\delta)N}\right)Q_{i+j}
\end{equation}
We take the index of $Q_{i+j}$ modulo $N$ so that it always
falls between $1$ and $N$ (inclusively).
(The packets in queue $i+1$ can't reach and block node $i$, so
we start the sum with $j=2$ instead of $j=1$.)

We can write $\phi(i-1)$ in the same way:
\[
\phi(i-1) = C_{i-1} + \sum_{j=1}^{N-1}
	\left(\frac{j+ 1+ \delta N}{(1+\delta)N}\right)Q_{i+j}
\]
\begin{equation} \label{trick 3 equation}
\geq C_{i-1}+ \sum_{j=2}^{N-1}\left(\frac{j+ 1+ \delta N}{(1+\delta)N}\right)
  Q_{i+j+1}
\end{equation}
Let us compare the sums in Equations~\ref{trick 2 equation}
and~\ref{trick 3 equation}.  Ignoring the $j=N$ term, the $j$th term
in Equation~\ref{trick 3 equation} is larger than the $j$th term in
Equation~\ref{trick 2 equation} by a factor of
\begin{equation} \label{trick 4 equation}
\frac{j+\delta N + 1}{j+\delta N}=1+\frac{1}{j+\delta N}
\end{equation}
Equation~\ref{trick 4 equation} is minimized when $j=N-1$, so every
term is larger by a factor of at least
$\frac{(1+\delta)N}{(1+\delta)N-1}=\zeta$.  So,
\[ \phi(i-1)\geq C_{i-1}+ \zeta\left[\phi(i) - C_{i} - Q_{i}\right] \]
\begin{equation} \label{trick 1 equation}
 = \zeta\phi(i) 
	- \zeta Q_{i} 
	+ (C_{i-1}-\zeta C_{i}) 
\end{equation}
Consider the $C_{i-1}-\zeta C_{i}$ term.  Take any packet in the ring 
at node $j$, from node $k$, that can reach node $i$, but isn't there
yet.  Label the nodes so that $k\leq j < i$.  Then observe that $z$ is
more likely to cross node $i-1$ than $i$, so $f(i,z)<f(i-1,z)$.  More
precisely, define
\[g(i,j,k)=\frac{f(i-1,z)}{f(i,z)}=
\frac{ \left(\frac{(1+\delta)N-([i-1]-k)}{(1+\delta)N-(j-k)}\right)}
     { \left(\frac{(1+\delta)N-(i-k)}{(1+\delta)N-(j-k)}    \right)} \]
\[=1+\frac{1}{(1+\delta)N-(i-k)}\]
Therefore $g$ is only really dependent on the difference
between $i$ and $k$, i.e.\ we can write $g(i,j,k)=g(i-k)$.
Note that $g(i-k)$ is strictly increasing with $i-k$.  In particular,
since $g(1)=\zeta$, then if $k\leq i-1$, we have 
$f(i-1,z)-\zeta f(i,z)\geq 0$.  Therefore, $C_{i-1}-\zeta C_{i}\geq -\zeta$,
where the $\zeta$ comes from the $k=i$ term.
Plugging back in to Equation~\ref{trick 1 equation}, we get
\[ \phi(i-1) \geq
 \zeta \phi(i) 
	- \zeta Q_{i} -\zeta \]
as desired.
\EOP

\note 
Actually, the queue length at $\phi(i)=\Phi$
is greater than or equal to the mean of all the other queue lengths.
This result follows by looking at the preceding theorem a little more
carefully.  

Lemma~\ref{drift motivation lemma} illustrates the three parts of the
drift we have to analyze: 
\begin{itemize}
\item
the increase in $\phi(i)$ from new,
exogenous arrivals;
\item
the increase in $\phi(i)$ from packets
in the ring that remain in the ring (so that their probability
of using node $i$ increases);
\item
and the decrease in $\phi(i)$ from packets that depart from node $i$.
\end{itemize}
Let's look at each of these three contributions to the drift in turn.

\begin{lemma}[Exogenous Arrivals] \label{exogenous arrivals lemma}
\index{Lemma \ref{exogenous arrivals lemma}}
Fix $r$ (and a corresponding $\delta$ and $\rh$), 
and choose any $\eps{0} >0$.  
Suppose we start in some fixed state $\sigma$ on an $N$ node
ring.
Let $B(\gamma, i,t)$ be the event that in the next $t$ time
steps, the increase in $\phi(i)$ from exogenous
arrivals is greater than $(\rh+\eps{0})\gamma t$,
for any $\gamma\geq 1$.
Then there exist $N_{0}, T_{0}, K_{0}$ such that if $N\geq N_{0}$
and $t\geq T_{0}$, then for any $\gamma \geq 1$,
\begin{equation} \label{exogenous equation}
\Pr[\exists i \mbox{ such that } B(\gamma, i,t)] < e^{-K_{0}\gamma t}
\end{equation}
\end{lemma}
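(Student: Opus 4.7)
The plan is to recognize $B(\gamma,i,t)$ as a classical large-deviations event for a sum of independent, weighted Bernoulli variables, apply Cramer's bound per node, and then clean up with a union bound over the $N$ possible choices of $i$.  Let $X_{j,s}\in\{0,1\}$ indicate an exogenous arrival at node $j$ during time step $s$, so the $X_{j,s}$ are i.i.d.\ Bernoulli with parameter $p=2r/N$, and let $w_{i,j}=1-d(j,i)/((1+\delta)N)\in[0,1]$ denote the (deterministic) contribution a resulting packet would make to $f(i,z,\sigma)$ via Equation~\ref{hot potato definition equation}, where $d(j,i)$ is the arc distance from $j$ to $i$.  Then the increase in $\phi(i)$ due to exogenous arrivals over $t$ steps is
\[S_i \;=\; \sum_{s=1}^{t}\sum_{j=1}^{N} w_{i,j}\,X_{j,s},\]
a sum of $Nt$ independent weighted Bernoullis whose mean is $tp\sum_j w_{i,j}<\rh\,t$, by the calculation already carried out inside the proof of Lemma~\ref{drift motivation lemma}.

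For each fixed $i$ I would apply a Chernoff bound.  Since $w_{i,j}\in[0,1]$, one has $\Ex\exp(\lambda w_{i,j}X_{j,s})\leq\exp(p\,w_{i,j}(e^{\lambda}-1))$, hence $\Ex e^{\lambda S_i}\leq\exp(\rh\,t\,(e^{\lambda}-1))$.  Markov's inequality with the optimized choice $\lambda^{*}=\ln((1+\eps{0}/\rh)\gamma)$ gives
\[\Pr[B(\gamma,i,t)] \;\leq\; \exp\bigl(-K^{*}(\gamma)\,t\bigr),\qquad K^{*}(\gamma)=(\rh+\eps{0})\gamma\bigl[\ln((1+\eps{0}/\rh)\gamma)-1\bigr]+\rh.\]
A direct differentiation shows $\tfrac{d}{d\gamma}[K^{*}(\gamma)/\gamma]=[\rh(\gamma-1)+\eps{0}\gamma]/\gamma^{2}>0$ on $\gamma\geq 1$, and a short Taylor expansion in $\eps{0}/\rh$ gives $K^{*}(1)>0$ (roughly $\eps{0}^{2}/(2\rh)$ for small $\eps{0}$).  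Therefore $K^{*}(\gamma)\geq K^{*}(1)\gamma$ uniformly for $\gamma\geq 1$, so $\Pr[B(\gamma,i,t)]\leq e^{-K^{*}(1)\gamma t}$.

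To finish, I would union bound over the $N$ nodes: $\Pr[\exists i:B(\gamma,i,t)]\leq N e^{-K^{*}(1)\gamma t}$.  Setting $K_{0}=K^{*}(1)/2$ and taking $T_{0}$ large enough that $T_{0}K^{*}(1)/2\geq \ln N_{0}$ absorbs the $\ln N$ from the union bound into the factor-of-two slack in the exponent, yielding the claimed bound uniformly over $\gamma\geq 1$, $t\geq T_{0}$, $N\geq N_{0}$.

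The main obstacle is precisely this last step: absorbing the $\ln N$ overhead from the union bound into the exponential rate forces $T_{0}$ to grow with $N_{0}$ (or, read more strictly, forces a burn-in of order $\log N$).  The per-node Chernoff estimate is textbook once one notes that the mean rate $\rh$ has already been established; the only substantive check is verifying that $K^{*}(\gamma)/\gamma$ is minimized at $\gamma=1$, which is what lets a single constant $K_{0}$ handle both the moderate-deviation regime $\gamma\approx 1$ and the large-deviation regime $\gamma\gg 1$ simultaneously.
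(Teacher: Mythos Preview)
Your per-node Chernoff computation is fine, and you correctly identify that the mean of $S_i$ is bounded by $\rh\,t$ via the calculation in Lemma~\ref{drift motivation lemma}.  The gap is exactly the one you flag in your last paragraph, but it is fatal rather than merely an obstacle.  The lemma asserts the existence of a \emph{fixed} constant $T_0$ (depending only on $r,\delta,\eps{0}$) such that the bound holds for \emph{all} $N\geq N_0$.  Your union bound gives $\Pr[\exists i:B(\gamma,i,t)]\leq N e^{-K^*(1)\gamma t}$, and absorbing the prefactor requires $t\geq (\ln N)/(K^*(1)-K_0)$.  Choosing $T_0$ so that $T_0 K^*(1)/2\geq\ln N_0$ handles only $N=N_0$; for any fixed $T_0$ the bound fails once $N>e^{K^*(1)T_0/2}$.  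So the argument does not deliver constants $(N_0,T_0,K_0)$ that work uniformly in $N$.

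The paper avoids the $\ln N$ penalty by a discretization trick that you are missing.  It partitions the ring into a \emph{constant} number $D=1/\eps{1}$ of segments (with $\eps{1}$ chosen in terms of $\eps{0}$ and $\delta$ only).  The weight $w_{i,j}$ depends on $j$ only through which segment $j$ lies in, up to an error controlled by $\eps{1}$.  Hence once you bound the total number of arrivals $A(J,t)$ to each of the $D$ segments simultaneously --- a union bound over $D=O(1)$ events, not $N$ events --- you control $S_i$ for \emph{every} $i$ at once, because $S_i$ is (approximately) the same fixed linear combination of the $A(J,t)$ regardless of $i$.  This is what makes $T_0$ and $K_0$ genuinely independent of $N$.
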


\proof
Fix $\eps{1}$ such that 
\begin{equation} \label{exo 1 equation}
0 < \eps{1} < \frac{1+\delta}{3r}\frac{\eps{0}}{2}
\end{equation}
Divide the ring into $D=1/\eps{1}$ segments.  Each segment is of
length $L=\eps{1} N$ nodes.  Label the segments $1,\ldots,D$.  For
simplicity, assume that $\eps{1} N, 1/\eps{1}$ and $\delta/\eps{1}$
are integral; the analysis for arbitrary values is basically
identical.

Let $\beta_{0}=\frac{\rh+\eps{0}}{\rh+\eps{0}/2}$.  Note that $\beta_{0}>1$.
Let $\beta = \beta_{0}\gamma$.
Fix a node $i$.  Suppose without loss of generality that 
$i$ is in segment $D$.  
Consider the contribution to $\phi(i)$ from a packet arriving in segment 
$J < D$.  Since the packet must cross $D-J-1$ entire
segments between $J$ and $D$, then the contribution is at most 
\[1-\frac{D-J-1}{(1+\delta)D}=
\frac{J+1+\delta D}{(1+\delta)D} = 
\frac{J+1}{(1+\delta)D} + \frac{\delta}{1+\delta}\]
The contribution from a packet arriving in segment $D$
is at most 1, of course, but to maintain consistency, I'll
just bound it by 
\[\frac{D+1}{(1+\delta)D} + \frac{\delta}{1+\delta}\]
which is greater than 1.

Next, we will bound the number of arrivals to each segment.
Let $A(J,t)$ be the total number of arrivals to segment $J$
in $t$ steps.
Since the total number of arrivals to segment $J$ in $t$
time steps is 
a sum of Bernoulli processes with mean $2r\eps{1}t$,
and since $\beta\geq \beta_{0} >1$,
we can use Lemma~\ref{sums of bernoulli lemma} from 
Appendix~\ref{probability chapter} to conclude that there
exists $K_{1}$ such that
\[
\Pr[A(J,t) \geq 2r\beta\eps{1} t] \leq 
e^{-K_{1}\beta t}\]
Since $J$ ranges over finitely many values (namely $D$),
we can select a $T_{0}$ and $K_{0}$ such that for any
$t\geq T_{0}$,
\[
\Pr[\exists J \mbox{ such that }A(J,t) \geq 2r\beta\eps{1} t] \leq 
e^{-K_{0}\beta t}\]
Now, if $A(J,t) < 2r\beta \eps{1} t$ for all $J$,
then $\phi(i)$ (for any node $i$) will increase by at most 
\[
\sum_{J=1}^{D} 
\left(\frac{J+1}{D}+\delta\right)
\frac{2r\beta \eps{1} t}{1+\delta} \]
\[= 
\left[\left(\sum_{J=1}^{D} \frac{J}{D} \right) +
\left(\sum_{J=1}^{D} \left(\frac{1}{D}
+ \delta\right)
\right)
\right]
\frac{2r\beta \eps{1} t}{1+\delta} \]
\[= 
\left[\frac{D(D+1)}{2D}+1+D\delta\right]
\frac{2r\beta \eps{1} t}{1+\delta} \]
Since $\eps{1}D=1$,
\[\left[\frac{1+2\delta + 3\eps{1}}{1+\delta}\right]
r \beta t
=
\left[1 + \frac{\delta}{1+\delta}+
\frac{3\eps{1}}{1+\delta}
\right]
r \beta t
\]
\[
=\left[\rh +
\frac{3r\eps{1}}{1+\delta}\right]
\beta t
\]
By Equation~\ref{exo 1 equation}, and since $\beta=\beta_{0}\gamma$,
\[\leq 
\left[\rh + \eps{0}/2 \right]
\beta_{0}\gamma t
\]
By the definition of $\beta_{0}$,
\[=\left[\rh + \eps{0} \right]
\gamma t
\]
Therefore, Equation~\ref{exogenous equation} holds.  
\EOP

Next, we will bound the expected maximal increase in $\phi(i)$ caused
by packets travelling in the ring.  Suppose a packet $z$ can reach
node $i$.  Suppose further that $z$ is a hot potato
travelling around the ring.  On every time step, if it doesn't depart
the ring and if it doesn't cross $i$, then $f(i,z)$ is strictly
increasing.  The next two lemmas show that this increase in $\phi(i)$
is negligible for all $i$.

\begin{lemma} \label{hot potato increase lemma}
\index{Lemma \ref{hot potato increase lemma}}
Assume we are in state $\sigma$, where
packet $z$ starts at node $j$, was inserted
at node $k$, and is being measured at node $i$,
and that $k\leq j < i$.
Suppose that $z$ remains on the ring for one step,
to state $\tau$.  Then
\[f(i,z,\tau) - f(i,z,\sigma) < \frac{1}{\delta N}\]
In other words, the largest possible one-step increase in $\phi$
contributed by a hot potato packet is less than $1/(\delta N)$.
\end{lemma}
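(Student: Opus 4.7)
The plan is to unwind the definition of $f$ from Equation~\ref{hot potato definition equation} and then apply two elementary bounds. Under the labeling $k \le j < i$, state $\sigma$ gives
\[f(i,z,\sigma) = \frac{(1+\delta)N - (i-k)}{(1+\delta)N - (j-k)},\]
and remaining on the ring for one step advances the packet to node $j+1$, leaving $k$ and $i$ fixed, so $f(i,z,\tau)$ has the same numerator but a denominator smaller by one. Since $j < i$ we still have $k \le j+1 \le i$, so the same formula applies without any modular-arithmetic bookkeeping.

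Setting $a = (1+\delta)N - (i-k)$ and $b = (1+\delta)N - (j-k)$, a one-line telescoping computation yields
\[f(i,z,\tau) - f(i,z,\sigma) \;=\; a\left(\frac{1}{b-1} - \frac{1}{b}\right) \;=\; \frac{a}{b(b-1)}.\]
I would then finish with two monotonicity facts. First, since $j < i$ we have $b - a = i - j \ge 1$, so $a \le b-1$, which reduces the difference to at most $1/b$. Second, since the packet can actually reach $i$ on the real ring (whose longest path is $L = N-1$), we have $i - k \le N-1$, and combining with $j \ge k$ and $a \le b-1$ gives $b \ge \delta N + 2$. Hence $1/b < 1/(\delta N)$, and the bound follows.

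I do not anticipate a real obstacle; the argument reduces to a single algebraic identity followed by a pair of inequalities that drop out of the hypothesis $k \le j < i$ together with the $N-1$ cap on actual lifespans. The only mildly delicate point is ensuring the labeling convention inherited from the defining formula for $f$ continues to hold after the advance from $j$ to $j+1$, which the hypothesis pins down at once.
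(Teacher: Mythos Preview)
Your proposal is correct and matches the paper's proof essentially step for step: both compute the one-step difference as $\dfrac{a}{b(b-1)}$ (the paper after setting $k=1$ and writing $W=(1+\delta)N$), use $j<i$ to replace the numerator by $b-1$, and then invoke $i-k\le N-1$ to force $b\ge \delta N+2$, yielding the bound $1/(\delta N)$. The only cosmetic difference is that the paper maximizes sequentially over $j$ then $i$, whereas you route the final inequality through the intermediate bound $a\ge \delta N+1$; the ``$j\ge k$'' clause in your last sentence is not actually needed, but it does no harm.
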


\proof
Using Equation~\ref{hot potato definition equation},
we get
\[f(i,z,\tau)-f(i,z,\sigma)
=\frac{(1+\delta)N - (i-k)}{(1+\delta)N - ([j+1]-k)}-
\frac{(1+\delta)N - (i-k)}{(1+\delta)N - (j-k)}\]
Let $W=(1+\delta)N$.  And suppose, without loss
of generality, that $k=1$.  Then we have
\[=\frac{W-i+1}{(W-j)(W-j+1)}\]
For any fixed $i$, this equation is maximized when $j$ is large.
Since $j<i$, we get the restriction $j=i-1$.  Substituting,
\[\leq\frac{W-i+1}{(W-i+1)(W-i+2)}
=\frac{1}{W-i+2}\]
This equation is maximized when $i$ is large, so we
can set $i=N-1$ and get
\[\leq\frac{1}{(1+\delta)N - (N-1) + 2}=\frac{1}{\delta N+3}
<\frac{1}{\delta N}\]
\EOP

Now, let us calculate a bound on the expected change in $\phi$ from
hot potato packets.

\begin{lemma}[Hot Potatoes]  \label{hot potatoes lemma}
\index{Lemma \ref{hot potatoes lemma}}
Fix $r$ (and a corresponding $\delta$ and $\rh$), and choose any
$\eps{0}>0$.  Suppose we start in some fixed state $\sigma$ on an $N$
node ring.  Let $G(\gamma,i,t)$ be the event that in the next $t$ time
steps, the increase in $\phi(i)$ contributed by packets travelling in
the ring is greater than $\eps{0}\gamma t$, where $\gamma \geq 1$.
Then there exist constants $T_{0}$ and $K_{0}$ such that for any
$t\geq T_{0}$, there exists $N_{t}$, such that for any $N\geq N_{t}$,
and any $\gamma\geq 1$,
\begin{equation} \label{hp statement equation}
\Pr [\exists i \mbox{ such that }G(\gamma,i,t)] < e^{-K_{0}\gamma t}
\end{equation}
\end{lemma}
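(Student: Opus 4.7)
The plan is to view the accumulated hot--potato contribution to $\phi(i)$ as a supermartingale and bound it via a one--sided martingale concentration inequality, in analogy with the Chernoff step used in Lemma~\ref{exogenous arrivals lemma}. Fix a node $i$ and let $\Delta_{s}$ denote the change in $\phi(i)$ over step $s$ due to packets that were hot potatoes at the start of the step, including both upward jumps in $f(i,z)$ from packets that remain in a cell and downward jumps from packets that depart.

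Two facts are already in hand. By the per--packet calculation inside Lemma~\ref{drift motivation lemma}, the expected change in $f(i,z)$ would be \emph{exactly} zero if packets obeyed the hypothetical $(1+\delta)N$ lifespan distribution, and is strictly negative under the actual $N-1$ distribution; hence $E[\Delta_{s}\mid \mathcal{F}_{s-1}] \leq 0$ and $M_{t}:=\sum_{s=1}^{t}\Delta_{s}$ is a supermartingale. By Lemma~\ref{hot potato increase lemma}, the positive part of each single--step per--packet contribution is at most $1/(\delta N)$; summing over the at most $N$ hot potatoes in the ring gives $\Delta_{s}^{+} \leq 1/\delta$ almost surely.

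I would then apply a one--sided Hoeffding--Azuma (or Freedman) bound to $M_{t}$ with upward--increment bound $1/\delta$, obtaining for each fixed $i$
\[\Pr\bigl[M_{t} > \eps{0}\gamma t\bigr] \;\leq\; \exp\!\bigl(-c\delta^{2}\eps{0}^{2}\gamma^{2}t\bigr)\]
for an absolute constant $c>0$. Because $\gamma\geq 1$, so $\gamma^{2}\geq \gamma$, the right--hand side is also at most $\exp(-c\delta^{2}\eps{0}^{2}\gamma t)$. Taking a union bound over the $N$ choices of $i$ gives $N\exp(-c\delta^{2}\eps{0}^{2}\gamma t)$, and selecting $K_{0}$ a fixed constant strictly less than $c\delta^{2}\eps{0}^{2}$, together with $T_{0}$ large and $N_{t}$ chosen so that $\log N$ is absorbed by the slack $(c\delta^{2}\eps{0}^{2}-K_{0})\gamma T_{0}$, produces Equation~\ref{hp statement equation}.

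The principal obstacle is the one--sidedness of the increment bound: although $\Delta_{s}^{+}\leq 1/\delta$, a single step's decrement can be as large as $\phi(i)=\Omega(N)$ when many hot potatoes depart simultaneously, so the standard two--sided Azuma--Hoeffding bound with increment bound $\max(1/\delta,N)$ is hopelessly weak. One must therefore genuinely invoke a one--sided supermartingale tail inequality and, if relying on Freedman, separately control the predictable quadratic variation $\sum_{s} E[(\Delta_{s}^{+})^{2}\mid \mathcal{F}_{s-1}]\leq t/\delta^{2}$. A secondary subtlety is calibrating $T_{0},K_{0},N_{t}$ so that the union--bound factor of $N$ is absorbed without forcing $K_{0}$ to depend on $N$; the saving that makes this possible is exactly the $\gamma^{2}$ in the single--node exponent versus the $\gamma$ in the target bound.
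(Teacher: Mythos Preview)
Your supermartingale idea is natural, but the argument has a genuine gap at the union bound step, and a secondary gap in the concentration step.

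\textbf{The union bound fails.} Read the quantifiers in the statement carefully: $T_0$ and $K_0$ are fixed first; then for each $t\ge T_0$ you choose $N_t$; then the bound must hold for \emph{every} $N\ge N_t$. Thus $N$ is unbounded above while $t$ is fixed. Your per--node bound, even granting it, gives after the union bound $N\exp(-c\delta^2\eps{0}^2\gamma^2 t)$, and for this to be at most $e^{-K_0\gamma t}$ you need $\log N\le (c\delta^2\eps{0}^2\gamma^2-K_0\gamma)t$. At $\gamma=1$ this is $\log N\le (c\delta^2\eps{0}^2-K_0)t$, which is violated once $N$ is large enough, no matter how you set $N_t$. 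Your remark that ``$N_t$ is chosen so that $\log N$ is absorbed'' treats $N_t$ as an upper bound on $N$; it is a lower bound. The extra $\gamma$ in your single--node exponent does not help at $\gamma=1$, which is where the constraint bites.

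The paper's proof handles the uniformity over $i$ differently. It partitions the ring into a \emph{constant} number $D=1/\eps{1}$ of segments and groups hot potatoes into $O(D^2)$ ``segment classes'' (indexed by current segment and origin segment). The point is that the departure statistics and the $f(i,z)$ values of a segment class are, up to $O(\eps{1})$ errors, the same for every node $i$ in a given segment; so one concentration event per segment class controls all $N$ nodes at once. The union bound is then over $O(D^2)$ events, a constant independent of $N$. This segmentation, not a union bound over nodes, is the missing idea.

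\textbf{The one--sided concentration step is also incomplete.} A bound of the form $\Pr[M_t>\eps{0}\gamma t]\le\exp(-c\gamma^2 t)$ does \emph{not} follow from just $E[\Delta_s\mid\mathcal F_{s-1}]\le 0$ and $\Delta_s\le 1/\delta$. A quick counterexample: take $Y_s$ i.i.d.\ with $Y_s=1$ w.p.\ $1-1/(t^2{+}1)$ and $Y_s=-t^2$ otherwise; then $E[Y_s]=0$, $Y_s\le 1$, yet $\Pr[\sum_{s\le t}Y_s=t]\to 1$. Freedman's inequality would work, but it needs $\sum_s E[\Delta_s^2\mid\mathcal F_{s-1}]$ (or the conditional variance of the martingale part), not $\sum_s E[(\Delta_s^+)^2\mid\mathcal F_{s-1}]$. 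Since departures contribute the dominant term to $\Delta_s^2$, you would have to bound $\sum_z p_z f(i,z)^2$ over all hot potatoes $z$, uniformly in the state and in $N$; this is not obvious and is not what you wrote. Even if you carry this out, the union--bound obstruction above remains.
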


\proof
Let 
\begin{equation} \label{rough bound on epsilon equation}
\eps{1}<\delta \eps{0}/7
\end{equation}
We will determine an additional upper bound on $\eps{1}$ later in the
proof.  Divide the ring into $D=1/\eps{1}$ segments.  Each segment is
of length $L=\eps{1}N$.  Label the segments (in order) $1,\ldots, D$.
For simplicity, assume that $L$ and $D$ are integral; the general
analysis is pretty much the same.

Fix a node $i$ from which we will measure $\phi(i)$.  Without loss of
generality, let $i$ be in segment $D$.  Let us consider a hot potato
packet $z$ that can reach node $i$ (and hence contributes to
$\phi(i)$.)  Observe that we can approximately describe a hot potato
packet in terms of the segment it arrived in, and the segment it is
currently in.  If $\eps{1}$ is small enough, this information is
sufficient to get fairly close bounds on $f(z,i)$ and on the
probability of packet $z$ departing in a finite number of steps.

More exactly, suppose that packet $z$ is at node $j$ in segment $J$,
originating from node $k$ in segment $K$.  Please see
Figure~\ref{segment class figure}.
\begin{figure}[ht]
\centerline{\includegraphics[height=1.0in]{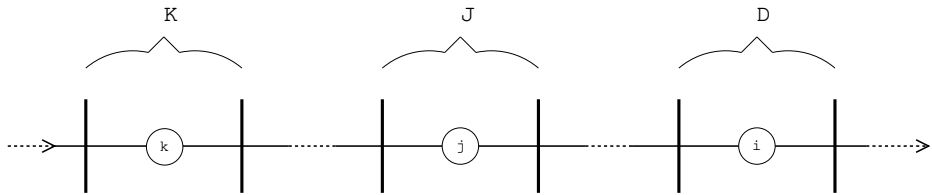} \hspace{.2in}  }
\caption{Segments on the ring}
\label{segment class figure}
\end{figure}
It's possible for a packet to enter segment $D$ twice; it can be
injected near the end of $D$, cross the rest of the ring, and enter a
node near the beginning.  For the sake of notational sanity, if this
happens (e.g.\ $K=D$, but the packet $z$ has left $D$ and may return
to it), then label the segment as $0$ rather than $D$.  Otherwise,
we'd have to write two versions of every equation below.

Consider the collection of hot potato packets in segment $J$ from
segment $K$.  I'll call these packets a \emph{segment class}.  (And
I'll distinguish the segments $0$ and $D$, as in the previous
paragraph.)  There are $(D+1)^{2}$ segment classes.

Assume that we have labelled the nodes so that $k\leq j \leq i$, and
$K \leq J \leq D$.  Then the probability that $z$ departs on the next
step is $1/(N-1-(j-k))$.  We can lower bound this
probability by
\[\frac{1}{N-1-(J-K-1)L} > \frac{1}{N-(J-K-1)L}\]
Note that the probability of a packet departing increases with each
step it spends travelling on the ring, so the probability that $z$ will
depart on the next $t$ time steps is at least
\[\frac{t}{N-(J-K-1)L}=\frac{t}{N[1-(J-K-1)\eps{1}]}\]
Let $C=1/[1-(J-K-1)\eps{1}]$, so our lower bound on the 
probability of a departure is $Ct/N$.

Suppose that there are at least $\eps{2}L$ packets in segment $J$ from
segment $K$, for some $\eps{2}>0$ (to be determined later in the proof).
Then the number of these packets departing over the next
$t$ steps can be lower bounded by a sum of at least $\eps{2}L$ Bernoulli
variables, each of probability $Ct/N$.  We can now use
Lemma~\ref{bernoulli lower bound lemma}, so for any sufficiently large
$t$, we have an exponential tail on the sum.  
More precisely, suppose that there are $Y$ packets in the segment
class (where $\eps{2}L\leq Y \leq L$), and $X$ of them depart
in $t$ time steps.  Then, for any $\eps{3}>0$,
there exists $T_{1}$ and $K_{1}$ such that for any $\gamma \geq 1$, if
$t\geq T_{1}$,
\begin{equation} \label{exp bernoulli hp bound 1 equation}
\Pr [X < (Ct/N)Y(1-\eps{3}\gamma)] \leq e^{-K_{1}\gamma t}
\end{equation}
(because the mean is bounded below by $(Ct/N)Y$.)

Now, by Equation~\ref{hot potato definition equation},
the contribution to $\phi(i)$ caused by $z$ is
\[\frac{N(1+\delta)-(i-k)}{N(1+\delta)-(j-k)}\]
So, the increase in $\phi(i)$ over $t$ time steps, should $z$
fail to depart, is
\[\frac{N(1+\delta)-(i-k)}{N(1+\delta)-(j+t-k)}
-\frac{N(1+\delta)-(i-k)}{N(1+\delta)-(j-k)}\]
Let $W=N(1+\delta)$.  Then
\[=[W-(i-k)]\left(\frac{1}{W-(j+t-k)}-\frac{1}{W-(j-k)} \right)\]
\[=[W-(i-k)]\left(\frac{t}{[W-(j+t-k)][W-(j-k)]} \right)\]
We can bound this by
\[\leq t \left( \frac{W-(D-K-1)L}{[W-(J-K+1)L -t][W-(J-K+1)L]} \right)\]
Assume that 
\begin{equation} \label{bound on N equation}
L>t
\end{equation}
(If $t$ and $\eps{2}$ is fixed and $N$ grows, then
eventually $L>t$ holds.)  
Instantiating $W$ back in, we can further bound the equation.
\[ \leq t \frac{N(1+\delta) - (D-K-1)\eps{1}N}
{[N(1+\delta)-(J-K+1+1)\eps{1}N]^{2}}\]
\begin{equation} \label{h.p. increase in t steps equation}
=\frac{t}{N}\left(\frac{1+\delta - (D-K-1)\eps{1}}
{[(1+\delta)-(J-K+2)\eps{1}]^{2}}\right)
\end{equation}
On the other hand,
the minimum value of $f(z,i)$ is
\[\frac{N(1+\delta)-(i-k)}{N(1+\delta)-(j-k)}
\geq \frac{N(1+\delta) - (D-K+1)L}{N(1+\delta) - (J-K-1)L}\]
\[=\frac{1+\delta - (D-K+1)\eps{1}}{1+\delta - (J-K-1)\eps{1}}\]
We can now combine the bounds above to get some bounds on the change
in $\phi(i)$ from all the packets in the same segment class as $z$
over the course of $t$ steps.  Let $s$ be the segment class of packet
$z$.  Suppose that there are $Y$ packets in the segment class, where
$Y\geq\eps{2}L$, and that there are $X$ departures, where $X\geq
\frac{Ct}{N}Y(1-\eps{3})$.  Let $\Delta_{1}^{s}$ be the change in $\phi(i)$
contributed by these hot potatoes.
Then $\Delta_{1}^{s}$ can be upper bounded by $Y$ times the maximum
increase in $\phi(i)$ in $t$ steps, minus $X$ times the minimum value
that $z$ can contribute to $\phi(i)$.  Plugging in, we find that the
increase in $\phi(i)$ is less than
\begin{eqnarray} 
\Delta_{1}^{s}& <& Y 
\left[
\frac{t}{N}\frac{1+\delta - (D-K-1)\eps{1}}
{[(1+\delta)-(J-K+2)\eps{1}]^{2}}
\right] -
\nonumber\\
& & X
\left[
\frac{1+\delta - (D-K+1)\eps{1}}{1+\delta - (J-K-1)\eps{1}}
\right]
\nonumber
\\ & < & Y 
\left[
\frac{t}{N}\frac{1+\delta - (D-K-1)\eps{1}}
{[(1+\delta)-(J-K+2)\eps{1}]^{2}}
\right] -
\nonumber\\
& &
\left[
\left(
\frac{1}{1-(J-K-1)\eps{1}}
\right)
\frac{t}{N}Y(1-\eps{3})
\right] 
\frac{1+\delta - (D-K+1)\eps{1}}{1+\delta - (J-K-1)\eps{1}}
\nonumber
\\ & = 
\frac{Yt}{N} &  \left[
\frac{1+\delta - (D-K-1)\eps{1}}
{[(1+\delta)-(J-K+2)\eps{1}]^{2}}-
\right.
\nonumber\\
& &
\left.
(1-\eps{3})
\frac{1+\delta - (D-K+1)\eps{1}}
{(1+\delta - (J-K-1)\eps{1})(1-(J-K-1)\eps{1})}
\right]
\label{h.p. increase amalgam equation}
\end{eqnarray}
Now, let $A=1+\delta-(D-K)\eps{1}$, and
$B=1+\delta-(J-K)\eps{1}$.  We can rewrite Equation~\ref{h.p. increase
amalgam equation} as
\[\frac{A+\eps{1}}{(B-2\eps{1})^{2}}
-(1-\eps{3})\frac{A-\eps{1}}{(B+\eps{1})(B-\delta+\eps{1})}
\]
\[=\frac{A+\eps{1}}{(B-2\eps{1})^{2}}
-(1-\eps{3})\frac{A-\eps{1}}{(B+\eps{1})^{2}}
\frac{B+\eps{1}}{B-\delta+\eps{1}}
\]
Now, $-\frac{B+\eps{1}}{B-\delta+\eps{1}}$
is maximized when $B=1+\delta$, so
\begin{equation} \label{tiny epsilon 1 limit equation}
<\frac{A+\eps{1}}{(B-2\eps{1})^{2}}
-(1-\eps{3})\frac{A-\eps{1}}{(B+\eps{1})^{2}}
\frac{1+\delta+\eps{1}}{1+\eps{1}}
\end{equation}
Now, in the limit as $\eps{1}\rightarrow 0$, we get
\[=\frac{A}{B^{2}} - \frac{A}{B^{2}}(1-\eps{3})(1+\delta)\]
Since $A\geq \delta$ and $B\leq 1+\delta$, then $A/B^{2}>0$.
Therefore,
\[=\frac{A}{B^{2}}[1- (1-\eps{3})(1+\delta)]\]
\[=\frac{A}{B^{2}}[\eps{3} - \delta +\eps{3}\delta]\]
So, suppose we take $\eps{3}<\delta/2$.  (There will be
an additional upper bound on $\eps{3}$ below.)  It follows that
$\eps{3} - \delta +\eps{3}\delta<-\delta/2 +\delta^{2}/2$
$=\frac{\delta}{2}[\delta-1]<0$, since $\delta<1$.
Therefore, if we take a sufficiently
small $\eps{1}$, we can make Equation~\ref{tiny epsilon 1 limit
equation} less than zero.  
Thus, Equation~\ref{tiny epsilon 1 limit equation} gives us
our second upper bound on $\eps{1}$.
In summary, we have:
\begin{eqnarray} \label{zero equation}
\Delta_{1}^{s}<0
\end{eqnarray}
In order to get exponential tails on the probabilities,
we have to analyze the behavior if $X$ is a bit smaller.
Suppose that $X\geq \frac{Ct}{N}Y(1-\eps{3}\gamma)$
for $\gamma >1$ (but $Y$ is still $>\eps{2}L$).
Let $\Delta_{\gamma}^{s}$ be the corresponding change in
$\phi(i)$ contributed by the class $s$ hot potatoes.  Then
\begin{eqnarray*}
\Delta_{\gamma}^{s} & \leq & 
\Delta_{1}^{s} + \eps{3}(\gamma-1)
\frac{Ct}{N}Y\frac{1+\delta-(D-K+1)\eps{1}}{1+\delta-(J-K+1)\eps{1}}
\\
& \leq & \Delta_{1}^{s} + \eps{3}(\gamma-1)
\frac{Ct}{N}Y\frac{1+\delta}{\delta}
\end{eqnarray*}
Since $C\leq \frac{1}{\eps{1}}=D$,
\[\leq
\Delta_{1}^{s} + \eps{3}(\gamma-1) \frac{Dt}{N}Y\frac{1+\delta}{\delta}\]
By Equation~\ref{zero equation}, 
\[<\eps{3}(\gamma-1) \frac{Dt}{N}Y\frac{1+\delta}{\delta}\]
Now, $Y/N\leq L/N=\eps{1}$, so
\[\leq\eps{3}(\gamma-1) tD\eps{1}\frac{1+\delta}{\delta}\]
Since $D\eps{1}=1$,
\[=\eps{3} \frac{1+\delta}{\delta}
(\gamma-1) t\]
\begin{equation} \label{Delta lower tail equation}
<\left[\eps{3} \frac{1+\delta}{\delta}\right]
\gamma t
\end{equation}
Suppose that we take
$\eps{3}<\frac{1}{Z(D+1)^{2}}\frac{\delta}{1+\delta}\frac{\eps{0}}{7}$,
where $Z$ will be determined below (and $Z$ will depend only
on $\delta$ and $\eps{0}$.)
(This is the second upper bound on $\eps{3}$.)  Then
\begin{equation} \label{Delta lower tail 2 equation}
<\frac{1}{Z(D+1)^{2}}\frac{\eps{0}}{7}\gamma t
\end{equation}
Let $\Delta^{s}$ be the change in $\phi(i)$ contributed
by the class $s$ hot potatoes if $Y > \eps{2}L$, with no
restriction on $X$.  Using the exponential tail in Equation~\ref{exp
bernoulli hp bound 1 equation} and the linearity of
Equation~\ref{Delta lower tail 2 equation}, we get
\begin{equation} \label{segment hp exponential down drift}
\Pr[\Delta^{s}>\frac{1}{Z(D+1)^{2}}
\frac{\eps{0}}{7}\gamma t] \leq e^{-K_{2}\gamma t}
\end{equation}
for some $K_{2}>0$ and all $t\geq T_{2}$, for some
$T_{2}$.
Let $\Delta$ be the change in $\phi(i)$ from the
hot potatoes in all the $(D+1)^{2}$ segment classes.
Then Equation~\ref{segment hp exponential down drift}
gives us
\begin{equation} \label{hp exponential down drift}
\Pr[\Delta>\frac{\eps{0}}{7Z}\gamma t] \leq e^{-K_{3}\gamma t}
\end{equation}
for some $K_{3}>0$ and all $t\geq T_{3}$, for some
$T_{3}$.

If we take a snapshot of all the hot potatoes in the 
system at time zero, and ask how their contribution
to $\phi(i)$ has changed by time $t$, then
Equation~\ref{hp exponential down drift}
can tell us 
the change.  However, during these $t$ time steps, other
new hot potatoes may enter a cell and begin travelling on 
the ring; these equations don't take those newer hot potatoes
into account.  

I will call these newly inserted hot potato packets
\emph{inserted hot potatoes}, as distinguished from the 
\emph{original hot potatoes}.  Recall that in order to satisfy
Equation~\ref{exp bernoulli hp bound 1 equation}, we needed $t\geq
T_{1}$.  However, we can always make $t$ bigger.  Let $T_{0}=ZT_{1}$
for some sufficiently large integer $Z$.  (We will determine $Z$
below).  Let us take intervals of $T_{0}$ steps (i.e.\ set $t=T_{0}$).

Consider one slot in the ring.  We will consider the time intervals
$(0,T_{1}]$, $(T_{1},2T_{1}]$, ...  $((Z-1)T_{1},ZT_{1}]$.  Consider
inserted hot potatoes at time $jT_{1}$ (for $j=0,1,\ldots,Z-1$).
Suppose there are $Y\geq \eps{2}L$ of them.  
Then, the increase in
$\phi(i)$ by those $Y$ packets during the time interval $(jT_{1},
(j+1)T_{1}]$ has a negative expected value with exponential tails, by
Equations~\ref{hp exponential down drift}.  
Let $\hat{\Delta}$ be the total change in $\phi(i)$ contributed
by hot potatoes travelling during these time intervals.
Adding together
all $Z$ time intervals, and using Equations~\ref{hp exponential down drift},
we get that for $\gamma \geq 1$,
\begin{equation} \label{main hp bound 1 equation}
\Pr[\hat{\Delta}_{1}>\frac{\eps{0}}{7}\gamma t] \leq e^{-K_{4}\gamma t}
\end{equation}
for some $K_{4}>0$ and all $t\geq T_{4}$, for some $T_{4}$.
Note that Equation~\ref{main hp bound 1 equation} holds
simultaneously for all nodes $i$.

Let us consider the increases in $\phi(i)$ from inserted hot
potatoes that I didn't account for above.  There are two cases. First,
there may be fewer than $\eps{2}L$ packets during a time interval
$(jT_{1}, (j+1)T_{1}]$.  Let $\hat{\Delta}_{2}$ be the change in
$\phi(i)$ from these packets.
Recall from Lemma~\ref{hot potato increase
lemma} that the maximum one-step increase in $\phi(i)$ from any packet
is $1/\delta N$.  Then the maximum increase in $\phi(i)$ over all $Z$
such time intervals, over all segment classes in segment $J$, is
at most
\[ T_{0} \frac{\eps{2}L}{\delta N} \]
\[=T_{0}\frac{\eps{0}\eps{1}}{7(D+1)^{2}}\]
Summing over all $D+1$ segments, we get
\begin{equation} \label{main hp bound 2 equation}
\hat{\Delta}_{2}<T_{0}\frac{\eps{0}\eps{1}}{7(D+1)}<\frac{\eps{0}}{7}T_{0}
\end{equation}

Second of all, we must account for the initial contributions from the
inserted hot potatoes.  When a hot potato is inserted, we only started
measuring its contributions to $\phi(i)$ from time $jT_{1}$ onward
(for some $j$).  Therefore, each inserted hot potato can travel for up
to $T_{1}$ time steps before we started measuring its contribution to
$\phi(i)$ in Equation~\ref{main hp bound 1 equation}.
Let $\hat{\Delta}_{3}$
be the contribution from all inserted
hot potatoes to $\phi(i)$ during these unmeasured steps.

How many inserted hot potatoes are there?
Well, if there were more than $N$ inserted hot potatoes during the
$T_{0}$ time steps, then some of the inserted hot potatoes must have
been inserted and then departed.  More precisely, if there were $N+m$
inserted hot potatoes, then there were at least $m$ inserted hot
potatoes that departed.  The probability of an inserted hot potato
departing in at most $T_{0}$ steps is at most $T_{0}/N$.  Let $W$ be
the total number of inserted hot potatoes.  We can use Lemma~\ref{sums
of bernoulli lemma} on the initial $N$ inserted hot potatoes to
conclude that for any $\beta>1$,
\begin{equation} \label{exp tails on the step function}
\Pr[(W-N)\geq \beta T_{0}]\leq e^{(1-\frac{1}{\beta}-\ln \beta)\beta T_{0}}
\end{equation}
Assume that $W-N<\beta T_{0}$ and that 
\begin{equation} \label{second bound on N equation}
N > T_{0}
\end{equation}
The net increase in $\phi(i)$ over all the uncounted time steps
is (by Lemma~\ref{hot potato increase lemma})at most
\[T_{1}\frac{1}{\delta N}W\]
\[<\frac{T_{1}}{\delta N}\left[ \frac{\beta T_{0}}{N}+1\right]N\]
Since $N>T_{0}$, and canceling, we get
\[<\frac{T_{1}}{\delta}[\beta + 1]\]
\[<\frac{2T_{1}}{\delta}\beta\]
\[=\frac{2}{Z\delta}\beta T_{0}\]
If we take $Z>\frac{2}{\delta}\frac{7}{\eps{0}}$, then
\[<\frac{\eps{0}}{7}\beta T_{0}\]
So, our total increase $\hat{\Delta}_{3}$ has exponential tails:
\begin{equation} \label{main hp bound 3 equation}
\Pr\left[\hat{\Delta}_{3} \geq \frac{\eps{0}}{7}\beta T_{0} \right]
<e^{-K_{5}\beta T_{0}}
\end{equation}
for some $K_{5}>0$ and all $T_{0}\geq T_{5}$ for some
$T_{5}$.

The analysis above pretty much accounts for all the significant
influences on $\phi(i)$ for any $i$.  To get a full bound on
$\phi(i)$, though, we must consider all the exceptional (and unlikely)
cases.

First of all, if a packet crosses node $i$ in $T_{0}$ time steps and
departs \emph{after} crossing $i$, the departure doesn't count (since
we're only counting reductions in $\phi(i)$ caused by packets leaving
the ring.)  Let $\hat{\Delta}_{4}$ be this contribution to
$\phi(i)$.
However, this effect is negligible: since the maximum
increase in a packet that crosses $i$ in $T_{0}$ time steps is
$T_{0}/(\delta N)$ (from Lemma~\ref{hot potato increase lemma}) and
there are at most $T_{0}$ such packets, the increase (for a fixed
$T_{0}$) can be bound by
\begin{equation} \label{main hp bound 4 equation}
\hat{\Delta}_{4}<\frac{T_{0}^{2}}{\delta N}=O(1/N)
\end{equation}
For sufficiently large $N$ this quantity can be made arbitrarily
small.

Next, there is a complication for packets in segment $D$, since the
packets before $i$ and after $i$ have different statistics.  I
represented this difference by distinguishing the $J=0$ and $J=D$
segment classes.  Let $\hat{\Delta}_{5}$ be the contribution
from these two segment classes to $\phi(i)$.
It's possible to show the increase in $\phi(i)$ for
every $i$ is well behaved, but it's easier just to consider the worst
case: there are at most $L$ such packets, each increasing $\phi(i)$ by
at most $T_{0}/(\delta N)$, leading to a maximum possible increase of
$T_{0}\eps{1}/\delta$.  By the definition of $\eps{1}$, we get the
bound
\begin{equation} \label{main hp bound 5 equation}
\hat{\Delta}_{5}<\frac{T_{0}\eps{1}}{\delta} < \frac{T_{0}\eps{0}}{7}
\end{equation}

Finally, suppose that nodes $i_{1}<i_{2}<i_{3}$ are in the same
segment (say $D$), and a packet from node $i_{2}$ is travelling on the
ring in another segment (say $J$).  Then the packet contributes to
$\phi(i_{1})$, but not $\phi(i_{3})$ (since it can't reach $i_{3}$
again.)  Observe, however, that if the nodes in segment $D$ are
$i_{0}, i_{0}+1, \cdots,i_{0}+L-1$, then every packet that can reach
segment $D$ crosses node $i_{0}$; some of them may cross $i_{0}+1$;
fewer of them may cross $i_{0}+2$, and so forth.  Fix a segment class
(of packets not in $D$).  Consider the last node of $D$, node
$i_{0}+L-1$.  Suppose there are $Y_{i}$ packets from the segment class
that can cross node $i_{0}+i$.  If $Y_{L-1}<\eps{2}L$, then we know
from above that the increase in $\phi(i_{0}+L-1)$ is inconsequential.
Let us continue backwards across the ring from node $i_{0}+L-1$ until
we hit the first node $i_{0}+w$ such that $Y_{w}
\geq\eps{2}L$.  If there is no such node, we're done,
because all the packets in the segment class only contribute
inconsequentially to the nodes in segment $D$.  Otherwise, if there
exists $w_{1}$ such that $Y_{w_{1}}\geq\eps{2}L$, then we can take
these $Y_{w_{1}}$ packets, and perform our bounding analysis from
above (and Equation~\ref{hp exponential down drift} applies).

Let us continue even further backwards along segment $D$ until we find
the nearest node $i_{0}+w_{2}$ such that $Y_{w_{2}}-
Y_{w_{1}}\geq\eps{2}L$.  We can then take the $Y_{w_{2}}- Y_{w_{1}}$
packets specified and perform our bounding analysis again.  
The nodes between $i_{0}+w_{2}$ and $i_{0}+w_{1}$ may still be
effected by this second batch of packets.  However, the effect is that
of less than $\eps{2}L$ packets per node, so it's inconsequential.  We
can continue this process all the way back to node $i_{0}$.  
Let $\hat{\Delta}_{6}$ be the contribution from
these packets to $\phi(i)$.
Since there are at most $L$ packets in the
segment class, and each jump is at least $\eps{2}L$, then there are at
most $1/\eps{2}$ such batches we need to consider, i.e.\ a finite
number of batches.  Therefore, for sufficiently large
$T_{0}$ (i.e.\ sufficiently large $T_{1}$ with fixed $Z$)
and any $\beta\geq 1$,
\begin{equation} \label{main hp bound 6 equation}
\Pr\left[\hat{\Delta}_{6} > \frac{\eps{0}}{7}T_{0}(1+\beta)T_{0}\right]
<e^{-K_{6}\beta T_{0}}
\end{equation}
for some $K_{6}>0$.  

We now have all the equations to complete the proof.  
We are trying to bound the probability that
there exists an $i$ such that $G(\gamma,i,t)$.
For any $t\geq T_{0}$ and $\gamma \geq 1$, we can bound this quantity by
\begin{eqnarray} \label{final hp equation}
\flu{\Delta} & = &
\Pr\left[
\hat{\Delta}_{1}>\frac{\eps{0}}{7}\gamma t
\right]
+
\Pr\left[
\hat{\Delta}_{2}>\frac{\eps{0}}{7}\gamma t
\right]
+
\Pr\left[
\hat{\Delta}_{3}>\frac{\eps{0}}{7}\gamma t
\right] 
\nonumber
\\ & + &
\Pr\left[
\hat{\Delta}_{4}>\frac{\eps{0}}{7}\gamma t
\right]
+
\Pr\left[
\hat{\Delta}_{5}>\frac{\eps{0}}{7}\gamma t
\right]
+
\Pr\left[
\hat{\Delta}_{6}>\frac{\eps{0}}{7}(1+\gamma) t
\right] 
\nonumber
\\
\end{eqnarray}
Equation~\ref{main hp bound 4 equation}
tells us that for sufficiently large $N_{T_{0}}$, 
$\Pr\left[
\hat{\Delta}_{4}>\frac{\eps{0}}{7}\gamma t
\right]$ is zero.
Equations~\ref{main hp bound 2 equation},
and~\ref{main hp bound 5 equation} tell us that
\[
\Pr\left[
\hat{\Delta}_{2}>\frac{\eps{0}}{7}\gamma t
\right]
+
\Pr\left[
\hat{\Delta}_{5}>\frac{\eps{0}}{7}\gamma t
\right]
=0\]
Using Equations~\ref{main hp bound 1 equation},
\ref{main hp bound 3 equation} and
\ref{main hp bound 6 equation}, we can conclude 
that there exists $K_{0}$ and $T_{0}$,
such that for any $t\geq T_{0}$, there 
exists $N_{t}$, such that for any
$N\geq N_{t}$,
\[\flu{\Delta}<e^{-K_{0}\gamma t}\]
which implies Equation~\ref{hp statement equation}, and
we're done.

Note that the size of $T_{0}$ is determined by
the size of $T_{1}$,
which is determined by the exponentials
in Equations~\ref{main hp bound 1 equation},
\ref{main hp bound 3 equation},
and \ref{main hp bound 6 equation}.
The same equations determine $K_{0}$.
The size of $N_{t}$ is determined by
Equations~\ref{bound on N equation}
and~\ref{second bound on N equation}.
\EOP

\begin{lemma}[Main Lemma] \label{main lemma}
\index{Lemma \ref{main lemma}}
Fix any nominal load $r<1$.  Then there exist
sufficiently large constants $T_{0}$, $K_{0}$, and $N_{0}$, and
sufficiently small $\eps{0}>0$ such that the following holds.

Suppose that we are on an $N$ node ring, for any $N \geq N_{0}$.
Suppose that the network starts in any state $\sigma$, and $T_{0}$ time
steps later is in state $\tau$, where $\tau$ is a random variable.  If
$\Phi(\sigma)>K_{0}N$ then
\begin{equation} \label{main regular ergodicity equation}
\Ex[\Phi(\tau)]-\Phi(\sigma)<-\eps{0}
\end{equation}
Moreover, we can find sufficiently large constants
$T_{1}, K_{1}$, and $N_{1}$ and sufficiently small $\eps{1}>0$, $\eta>1$,
such that if $\Phi(\sigma)>K_{1}N$ then
\begin{equation} \label{main geometric ergodicity equation}
\Ex[\eta^{\Phi(\tau)}]-\eta^{\Phi(\sigma)}<-\eps{1}\eta^{\Phi(\sigma)}
\end{equation}
\end{lemma}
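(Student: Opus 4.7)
The plan is to combine the preparatory lemmas---the mean-drift computation (Lemma~\ref{drift motivation lemma}), the Trick Lemma (Lemma~\ref{trick lemma}), and the two exponential tail bounds on exogenous arrivals (Lemma~\ref{exogenous arrivals lemma}) and on hot potato motion (Lemma~\ref{hot potatoes lemma})---to control each $\phi(i,\tau)$ and then take the maximum over $i$. For each node $i$, I would decompose the change over $T_0$ steps as
\[\phi(i,\tau) - \phi(i,\sigma) \;=\; -D_i \;+\; E_i \;+\; H_i,\]
where $D_i$ counts the time steps at which a packet leaves node $i$ (each such departure reduces $\phi(i)$ by exactly $1$), $E_i$ is the total contribution of new exogenous arrivals, and $H_i$ the total increase due to packets already travelling in the ring. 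Fix some small $\mu>0$ with $\rh+2\mu<1$. Lemmas~\ref{exogenous arrivals lemma} and~\ref{hot potatoes lemma} then give a constant $K>0$ such that for all sufficiently large $T_0$, all $\gamma\geq 1$, and all sufficiently large $N$,
\[\Pr\!\left[\exists i:\ E_i > (\rh+\mu)\gamma T_0 \ \text{or}\ H_i > \mu\gamma T_0\right] \;\leq\; 2e^{-K\gamma T_0}.\]

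To promote these per-node bounds to control on $\Phi(\tau)=\max_i\phi(i,\tau)$, I would partition the nodes. Set $\Delta := (\rh+2\mu)T_0 + \eps{0}$ and call $i$ \emph{near-max} if $\phi(i,\sigma)\geq \Phi(\sigma)-\Delta$. Applying the Trick Lemma together with the crude bound $\phi(i-1,\sigma)\leq\Phi(\sigma)$ and the identity $1-1/\zeta = 1/((1+\delta)N)$ gives
\[Q_i(\sigma) \;\geq\; \phi(i,\sigma) - \Phi(\sigma)/\zeta - 1 \;\geq\; \Phi(\sigma)/((1+\delta)N) - \Delta - 1\]
for every near-max $i$. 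Choosing $K_0>(1+\delta)(T_0+\Delta+2)$ and invoking $\Phi(\sigma)>K_0 N$ therefore forces $Q_i(\sigma)>T_0+1$ for every near-max $i$. Since $Q_i$ can drop by at most $1$ per step, $Q_i$ stays strictly positive throughout the whole interval, so a packet leaves node $i$ on every single one of the $T_0$ steps, and $D_i=T_0$ exactly.

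On the good event (both tail bounds hold at $\gamma=1$), for every near-max $i$,
\[\phi(i,\tau) \leq \phi(i,\sigma) - T_0 + (\rh+2\mu)T_0 \leq \Phi(\sigma) - (1-\rh-2\mu)T_0 \leq \Phi(\sigma) - \eps{0}\]
once $T_0$ is large enough, while for every $i$ that is not near-max no decrease is needed, since
\[\phi(i,\tau) \leq \phi(i,\sigma) + (\rh+2\mu)T_0 \leq \Phi(\sigma) - \Delta + (\rh+2\mu)T_0 = \Phi(\sigma) - \eps{0}.\]
Taking the maximum gives $\Phi(\tau)\leq\Phi(\sigma)-\eps{0}$ on the good event. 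The bad-event contribution to $\Ex[\Phi(\tau)-\Phi(\sigma)]$ is controlled by integrating over the level $\gamma$: the possible excess at level $\gamma$ is $O(\gamma T_0)$ with probability $\leq 2e^{-K\gamma T_0}$, so the bad-event expectation is $O(e^{-KT_0})$ uniformly in $N$. Taking $T_0$ large enough absorbs this into the drift and yields Equation~\ref{main regular ergodicity equation}.

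For the geometric bound, factor $\eta^{\Phi(\tau)} = \eta^{\Phi(\sigma)}\cdot\eta^{\Phi(\tau)-\Phi(\sigma)}$. The same dichotomy gives $\eta^{\Phi(\tau)-\Phi(\sigma)} \leq \eta^{-\eps{0}}<1$ on the good event, and $\eta^{\Phi(\tau)-\Phi(\sigma)} \leq \eta^{(\rh+2\mu)\gamma T_0}$ with probability $\leq 2e^{-K\gamma T_0}$ on the level-$\gamma$ bad event. Choosing $\eta>1$ small enough that $(\rh+2\mu)\ln\eta < K$ makes the integral $\int_1^\infty \eta^{(\rh+2\mu)\gamma T_0} e^{-K\gamma T_0}\,d\gamma$ converge and decay with $T_0$, so that $\Ex[\eta^{\Phi(\tau)-\Phi(\sigma)}]$ can be forced below $1-\eps{1}$, giving Equation~\ref{main geometric ergodicity equation}. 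The chief technical obstacle is not any one step but the interlocking choice of the constants $\eps{0},\mu,\Delta,T_0,K_0,N_0,\eta$; the crucial feature that makes it all go through is that the tail bounds in Lemmas~\ref{exogenous arrivals lemma} and~\ref{hot potatoes lemma} are \emph{uniform in $i$}, which is precisely what licenses replacing per-node statements by one about the maximum.
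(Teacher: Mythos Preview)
Your proposal is correct and follows essentially the same route as the paper: combine the uniform-in-$i$ tail bounds from Lemmas~\ref{exogenous arrivals lemma} and~\ref{hot potatoes lemma}, split nodes into ``near-max'' and ``far'' (the paper uses a multiplicative threshold $\phi(i)\geq\alpha\Phi$ rather than your additive one, but the effect is identical), and invoke the Trick Lemma to force $Q_i\geq T_0$ on the near-max nodes so that $D_i=T_0$ deterministically. The one notable methodological difference is in deriving Equation~\ref{main geometric ergodicity equation}: you integrate the tail explicitly to bound $\Ex[\eta^{\Phi(\tau)-\Phi(\sigma)}]$, whereas the paper observes that this quantity is analytic in $\eta$ near $1$ with derivative $\Ex[\Phi(\tau)-\Phi(\sigma)]<0$ at $\eta=1$, and concludes it dips below $1$ for $\eta$ slightly above $1$ --- both arguments rest on the same exponential tail, and yours is arguably the more explicit of the two.
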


\proof
If we combine Lemma~\ref{exogenous arrivals lemma}
and Lemma~\ref{hot potatoes lemma}, we get
the following the conclusion:  

Suppose we are in state $\sigma$, and take any $\eps{2}>0$
such that $0<2\eps{2}<1-\rh$.  Let
$d(i,t)$ be the increase in $\phi(i)$ over the next $t$ time steps
from exogenous arrivals and hot potatoes.  Let $D(\gamma,i,t)$ be the
event that $d(i,t)>(\rh+\eps{2})\gamma t$, for any $\gamma \geq 1$.
Then there exist $N_{2}, T_{2}$, and $K_{2}$ such that if $N\geq
N_{0}$, then for any $\gamma \geq 1$,
\begin{equation} \label{uber increase tail equation}
\Pr[\exists i \mbox{ such that } D(\gamma,i,T_{0})]
<e^{-K_{2}\gamma T_{0}}
\end{equation}
and (assuming we took our $T_{0}$ large enough in the Lemmas),
\begin{equation}\label{uber increase mean equation}
\Ex[\max_{i} d(i,T_{0})]<(\rh+2\eps{2})T_{0}
\end{equation}
Let $\eps{0}=1-(\rh+2\eps{2})$.  Note that our choice of $\eps{2}$ was
small enough to guarantee that $\eps{0}>0$.

Our first goal will be to establish Equation~\ref{main regular
ergodicity equation}.  Suppose that a node $i$ has at least $T_{0}$
packets waiting in its queue.  Then over the next $T_{0}$ time steps,
it will be guaranteed of ejecting $T_{0}$ packets.  Thanks to
Equation~\ref{uber increase mean equation},
the expected change
in $\phi(i)$ is at most 
\begin{equation} \label{uber mean negative drift equation}
-\eps{0}T_{0}<0
\end{equation}

These bounds are all well and good when node $i$ has a sufficiently
long queue, but $\Phi$ is the maximum over all $i$.  How can we
guarantee that every node with large values of $\phi$ also has a
queue of length at least $T_{0}$?

Lemma~\ref{trick lemma} will provide the trick we need.
Let $\beta>1$ and let
\[\alpha=1-\frac{1}{\beta(1+\delta)N}\]
(Note that $\alpha<1$.)
Suppose that $\phi(i)\geq \alpha \Phi$.  It follows, then,
that $\phi(i-1) \leq \frac{1}{\alpha}\phi(i)$ (since, by 
definition, $\phi(i-1)\leq \Phi$.  Therefore, Lemma~\ref{trick lemma}
implies that
\[Q_{i}\geq \phi(i)-\frac{1}{\zeta}\left[\frac{1}{\alpha}\phi(i) \right]-1\]
We would like to guarantee that $Q_{i}$ is at least, say,
$3T_{0}$.  To guarantee this bound, it is sufficient that
\[\phi(i)-\frac{1}{\zeta}\left[\frac{1}{\alpha}\phi(i)\right]-1
\geq 3T_{0}\]
hence
\begin{equation} \label{main 1 equation}
\phi(i)\geq \frac{3T_{0}+1}{1-\frac{1}{\zeta\alpha}}
\end{equation}
Note that 
$1-\frac{1}{\zeta\alpha}=(\beta-1)/[\beta(1+\delta)N -1]$
which is $\Theta(1/N)$, and the numerator is $\Theta(1)$,
so the right hand side of Equation~\ref{main 1 equation} is $O(N)$.

Now, if $\phi(i)<\alpha \Phi$, how much smaller is $\phi(i)$?
Well, $(1-\alpha)\Phi=\frac{1}{\beta[(1+\delta)N-1]}$, so
if $\Phi\geq 3T_{0}\beta[(1+\delta)N-1]$, then 
\begin{equation} \label{main 2 equation}
\Phi-\phi(i)>3T_{0}
\end{equation}

Let us define $\Phi_{N}$ as:
\[\Phi_{N}=\max\left\{\frac{1}{\alpha}
\frac{3T_{0}+\frac{1}{\zeta}}{1-\frac{1}{\zeta\alpha}},
3T_{0}\beta[(1+\delta)N-1] \right\}\]

Let us suppose, then, that $\Phi \geq \Phi_{N}$.
Observe that $\Phi_{N}$ is $\Theta(N)$. Therefore, we
can find a $K_{0}$ such that $\Phi_{N}\leq K_{0}N$,
as in the statement of this theorem.

Consider any $i$.  If $\phi(\sigma, i) < \alpha \Phi(\sigma)$, then
Equation~\ref{uber increase mean equation}
and Equation~\ref{main 2 equation} imply that
$\Ex[\phi(\tau,i)]<\Phi(\sigma)-2T_{0}$.  If, on the other hand,
$\phi(\sigma,i) \geq \alpha \Phi(\sigma)$, then $Q_{i}\geq 3T_{0}$, so
by Equation~\ref{uber mean negative drift equation},
$\Ex[\phi(\tau,i)]<\phi(\sigma,i)-\eps{0}T_{0}\leq
\Phi(\sigma)-\eps{0}T_{0}$.  Therefore, 
\begin{equation} \label{negative mean drift restated equation}
\Ex[\Phi(\tau)] -\Phi(\sigma) \leq -\eps{0}T_{0}
\end{equation}
Since $T_{0}$ is (much) larger than 1, then
\[\Ex[\Phi(\tau)] -\Phi(\sigma) \leq -\eps{0}\]
which establishes Equation~\ref{main regular ergodicity equation}.

To see that Equation~\ref{main geometric ergodicity equation} holds,
divide it by $\eta^{\Phi(\sigma)}$.
Then we need to prove
\[\Ex\left[\eta^{\Phi(\tau)-\Phi(\sigma)}\right]-1<-\eps{1}\]
(Note that
$\Ex\left[\eta^{-\Phi(\sigma)}\right]=\eta^{-\Phi(\sigma)}$.)  Observe
that Equation~\ref{uber mean negative drift equation} implies that
$\Ex\left[\eta^{\Phi(\tau)-\Phi(\sigma)}\right]$ is an analytic
function of $\eta$ in a neighborhood of $\eta=1$.  Observe that the
first derivative at $\eta=1$ is $\Ex[\Phi(\tau)]-\Phi(\sigma)$, which
we've just shown is negative (in Equation~\ref{negative mean drift
restated equation}).  Therefore, there exists a sufficiently small
$\eta>1$ such that Equation~\ref{main geometric ergodicity equation}
holds.
\EOP

\section{Ergodicity and Expected Queue Length} 
  \label{ergodicity results section} 

Once we have constructed a potential function with negative drift,
there are a number of powerful theorems we can draw on.
Section~\ref{comparison theorem section} reviews this material.  These
drift theorems allow us to translate Lemma~\ref{main lemma} into
statements about the ergodicity and expected queue length of the
system.  Let us begin with some immediate ergodicity results.

\begin{theorem} \label{ergodicity payoff theorem}
\index{Theorem \ref{ergodicity payoff theorem}}
The standard Bernoulli ring is ergodic if $r<1$, for all
sufficiently large $N$.  Moreover, it converges to its
stationary distribution exponentially rapidly.  Finally,
\begin{equation} \label{expected phi equation}
\Ex[\Phi] = O(N)
\end{equation}
\end{theorem}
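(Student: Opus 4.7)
The plan is to apply standard Foster--Lyapunov drift theorems to $\Phi$, using Lemma~\ref{main lemma} to supply the required drift conditions and the tools collected in Section~\ref{comparison theorem section} to translate drift into ergodicity and moment bounds. I would work with the $T_0$-step skeleton chain, so that Equation~\ref{main regular ergodicity equation} becomes an honest one-step negative mean drift outside the set $C = \{\sigma : \Phi(\sigma) \leq K_0 N\}$. A key preliminary observation is that a bound on $\Phi$ forces a bound on the total number of packets in the ring (since every packet contributes at most $1$ to some $\phi(i)$), so $C$ is a \emph{finite} set of states. Foster's criterion then yields positive recurrence of the skeleton, and because with probability $(1-p)^N > 0$ nothing happens on a single step the chain is aperiodic, so ergodicity of the skeleton lifts to ergodicity of the original chain.

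For the claim of exponential convergence to stationarity, I would use the geometric drift condition, Equation~\ref{main geometric ergodicity equation}, with Lyapunov function $\eta^\Phi$. This is exactly the hypothesis of the discrete-time $V$-uniform / geometric ergodicity theorem, which produces a constant $\rho \in (0,1)$ and a function $M(\sigma)$ with
\[
\|\Pr^t(\sigma,\cdot) - \pi\|_{\mathrm{TV}} \leq M(\sigma)\,\rho^t.
\]
Finiteness of $C$ plus Equation~\ref{main geometric ergodicity equation} ensures that the small-set/minorization requirement of the theorem is satisfied.

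For the moment bound $\Ex[\Phi] = O(N)$, I would invoke the comparison theorem from Section~\ref{comparison theorem section}. Under Equation~\ref{main regular ergodicity equation}, the stationary expectation $\Ex_\pi[\Phi]$ is controlled by $\sup_{\sigma \in C}\Phi(\sigma) = K_0 N$ plus a contribution coming from the size of the jump of $\Phi$ over $T_0$ steps. The typical jump is bounded by $O(T_0)$, which is a constant, and the atypical jumps have exponential tails by Equation~\ref{uber increase tail equation}, so they contribute only $O(1)$ to the stationary mean. Adding these terms gives $\Ex_\pi[\Phi] = O(N)$, i.e.\ Equation~\ref{expected phi equation}.

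The main obstacle is the moment bound: the drift hypotheses are stated for the $T_0$-step skeleton, while $\Phi$ itself can jump on a finer time scale, so one has to be a little careful in invoking the comparison theorem. This is precisely the reason Lemma~\ref{exogenous arrivals lemma} and Lemma~\ref{hot potatoes lemma} were sharpened from mean-drift statements to exponential-tail statements; with that tail control in hand, the jumps of $\Phi$ on the skeleton are integrable uniformly in $N$ (up to the dependence of $K_0 N$), and the comparison argument goes through routinely. Ergodicity and geometric convergence, by contrast, follow essentially as black-box consequences of Lemma~\ref{main lemma} once the finiteness of $C$ is noted.
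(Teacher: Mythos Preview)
Your proposal is correct and follows essentially the same route as the paper: Foster's criterion on the $T_0$-step chain for ergodicity, the geometric drift condition~\eqref{main geometric ergodicity equation} for exponential convergence, and the Comparison Theorem together with the fact that the drift is negative outside $\{\Phi\le K_1 N\}$ for the $O(N)$ bound on $\Ex[\Phi]$. The paper's write-up is terser---it simply cites Corollary~\ref{foster's criterion}, Corollary~\ref{geometric ergodicity corollary}, and Theorem~\ref{comparison theorem}---while you spell out the skeleton-chain and finite-$C$ details; one small slip is that finiteness of $C$ follows because every packet contributes at \emph{least} a fixed positive amount to some $\phi(i)$ (not ``at most $1$''), but your conclusion is right.
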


\proof
To show that the Markov chain is ergodic, we can use Equation~\ref{main
regular ergodicity equation} and Foster's criterion
(Corollary~\ref{foster's criterion}).

Now, Equation~\ref{main geometric ergodicity equation} 
and Corollary~\ref{geometric ergodicity corollary}
allows us
to establish the stronger property of geometric ergodicity.
Exponential rates of convergence to stationarity follow.

Next, using Equation~\ref{main geometric ergodicity equation} again,
and, Theorem~\ref{geometric ergodicity corollary} (or Theorem 14.0.1
from Meyn and Tweedie~\cite{Meyn_and_Tweedie}), we can conclude that
\[\Ex[\eta^{\Phi}]<\infty\]
and hence
\[\Ex[\Phi]<\infty\]
for a fixed $N$.  Finally, we can use the Comparison
theorem (see Theorem~\ref{comparison theorem}) and the fact that the
negative drift holds for all states $\sigma$ with $\Phi(\sigma)>K_{1}N$
to conclude that
\[\Ex[\Phi]=O(N)\]
\EOP

Next, I'll show how to convert Equation~\ref{expected phi equation}
into a bound on the expected queue length.

\begin{theorem} \label{0(1) expected queue theorem}
\index{Theorem \ref{0(1) expected queue theorem}}
On a standard Bernoulli ring, the expected queue length per node is
$O(1)$.
\end{theorem}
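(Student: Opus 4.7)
The plan is to leverage the two results we already have in hand: the bound $\Ex[\Phi]=O(N)$ from Theorem~\ref{ergodicity payoff theorem}, and the explicit decomposition of $\phi(i)$ in terms of queue lengths that appeared inside the proof of Lemma~\ref{trick lemma}. The idea is that $\phi(i)$ counts each queued packet with weight bounded below by a positive constant, so a bound of size $O(N)$ on $\Ex[\phi(i)]$ translates directly into an $O(1)$ bound on the expected queue length per node once we invoke rotational symmetry.

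More precisely, first I would start in the stationary distribution (which exists for all sufficiently large $N$ by Theorem~\ref{ergodicity payoff theorem}) and note the trivial bound $\Ex[\phi(i)]\le \Ex[\Phi]=O(N)$, valid for every node $i$. Next, I would recall the identity
\[
\phi(i) \;=\; C_i \;+\; \sum_{j=2}^{N} \frac{j+\delta N}{(1+\delta)N}\, Q_{i+j},
\]
which was derived in Equation~\ref{trick 2 equation}. Since $C_i\ge 0$ and since $\frac{j+\delta N}{(1+\delta)N}\ge \frac{\delta}{1+\delta}$ for every $j\ge 2$, we get
\[
\phi(i) \;\ge\; \frac{\delta}{1+\delta}\sum_{j=2}^{N} Q_{i+j}
\;=\; \frac{\delta}{1+\delta}\bigl(Q - Q_i - Q_{i+1}\bigr),
\]
where $Q=\sum_{k} Q_k$ is the total number of packets in queue.

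Taking expectations in the stationary distribution and invoking the rotational symmetry of the standard Bernoulli ring, $\Ex[Q_k]$ takes a common value $\bar q$ for every $k$, so $\Ex[Q]=N\bar q$. Combining with the inequalities above,
\[
O(N) \;=\; \Ex[\Phi] \;\ge\; \Ex[\phi(i)] \;\ge\; \frac{\delta}{1+\delta}(N-2)\bar q,
\]
and dividing through by $N-2$ yields $\bar q=O(1)$, which is exactly the stated bound for sufficiently large $N$. For the finitely many small values of $N$ the chain is trivially ergodic (e.g.\ via Foster's criterion applied to the total packet count) and has a finite stationary expected queue length, which is vacuously $O(1)$ in $N$.

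The proof has no serious obstacle once one notices the weight lower bound $\delta/(1+\delta)$: essentially all the hard work was already expended in proving Lemma~\ref{main lemma} and Theorem~\ref{ergodicity payoff theorem}. The only subtle point is justifying that we may pass to the stationary distribution and apply rotational symmetry there; this is immediate because the transition kernel of the standard Bernoulli ring is invariant under cyclic relabeling of the nodes, so the (unique) stationary distribution inherits the same invariance.
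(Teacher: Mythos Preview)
Your argument is correct and follows essentially the same route as the paper: decompose $\phi(i)$ into a nonnegative hot-potato contribution plus a weighted sum of the queue lengths, invoke rotational symmetry so that $\Ex[Q_k]=\bar q$ for all $k$, and then combine with $\Ex[\Phi]=O(N)$ from Theorem~\ref{ergodicity payoff theorem} to get $\bar q=O(1)$.

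Two small remarks. First, in your identity $\sum_{j=2}^{N}Q_{i+j}=Q-Q_i-Q_{i+1}$ the left side actually equals $Q-Q_{i+1}$ (the term $j=N$ gives $Q_{i+N}=Q_i$), but since you only use this as a lower bound the slip is harmless. Second, the paper sums the weights exactly rather than replacing each by the crude lower bound $\delta/(1+\delta)$; this yields $\Ex[\phi(i)]\ge (N/2)\bar q$ instead of your $\frac{\delta}{1+\delta}(N-2)\bar q$, so the paper gets a somewhat better implicit constant, but of course both give the same $O(1)$ conclusion.
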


\proof
Consider node $N-1$.  Then
\[ \Ex [\phi(N-1)]=\Ex \left[\sum_{j=1}^{N-1}\left(c_{j} 
	+ \frac{j}{N-1}Q_{j}\right)\right]\]
where $c_{j}$ is the expected contribution to $\phi(i)$
from the packet in service, and $Q_{j}$ is the length of
the $j$th queue.  
The $c_{j}$ terms add up to $rN/3$, but rather than calculate
that, I'll just drop the (non-negative) term:
\[\geq  \Ex \left[\sum_{j=1}^{N-1} \frac{j}{N-1}Q_{j}\right]\]
\[=  \sum_{j=1}^{N-1} \frac{j}{N-1}\Ex [Q_{j}]\]
Note that $\Ex[Q_{j}]=\Ex[Q]$, i.e.\ the
expected queue length per node is independent of the node
(because of cyclical symmetry).  
\[=  \Ex[Q]\sum_{j=1}^{N-1} \frac{j}{N-1}\]
\[= (N/2) E[Q]\]
Now, from Equation~\ref{expected phi equation} we can write
\[O(N)=\Ex[\Phi] \geq \Ex[\phi(N-1)] \geq (N/2)E[Q]\]
Therefore, dividing by $N$,
\[\Ex[Q]=O(1)\]
\EOP

Because of Little's Theorem, we can translate this result
into a tight bound on the expected delay of a packet.
\begin{corollary} \label{theta(N) packet delay corollary}
\index{Corollary \ref{theta(N) packet delay corollary}}
The expected delay per packet of an $N$-node standard Bernoulli
ring with nominal load $r<1$ is $\Theta(N)$.
\end{corollary}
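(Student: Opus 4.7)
The plan is to apply Little's Theorem, which states that the expected number of packets in the system equals the arrival rate into the system times the expected sojourn time (delay) of a packet. The upper bound will come from Theorem~\ref{0(1) expected queue theorem}, and the lower bound will come from the fact that a packet must traverse $\Theta(N)$ edges in expectation.

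For the upper bound, I would first observe that the expected total number of packets in the $N$-node system is $O(N)$. Indeed, the expected number of packets at any given node is at most the expected queue length plus $1$ (for a possible hot potato in the cell), which is $O(1)$ by Theorem~\ref{0(1) expected queue theorem}. Summing over the $N$ nodes gives $O(N)$. The total arrival rate into the system is $Np = N \cdot \frac{2r}{N} = 2r$, which is $\Theta(1)$ for fixed $r \in (0,1)$. Little's Theorem then yields expected delay $= \frac{O(N)}{\Theta(1)} = O(N)$.

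For the lower bound, I would use the fact that each packet must cross a number of edges equal to its (uniformly chosen) destination offset, and each crossing consumes at least one time step. A packet inserted at a node travels a number of nodes uniformly distributed in $\{1, 2, \ldots, N-1\}$, so the expected travel distance is $N/2$. Hence the expected delay of a packet is at least $N/2 = \Omega(N)$, independent of any queueing delay.

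Combining the matching bounds gives expected delay $= \Theta(N)$. The main subtlety is ensuring that Little's Theorem applies: this requires the system to be ergodic (with finite expected queue length), which is exactly what Theorem~\ref{ergodicity payoff theorem} and Theorem~\ref{0(1) expected queue theorem} provide. No other obstacle is expected, as both bounds follow essentially immediately from results already established.
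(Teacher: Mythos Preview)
Your proposal is correct and follows essentially the same approach as the paper: the lower bound comes from the expected travel distance $N/2$, and the upper bound comes from Little's Theorem combined with the $O(1)$ expected queue length of Theorem~\ref{0(1) expected queue theorem}. The only cosmetic difference is that the paper applies Little's Theorem at a single queue (expected waiting time in queue $= \Ex[Q]/p = O(1)/(2r/N) = O(N)$, then adds the $N/2$ travel time), whereas you apply it to the whole system (total packets $O(N)$ divided by total arrival rate $Np = 2r$); both are equivalent.
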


\proof
The expected delay of a packet consists of the expected delay
while waiting in queue, plus the expected delay while
travelling along the ring.  Since destinations on the ring
are uniformly distributed from $1$ to $N-1$, then the
expected delay on the ring is $N/2$.  Therefore, the
total expected delay of a packet is $\Omega(N)$.

Little's Theorem (Theorem~\ref{little's theorem}) tells us
that the expected delay at a fixed queue is the expected queue
length times the arrival rate.  Since the arrival rate
is $p=2r/N$ and the expected queue length is $O(1)$ per
node, then the expected delay in queue is $O(N)$.  Adding
the $N/2$ expected delay in the ring, we have the expected
delay of a packet is $O(N)$.

Combining the upper and lower bounds, the expected delay
of a packet is $\Theta(N)$.
\EOP

\section{Other Bernoulli Rings}
  \label{nonstandard ring bounds section}

In this section, I will briefly discuss extensions of
Sections~\ref{main section} and~\ref{ergodicity results section}.  The
reasoning is closely related to that of the standard Bernoulli case,
so the proofs are only in outline.

\subsection{The Bidirectional Ring}
\index{bidirectional ring!$O(1)$ queue length}
There is nothing terribly special about the standard
Bernoulli ring, as indicated by the following theorem.
\begin{theorem}
Fix $\alpha>0$.  Suppose we have a family of nonstandard Bernoulli
rings, where the $N$th ring has $N$ nodes, and parameter $L=\lfloor
\alpha N \rfloor$.  Fix a nominal load $r<1$.  Then the expected queue
length per node is $O(1)$.
\end{theorem}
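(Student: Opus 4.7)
The plan is to port the Lyapunov argument of Sections~\ref{main section} and~\ref{ergodicity results section} over to nonstandard Bernoulli rings, with the maximum path length $L$ playing the role that $N-1$ played for the standard ring. Fix $\delta>0$ with $\rh = r(1+\delta/(1+\delta))<1$, and for each packet $z$ inserted at node $k$ and currently at node $j$, let $f(i,z,\sigma)$ be the probability that $z$ reaches node $i$ at least once if its lifespan were uniformly distributed on $\{1,\ldots,\lfloor(1+\delta)L\rfloor\}$ instead of on $\{1,\ldots,L\}$. Set $\phi(i,\sigma)=\sum_z f(i,z,\sigma)$ and $\Phi(\sigma)=\max_i \phi(i,\sigma)$. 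The mean-drift calculation in Lemma~\ref{drift motivation lemma} then adapts directly: summing the arrival contribution over the $L$ upstream nodes within reach yields an expected increase bounded by $\rh$ per time step, while a nonempty queue at $i$ guarantees one departure, giving net drift at most $\rh-1<0$.

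Next I would verify that the analogues of Lemmas~\ref{trick lemma}, \ref{exogenous arrivals lemma}, and \ref{hot potatoes lemma} transfer almost mechanically. The Trick Lemma holds with $\zeta = 1 + 1/((1+\delta)L-1)$, since its proof uses only ratios of length across the reach window. For the arrival and hot-potato bounds, I would partition the $L$-node upstream window into $D=1/\eps{1}$ segments of length $\eps{1}L$ and repeat the Chernoff-style estimates verbatim; because $L=\lfloor\alpha N\rfloor=\Theta(N)$, the hypothesis ``for all sufficiently large $N$'' is equivalent to ``for all sufficiently large $L$,'' so every quantitative threshold still applies for all large enough $N$. Assembling these yields the analogue of Lemma~\ref{main lemma}: mean and geometric negative drift of $\Phi$ whenever $\Phi(\sigma)>KL$. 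Foster's criterion then gives ergodicity, and the geometric drift combined with the Comparison Theorem produces $\Ex[\Phi]=O(L)=O(N)$.

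To extract $\Ex[Q]=O(1)$, I would mimic Theorem~\ref{0(1) expected queue theorem}: by cyclic symmetry $\Ex[Q_j]=\Ex[Q]$ for every node, and for any fixed $i$ the contribution of queue $j$ (with $j$ in the $L$-node upstream window of $i$) to $\phi(i)$ is at least a positive multiple of $j/L$ times $Q_j$. Summing over the window and using $L=\Theta(N)$ gives $\Ex[\Phi]\geq \Ex[\phi(i)]\geq c L\,\Ex[Q]$, whence $\Ex[Q]=O(\Ex[\Phi]/L)=O(1)$.

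The hard part will be the case $\alpha>1$, where $L>N$ and a single packet can potentially visit a fixed node more than once; here $f(i,z,\sigma)$ must be read strictly as ``probability of at least one future visit'' and not conflated with expected number of visits. The mean-drift calculation still lands on $\rh$ because it is driven by the nominal load $r=(L+1)p/2$, which already folds repeat crossings into the per-edge load; the segment-based estimates are insensitive to whether $L$ exceeds $N$ because they partition the reach window rather than the ring itself. So while the bookkeeping needs care when $\alpha>1$, no new techniques beyond those of Chapter~\ref{bounds chapter} appear to be required.
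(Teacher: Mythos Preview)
Your approach is exactly the paper's: it simply states that ``the arguments are identical to those for a standard Bernoulli ring, since the parameter $L$ scales linearly with $N$,'' and you have correctly unpacked what that entails. You go further than the paper by flagging the $\alpha>1$ case, which the paper does not address separately; your instinct that the bookkeeping there needs care (particularly the distinction between ``probability of at least one future visit'' and ``expected number of future crossings'' when a packet can wrap) is well placed, though the paper itself offers no additional argument for that regime.
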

\proof
The arguments are identical to those for a standard Bernoulli ring,
since the parameter $L$ scales linearly with $N$.
\EOP

We can use this result to analyze a bidirectional ring.
\begin{corollary}[Bidirectional Ring]
For a fixed nominal load $r<1$, and a Bernoulli arrival process,
the expected queue length per node for a bidirectional ring is $O(1)$.
\end{corollary}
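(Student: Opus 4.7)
The plan is to leverage the decomposition of the bidirectional ring into two (dependent) copies of a nonstandard Bernoulli ring, which was worked out in Section~\ref{bidirectional section}, and then apply the previous theorem (the nonstandard Bernoulli ring with $L=\lfloor\alpha N\rfloor$) to each copy separately.

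More concretely, I would first recall the decomposition: for odd $N$, restricting attention to the packets travelling in the clockwise direction gives, in isolation, an $N$-node nonstandard Bernoulli ring with parameter $L=(N-1)/2$. The exogenous arrival process for this sub-system is Bernoulli, because a packet arrives exogenously at a node with some probability $p$ and then, conditioned on arrival, the direction (clockwise vs.\ counterclockwise) is determined by a uniform random destination over the remaining $N-1$ nodes; the clockwise and counterclockwise halves split the destinations evenly. So the clockwise sub-system sees Bernoulli arrivals with rate $\hat p = p/2$, and its nominal load is $\tfrac{L+1}{2}\hat p$, which matches the nominal load $r$ of the whole bidirectional ring. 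Since we assumed $r<1$, each sub-system is in the regime covered by the previous theorem. The counterclockwise sub-system is identical (though probabilistically coupled to the clockwise one).

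Next, I would apply the immediately preceding theorem with $\alpha = 1/2$ to each sub-system. This yields that the expected queue length at any fixed node in the clockwise sub-system is $O(1)$, and likewise for the counterclockwise sub-system. Finally, I observe that the queue length of the bidirectional ring at a given node is the sum of the queue lengths of the clockwise and counterclockwise sub-systems at that node. By linearity of expectation (this is the only thing that saves us from worrying about the dependence between the two sub-systems, as already noted in Section~\ref{bidirectional section}), the expected queue length per node in the bidirectional ring is at most the sum of the two per-node bounds, which is $O(1)+O(1)=O(1)$.

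The only real subtlety is the one the paper already flags: the two sub-systems are highly dependent, since an exogenous arrival to a node is either clockwise or counterclockwise but never both, so one cannot treat the bidirectional ring as a product of two independent nonstandard Bernoulli rings. This would be the main obstacle if we needed joint distributional statements, but because the claim is only about \emph{expected} queue length, linearity of expectation sidesteps the dependence entirely. The even-$N$ case (where shortest paths are not unique) would need a brief separate remark about how ties are broken, but once a fixed rule is specified the same decomposition applies with $L=N/2$ or $L=(N/2)-1$ for the two directions, and the argument goes through unchanged.
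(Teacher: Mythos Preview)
Your proposal is correct and matches the paper's proof essentially line for line: decompose the bidirectional ring into two nonstandard Bernoulli rings with $L=\lfloor (N-1)/2\rfloor$ as in Section~\ref{bidirectional section}, apply the preceding theorem to each, and combine via linearity of expectation, with a brief remark on the even-$N$ tie-breaking issue. The paper's treatment is slightly terser but covers exactly the same points.
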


\proof
Decompose the ring into two unidirectional butterflies, where
$L=\lfloor (N-1)/2 \rfloor$, as in Section~\ref{bidirectional
section}.  We then get two $O(1)$ bounds on the expected
queue length, which we can add together by the linearity of
expectation.

There is a minor detail to worry about if $N$ is even, 
because there isn't a unique shortest path to the node 
$N/2$ hops away.  If we specify that these $N/2$
length paths are all (say) clockwise, then the decomposition
above works.  If we decide that the packet chooses between
the two paths with equal odds, then we need to make some
minor (and simple) adjustments, but the proof still
follows.
\EOP

\subsection{$N$ constant, $L \rightarrow \infty$}
The technique for the standard Bernoulli ring 
works fairly well if the size of the ring is fixed.

\begin{corollary}
Suppose we have an $N$-node nonstandard Bernoulli
ring with parameter $L$.  Suppose that $N$
is constant.  Then for any nominal load $r<1$,
the expected queue length is $\Theta(1)$ as
$L\rightarrow \infty$.
\end{corollary}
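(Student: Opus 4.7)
The lower bound $\Omega(1)$ is immediate from Corollary~\ref{lower bound corollary}, so only the matching upper bound requires work. My plan is to adapt the potential-function machinery of Section~\ref{main section} to the regime where a packet may circumnavigate the ring several times ($L \gg N$). Specifically, I would redefine $f(i,z,\sigma)$ to be the \emph{expected number of visits} that packet $z$ makes to node $i$ before departing, computed under a padded uniform lifespan on $\{1,\ldots,(1+\delta)L\}$, and then set $\phi(i,\sigma)=\sum_{z} f(i,z,\sigma)$ and $\Phi(\sigma)=\max_i \phi(i,\sigma)$ exactly as before. Under this definition a typical hot potato contributes $\Theta(L/N)$ to any fixed $\phi(i)$, and a typical exogenous arrival contributes in expectation at most $\tfrac{L+1}{2N(1+\delta)}$.

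Next I would reprove the three drift ingredients. The mean-drift-downward lemma (analog of Lemma~\ref{drift motivation lemma}) goes through with essentially the same arithmetic: exogenous arrivals happen at rate $p=2r/(L+1)$ at each of $N$ nodes, so their total expected contribution to any $\phi(i)$ per step is at most $N\cdot p\cdot \tfrac{L+1}{2N(1+\delta)} = r/(1+\delta) = \rh < 1$; the padded lifespan ensures surviving hot potatoes have non-positive expected contribution (because the true residual lifespan is stochastically less than the padded one); and a non-empty queue at $i$ gives a guaranteed $-1$ per step. The exogenous-arrivals and hot-potatoes tail bounds (Lemmas~\ref{exogenous arrivals lemma} and~\ref{hot potatoes lemma}) carry over after replacing "segments of length $\eps{1}N$" with "segments of length $\eps{1}L$" in the coarse-graining, since it is $L$, not $N$, that controls the residual-lifetime statistics here.

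The main obstacle will be adapting the Trick Lemma (Lemma~\ref{trick lemma}). In the standard Bernoulli ring, that lemma exploited the strictly linear fall-off of $f$ along the ring; with wrap-around this monotonicity is destroyed. Because $N$ is constant, however, I would replace the linear argument with a pigeonhole: all $\phi(i)$ differ by at most $N$ (since moving the observation node by one step changes each packet's visit count by at most $1$), so $\phi(i) \geq \Phi - N$ for every $i$. Consequently, whenever $\Phi$ exceeds a threshold $K_0 L$, \emph{every} node's potential is large, while hot potatoes can contribute at most $N\cdot \lceil (1+\delta)L/N\rceil$ to any single $\phi(i)$; the surplus must sit in queues, forcing $\sum_j Q_j = \Omega(\Phi/L)$ and hence (again by pigeonhole over $N$ queues) some $Q_{j^*}=\Omega(\Phi/(LN))$. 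Rotating the analysis to apply the downward-drift argument at node $j^*$ supplies the required forced departures.

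Combining these pieces gives an analog of Lemma~\ref{main lemma}, from which Foster's criterion (Corollary~\ref{foster's criterion}) and the Comparison Theorem (Theorem~\ref{comparison theorem}) yield ergodicity and $\Ex[\Phi]=O(L)$. Finally, since every packet anywhere in the system contributes at most $(1+\delta)L/N + O(1)$ to $\phi(N-1)$, one has $\Ex[\Phi] \geq \phi(N-1) \geq (L/N - O(1))\Ex[\,\text{total packets in system}\,]$, so $\Ex[\text{total packets}]=O(N)$ and thus $\Ex[Q]=O(1)$ per node, closing the bound with the $\Omega(1)$ lower bound already in hand.
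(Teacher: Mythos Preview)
Your plan follows the same route as the paper's brief sketch (``repeat Section~\ref{main section}, with $T_0$ becoming $O(L)$''), and the redefinition of $f$ as an expected visit count is the right adaptation when $L\gg N$. One arithmetic slip: under the padded lifespan $(1+\delta)L$, a fresh arrival contributes about $\tfrac{(1+\delta)L}{2N}$ to $\phi(i)$, so the exogenous drift is $\approx r(1+\delta)$, not $r/(1+\delta)$; fortunately this is still $<1$ for small $\delta$, so the conclusion survives.

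The genuine gap is your replacement for the Trick Lemma. Your premise ``each packet's visit count changes by at most $1$ when the observation node shifts by one'' is correct, but summing over packets gives $|\phi(i)-\phi(j)|\le P$, the \emph{total packet count}, not $N$; the claim ``all $\phi(i)$ differ by at most $N$'' is false once queues are nonempty. And even granting $\sum_j Q_j=\Omega(\Phi/L)$, your pigeonhole only locates a large queue at \emph{some} node $j^*$; for negative drift of $\Phi=\max_i\phi(i)$ you need a large queue at the \emph{argmax} node, and ``rotating the analysis to $j^*$'' does not deliver that. The fix is that the Trick Lemma itself adapts: comparing $\phi(i^*)$ with $\phi(i^*-1)$ at the argmax $i^*$ (each queued packet at $j\ne i^*$ is one step closer to $i^*-1$, while packets queued at $i^*$ are $N{-}1$ steps farther) yields $Q_{i^*}\ge\tfrac{1}{N}\sum_j Q_j-O(N)$, exactly the strengthening noted after Lemma~\ref{trick lemma}. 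Combined with $\sum_j Q_j=\Omega(\Phi/L)$ this forces $Q_{i^*}$ large whenever $\Phi>K_0L$, and the Main Lemma then goes through as you outline.
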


\proof
We can repeat the results of Section~\ref{main section}, as in the
standard Bernoulli case, but we need to take longer time steps.  That
is, the $T_{0}$ terms, which were $O(1)$ in the standard case, become
$O(L)$.  However, since there are only a constant number of nodes, we
still get an $O(L)$ bound for $\Ex[\Phi]$.  As in
Section~\ref{ergodicity results section}, we can translate this into
an $O(1)$ upper bound on the expected queue length per node.  This
bound matches the $\Omega(1)$ bound from Section~\ref{lower bound
section}, giving a tight $\Theta(1)$ bound.
\EOP

It's worth considering the following intuitive analysis of the system.
Suppose that, for a fixed $N$, we rescale time by speeding it up by a
factor of $L$, and let $L\rightarrow \infty$. Then the network begins
to resemble a single-queue network with $N$ servers, where each packet
has an amount of work uniformly distributed from $0$ to $1$.  The 
expected queue length is finite (because the variance is bounded).
Changing
the time scale of a network doesn't change the expected queue length,
so this limit would suggest an $O(1)$ limit for the expected queue
length of the original non-standard Bernoulli ring.

\subsection{$L$ constant, $N \rightarrow \infty$}

The technique doesn't work quite so nicely if the ring grows while the
parameter $L$ is fixed.

\begin{corollary}
Suppose we have an $N$-node nonstandard Bernoulli
ring with parameter $L$.  Suppose that $L$
is constant.  Then for any nominal load $r<1$,
the expected queue length is $O(\log N)$ as
$N\rightarrow \infty$.
\end{corollary}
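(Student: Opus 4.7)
The plan is to adapt the Lyapunov analysis of Section~\ref{main section} to the regime where $L$ is fixed, obtain geometric drift as in Lemma~\ref{main lemma}, and then extract an $O(\log N)$ bound on $\Ex[\Phi]$ via a union bound that exploits the fact that $\Phi = \max_{i}\phi(i)$ is a maximum over $N$ symmetrically distributed quantities.

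First, I would modify the potential function by replacing every occurrence of $(1+\delta)N$ with $(1+\delta)L$ in the definitions of Section~\ref{main section}, since packet lifespans in the nonstandard ring are uniform on $\{1,\ldots,L\}$ rather than $\{1,\ldots,N-1\}$. With $L$ held constant, all the auxiliary quantities ($\zeta$, $\alpha$, $T_{0}$, $K_{0}$, $\eta$) appearing in Lemmas~\ref{trick lemma} and~\ref{main lemma} become $O(1)$. Rerunning the proofs of Lemmas~\ref{exogenous arrivals lemma},~\ref{hot potatoes lemma}, and~\ref{main lemma} with these substitutions should yield negative geometric drift for $\Phi$ above a constant threshold, with all rate parameters determined solely by $L$, $r$, and $\delta$, rather than by $N$.

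The crucial observation is then that each $\phi(i)$ depends only on packets lying within distance $L$ of $i$. By the cyclic symmetry of the ring, all $\phi(i)$ share a common stationary distribution, and the geometric drift of the adapted $\Phi$ specialized to a single node (together with Markov's inequality) should give exponential tails $\Pr[\phi(i)>t]\leq Ce^{-\lambda t}$ at a rate $\lambda>0$ independent of $N$. A union bound over the $N$ nodes then yields $\Pr[\Phi>t]\leq NCe^{-\lambda t}$ for $t$ above a constant threshold, so $\Ex[\Phi]=O(\log N)$.

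Finally, I would translate the bound into an estimate on the expected queue length by the same device used in Theorem~\ref{0(1) expected queue theorem}: each packet queued at a node $j$ within distance $L$ of $i$ contributes at least $\frac{\delta}{1+\delta}$ to $\phi(i)$, so by symmetry $\Ex[\phi(i)]\geq c(L)\Ex[Q]$ for a positive constant $c(L)$. Combined with $\Ex[\phi(i)]\leq\Ex[\Phi]=O(\log N)$ and the fact that $L$ is constant, this gives $\Ex[Q]=O(\log N)$. The main obstacle is the middle step: verifying that the per-component tail rate $\lambda$ is genuinely independent of $N$ requires a careful audit of the constants in the adapted Lemmas~\ref{exogenous arrivals lemma} and~\ref{hot potatoes lemma}, where the segment decomposition was originally sized relative to $N$ and must be resized (or replaced) so that every constant depends only on $L$.
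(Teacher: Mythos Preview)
There is a genuine gap. Your claim that rerunning Lemma~\ref{main lemma} with $(1+\delta)L$ in place of $(1+\delta)N$ makes \emph{all} parameters---in particular $T_0$---independent of $N$ is where the argument breaks. Lemmas~\ref{exogenous arrivals lemma} and~\ref{hot potatoes lemma} bound $\Pr[\exists\, i\text{ such that }\ldots]$, a simultaneous bound over all $N$ nodes. In the standard ring the segments used in those proofs have length $\Theta(N)$, so there are only $O(1)$ of them and the union bound over segments costs nothing. When $L$ is constant, however, each $\phi(i)$ depends only on a window of width $L$, and windows for nodes separated by more than $L$ are essentially independent; a union bound over $\Theta(N/L)$ such windows forces $T_0=\Theta(\log N)$ in order to control the maximum increase. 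This is exactly the paper's route: take $T_0=O(\log N)$, so that the drift threshold $\Phi_N$ (which is $O(T_0)$ once $\zeta$, $\alpha$, etc.\ are constants depending only on $L$) becomes $O(\log N)$, and the Comparison Theorem then gives $\Ex[\Phi]=O(\log N)$ as before.

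Your second idea---establish exponential tails $\Pr[\phi(i)>t]\le Ce^{-\lambda t}$ on each individual $\phi(i)$ with rate independent of $N$, then union-bound---is a genuinely different strategy, but it is not justified by what you wrote. The Main Lemma gives geometric drift for $\Phi=\max_i\phi(i)$, not for any single $\phi(i)$: the Trick Lemma guarantees $Q_i$ is large only when $\phi(i)$ is near the \emph{global} maximum, so one cannot simply ``specialize to a single node'' and retain the drift. If you could obtain such a per-node tail bound you would in fact get $\Ex[\phi(i)]=O(1)$ directly, bypassing the union bound entirely, and hence $\Ex[Q]=O(1)$---which is precisely the stronger conjecture the paper flags immediately after this corollary as likely true but not obtainable by the present method.
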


\proof
We can proceed exactly as in Sections~\ref{main section} 
and~\ref{ergodicity results section}.  However, rather
than showing negative drift in $T_{0}=O(1)$ time steps,
we need $O(\log N)$ time steps.  Correspondingly,
the upper bound on the expected queue length is $O(\log N)$.
\EOP

This weaker result is to be expected; since we are maximizing the
$\phi(i)$ over all $i$, and the nodes are fairly independent, then we
would expect an order $\log (N)$ result for the maximum $\phi(i)$.  In
all likelihood, the lower bound is tight, and the expected queue
length per node is $\Theta(1)$.  There is probably a way to modify
$\Phi$ to get the $O(1)$ bound, but it's not obvious how.

\subsection{Bounded Queue Lengths}
If queues have a bounded maximum queue length in any (otherwise)
standard Bernoulli ring, how does this effect the queue length?
Well, suppose that the $n$th ring has an upper bound
of $B_{n}$ on the number of packets in any queue.  If a queue
is full, any excess exogenous arrivals are simply deleted.

If the $B_{n}$ are bounded by some $B$, then obviously the
expected queue length per node is $O(1)$.  But what
happens if the $B_{n}$ are unbounded?  

\begin{corollary}
Suppose we have a family of standard Bernoulli rings,
where the $n$th ring is has a maximum queue length of
$B_{n}$.  Then the expected queue length is $O(1)$.
\end{corollary}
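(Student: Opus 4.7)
The plan is to show that the Lyapunov drift machinery of Section~\ref{main section} transfers to the bounded-queue ring essentially unchanged, so that Theorems~\ref{ergodicity payoff theorem} and~\ref{0(1) expected queue theorem} apply directly.

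First, I would couple the bounded ring to an unbounded standard Bernoulli ring by sharing Bernoulli trials and destinations at every node; the only mechanism that distinguishes the two systems is that an exogenous arrival landing on a queue of length $B_{n}$ in the bounded ring is discarded. In particular, the potential $\Phi(\sigma)=\max_{i}\phi(i,\sigma)$ is well-defined on every state of the bounded system, and its value is computed exactly as before.

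Second, I would re-examine the drift lemmas of Section~\ref{main section} and verify that each one holds in the bounded setting with the same constants. Lemma~\ref{exogenous arrivals lemma} is strictly easier: discarded packets contribute $0$ to every $\phi(i)$, so the bounded-system count of accepted exogenous arrivals per segment is pointwise dominated by the coupled unbounded count and the same Chernoff tail applies. Lemma~\ref{hot potatoes lemma} rests on (a)~the one-step contribution of a traveling packet (Lemma~\ref{hot potato increase lemma}), which is a function of origin and current position and so is independent of $B_{n}$; and (b)~an exponential tail on the number $W$ of ``inserted hot potatoes'' during $T_{0}$ steps, where $W^{B}\le W^{U}$ pointwise under the coupling because every admitted packet is also admitted in the unbounded ring. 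Lemma~\ref{trick lemma} is an algebraic inequality on any state and is unaffected by the queue bound. Putting these observations together as in the proof of Lemma~\ref{main lemma}, for $\Phi(\sigma)>K_{0}N$ the bounded system still satisfies
\[
\Ex[\Phi(\tau)]-\Phi(\sigma)<-\eps{0},\qquad
\Ex[\eta^{\Phi(\tau)}]-\eta^{\Phi(\sigma)}<-\eps{1}\eta^{\Phi(\sigma)},
\]
with constants not depending on $B_{n}$.

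Third, since queues are bounded the chain has finite state space and is automatically ergodic, so only the stationary mean is in question. The geometric drift inequality together with the Comparison theorem (Theorem~\ref{comparison theorem}) yields $\Ex[\Phi]=O(N)$ uniformly in $B_{n}$; the cyclic-symmetry argument from Theorem~\ref{0(1) expected queue theorem}, namely $\Ex[\phi(N-1)]\ge (N/2)\Ex[Q]$, then converts this into $\Ex[Q]=O(1)$ per node, as desired. The main thing left to check is the uniformity in $B_{n}$ of the constants produced by the proofs of Lemmas~\ref{exogenous arrivals lemma}--\ref{main lemma}; I expect this to be bookkeeping rather than a new difficulty, since the pointwise-domination coupling above shows that every stochastic input to those proofs is no worse in the bounded system than in the unbounded one.
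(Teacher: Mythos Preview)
Your proposal is correct and follows essentially the same approach as the paper: reuse the Lyapunov function $\Phi$ and observe that discarding overflow packets can only improve the exogenous-arrival bound, so all the drift lemmas of Section~\ref{main section} carry over with the same constants. The paper's own proof is a terse two-sentence version of exactly this argument; your write-up simply fills in the lemma-by-lemma bookkeeping (and adds the nice observation that finite state space gives ergodicity for free).
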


\proof
We can use exactly the same Lyapunov function $\Phi$ to show drift in
this network; all the proofs are identical.  The only difference
in the analysis is that certain packets never arrive.  This change
only helps us to bound $\Phi$ (by strictly reducing the exogenous
arrival bound in Lemma~\ref{exogenous arrivals lemma}).  Therefore, for
any fixed nominal load $r<1$, the expected queue length is $O(1)$.
\EOP


\section{Future Work}
In order to prove the results in Section~\ref{main section}, I had to
prove exponential bounds on the tails of unlikely events.  These
bounds ultimately came from Theorems~\ref{sums of bernoulli lemma}
and~\ref{bernoulli lower bound lemma}.  It should be
possible to prove these sorts of results with any arrival process
with appropriate exponential tails.  For instance, if the number
of packet arrivals in one time step has a geometric
or Poisson distribution, then we ought to be able to show 
$O(1)$ bounds on the expected queue length by using the
same techniques from this chapter.

Can we extend these results to a continuous time ring?
Consider the number of arrivals to a node in $N$ steps.
As $N$ gets large, this distribution
converges to a Poisson distribution.  (The convergence
of a rescaled Bernoulli process to a Poisson
process is sometimes called a ``baby Bernoulli''
approximation.)\index{baby Bernoulli}
Perhaps, then, we could construct a continuous time version of the
GHP protocol, and show that under a Poisson arrival process,
the expected queue length per node is $O(1)$.  
(The natural continuous time version of the GHP
protocol is not obvious, unfortunately.)

Finally, let's consider higher dimensional variants on the ring.
Suppose we have a $d$-dimensional torus $T=N_{1}\times\cdots\times
N_{d}$.  Suppose that packets arrive according to a rate $p$ Bernoulli
process at every node, and destinations are uniformly distributed
throughout the torus.  Every node has out-degree $d$, so suppose we
allow a node to route packets along as many of these edges as it can.
The appropriate queueing theory model for this network, then, will be
its edge graph, $T_{e}$ (since each edge only routes at most one
packet per time step), but I'd like to translate the results back to
the original network $T$.  (In $T$, the edges queues wait at the
nodes; the queue at node $t\in T$ consists of all the queues $t_{e}\in
T_{e}$ representing edges originating at $t$.)

We still need to specify the protocol.  We can route packets using
dimensional routing, but not all possible conflicts are resolved.  We
need a refinement to the protocol.  Suppose that we consider a subring
$R$, and consider the packets travelling along it.  Let us give
precedence to hot potatoes travelling along the ring.  Exogenous
packets, and packets entering from another ring, all wait (in FIFO
order) in queue.  This protocol is a kind of higher dimensional GHP.

The techniques of this chapter should suffice to prove an $O(d)$
expected queue length per node for any torus.  (Because dimensional routing
is inherently asymmetric, it would be difficult to strengthen this to
an $O(1)$ expected queue length per node in $T_{e}$.  If we
symmetrized the dimensional routing, though, it would probably work.)
If the torus had the same size in all dimensions, i.e.\ if
$N_{i}=N_{j}$ for all $i,j$, then it should follow that the expected
delay per packet was $\Theta(dN)$.

 \chapter{Fluid Limits} \label{fluid chapter}

\section{Introduction}
In Chapters~\ref{exact chapter} and~\ref{bounds chapter}, I analyzed
specific models of packet routing on the ring.  That is, I specified
the arrival process (Bernoulli), the distributions of packet life
spans (uniform over some range), and the protocol (GHP, for the most
part).  Once these details were specified, I could attempt to prove
ergodicity results and bounds on the expected queue length.

Could we do more?  Might it be possible to prove the stability
of the ring under \emph{any} arrival process, with \emph{any} protocol?

The answer, more or less, is yes.  This chapter is devoted to the
development of a technique known as the \emph{fluid limit} approach.
It allows us to establish the ergodicity of vast classes of queueing
networks with relative ease.

The idea behind fluid limits is to take a stochastic 
process of interest (like the length of queues in a network), rescale
it in time and space (e.g.\ speed up time by a factor of $T$, while
simultaneously dividing the queue lengths by $T$), and take the
limit as the scaling goes to infinity (i.e.\ $T\rightarrow \infty$.)
It turns out that this process is well-defined in many cases of interest,
and the limit (called the ``fluid limit model'') can be
fairly simple to analyze.  Laws of large numbers convert the 
stochastic system into a deterministic one, and the discrete number of
customers in queues is transformed into a continuous (``fluid'') quantity.

Fluid limit models are of interest because they give information
about the original model.  In particular, if the fluid limit model is
stable, then the original stochastic system is stable.  (I will define
fluid stability later in this chapter.)  

There is a growing body of literature on fluid limits.  The seminal
paper establishing ergodicity by the fluid limit technique is by
Dai~\cite{Dai}.  The result was refined by Chen~\cite{Chen}, and 
extended to higher moments (e.g.\ finiteness of expected queue length)
by Dai and Meyn~\cite{Dai_and_Meyn}.  The fluid stability of 
the ring was proved by Dai and Weiss~\cite{Dai_and_Weiss}.

Given all the results I've just referenced, it may sound like there's
nothing left to do; we should just look up the ergodicity results and
rejoice.  Unfortunately, there is a complication.  The results apply
to continuous time, but specifically exclude discrete time systems.  I
rectify that problem in Section~\ref{drift stability section}, and
extend the fluid limit approach to discrete time.  We can now apply
the other ring stability results from the literature and make
conclusions about ergodicity on the (discrete time) ring under any
greedy protocol.

A more limiting restriction of the networks studied in the literature
is the dynamics of the stochastic processes.  Let us consider
the arrival process at a particular queue as an example.
In a traditional network of queues, we imagine that the interarrival
times (the amount of time between adjacent arrivals of packets
to the same class) form a series of i.i.d.\ random variables.

But are interarrival times in packet routing networks really
identically distributed?  In actual networks, like the internet, there
are brief periods with short interarrival times (i.e.\  lots of new
packets arrive), interspersed with long periods of relative silence.
These sorts of long-range correlations (and even self-similarity) have
been verified empirically.  See, for instance, the work of
Crovella in~\cite{crovella} and~\cite{crovella2}.

The situation can be even more dire.  Suppose that a malicious hacker
decides to destabilize the network.  He can inject packets from any
node whenever he wants, but if he simply floods the network, he'll be
detected and eliminated.  Therefore, he'll try to destabilize the
network not with brute force, but by timing his packet injections
carefully.  Because the packet injections are ultimately performed by
a computer program (written by the hacker), we can model the adversary
as a finite state machine, possibly using randomness in the choice of
states (i.e.\ a randomized FSM).  To simplify the problem, let us
assume that the machine's state is independent of the state of the
network (but simply executes according to its own internal
logic).\footnote{For the reader familiar with the bounded adversaries
of adversarial queueing theory,\index{adversarial queueing theory}
this model may sound faintly reminiscent.  There are two crucial
differences.  First, the complexity of the bounded adversary is
allowed to be arbitrarily great; for instance, the strategy need not
even be recursively computable.  Second of all, a bounded adversary
has some associated constant $B$, such that it can inject a limited
number of packets into any window of $B$ steps.  I view a randomized
FSM as providing a much more reasonable model of an adversary, in that
the complexity is bounded, but there is no artificial length $B$
window to consider.}

Unfortunately for web surfers everywhere, this concept is modelled
after a real-world phenomenon, the \emph{Distributed Denial of
Service}\index{DDOS attack}\index{distributed denial of service attack}
(DDOS) attack.  A DDOS attack involves a hacker taking over a large
number of computers on the internet, then instructing all of them to
download the same web page.  Although each computer only requests a
few downloads, the number of computers involved and their simultaneity
can crash major web pages.  A fairly high-profile example of this
occurred to Yahoo on February 7th, 2000 (see Richtel~\cite{richtel}).

To model these systems, we'd really like to allow more general
stochastic processes.  In Section~\ref{packet routing defs section}
and onward, I show how to extend the fluid limit technique to handle
hidden Markov processes (which allow us to simulate both long-range
correlations and randomized FSMs).  An immediate implication is the
ergodicity of the ring under any greedy protocol in this more general
setting.

[For the reader unfamiliar with fluid limits, or who doesn't trust that
these dramatic-sounding stability results really follow, I've included
Appendix~\ref{dummies chapter} as a self-contained primer.
Contentwise, the appendix amounts to proving a special case of Dai's
results.  The results are general enough to apply to the Bernoulli
ring, though.  The proofs themselves are different and much
simpler.]

\section{A Drift Criterion for Stability} \label{drift stability section}
The first property to consider in a Markov chain is its ergodicity.
Definitions of Markov chains and ergodicity can be found in Section~\ref{basic
probability section}.

Let us fix some notation:
\begin{definition}
We are considering a discrete time, irreducible, aperiodic Markov
chain $X(\cdot)$.  It has a countable state space, $\ssx$.  $X^{y}(t)$
is the state of the Markov chain at time $t$ when started in state
$y$.

If $t$ isn't an integer, then we interpret $X^{y}(t)$ as
$X^{y}(\lfloor t \rfloor)$
\end{definition}
I'm going to devote this section to proving that an apparently very
weak drift condition is sufficient to establish ergodicity.  First,
I'll need to define a bounded norm on a countable state space.

\begin{definition} \index{bounded norm}
Suppose we have a state space $\ssx$.  A \emph{bounded norm} is a
function $|\cdot|:\ssx\rightarrow \real^{+}$ such that for any integer
$k$, the set $\{x|\,\,|x|\leq k\}$ is finite.
\end{definition}

For example, in many queueing systems, the sum of the queue lengths forms
a bounded norm.

\begin{theorem} \label{lim stab}
\index{Theorem \ref{lim stab}}
Assume we have an irreducible, aperiodic discrete time Markov chain
with a countable state space and a bounded norm, $|\cdot|$.  Suppose
there exists $T>0$ such that
\begin{equation} \label{eq:limit stability}
\lim_{|x|\rightarrow \infty} \frac{1}{|x|} \Ex \left| X^{x}(
|x|T )\right| =0 
\end{equation}
Then $X(\cdot)$ is an ergodic Markov chain.
\end{theorem}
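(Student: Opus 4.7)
The plan is to verify a state-dependent Foster--Lyapunov condition, using the norm $|\cdot|$ itself as the Lyapunov function and the state-dependent sampling times $n(x) = \lceil |x|T \rceil$. In other words, I will treat the hypothesis as saying that $|x|$ drops by a definite factor over a window of length linear in $|x|$, and then apply an excursion argument to conclude finite expected return time to a finite set.

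First, extract a uniform drift bound from the limit hypothesis. Fix any $\epsilon \in (0,1)$; by Equation~\ref{eq:limit stability} there exists $K>0$ such that
\[
\Ex\bigl|X^{x}(|x|T)\bigr| \;\leq\; \epsilon |x|
\]
for every $x$ with $|x| > K$. Because $|\cdot|$ is a bounded norm, the set $C = \{x : |x| \leq K\}$ is finite.

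Next, iterate this one-step drift (of the chain sampled at intervals of length $|x|T$) using the strong Markov property. From any starting state $x$, define stopping times $\tau_0 = 0$ and $\tau_{n+1} = \tau_n + \lceil |X(\tau_n)|T \rceil$, and let $\tau_C$ be the first hitting time of $C$. Put $M_n = |X(\tau_n)|\mathbf{1}_{\tau_n < \tau_C}$. The drift bound plus the tower property gives $\Ex M_{n+1} \leq \epsilon\, \Ex M_n$, so $\Ex M_n \leq \epsilon^n |x|$. Since $M_n > K$ on $\{\tau_n < \tau_C\}$, Markov's inequality yields $\Pr[\tau_n < \tau_C] \leq \epsilon^n |x|/K$, so the expected number of epochs before entering $C$ is finite, and the expected hitting time satisfies
\[
\Ex_x \tau_C \;\leq\; \sum_{n=0}^{\infty} \bigl(T\,\Ex M_n + \Pr[\tau_n < \tau_C]\bigr) \;<\; \infty.
\]
With finite expected return time to the finite set $C$ from every state, the irreducible aperiodic chain is positive recurrent and hence ergodic. (Equivalently, this is Foster's criterion applied to the state-dependent sampled chain, in the spirit of the state-dependent drift theorems of Meyn and Tweedie.)

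The main obstacle is the iteration step: although $\Ex|X(\tau_n)|$ shrinks geometrically, $|X(\tau_n)|$ may occasionally be much larger than $\epsilon^n |x|$, and then the next excursion length $\lceil |X(\tau_n)|T\rceil$ is linear in that large value. The interchange of sum and expectation in the hitting-time estimate relies crucially on the $L^{1}$ control $\Ex M_n \leq \epsilon^n|x|$, which in turn uses precisely the \emph{linear in $|x|$} scaling of the time horizon in the hypothesis; a fixed or sub-linear horizon would not be enough. A minor subtlety is that the hypothesis uses $X^{x}(|x|T)$ under the convention $X^{x}(t)=X^{x}(\lfloor t\rfloor)$, while my iteration uses $\lceil |x|T\rceil$; the resulting off-by-one discrepancies are absorbed into the slack $1-\epsilon$ by taking $K$ slightly larger.
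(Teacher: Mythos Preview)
Your proposal is correct and follows essentially the same route as the paper. Both arguments sample the chain at the state-dependent times $n(x)\approx |x|T$, extract a multiplicative drift $\Ex|X^{x}(n(x))|\le (1-\epsilon)|x|$ outside a finite set, and convert this into a linear-in-$|x|$ bound on the expected hitting time of that set. The paper packages the last step by invoking the Comparison Theorem (Meyn--Tweedie) on the embedded chain $\hat X$ with $f(x)=\tfrac{\epsilon}{T}n(x)$, whereas you carry out the same telescoping by hand via the supermartingale iterates $M_n$; the resulting bounds $\Ex_x\tau_C\le \tfrac{T}{\epsilon}(|x|+L_2)$ and $\Ex_x\tau_C\le (T+1/K)\tfrac{|x|}{1-\epsilon}$ are the same up to constants. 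One cosmetic point: your floor/ceiling remark is unnecessary---you can simply take $\tau_{n+1}-\tau_n=\lfloor |X(\tau_n)|T\rfloor$ (which is $\ge 1$ once $K\ge 1/T$), matching the paper's convention exactly and avoiding any appeal to one-step regularity of $|\cdot|$.
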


\note If you find the notation ``$\lim_{|x|\rightarrow\infty}$''
vague, then specify an enumeration of the state space: $x_{1},
x_{2},\ldots$. Since for every $k$ there are only finitely many $x_{n}$
with $|x_{n}|\leq k$, then $\lim_{n \rightarrow
\infty}|x_{n}|=\infty$.  We can then use this ordering $\{x_{n}\}$
above.

\note
A version of this theorem appears in Dai~\cite{Dai} as Theorem 3.1.
Dai's version assumes that the interarrival and service times are
unbounded and spread out (Equations 1.4 and 1.5,) which rules out
discrete time systems.  My proof removes that restriction (in discrete
time).  The only related discrete time theorem in the literature is by
Malyshev and Menshikov~\cite{malyshev}, and is substantially weaker.

The proof below also generalizes the role of the norm, which will
be useful later in this chapter.

\proof
The existence of the limit implies that for any $\epsilon$,
there exists a sufficiently large bound $L$ such that
\[\mbox{ if } |x|>L \mbox{ then }
\frac{1}{|x|} \Ex\left| X^{x}(|x|T)\right| 
\leq \epsilon\]
Let $\epsilon=1/2=1-\epsilon$, and take $L\geq 1$, so
\[\mbox{ if } |x|>L \mbox{ then }
\frac{1}{|x|} \Ex\left| X^{x}(|x|T)\right| 
\leq 1- \epsilon\]
Let $B=\{x\in \ssx \, | \, |x|\leq L\}$.  Then for any
$x\not\in B$,
\[\Ex\left| X^{x}(|x|T)\right| 
\leq (1-\epsilon)|x| = |x| - \epsilon|x| \]
Now, consider the following function:
\[ n(x)=\left\{ \begin{array}{ll} |x|T, &  \mbox{if }x\not\in B\\
			T, & \mbox{if } x\in B \end{array}
	\right. \]
Since we chose an $L$ such that $L\geq 1$, 
it follows that $n(x)\geq T$ for all $x$.  Therefore, for any 
$x$,
\begin{equation} \label{eq:stab1}
\Ex\left| X^{x}(n(x))\right| 
\leq |x| -\epsilon|x| + L_{1}1_{B}(x)
\leq |x| -\epsilon \frac{n(x)}{T} + L_{2}1_{B}(x)
\end{equation}
where $L_{1}, L_{2}$ are some (finite) constants.  To see that
$L_{1}$ is finite, first observe that 
$\Ex\left| X^{x}(n(x))\right| \leq
|x| +$ (expected number of arrivals in $n(x)$ steps),
which is finite for any fixed $x$.  Therefore,
we can just take the maximum of
$\Ex\left| X^{x}(n(x))\right| -|x| +\epsilon L$
over all $x\in B$.  Since $B$ is a finite set, this maximum exists.
We can take $L_{2}=L_{1}+\frac{L}{T}\epsilon$.

We now construct a new Markov chain (an ``embedded chain''), as follows.
We have our original transition probabilities, where $p_{i,j}$ is the
probability of changing from state $i$ to state $j$ in one step, and
$p_{i,j}^k$ is the probability of changing from $i$ to $j$ in exactly
$k$ steps.  Construct a new Markov chain on the same state space
with transition probabilities $\hat{p}_{i,j}=p_{i,j}^{n(i)}$.

The embedded chain (call it $\hat{X}(t)$) represents a particular
sampling of points from the original chain, namely 
\\ $\hat{X}(0)=X(0)$, 
\\ $\hat{X}(1)=X(n(X(0)))$,
\\ $\hat{X}(2)=X(n(X(n(X(0)))) + n(X(0)))$, and so on.  If we define $s(t)$
by $s(0)=0$, and $s(t+1)=n(X(s(t))) + s(t)$, then $\hat{X}(t)=X(s(t))$.
(Note that $s(t)$ isn't a deterministic function; 
it's a stochastic process.)

Let $\tau_{B}$ be the first return time to $B$ in $X(t)$; that is, $\tau_{B}$
is a stopping time defined as the least time $t\geq1$ such that
$X(t)\in B$.  Let $\hat{\tau}_{B}$ be the first return time to $B$ in
$\hat{X}(t)$.  Then observe that if $\hat{X}$ has returned to $B$
by time $t$, then $X$ has returned to $B$ by time $s(t)$ (and possibly
sooner).  So,
\begin{equation} \label{eq:embed}
s(\hat{\tau}_{B})=\sum_{k=0}^{\hat{\tau}_{B}-1}n(\hat{X}(k))
\geq \tau_{B}
\end{equation}

Considering the embedded chain, we can view Equation~\ref{eq:stab1} as
\[
\Ex\left| \hat{X}^{x}(1)\right| 
\leq |x| -\epsilon \frac{n(x)}{T} + L_{2}1_{B}(x)\]
which tells us about the one-step drift of $\hat{X}(t)$.  Using the
Comparison Theorem (Theorem~\ref{comparison theorem}), we conclude
that for any $x\in\ssx$, if we set $X(0)=\hat{X}(0)=x$, then
\[ \Ex\left[ \sum_{k=0}^{\hat{\tau}_{B}-1}\frac{\epsilon}{T}
n(\hat{X}(k))\right]
\leq |x| + L_{2}\]
dividing both sides by $\epsilon/T$ and using
Equation~\ref{eq:embed}, we conclude that
\[\Ex[\tau_{B}] \leq 
\frac{T}{\epsilon}(|x| + L_{2})\]
Thus, the expected return time to $B$ from any state is finite, so (by
Theorem~\ref{basic ergodic facts theorem}), $X(t)$ is a positive
recurrent Markov chain.
\EOP

A more careful examination of the preceding proof reveals that
we don't really need the limit to go to zero in 
Equation~\ref{eq:limit stability}-- it is sufficient that
\[\limsup_{|x|\rightarrow \infty} \frac{1}{|x|} \Ex \left| X^{x}(
|x|T  )\right| <1-\epsilon \]
for any fixed $\epsilon>0$.

Theorem~\ref{lim stab} is sufficient to establish a universal
stability result on the ring.

\begin{corollary} \label{ergodic ring corollary}
\index{Corollary \ref{ergodic ring corollary}}
Suppose we are routing on an $N$ node ring in discrete time under any
greedy protocol.  Suppose that the ring is a generalized Kelly network
(see page~\pageref{Kelly network}).  Finally, suppose that the nominal
loads are less than one at each node (see page~\pageref{nominal
load}), i.e.\ $r<1$ at every node.  Then the ring network is ergodic.
\end{corollary}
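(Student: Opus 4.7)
The plan is to apply Theorem~\ref{lim stab} (the drift criterion just established) by means of the fluid limit technique. Take the state $x$ of the Markov chain to record the number of packets in every queue together with any additional information needed (destinations or progress indicators) to make the process Markovian. Let $|x|$ be the total packet count, which is clearly a bounded norm on the state space. The goal is then to exhibit a $T>0$ such that $\frac{1}{|x|}\Ex|X^{x}(|x|T)|\to 0$ as $|x|\to\infty$.

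First, I would pass to the rescaled process $\bar X^{x}(t)=X^{x}(\lfloor|x|t\rfloor)/|x|$. Using the i.i.d.\ (or more generally stationary) increments of the generalized Kelly arrival and routing processes together with the functional strong law of large numbers, I would argue that along any sequence of initial states with $|x|\to\infty$, the rescaled trajectories are precompact (with respect to uniform convergence on compacts, almost surely) and that every subsequential limit is a deterministic Lipschitz path $\bar X(\cdot)$ satisfying the fluid model equations for a ring: conservation of mass at each queue, a routing matrix determined by the nominal routing probabilities, the non-idling (work-conserving) condition enforced by greediness, and the constraint that each server processes at rate at most one.

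Second, I would invoke the fluid stability result for the ring due to Dai and Weiss~\cite{Dai_and_Weiss}: provided the nominal load at every node is strictly less than one, every fluid solution of the ring with $|\bar X(0)|\le 1$ reaches the zero state in a uniformly bounded time, so there exists $T$ (depending only on the nominal loads and $N$) with $\bar X(T)=0$. Although their theorem is stated in continuous time, the fluid model arising from my discrete time ring is algebraically identical (same conservation equations, same greediness constraint), so their proof transfers verbatim.

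Finally, I would combine fluid stability with a uniform integrability estimate. A crude stochastic domination by a process that accepts arrivals but never departs shows $\Ex|X^{x}(|x|T)|/|x|$ is bounded uniformly in $x$, while the a.s.\ fluid convergence gives $|\bar X^{x}(T)|\to 0$ in probability; together these imply convergence in $L^{1}$, which is exactly Equation~\ref{eq:limit stability}. Theorem~\ref{lim stab} then delivers ergodicity. The main obstacle, and the step requiring the most care, is the fluid-limit convergence argument in Step~1: verifying that discrete-time greedy routing (``route, then arrive,'' with ties broken arbitrarily) still yields limits satisfying the standard non-idling fluid equations, and that the class of limit points really coincides with the Dai--Weiss fluid model so that their stability theorem is directly applicable.
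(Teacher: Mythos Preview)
Your proposal is correct and follows essentially the same architecture as the paper's proof: invoke Dai--Weiss for fluid stability of the ring, use Dai's machinery to convert fluid stability into Equation~\ref{eq:limit stability}, and then apply Theorem~\ref{lim stab} to conclude ergodicity. The paper simply cites Dai for the step you spell out (precompactness of rescaled trajectories, identification of limit points with the fluid model, and the uniform-integrability argument yielding $L^{1}$ convergence), while you sketch that step in more detail; your identification of the discrete-time fluid-limit convergence as the delicate point matches exactly what the paper isolates as its contribution and develops in Appendix~\ref{dummies chapter}.
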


\proof
Suppose we replace the probabilistic model of the ring, with discrete
packet arrivals, with a deterministic model, with continous
quantities of packet ``fluid'' entering the system.  The rates
of flow in the fluid model are determined by the expected rates
in the probabilistic model.
Dai and Weiss~\cite{Dai_and_Weiss} show that the fluid model
of a ring is stable; that is, all the fluid eventually exits the
system.  

Dai~\cite{Dai} shows that if the fluid model is stable, then
Equation~\ref{eq:limit stability} holds. 
We can now plug into Theorem~\ref{lim stab}, and we're done.

If the reader is interested in a self-contained version of this for
the standard (or non-standard) Bernoulli ring, he or she can combine
the results of Appendix~\ref{dummies chapter} and Corollary~\ref{ring
stability by convexity corollary}.  The more general results of the
next sections, combined with Corollary~\ref{ring stability by
convexity corollary}, will give Corollary~\ref{ergodic ring corollary}
on any ring with constant mean service times (not just Bernoulli
ones.)
\EOP

The amount of time it takes a packet to cross an edge can be much more
general than the deterministic behavior of Corollary~\ref{ergodic ring
corollary}, and the result will still hold.  All we really need is
that the ring is a generalized Kelly network.

\section{Our Model of Packet Routing} \label{packet routing defs section}
I'm going to lay out all the assumptions I'll make about the packet routing
model, and fix some of my notation.  I'm making an effort to make
this framework very general, so I'm not assuming (for instance)
that the time it takes to cross an edge is one time step.
\begin{itemize}
\item We're operating in discrete time.
\item Packets travel on an $N$ node network, which can be an arbitrary 
directed graph.  Packets wait at nodes (rather than edges).  For
packet routing where packets queue on edges, we just consider the edge
graph and perform our analysis there.
\item Packets are members of a class.  There are $C$ classes (where
$C$ is finite).
A class usually contains information such as a packet's destination,
or possibly its destination and priority in the system.
By queueing theory convention, each class
occurs at only one node.  (This is not restrictive-- it's just a 
naming convention.)  

For a node $i$, let $C_{i}$ be the \emph{constituency} of $i$, i.e.\ 
the collection of classes that occur at node $i$.
\item Packets enter a node either from another node, or from outside
the network.  The first type of packet is called an \emph{internal
packet},\index{internal packet} and the second type is called an
\emph{exogenous packet}.\index{exogenous packet}

In a traditional network of queues, the exogenous packet arrivals
are determined by their ``interarrival times''.  That is,
there is a series of i.i.d.\ random variables that determine
how much time passes between each arrival of a class $c$ packet.

We're going to use a more general arrival process.  (I'll demonstrate
the reduction of the i.i.d.\ case to the following case later.)  The
exogenous packet arrivals are determined by a Markov process $A(t)$ on
a state space $\ssa$.  (``$A$'' stands for ``arrival''.)  For each
class of packet $c$ and each integer $i>0$, there is a set
$\ssa_{c}^{i}\subseteq\ssa$ such that whenever the Markov chain enters
state $\sigma$, if $\sigma\in\ssa_{c}^{i}$ then $i$ packets of class
$c$ arrive.  For a fixed $c$, the $\ssa_{c}^{i}$ are disjoint.  If
$c$ varies, the $\ssa_{c}^{i}$ are not necessarily disjoint.  
The $\ssa_{c}^{i}$ may be empty (if there are never $i$
exogenous arrivals to a particular class).

\item Next, we need to specify the behavior of the internal packets.
To avoid trivialities, I'm going to assume that at most one 
packet departs a node on each time step.  The interested
reader can generalize this appropriately.

We need to be a bit more careful with the packet routing than with the
arrival process.  We need to define a different Markov process for
each class $c$.

When a packet leaves a node, it must either follow one of the outgoing
edges to another node, or it must leave the network.  
These decisions are made by a Markov process $R_{c}(t')$ on 
a state space $\ssr_{c}$.  (``$R$'' stands for ``routing''.)
For every class $c$ at node $n$, and every node $m$ with
an edge from $n$ to $m$, there is a subset $\ssr_{c}^{m}\subseteq\ssr$.
The $\ssr_{c}^{m}$ are disjoint.

If a class $c$ packet leaves node $n$ and
we're in state $\sigma \in \ssr_{c}^{m}$, then the packet
travels to node $m$.  If 
$\sigma \not\in \ssr_{c}^{m}$ for any $m$, then the packet leaves
the system.

The $t'$ variable in the Markov process $R_{c}(t')$ is not the time,
but rather the number of packets that have been routed from class $c$
so far (so $tleq t'$).  In other words, time only advances for
$R_{c}(t')$ when packets are being routed; otherwise, the Markov chain
remains frozen in the same state.

\item
Next, we need to determine how long it takes a packet to cross a node.
As mentioned above, this amount of time is usually deterministically
one in the case of traditional packet routing networks, but can be a
collection of i.i.d.\ random variables of arbitary distribution in a
more general network of queues.

For each class $c$, we define a Markov
process $S_{c}(t')$ on the state space $\sss_{c}$.  


For each class $c$ and integer $i>0$, there's a subset $\sss_{c}^{i}
\subseteq \sss_{c}$ such that for a fixed $c$, the $\sss_{c}^{i}$
are disjoint.  If we are working on a class $c$ packet and enter class
$\sigma\in\sss_{c}^{i}$, then $i$ class $c$ packets leave the node
(although some may possibly reenter the node immediately, if there's
an edge from the node to itself).  If there are $i_{0}<i$ class $c$
packets in queue, then all $i_{0}$ will leave immediately, and the
next $i-i_{0}$ class $c$ packets that have work done on them will
immediately depart, i.e.\ have service times of zero.  This means that
it is possible for a packet to travel across several nodes in one time
step.  (If we set $\sss_{c}^{i}=\emptyset$ for any $i>1$ and all $c$,
then this strange node-hopping behavior never occurs.)

As with the routing process, the $t'$ counts the number of units of
time spent servicing class $c$ packets, so $t' \leq t$.

The state of $S_{c}(t')$ will only advance after $i$ packets of
class $c$ have left; otherwise, we remain frozen in the same state.

\item $A(t)$, $R_{c}(t)$ and the $S_{c}(t)$ are mutually independent,
irreducible, aperiodic, and ergodic.  For any state $\sigma$ in any of
these Markov chains, let $p(\sigma)$ be the stationary probability of
being in state $\sigma$.

The \emph{mean (exogenous) arrival rate}
\index{mean (exogenous) arrival rate}
 of class $c$ packets is:
\[\alpha_{c}=\sum_{i=1}^{\infty}\sum_{\sigma \in \ssa_{c}^{i}}
i p(\sigma)\]
which we will assume to be finite.

The \emph{mean service rate}
\index{mean service rate}
of class $c$ packets is:
\[\mu_{c}=\sum_{i=1}\sum_{\sigma \in \sss_{c}^{i}}
i p(\sigma)\]
which we will also assume to be finite.

The \emph{mean transition probability}
\index{mean transition probability}
of class $c$ packets (from node $k$) to node $l$ is:
\[P_{kl}=\sum_{\sigma \in \ssr_{c}^{l}}p(\sigma)\]
Let $P$ be the matrix formed from the $P_{kl}$.  Assume
that this matrix is transient, i.e.\
\[I+P+P^{2}+\cdots \mbox{ is convergent}\]
This implies that the expected number of visits to class $l$
by a class $k$ packet is finite, i.e.\ we have an open queueing
network.  It follows from this equation that
\[(I-P')^{-1}=(I+P+P^{2}+\cdots)'\]
Then, the \emph{effective arrival rate}
\index{effective arrival rate}
(in vector form) is:
\[ \lambda=(I-P')^{-1}\alpha\]
For a particular class $c$, the let $\lambda_{c}$ be the $c$th
coefficient.

Finally, the \emph{nominal load}
\index{nominal load} \label{nominal load}
at node $n$ is
\[\rho_{n}=\sum_{c\in C_{n}}\lambda_{c}/\mu_{c}\]
\item Our routing protocol is greedy, or work-conserving-- if a queue
is non-empty, it will always send \emph{some} packet across an edge.
\end{itemize}

For example, a standard Bernoulli ring meets all the requirements
listed above.  

Given a queueing system with all the features described above, it's
fairly easy to view it as a Markov chain; we just have to build the
state space.  The details of the state space are determined to some
extent by the protocol.  For instance, consider node $i$ under a
priority discipline\footnote{In general queueing systems with priority
disciplines, the issue of preemption arises.  Suppose we're working on
a class $c_{0}$ packet, and a higher-priority class $c_{1}$ packet
arrives.  Do we stop working on the $c_{0}$ packet immediately and
switch to the $c_{1}$ packet, or do we complete servicing the $c_{0}$
packet and then work on the $c_{1}$ packet?  These two options are
referred to as preemptive and non-preemptive priority disciplines,
respectively.  In discrete time systems with deterministic service
times of exactly one, these two classes coincide, so we don't have to
distinguish the two.}, where certain classes of packets get priority
over other classes of packets.  We need to store the number of packets
of each class currently at node $i$, to determine who should be
serviced.  If instead we were using FIFO to determine
packet priority, then we would need to keep an ordered list of all
packet arrivals, with the earliest arrivals at the front of the list.
In order to determine when the packet being serviced (if any) is ready
to leave, we need to keep track of our state in $\sss_{c}$ for every
$c$.  We need to store all this information for every node $i$.

To determine exogenous arrivals, routing choices and service times, we
need to keep track of the state of the various hidden Markov
processes.  (This information is not per node, but for the whole
network.)  Given all this information (the arrangement of classed
packets waiting in queues, along with $\sigma_{\ssa} \in \ssa$, and,
for all $c$, $\sigma_{\ssr_{c}} \in \ssr_{c}$, and $\sigma_{\sss_c} \in
\sss_{c}$), we have a Markov process $X(t)$ on state space $\ssx$.

We can now complete our definitions and notation for the queueing network.
\begin{itemize}
\item $\ssx$ is the queueing network's state space, as defined above.
\item Let 
$Q_{c}(t)$ be the total number of class $c$ packets in the system at
time $t$.  
(Remember, all class $c$ packets will be at the same node.)
\item If we start our Markov chain in state $x$, then
the state at time $t$ will be written $X^{x}(t)$.  In general,
whenever I want to know the value of a quantity at time $t$ when the
system started in state $x$, I'll denote this by a superscripted $x$.
\end{itemize}

Also, it will be useful to refer to time continuously in addition to
viewing it in discrete steps.  So, for $t$ a non-negative real, define
$X(t)=X(\lfloor t \rfloor )$, and similarly for the other quantities.

\section{Building a Bounded Norm} \label{bounded norm section}
I'd like to construct a bounded norm for $\ssx$.  To do this, I need
to make a brief digression about countable Markov chains in discrete
time.  Suppose we have any discrete-time Markov chain $M(t)$ on a
countable state space $\ssm$.  Suppose that $M(t)$ is aperiodic,
irreducible, and ergodic.

Let us select (any) fixed state $\sigma_{renew} \in \ssm$.  Consider
every possible (finite) path through $\ssm$ that begins and ends in
$\sigma_{renew}$, but doesn't return to it at any other point.  
Let $\gamma$ be such a loop.  Let
$p(\gamma)$ be the probability that, starting in state $\sigma_{renew}$,
the Markov chain follows path $\gamma$ back to $\sigma_{renew}$.
Because the Markov chain is ergodic, the
expected return time to $\sigma_{renew}$ is finite.  Therefore,
the probability of $M(t)$ never returning to $\sigma_{renew}$
is zero.

I'm going to construct a second Markov chain $M'(t)$ on
state space $\ssm'$.  Consider a loop $\gamma=
\sigma_{renew} \sigma_{1}\cdots\sigma_{k}\sigma_{renew}$
through $\ssm$.  I'll insert states $\sigma_{renew}^{\gamma},
\sigma_{1}^{\gamma},\cdots, \sigma_{k}^{\gamma}$ to $\ssm'$.
I'll also insert state transition probabilities such 
that the probability of changing from 
$\sigma_{i}^{\gamma}$ to $\sigma_{i+1}^{\gamma}$ is 1,
and the probability of changing from $\sigma_{k}^{\gamma}$
to $\sigma_{renew}^{\gamma}$ is 1.  (I haven't specified
the transitions out of $\sigma_{renew}^{\gamma}$ yet.)

Insert these $\sigma_{i}^{\gamma}$ and edges for all loops $\gamma$
with $p(\gamma)>0$.  Next, associate all the $\sigma_{renew}^{\gamma}$
into one node, called $\sigma_{renew}'$.  (Notice that this may induce
an edge from $\sigma_{renew}'$ to itself, in case there exists a
$\gamma'=\sigma_{renew}\sigma_{renew}$ with $p(\gamma')>0$.  If so,
remove it.)  Let the probability of travelling from $\sigma_{renew}'$
to $\sigma_{1}^{\gamma}$ be $p(\gamma)$.  (This may re-introduce
an appropriately weighted edge from $\sigma_{renew}'$ to itself.)

There is a function $f:\ssm'\rightarrow \ssm$ that takes every
node in $\ssm$ to the original node that induced it.  Observe
that we can stochastically couple the two processes such 
that $f(M'(t))=M(t)$.  

Now, consider the arrival process $A(t)$.  Suppose that we are in
state $\sigma\in\ssa$.  As defined in the previous section, there are
$i$ arrivals to class $c$ iff $\sigma\in\ssa_{c}^{i}$.  That is,
the arrival process is a hidden Markov process, where the
underlying process is $A(t)$.  So, there is some function 
$h_{c}:\ssa\rightarrow\natu$ such that the number of class $c$
arrivals at time step $t$ is $h_{c}(A(t))$.  From the previous paragraph,
this is equal to $h_{c}(f(A'(t)))$.  Therefore,
we might as well assume that the underlying Markov chain is
$A'(t)$, and the ``hiding'' function for determining arrivals 
is $h_{c}\circ f()$.  We can perform the same kind of change
to the service time and routing processes.

Consider two loops in $\ssm'$:
\[\gamma=\sigma_{renew}\sigma_{1}\cdots\sigma_{k}\sigma_{renew}
\mbox{ and }
\gamma'=\sigma_{renew}\tau_{1}\cdots\tau_{k}\sigma_{renew}\]
(notice that both loops are $k+2$ steps long).
Suppose that for $i=1,...,k$, $h_{c}(\sigma_{i})=h_{c}(\tau_{i})$.
If we were looking at the arrival process, for instance, then
these two loops would generate the same packet arrivals
on the same time steps, and renew in the same amount of time.
Since they are identical from the point of view of arrivals,
we will amalgamate them into the same loop.  (More precisely,
we will remove $\gamma'$, and add the $p(\gamma')$ to
the probability of selecting the edge into the $\gamma$ loop.)
This final change will determine our state space $\ssm'$.

If we compare $\ssm$ and $\ssm'$, it doesn't really look like we've
done anything very useful to the state space.  However, suppose we're
in state $\sigma'\in\ssm'$.  Then we can (deterministically) count how
many steps it will take until we enter $\sigma_{renew}'$ for the first
time.  (If we're \emph{in} state $\sigma_{renew}'$, this number is
zero.)  This will allow us to build a bounded norm.

\begin{definition}
Suppose we have a discrete time Markov chain $M(t)$ on $\ssm$.
Construct $M'(t)$ on $\ssm'$ as above from the paths through
$\ssm$. Then we can define a function
$g_{\ssm'}:\ssm'\rightarrow\natu$ such that $g^{1}_{\ssm'}(\sigma)$ is
the number of time steps until the Markov chain (first) returns to
$\sigma_{renew}'$.  If $\sigma=\sigma_{renew}'$, then $g^{1}_{\ssm'}$
is zero.

Suppose our Markov chain has a function $h:\ssm'\rightarrow\natu$ on
it.  (For the arrival and routing process, we can take
$h=\sum_{c}h_{c}$; for the service process, $h$=$h_{c}$.)  Consider
$\sigma_{0}\in\ssm'$ where the evolution of the state space is
$\sigma_{0} \sigma_{1} \cdots \sigma_{k} \sigma_{renew}'$ (and
$\sigma_{i}\neq \sigma_{renew}'$ for $i>0$).  Define
$g^{2}_{\ssm'}:\ssm\rightarrow\natu$ such that
$g^{2}_{\ssm'}(\sigma_{0})=\sum_{i=0}^{k}h_{c}(\sigma_{i})$.

Let $g_{\ssm'}=g^{1}_{\ssm'} + g^{2}_{\ssm'}$.

Let us now define a norm on the state space $\ssx$ defined in
Section~\ref{packet routing defs section}.  Recall that a state in
$\ssx$ is determined by:
\begin{itemize}
\item The arrival process state $\sigma_{\ssa'}$.
\item The routing process states $\sigma_{\ssr_{c}'}$
for each class $c$ of packet.
\item The service process states $\sigma_{\sss_{c}'}$
for each class $c$ of packet.
\item The queue lengths $Q_{c}$ of each class $c$.
\end{itemize}
Then the norm on $\ssx$ is:
\[ |\cdot|=g_{\ssa'}(\sigma_{\ssa'}) 
+\sum_{c\in C}g_{\ssr_{c}'}(\sigma_{\ssr_{c}'}) 
+\sum_{c\in C}g_{\sss_{c}'}(\sigma_{\sss_{c}'}) 
+\sum_{c\in C}Q_{c}
\]
\end{definition}

\begin{lemma}
The function $|\cdot|$ is a bounded norm on $\ssx'$.
\end{lemma}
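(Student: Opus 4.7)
The plan is to unpack the definition of a bounded norm and verify the two properties: that $|\cdot|$ maps into $\real^{+}$, and that each sub-level set $\{x \in \ssx \mid |x|\leq k\}$ is finite. Non-negativity is immediate, since every summand $g_{\ssa'}$, $g_{\ssr_c'}$, $g_{\sss_c'}$ and $Q_c$ takes values in $\natu$ by construction. So the real content is the finiteness of sub-level sets.

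Fix an integer $k$ and suppose $|x|\leq k$. Because $|\cdot|$ is a sum of non-negative terms, each individual term is bounded by $k$. First, the queueing part is easy: there are only $C<\infty$ classes, and each $Q_c$ is a non-negative integer with $Q_c\leq k$, so there are at most $(k+1)^{C}$ possible queue-length vectors.

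The main work is to show that, for each hidden Markov chain $M'(t)$ attached to a state component $\sigma_{M'}$, the set $\{\sigma_{M'} \in \ssm' \mid g_{\ssm'}(\sigma_{M'}) \leq k\}$ is finite. Here I would use the amalgamation step carefully. Recall that after the construction, every non-renewal state $\sigma \in \ssm'$ lies on a unique loop $\gamma = \sigma_{renew}' \sigma_1 \cdots \sigma_\ell \sigma_{renew}'$, and $g^1_{\ssm'}(\sigma_i)$ is exactly $\ell - i + 1$, the number of deterministic steps remaining until the next visit to $\sigma_{renew}'$. So $g^1_{\ssm'}\leq k$ forces $\ell \leq k$, bounding the loop length, and forces $\sigma$ to sit in one of the first $k$ positions of its loop. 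Next, since $g^2_{\ssm'}(\sigma) = \sum_{j\geq i} h(\sigma_j) \leq k$, each individual $h(\sigma_j) \leq k$. For the routing process, $h$ takes values in the finite set of possible out-neighbours (or "depart"), so only finitely many value sequences are possible. For the arrival and service processes $h = \sum_c h_c$ (or $h = h_c$) is a sum of $C$ nonnegative integers bounded by $k$, giving at most $(k+1)^{C}$ possible values of the $h_c$-tuple at each position. Consequently there are at most $\bigl[(k+1)^{C}\bigr]^{k}$ possible $h$-labelled loop patterns of length $\leq k$, and after amalgamation (which identified loops with identical $h$-sequences) each such pattern corresponds to at most one loop in $\ssm'$. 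Therefore $\{\sigma_{M'} \mid g_{\ssm'}(\sigma_{M'}) \leq k\}$ is finite.

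Combining these finite bounds across the (finitely many) queueing components, the arrival chain $\ssa'$, and the $2C$ routing and service chains $\ssr_c'$ and $\sss_c'$ yields a finite product bounding the number of states with $|x|\leq k$. The only mildly delicate point — which I would flag as the main obstacle — is the arrival process, where a priori $h_c$ ranges over all of $\natu$; the trick is that the joint constraint $g^{1} + g^{2} \leq k$ simultaneously caps loop length and arrival count, so the amalgamation rule really does leave finitely many equivalence classes. Everything else is bookkeeping.
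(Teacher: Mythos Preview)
Your argument is essentially identical to the paper's: both bound the loop length and the total arrival count via $g_{\ssm'}\le k$, invoke the amalgamation step to leave only finitely many loop patterns, and arrive at the same exponential bound (your $[(k+1)^C]^k$ is the paper's $(B+1)^{CB}$ with $B=k$). You are simply more thorough in spelling out non-negativity, the queue-length component, and the combining step across the $2C+1$ hidden chains, all of which the paper leaves implicit or dismisses as ``analogous.''
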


\proof
Consider the arrival process for a moment.
Observe that if $g_{\ssa'}(\sigma_{\ssa'}) = B$, then
the loop $\gamma$ that $\sigma_{\ssa'}$ is on
is at most of length $B$; at most $B$ packets arrive
across $C$ classes,
so there are (as an upper bound) at most $(B+1)^{CB}$
possible packet arrivals that we will see.  Since
we are amalgamating loops with identical arrival
patterns, that means that there are at most
$(B+1)^{CB}$ possible values for $\sigma_{\ssa'}$,
which is finite.  Analogous arguments hold for the other
processes.

Therefore, the function $|\cdot|$ is a bounded norm on $\ssx'$.
\EOP

\section{Fluid Limits}

The purpose of the fluid limit technique is to find a practical way of
checking Equation~\ref{eq:limit stability} for a system of
queues.  Dai~\cite{Dai} solved this problem in the case of i.i.d.\
interarrival and service times and Bernoulli routing in
Section 4 of his paper.  In this section, I'll show how to
alter a few lemmas of his paper in order to translate the result to
hidden Markov processes.  

For the reader unfamiliar with fluid limits, please consider glancing
at Appendix~\ref{dummies chapter}.  It contains a formal exposition of
all the ideas behind taking a fluid limit.  The appendix applies to
the special case of a memoryless discrete-time system, as I feel that
that better illuminates the important parts of the theorems.

There are two general points worth making about Dai's theorems before
we begin.  First of all, because of the generality of the Renewal
Reward Theorem (Theorem~\ref{delayed renewal reward theorem}), it is
possible to extend many of Dai's results to hidden Markov processes
without changing his proofs at all.  Second of all, by delaying the
fluid limit (considering it only after time fluid $t=1$), we can obviate the
need for some of the results.  (Some of the ``initial conditions'' of
the limits wear off in a finite amount of time, allowing laws of large
numbers to take over; if we simply observe the fluid model after this
second regime has begun, the mathematics is much more pleasant.)  The
idea of delaying a fluid limit to simplify it is due to
Chen~\cite{Chen}.  For our problem, the ``initial conditions'' are
much more complicated than for Dai, and the delay is probably
necessary to make the fluid limits well-defined.

Let us begin.

\begin{definition} \index{u.o.c.} \index{uniformly on compact sets}
We say that a collection of functions $\{f_{n}\}$ converges
to $f$ \emph{uniformly on compact sets} (abbreviated \emph{u.o.c.})
if
\[\sup_{0 \leq s \leq t} |f_{n}(s)-f(s)|\rightarrow 0 
\mbox{ as }n\rightarrow \infty\]
where $f$ and the $\{f_{n}\}$ are right-continuous functions on
$\real^{+}$.
\end{definition}

Next, let us define some useful functions:
\begin{definition}
Let $\tot{A}_{c}^{x}(t)$ be the total number of arrivals
at time $t$ to class $c$, from an initial state of $x$.
(This is analogous to Dai's $E_{l}^{x}(t)$.)

Let $\tot{S}_{c}^{x}(t)$ be the total number of packet 
departures from class $c$ after $t$ units of service,
from an initial state of $x$.
(This is analogous to Dai's $S_{l}^{x}(t)$.)

Let $\tot{R}_{c,d}^{x}(t)$ be the total number of packet 
departures from class $c$ to class $d$ after $t$ packets have been routed,
from an initial state of $x$.
(This is analogous to Dai's $\Phi^{k}(t)$.)

Extend these functions to non-integral $t$ by rounding down $t$.
\end{definition}

We can now convert Dai's Lemma 4.2 into a form more applicable
to our model.

\begin{theorem} 
Let $\{x_{n}\}\subseteq \ssx$ with $|x_{n}|\rightarrow \infty$
as $n\rightarrow \infty$.  Assume that
\[\frac{1}{|x_{n}|}\tot{A}_{c}^{x_{n}}(1)=\bar{A}_{c}\]
\[\frac{1}{|x_{n}|}\tot{S}_{c}^{x_{n}}(1)=\bar{S}_{c}\]
\[\frac{1}{|x_{n}|}\tot{R}_{c,d}^{x_{n}}(1)=\bar{R}_{c,d}\]
Then as $n\rightarrow \infty$, for any $t\geq 1$, almost surely
\[\frac{1}{|x_{n}|}\tot{A}_{c}^{x_{n}}(t)=\alpha_{c}(t-1) + \bar{A}_{c}\]
\[\frac{1}{|x_{n}|}\tot{S}_{c}^{x_{n}}(t)=\mu_{c}(t-1) + \bar{S}_{c}\]
\[\frac{1}{|x_{n}|}\tot{R}_{c,d}^{x_{n}}(t)=P_{cd}(t-1) + \bar{R}_{c,d}\]
\end{theorem}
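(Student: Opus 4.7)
The plan is to exploit the renewal structure engineered in Section~\ref{bounded norm section}. For each driving Markov chain, the augmented state space is built so that one can read off, from the current state $\sigma$, a deterministic time-to-renewal $g^1(\sigma)$ and a deterministic reward-until-renewal $g^2(\sigma)$; both are dominated by the norm $|\sigma|$. Since $|x_n|\to\infty$, each of $g^1_{\ssa'},\,g^1_{\ssr'_c},\,g^1_{\sss'_c}$ applied to the initial state is at most $|x_n|$, so once the corresponding internal clock has advanced $|x_n|$ steps---that is, by fluid time one---each driving chain has visited its renewal state $\sigma_{renew}'$ at least once and begins executing i.i.d.\ renewal cycles.

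I would give the argument for $\tot{A}_c$ in detail; the other two are formally identical. Let $\tau_n = g^1_{\ssa'}(\sigma_{\ssa'}(x_n)) \leq |x_n|$ be the deterministic first hitting time of $\sigma_{renew}'$ for the arrival chain. After time $\tau_n$ the chain evolves as an i.i.d.\ sequence of renewal cycles, and by the definition of $\alpha_c$ the expected number of class-$c$ arrivals per cycle equals $\alpha_c$ times the expected cycle length. Applying the Renewal Reward Theorem (Theorem~\ref{delayed renewal reward theorem}) to the interval $[\tau_n, |x_n|t]$ yields almost surely
\[
\frac{1}{|x_n|t - \tau_n}\left(\tot{A}_c^{x_n}(|x_n|t) - \tot{A}_c^{x_n}(\tau_n)\right) \longrightarrow \alpha_c.
\]
Since $\tau_n/|x_n|$ is bounded, multiplying through by $(|x_n|t - \tau_n)/|x_n| \to t - 1$ and using the hypothesis $\tot{A}_c^{x_n}(1)/|x_n| \to \bar{A}_c$ to absorb $\tot{A}_c^{x_n}(\tau_n)/|x_n|$ delivers
\[
\frac{1}{|x_n|}\tot{A}_c^{x_n}(|x_n|t) \longrightarrow \alpha_c(t-1) + \bar{A}_c
\]
as required. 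The analogous arguments for $\tot{S}_c$ and $\tot{R}_{c,d}$ replace real time by the service or routing clock and invoke the corresponding renewal bounds; the mean reward per cycle is $\mu_c$ and $P_{cd}$ respectively, by the definitions of the mean service rate and mean transition probability.

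The main obstacle I anticipate is the careful handling of the random offset during the first fluid time unit: the quantity $\tot{A}_c^{x_n}(\tau_n) - \tot{A}_c^{x_n}(|x_n|)$ need not vanish after scaling by $|x_n|$, but any such offset has already been absorbed into $\bar{A}_c$ via the hypothesis---which is precisely why the theorem is only stated for $t \geq 1$. A secondary technical point is that the three hidden Markov chains must have their almost-sure limits intersected on a common probability-one event; this is immediate from their mutual independence assumed in Section~\ref{packet routing defs section}. If u.o.c.\ convergence is required downstream, it follows from the monotonicity of each rescaled process together with the continuity of the linear limit by a standard Dini-style argument.
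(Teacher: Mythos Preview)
Your approach is essentially the same as the paper's: invoke the Renewal Reward Theorem for the post-renewal portion of each hidden chain, and use the norm constructed in Section~\ref{bounded norm section} to guarantee that the first renewal has occurred by real time $|x_n|$ (fluid time $1$). The paper's own proof is in fact much terser than yours---it simply observes that Dai's Lemma~4.2 argument goes through verbatim once one notes that $\lim_{t\to\infty} A_c^{\sigma_{renew}}(t)/t = \alpha_c$ by Renewal Reward and that the norm forces the first renewal before time $|x_n|$---so your write-up is a reasonable unpacking of that sketch.

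One expository slip to clean up: you assert $(|x_n|t-\tau_n)/|x_n|\to t-1$, but you only know $\tau_n/|x_n|\in[0,1]$, not that it converges to $1$. The correct bookkeeping is to split at real time $|x_n|$ rather than at $\tau_n$: write
\[
\frac{1}{|x_n|}\tot{A}_c^{x_n}(|x_n|t)=\frac{1}{|x_n|}\tot{A}_c^{x_n}(|x_n|)+\frac{1}{|x_n|}\bigl[N(|x_n|t-\tau_n)-N(|x_n|-\tau_n)\bigr],
\]
where $N(\cdot)$ is the cumulative reward of the fixed post-renewal process. Since $N(T)=\alpha_c T+o(T)$ a.s.\ and both arguments are $O(|x_n|)$, the bracket is $\alpha_c|x_n|(t-1)+o(|x_n|)$, and the first term is $\bar A_c+o(1)$ by hypothesis. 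You clearly see this issue in your ``main obstacle'' paragraph, so this is a matter of tightening the exposition rather than a gap in understanding.
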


\proof
Surprisingly, Dai's proof runs through unchanged.  The key observation
(using the arrival process as an example) is that 
\[\lim_{t\rightarrow \infty}\frac{A^{\sigma_{renew}}_{c}(t)}{t}
=\alpha_{k}\] by the Renewal Reward Theorem (which Dai calls ``the
strong law of large numbers for renewal processes''; see
Theorem~\ref{delayed renewal reward theorem} in this thesis).
Our hidden Markov processes undergo renewals (because, by assumption,
they're ergodic), so we can apply the theorem.  Because of our norm, we are
guaranteed that the first renewal has occurred by time $|x_{n}|$, so
we don't have to account for the initial delay from $x_{n}$.
\EOP

Recall that $Q^{x}_{c}(t)$ is the number of class $c$ packets in queue
at time $t$, if we start in state $x$.
There is one final property of a network of queues that
we need to define:
\begin{definition}
Let $T^{x}_{c}(t)$ be the cumulative amount of time that has been
lavished on class $c$ packets by time $t$.  (Note that
$T$ is non-decreasing.)
\end{definition}

Dai's first main theorem, Theorem 4.1, is transformed into the following:
\begin{theorem} \label{dai's 4.1 theorem}
\index{Theorem \ref{dai's 4.1 theorem}}
For almost all sample paths $\omega$ and any sequence of initial
states $\{x_{n}\}\subseteq\ssx$ with $|x_{n}|\rightarrow \infty$,
there is a subsequence $\{x_{n_{j}}\}$ with 
$|x_{n_{j}}|\rightarrow \infty$ such that
\[\frac{1}{|x_{n_{j}}|}\left(Q^{x_{n_{j}}}_{c}(1),T^{x_{n_{j}}}_{c}(1)\right)
\rightarrow (\flu{Q}(1), \flu{T}(1))\]
and for any $t\geq 1$, 
\[\frac{1}{|x_{n_{j}}|}\left(Q^{x_{n_{j}}}_{c}(|x_{n_{j}}|t),
T^{x_{n_{j}}}_{c}(|x_{n_{j}}|t)\right)
\rightarrow (\flu{Q}(t), \flu{T}(t)) \mbox{ u.o.c.}\]
for some functions $\flu{T}(t)$ and $\flu{Q}(t)$.
\end{theorem}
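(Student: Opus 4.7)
\medskip\noindent
The plan is to mimic Dai's proof of his Theorem~4.1, relying on the preceding lemma (the hidden-Markov analogue of his Lemma~4.2) to absorb the one place where the i.i.d.\ hypothesis would otherwise enter. The strategy is to show that the family of rescaled trajectories is precompact in the topology of uniform convergence on compact subsets of $[1,\infty)$, and then extract a single subsequence along which everything converges simultaneously.

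\medskip\noindent
First I would establish a uniform Lipschitz bound on the rescaled processes. Since at most one packet departs any given node per time step, $\sum_{c\in C_{n}}(T^{x}_{c}(t+1)-T^{x}_{c}(t))\leq 1$ for every node $n$, so $\flu{T}^{x_{n}}_{c}(t):=\tfrac{1}{|x_{n}|}T^{x_{n}}_{c}(|x_{n}|t)$ is $1$-Lipschitz on $[0,\infty)$. The counting processes $\tot{A}^{x_{n}}_{c}$, $\tot{S}^{x_{n}}_{c}$, $\tot{R}^{x_{n}}_{c,d}$ are nondecreasing, and the preceding lemma (applied along any fixed subsequence) together with monotonicity gives a uniform-in-$n$ Lipschitz bound for their rescalings on $[1,\infty)$. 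By Arzel\`a--Ascoli, each of these rescaled families is relatively compact in the u.o.c.\ topology.

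\medskip\noindent
Second, I would pick a subsequence $\{x_{n_{j}}\}$ along which the finitely many bounded real numbers $\tfrac{1}{|x_{n}|}\tot{A}^{x_{n}}_{c}(|x_{n}|)$, $\tfrac{1}{|x_{n}|}\tot{S}^{x_{n}}_{c}(|x_{n}|)$, $\tfrac{1}{|x_{n}|}\tot{R}^{x_{n}}_{c,d}(|x_{n}|)$, $\tfrac{1}{|x_{n}|}Q^{x_{n}}_{c}(|x_{n}|)$, and $\tfrac{1}{|x_{n}|}T^{x_{n}}_{c}(|x_{n}|)$ all converge (they are all bounded, the last two by the norm construction together with the Lipschitz bound). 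Applying the preceding lemma with starting values equal to these limits yields almost-sure u.o.c.\ convergence of $\tfrac{1}{|x_{n_{j}}|}\tot{A}^{x_{n_{j}}}_{c}(|x_{n_{j}}|t)$, $\tfrac{1}{|x_{n_{j}}|}\tot{S}^{x_{n_{j}}}_{c}(|x_{n_{j}}|t)$, and $\tfrac{1}{|x_{n_{j}}|}\tot{R}^{x_{n_{j}}}_{c,d}(|x_{n_{j}}|t)$ to affine functions on $[1,\infty)$. A further diagonal refinement arranges that $\flu{T}^{x_{n_{j}}}_{c}$ converges u.o.c.\ on $[1,\infty)$ to some Lipschitz $\flu{T}_{c}$. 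One then writes the standard flow-balance identity
\[ Q^{x}_{c}(t)=Q^{x}_{c}(0)+\tot{A}^{x}_{c}(t)+\sum_{d}\tot{R}^{x}_{d,c}\!\left(\tot{S}^{x}_{d}(T^{x}_{d}(t))\right)-\tot{S}^{x}_{c}(T^{x}_{c}(t)), \]
rescales both sides by $1/|x_{n_{j}}|$ at time $|x_{n_{j}}|t$, and passes to the u.o.c.\ limit using the fact that composition is continuous when the outer function is continuous and the inner function converges uniformly. This produces a limit $\flu{Q}_{c}$, and evaluating at $t=1$ gives the first displayed convergence in the theorem.

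\medskip\noindent
The main obstacle I expect is coordinating the subsequence extractions and justifying the composition step for $\tot{R}^{x}_{d,c}(\tot{S}^{x}_{d}(T^{x}_{d}(t)))$: this requires the joint u.o.c.\ convergence of three nested processes, and uses in an essential way that the outer process's limit is continuous (indeed affine) and that the inner processes are uniformly Lipschitz. A related subtlety is why the conclusion is only stated for $t\geq 1$: we need $|x_{n}|$ units of real time to elapse before the hidden-Markov components encoded in the state are guaranteed to have completed at least one renewal, and this is exactly what the bounded-norm construction of the previous section arranges. Once the limits exist on $[1,\infty)$, the $t=1$ statement is just the restriction to the endpoint.
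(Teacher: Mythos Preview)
Your proposal is correct and follows essentially the same route as the paper, which itself defers almost entirely to Dai: establish boundedness of the rescaled data at fluid time $t=1$, extract a convergent subsequence by compactness, and then invoke Dai's machinery (Arzel\`a--Ascoli, the flow-balance identity, and the composition argument) verbatim on $[1,\infty)$. One small point: your justification that $\tfrac{1}{|x_n|}Q^{x_n}_c(|x_n|)$ is bounded ``by the norm construction together with the Lipschitz bound'' is slightly off, since $Q$ is not a priori Lipschitz in this setting (batch arrivals are allowed); the paper's argument is that the norm's $g^2$ term bounds the pre-renewal arrivals by $|x_n|$, and then the strong law of large numbers bounds the post-renewal arrivals, yielding an almost-sure bound $B$ on the rescaled queue at fluid time $1$.
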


\proof
By our norm,
\[\frac{1}{|x_{n}|}Q^{x_{n}}_{c}(0)\leq 1\]
so we could use compactness (on $[0,1]$) 
to find a convergent subsequence.  However,
we want to do this at $t=1$.  Observe, however, that 
we can bound the queue length at time $|x_{n}|$ by the
sum of all the packets in the system, plus all the new arrivals
(to all classes) in those $|x_{n}|$ steps.  There may be
a certain number of packets destined to arrive based on the 
initial state of the hidden Markov arrival process, but from
the definition of our norm, that will account for at most 
$|x_{n}|$ new packets.  Other new arrivals will be injected after
a renewal.  After a renewal, we can then use the strong law of 
large numbers to tell us that
\[\lim_{n\rightarrow\infty}\frac{1}{|x_{n}|}Q^{x_{n}}_{c}(|x_{n}|)
\leq B\]
almost surely for some $B$.  We can then use compactness (on $[0,B]$)
to find a convergent subsequence.

We can use the same reasoning on $\flu{T}$.
The rest of the proof follows along Dai's lines.
\EOP

We need one final definition:
\begin{definition} \index{fluid limit}\index{fluid limit model}
\index{fluid stability}
Let a queueing discipline be fixed.  Any limit $(\flu{Q}(t), \flu{T}(t))$
from Theorem~\ref{dai's 4.1 theorem} is a \emph{fluid limit}
of the discipline.  We say that a fluid limit model of
the queueing discipline is \emph{stable} if there exists a 
constant $t_{0}>0$ that depends on $\alpha$, $\mu$ and $P$ only,
such that for any fluid limit with $\flu{Q}(1)=1$,
and any $t\geq t_{0}$, $\flu{Q}(t)=0$.
\end{definition}

We can now state the main result.

\begin{theorem}
Let a queueing discipline be fixed.  Suppose we have a network of
queues with hidden Markov processes, and the resulting Markov process
$X(t)$, as defined in Section~\ref{packet routing defs section}.  If
(every) fluid limit model of the queueing discipline is stable, then
the $X(t)$ is ergodic.
\end{theorem}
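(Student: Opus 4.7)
The plan is to verify the hypothesis of Theorem~\ref{lim stab} with $T$ taken to be any value strictly greater than the stability time $t_{0}$ guaranteed by the fluid-stability assumption. So I would take an arbitrary sequence $\{x_{n}\}\subseteq \ssx$ with $|x_{n}|\rightarrow\infty$ and show that $\frac{1}{|x_{n}|}\Ex|X^{x_{n}}(|x_{n}|T)|\rightarrow 0$. By Theorem~\ref{lim stab}, this would give ergodicity of $X(t)$. The strategy has two main ingredients: (i) an almost-sure statement about the rescaled state along subsequences, coming from Theorem~\ref{dai's 4.1 theorem} plus fluid stability, and (ii) a uniform-integrability bound that promotes the a.s.\ convergence to $L^{1}$ convergence.

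For ingredient (i), I would decompose the norm $|X^{x_{n}}(|x_{n}|T)|$ into its four summands: the queue-length sum $\sum_{c}Q_{c}$, together with $g_{\ssa'}$, $g_{\ssr_{c}'}$, and $g_{\sss_{c}'}$ applied to the states of the hidden Markov processes. For the queue-length piece, Theorem~\ref{dai's 4.1 theorem} says that \emph{every} subsequence of $\{x_{n}\}$ contains a further subsequence $\{x_{n_{j}}\}$ along which $\frac{1}{|x_{n_{j}}|}Q^{x_{n_{j}}}_{c}(|x_{n_{j}}|t)\rightarrow\flu{Q}_{c}(t)$ u.o.c., where $(\flu{Q},\flu{T})$ is some fluid limit with $\flu{Q}(1)$ a probability vector of total mass at most one. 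By the fluid-stability hypothesis, $\flu{Q}(t)=0$ for $t\geq t_{0}$, and in particular at $t=T$. The standard ``every subsequence has a sub-subsequence'' argument then upgrades this to $\frac{1}{|x_{n}|}Q_{c}^{x_{n}}(|x_{n}|T)\to 0$ almost surely. For the three hidden-Markov pieces, I would observe that $g_{\ssa'}(A'(\cdot))$, etc., are nonnegative functions of an ergodic Markov chain whose expected value under the stationary distribution is finite (each stationary loop has finite expected length, and each loop contributes finitely many events); an elementary application of the Markov ergodic theorem then gives $\frac{1}{|x_{n}|}g_{\ssa'}(A'(|x_{n}|T))\rightarrow 0$ almost surely, and similarly for the routing and service pieces.

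For ingredient (ii), I would establish a deterministic linear-growth bound on the total queue content, namely $\sum_{c}Q_{c}^{x_{n}}(|x_{n}|T)\leq\sum_{c}Q_{c}^{x_{n}}(0)+\sum_{c}\tot{A}_{c}^{x_{n}}(|x_{n}|T)$, since packets cannot be created other than by exogenous arrival. Dividing by $|x_{n}|$, the first term is bounded by $1$ by definition of the norm, while the expectation of the second term is, for the hidden Markov arrival process, bounded by $\alpha_{c}\cdot|x_{n}|T+O(1)$ (the $O(1)$ absorbing the initial segment before renewal, which is at most $|x_{n}|$ by the definition of the norm, hence contributes $O(1)$ after division). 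This gives a uniform $L^{2}$ (hence uniform-integrability) bound on $\frac{1}{|x_{n}|}\sum_{c}Q_{c}^{x_{n}}(|x_{n}|T)$ once we upgrade the first-moment bound using the tail control inherited from the hidden Markov arrival process. For the hidden-state pieces, uniform integrability is a direct consequence of $g_{\ssa'}$, $g_{\ssr_{c}'}$, $g_{\sss_{c}'}$ having finite stationary mean together with the exponential tightness of hitting times of $\sigma_{renew}'$ in an ergodic, irreducible chain. Combining (i) and (ii) yields $\frac{1}{|x_{n}|}\Ex|X^{x_{n}}(|x_{n}|T)|\rightarrow 0$, which is exactly Equation~\ref{eq:limit stability}.

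The main obstacle will be the uniform-integrability step, specifically for the queue-length component. The difficulty is that $\frac{1}{|x_{n}|}Q_{c}^{x_{n}}(|x_{n}|T)$ is dominated by a sum of exogenous arrivals generated by a hidden Markov process whose initial state depends on $x_{n}$, so one cannot simply invoke i.i.d.\ tail estimates. I would handle this by conditioning on the first visit to $\sigma_{renew}'$ after time $0$ (which by the norm definition occurs by time at most $|x_{n}|$) and then using the regeneration structure: after the first return to $\sigma_{renew}'$, the arrival counting process is a genuine delayed renewal-reward process with finite mean per cycle, so the second moment of the arrival count over $|x_{n}|T$ steps is $O(|x_{n}|^{2})$, which yields the needed uniform integrability. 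A secondary delicate point is that the arrival, routing, and service chains are independent but share the initial state $x_{n}$; fortunately, the norm was designed as a sum so the decomposition above treats each independent component separately.
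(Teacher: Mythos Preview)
Your overall strategy is exactly the paper's: the paper's proof consists of the single sentence ``With the modifications to the original lemmas that we've just made, Dai's proof still works,'' and what you have outlined is precisely the skeleton of Dai's argument---pass to a subsequence via Theorem~\ref{dai's 4.1 theorem}, use fluid stability to get $\flu{Q}(T)=0$, upgrade to convergence in mean via uniform integrability, and invoke Theorem~\ref{lim stab}. Your decomposition of the norm into the queue-length piece and the three hidden-Markov pieces is also the right way to organize the hidden-Markov generalization.

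The gap is in your uniform-integrability step, where you overshoot the available hypotheses. You claim an $L^{2}$ bound on $\frac{1}{|x_{n}|}\sum_{c}Q_{c}^{x_{n}}(|x_{n}|T)$ via ``the second moment of the arrival count over $|x_{n}|T$ steps is $O(|x_{n}|^{2})$,'' but the paper only assumes finite \emph{mean} arrival rate $\alpha_{c}$, not finite variance of the per-cycle reward; with only a first-moment assumption the second moment of a renewal-reward count need not be $O(t^{2})$. Likewise, your appeal to ``exponential tightness of hitting times of $\sigma_{renew}'$'' is unjustified: the underlying chains are assumed ergodic, not geometrically ergodic, so return times have finite mean but no exponential tail in general. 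Dai's actual uniform-integrability argument (his Lemma~4.5) avoids both of these by a truncation: one bounds $\Ex\!\left[\frac{1}{|x_{n}|}\tot{A}_{c}^{x_{n}}(|x_{n}|T)\,;\,\tot{A}_{c}^{x_{n}}(|x_{n}|T)>M|x_{n}|\right]$ directly from the first-moment renewal-reward estimate and shows it is small uniformly in $n$ for large $M$. The norm's design (first renewal occurs by time $|x_{n}|$) is what makes this work uniformly over initial states. So your plan is right, but replace the $L^{2}$/exponential-tail claims with Dai's first-moment truncation.
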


\proof
With the modifications to the original lemmas that we've
just made, Dai's proof still works.
\EOP

Finally, let's show that these hidden Markov processes
are actually a generalization of the standard queueing theory
results.

\begin{lemma}
Discrete time i.i.d.\ interarrival times are a special case of arrivals
from a hidden Markov process.
\end{lemma}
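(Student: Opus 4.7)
The plan is to realize the i.i.d.\ interarrival model as a hidden Markov process by letting the underlying chain track, in each coordinate, the residual time until the next exogenous arrival of the corresponding class.

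For each class $c$ with i.i.d.\ interarrival distribution $F_c$ supported on $\{1,2,\ldots\}$, I build a per-class state space $\ssa^{(c)}=\{0,1,2,\ldots\}$ in which state $r$ is interpreted as ``$r$ steps remain until the next class $c$ arrival''.  Transitions are: from $r>0$, step deterministically to $r-1$; from $0$, step to $\tau-1$ with probability $F_c(\tau)$.  I then form the product chain $A(t)$ on $\ssa=\prod_c\ssa^{(c)}$, running the coordinates independently, and set $\ssa_c^1=\{\sigma\in\ssa:\sigma_c=0\}$ and $\ssa_c^i=\emptyset$ for $i>1$.  The entrance times into $\ssa_c^1$ are then precisely the partial sums of the $F_c$-distributed interarrival gaps, so the induced arrival stream matches the given i.i.d.\ one exactly.

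What remains is to verify the axioms listed in Section~\ref{packet routing defs section}.  Independence from the routing and service processes is built in by the product construction on separate probability spaces.  Each coordinate is a simple renewal chain whose return time to $0$ has law $F_c$; under the standing assumption $\Ex[F_c]<\infty$ the chain is positive recurrent, with stationary probability of state $0$ equal to $1/\Ex[F_c]$, reproducing the expected formula $\alpha_c=1/\Ex[F_c]$ for the mean exogenous arrival rate.

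The one point that needs care is aperiodicity.  If $\mbox{supp}(F_c)\subseteq d\natu$ for some $d>1$, the coordinate chain is periodic, which violates the framework's hypotheses.  This is not a genuine obstacle: one may either restrict to distributions with $\gcd(\mbox{supp}\,F_c)=1$ (the generic case), or enlarge the state with an independent auxiliary coordinate whose transitions are slightly lazy and which carries no arrival information, thereby destroying periodicity without altering the arrival signal.
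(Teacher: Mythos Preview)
Your construction is correct and follows essentially the same route as the paper: build a per-class renewal chain whose visits to a distinguished state mark arrivals, then take the Cartesian product over classes. The paper uses a ``flower'' chain (one central node $z_0$ shared by loops of length $k$, with loop $k$ chosen from $z_0$ with probability $d_k$), whereas you use the collapsed residual-time chain on $\{0,1,2,\ldots\}$; these are standard equivalent encodings of the same renewal process. You are in fact more careful than the paper in checking the framework's aperiodicity hypothesis and proposing a fix when $\gcd(\mathrm{supp}\,F_c)>1$.
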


\proof
This proof is similar to the arguments in Section~\ref{bounded norm
section}.

Suppose we want to generate i.i.d.\ arrivals such that
$\Pr($interarrival time at class $i=k)=d_{k}^{i}$.  Assume that
there exists $d_{k}^{i}$ and at most 1 packet arrives per time step.
Then consider a state space consisting of an infinite number of loops,
where loop $k$ has $k$ nodes along it.  Let all these loops share
exactly 1 node in common, called $z_{0}$.  When we enter $z_{0}$, we
insert a packet.  From $z_{0}$, we select the first node in loop $k$
with probability $d_{k}^{i}$.  The existence of a mean arrival rate is
equivalent to having finite expected return times to state $z_{0}$, so the
system is ergodic.  It's pretty clear how to genearlize this process
to allow batch arrivals (i.e.\ more than one arrival per turn to the
same class).  We have a Markov chain for each class, so if we take the
Cartesian product of all these Markov chains, we get one Markov chain
which generates all the exogenous arrivals for all classes.
\EOP

\section{Future Work}
In this section, I'm going to discuss some avenues for future research
that seem promising.

\begin{itemize}
\item
In Dai and Meyn~\cite{Dai_and_Meyn}, the authors show
that if the stochastic processes involved in the
fluid limits have finite $n$th moments, then the
$(n-1)$st moment of the expected queue length is stable.
For instance, if the interarrival and service times
all have finite variance, then the expected queue length
is finite.

It should be straightforward to apply these results to networks with
hidden Markov processes, too.  The relevant property is probably the
$n$th moment of the hidden process per renewal.  For instance, for the
arrival process, this variable is the total number of arrivals per
renewal period.  Continuing the example above, if the return time to
state $\sigma_{renew}\in\ssx'$ has finite variance for the arrival,
service, and routing processes, then the expected queue length should
be finite.

\item
Consider a countable family $\mathcal{F}$ of ring networks.  Different
networks may have different (greedy) protocols, and the rings may be
of different sizes.  Suppose that there is a maximum nominal load
$r<1$ for all nodes throughout the family.  Suppose, finally, that the
interarrival times and service times are i.i.d.\ and have finite
variance, and the same bound on the variances apply to all the
networks in $\mathcal{F}$.

If we look at the fluid stability result of Dai and 
Weiss~\cite{Dai_and_Weiss} on the ring (or look carefully
at Corollary~\ref{ring stability by convexity corollary}),
we'll realize that the amount of time it takes for any
fluid limit of any ring in the family to converge to zero
can be bounded as a function of $r$, independent of the
particular ring size or protocol.

It is tempting, then, to imagine taking a disjoint union of
the state spaces of the rings in $\mathcal{F}$.  From
any starting configuration, with any limit of initial
states stretching over all the rings, the fluid limit
will still converge to zero by a fixed point in time
dependent only on $r$.  Therefore, the whole family
of rings would have a universal bound that
would translate into an $O(1)$ bound on the expected
queue length.  It would follow that for a fixed
maximum load $r$, there is a universal maximum expected
queue length $Q_{r}$ for any ring, with any greedy protocol.

There is quite a bit of work to be done to show that this works.  The
fundamental problem is that the step in Theorem~\ref{lim stab} where
we select an $L$ fails to work; we have a series of
$L_{0},L_{1},\ldots$ which may diverge to infinite.  This
difficulty seems surmountable, but additional assumptions about
the family (or a more effective use of the bounded variance)
may be necessary.

\item
If the previous suggestion holds for ring networks, it should also
work on the ``convex routing'' networks discussed in
Section~\ref{convex routing section}.

\item
We could also consider taking a fluid limit on a ring with $N=\infty$.
To make this problem well-defined, we must change the norm accordingly;
rather than use the sum of the queue lengths, it may
be more useful to use their lim sup.  In any event, one might
optimistically hope to gain knowledge about the asymptotic 
behavior of large rings by leap-frogging to the infinite.

\item
Throughout this chapter, I had to assume that all the hidden Markov
processes driving the network were ergodic.  Ergodicity applies in a
more general setting than Markov chains-- see, for instance,
Dudley~\cite{Dudley}, Section 8.4.  Is it possible
to extend the fluid models to include these more general
processes?

Because the system doesn't have Markovian renewals, it's
probably better to prove stability through the
techniques of Dai and Meyn~\cite{Dai_and_Meyn} than
of Dai~\cite{Dai}.  If we try to generalize the proof, 
the first major difference we find is
determining what kind of norm to use on the state.
(The ``state'' now includes the state of the system
at all times in the past.)

After some thought about the purpose of the norm in fluid
limits, we probably want to define a norm such that
if we wait $|\sigma|$ time steps, the ergodicity
will have kicked in; in other words, for some
fixed $\epsilon>0$, the observed arrival
rates should have begun to converge within a factor of $1\pm\epsilon$
of their
expected values.  The ergodic theorem tells us that such
a value for $|\sigma|_{\epsilon}$ exists.

We can then continue with most of the proof.  Unfortunately, we are
eventually faced with proving uniform integrability results for this
system, and it's not clear how to proceed.  Proving this result seems
to require some new ideas for fluid limits that haven't been needed
before.
\end{itemize}

 \chapter{Analyticity} \label{analytic chapter}

\section{Analyticity and Absolute Monotonicity}
Consider the expected queue length at one node of a 3 node standard
Bernoulli ring.  The expected queue length can be expressed as a
function of $p$, where $p$ is the probability of a packet arriving at
a node on one time step.  From the results of Chapter~\ref{exact
chapter}, we know what this function is:
\[\frac{p^{2}}{2-3p}\]
Observe that this function is analytic, rational, strictly monotonic,
and convex.  If we consider the stationary probability of being in
any fixed state, this function is also a rational function of $p$.

It's natural to ask ourselves how many of these properties hold 
for other packet routing networks.  This chapter looks at some 
analyticity and monotonicity results that can be widely applied.

We mustn't be overly confident with our conjectures, though.
Consider the two node Markov chain in Figure~\ref{2 node pathology figure}.
\begin{figure}[ht]
\centerline{\includegraphics[height=1in]{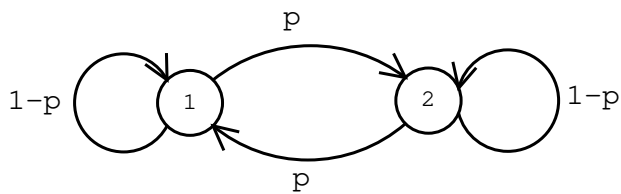} \hspace{.2in}  }
\caption{A pathological two node Markov chain}
\label{2 node pathology figure}
\end{figure}
The states are labeled 0 and 1, and we start in state 0.  We switch
states with probability $p$, and remain in the same state with
probability $1-p$.  If $0<p<1$, then the probability $\pi_{0}(p)$ of
being in state 0 is $1/2$.  But if $p=0$, then $\pi_{0}(0)=1$.
Therefore, $\pi_{0}(p)$ is discontinuous at $p=0$.  If we want to
prove general smoothness results, we'd better avoid cases like this
one.

It will be very useful to consider a strong type of monotonicity
from which we can deduce various other smoothness and monotonicity
results.

\begin{definition}[D] \label{abso mono D definition} 
\index{absolute monotonicity}
A function $f:\real\rightarrow\real^{+}$ is \emph{absolutely monotonic (D)}
in $[a,b)$ iff it has derivatives of all orders that satisfy 
\[f^{(k)}(x)\geq 0, x \in (a,b), k\in\natu \]
\end{definition}
Let $\Delta_{h}f(x)=f(x+h) - f(x)$ and 
$\Delta_{h}^{n}f(x)=\Delta_{h}(\Delta_{h}^{n-1}(f(x))$,
for $n=2,3,...$.
\begin{definition}[$\Delta$] \label{abso mono Delta definition}
A function $f:\real\rightarrow\real^{+}$ is 
\emph{absolutely monotonic ($\Delta$)} in $[a,b)$ iff
\begin{equation} \label{abso mono Delta equation}
\Delta_{h}^{n}f(x)=\sum_{k=0}^{n}(-1)^{n-k}\choose{n}{k}f(x+kh)\geq 0
\end{equation}
for all non-negative integers $n$ and for all $x$ and $h$ such that
\[a\leq x<x+h<\cdots<x+nh<b\]
\end{definition}

Absolute monotonicity is useful because of the following facts:
\begin{theorem}
Definitions~\ref{abso mono D definition} 
and~\ref{abso mono Delta definition} are equivalent.

Moreover, if $f$ is absolutely monotonic in $[a,b)$, then
$f$ is analytic on $[a,b)$.  In fact, $f$ can be analytically
continued on an open disk of radius $b-a$ centered at $a$ (so
the Taylor expansion of $f(x)$ at $x=a$ converges inside this disk).
\end{theorem}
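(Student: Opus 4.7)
The plan is to handle the equivalence of the two definitions first, and then derive analyticity from the derivative definition (D), since that is the form most directly amenable to Taylor expansion.

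For D $\Rightarrow$ $\Delta$: I would proceed by induction on $n$, using the integral representation
\[
\Delta_h^n f(x) = \int_0^h \!\!\int_0^h \!\!\cdots\! \int_0^h f^{(n)}(x + s_1 + s_2 + \cdots + s_n)\, ds_1\, ds_2 \cdots ds_n,
\]
which one verifies by induction using the fundamental theorem of calculus at each step. Since $f^{(n)} \geq 0$ on $(a,b)$ and the integration region lies inside $(a,b)$ whenever $a \leq x < x+nh < b$, the right-hand side is non-negative. The converse $\Delta \Rightarrow$ D is more delicate: the $n=1$ hypothesis implies $f$ is non-decreasing, the $n=2$ hypothesis implies $f$ is convex and hence has one-sided derivatives everywhere, and by induction on $n$ the quotient $\Delta_h^n f(x)/h^n$ increases to $f^{(n)}(x)$ as $h \downarrow 0$, giving existence and non-negativity of all derivatives. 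I expect this inductive bootstrap to be the \emph{main obstacle}, since one needs to carefully control the interchange of limits and verify that each successive derivative inherits both the smoothness and the sign conditions of its predecessors.

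For analyticity, once D is in hand, the key estimate is the bound
\[
f^{(n)}(x) \leq \frac{n!\, f(y)}{(y-x)^n} \qquad \text{for } a \leq x < y < b,
\]
obtained by writing out the Taylor expansion of $f(y)$ around $x$ and discarding every term other than the $n$th (all remaining terms are non-negative because all derivatives are non-negative). Setting $x = a$ and taking $n$th roots gives
\[
\left(\frac{f^{(n)}(a)}{n!}\right)^{\!1/n} \leq \frac{f(y)^{1/n}}{y - a},
\]
and letting $n \to \infty$ (with $f(y)^{1/n} \to 1$) shows that the radius of convergence of $\sum_{k} \frac{f^{(k)}(a)}{k!}(z-a)^k$ is at least $y - a$, hence at least $b - a$ as $y \uparrow b$.

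Finally, to show that $f$ actually equals its Taylor series on the real interval $[a,b)$, I would use the Lagrange remainder $R_n(x) = f^{(n)}(\xi_n)(x-a)^n/n!$ with $\xi_n \in (a, x)$. Since $f^{(n)}$ is non-decreasing, $f^{(n)}(\xi_n) \leq f^{(n)}(x)$, and combining with the key estimate above gives
\[
R_n(x) \leq f(y)\left(\frac{x-a}{y-x}\right)^{\!n},
\]
which tends to zero whenever $x < (a+y)/2$. Taking $y$ close to $b$ establishes $f = g$ on $[a, (a+b)/2)$, where $g$ denotes the power series. To extend equality to all of $[a,b)$, I would pick a base point $a' \in [a, (a+b)/2)$ and repeat the argument with $a'$ in place of $a$, producing a power series around $a'$ with radius at least $b - a'$; by uniqueness, it must agree with $g$ on the overlap of their convergence disks, so $g$ extends and equals $f$ further along $[a,b)$. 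Iterating this bootstrap finitely many times (or taking a supremum argument on the set where $f = g$) reaches every point of $[a,b)$ and simultaneously furnishes the analytic continuation of $f$ to the complex disk $|z - a| < b - a$.
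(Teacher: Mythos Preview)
Your proof sketch is essentially correct and follows the classical route due to Bernstein. The paper itself does not prove this theorem at all; it simply cites Bernstein and Widder and moves on. So you have actually supplied considerably more than the paper does, and your outline matches the standard textbook treatment found in those references: the integral representation for $\Delta_h^n f$ gives one direction of the equivalence, the convexity bootstrap gives the other, and the Taylor remainder estimate $f^{(n)}(x) \le n!\,f(y)/(y-x)^n$ drives both the radius-of-convergence bound and the proof that $f$ agrees with its series. Your halving-the-interval argument for extending $f=g$ across all of $[a,b)$ is also the standard device.
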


\proof See Bernstein~\cite{Bernstein} and Widder~\cite{Widder}. \EOP

A slight change in a paper by Zazanis~\cite{zazanis} allows
us to get our first interesting conclusion.

\begin{theorem}[Zazanis] \label{zazanis theorem}
\index{Theorem \ref{zazanis theorem}}
Suppose we have a discrete time queueing system where exogenous
packets are inserted to classes according to a rate $p$ Bernoulli arrival
processes.  Suppose that the system is ergodic if $0\leq p <p_{0}$.
Then the expected queue length and the stationary
probabilities for any state are analytic functions on $0\leq p <p_{0}$.
\end{theorem}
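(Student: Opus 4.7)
The plan is to prove analyticity by establishing absolute monotonicity in the sense of Definition~\ref{abso mono Delta definition}, then invoking the fact that absolute monotonicity on $[0,p_{0})$ implies analyticity there. Concretely, for each state $\sigma$ I would exhibit a function $\pi_{\sigma}^{*}(p)$ (either $\pi_{\sigma}(p)$ itself, or $1-\pi_{\sigma}(p)$, or a suitable linear combination of stationary probabilities) whose $n$-th forward differences $\Delta_{h}^{n}\pi_{\sigma}^{*}(p)$ are non-negative for all $n$ and all admissible $x,h$. Since the expected queue length is a non-negative linear combination of stationary probabilities (weighted by queue sizes), analyticity will then transfer to it, with radius of convergence at least $p_{0}-p$ at any base point $p\in[0,p_{0})$.

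The key construction is a coupling between Bernoulli processes at rates $p, p+h, p+2h, \ldots, p+nh$. At each node, rather than drawing a single Bernoulli$(p+kh)$ arrival, I would sample independently a ``base'' Bernoulli$(p)$ stream and $n$ auxiliary ``increment'' Bernoulli streams at rate $h/(1-p-(k-1)h)$, and declare an arrival at rate $p+kh$ to occur iff the base stream fires or one of the first $k$ increment streams fires (all adjusted so the marginal is correct). Under this coupling, all $n+1$ systems run on a common probability space and $\pi_{\sigma}(p+kh)$ becomes the stationary expectation of an indicator that is a monotone function of which increment streams have fired. The alternating sum $\sum_{k=0}^{n}(-1)^{n-k}\binom{n}{k}\pi_{\sigma}^{*}(p+kh)$ then rewrites, via inclusion-exclusion, as the stationary probability of the event ``every one of the $n$ auxiliary streams has had a nonzero influence on the trajectory up to some bounded horizon''---a non-negative quantity. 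This is exactly the discrete-time analogue of the derivative-of-intensity argument Zazanis uses for Poisson input; the inclusion-exclusion structure forces the sign required by Equation~\ref{abso mono Delta equation}.

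To execute this I would: (i) formalize the coupling on a single infinite probability space under the ergodicity hypothesis $p+nh<p_{0}$, using the geometric ergodicity of the Markov chain to justify interchanging expectations and limits; (ii) verify that the coupled chains all have well-defined stationary distributions; (iii) unfold $\Delta_{h}^{n}\pi_{\sigma}^{*}(p)$ via Fubini over the independent auxiliary streams and identify it with a non-negative event probability; (iv) conclude absolute monotonicity on any compact sub-interval of $[0,p_{0})$ and hence analyticity. Expected queue length then inherits analyticity by summing a non-negative linear combination, with the series converging because the moments are finite throughout the ergodic regime.

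The main obstacle will be the coupling step. In continuous time Zazanis gets the decomposition for free from the fact that superposed independent Poissons are Poisson, but in discrete time two independent Bernoullis can fire simultaneously, and the ``union'' of Bernoulli$(p)$ and Bernoulli$(q)$ is Bernoulli$(p+q-pq)$, not Bernoulli$(p+q)$. The ``slight change'' the theorem refers to is presumably this: one must use a carefully renormalized family of auxiliary rates (as above) to keep the marginal linear in $p$ while preserving independence of the increments, and then check that the inclusion-exclusion identity still holds after the renormalization. A secondary technical nuisance is showing that the event whose probability equals $\Delta_{h}^{n}\pi_{\sigma}^{*}$ really is measurable with respect to the stationary $\sigma$-algebra of the coupled system, which needs the stability assumption to ensure that the effect of auxiliary streams propagates through the dynamics in a well-defined way.
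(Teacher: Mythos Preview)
Your approach has a genuine gap: you are trying to prove analyticity by showing that $\pi_\sigma(p)$ itself (or $1-\pi_\sigma(p)$, or the expected queue length) is absolutely monotonic, via a pathwise coupling and inclusion--exclusion. This cannot work in the generality of the theorem. The paper proves explicitly in Theorem~\ref{not abso mono theorem} that the expected queue length of the $4$-node standard Bernoulli ring is \emph{not} absolutely monotonic (the $s^{10}$ Taylor coefficient is negative), so no argument that concludes $\Delta_h^n(\text{expected queue length})\geq 0$ can be correct for multiclass networks. The inclusion--exclusion identity you sketch---that the alternating sum equals ``the probability that every auxiliary stream has nonzero influence''---only yields a non-negative quantity when the functional under study is monotone in the arrival streams. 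Indicators $1_{\{\text{state}=\sigma\}}$ are not monotone (think of ``exactly $k$ packets in queue''), and in multiclass systems even tail events like ``at least $k$ packets'' need not be monotone because of class interactions. Your coupling is essentially the method of collective marks used later in Theorem~\ref{absolute monotonicity theorem}, and that theorem is carefully restricted to Markovian (single-class) networks for precisely this reason.

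The paper's route is different in kind: it is a change-of-measure argument, not a pathwise coupling. One writes the Radon--Nikodym derivative between the Bernoulli($p$) and Bernoulli($a$) path measures on a finite horizon,
\[
\frac{dP_{p,T}}{dP_{a,T}}=\left(\frac{p(1-a)}{a(1-p)}\right)^{N_T}\left(\frac{1-p}{1-a}\right)^{\lfloor T\rfloor},
\]
and then follows Zazanis's continuous-time proof with this formula substituted for his Poisson likelihood ratio $(\lambda/a)^{N_T}e^{-T(\lambda-a)}$. The absolute monotonicity that Zazanis invokes is not of $E_p[f]$ itself but of $E_p[f]$ multiplied by an explicit analytic nonvanishing factor (coming from the deterministic part of the likelihood ratio); after that multiplication, what remains is a power series with non-negative coefficients in a transformed variable, regardless of whether $f$ is monotone. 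Dividing back by the analytic factor recovers analyticity of $E_p[f]$. So the ``slight change'' in the statement is exactly the replacement of the Poisson likelihood ratio by the Bernoulli one above, not a renormalized thinning coupling. You should abandon the coupling plan here and instead write down the discrete likelihood ratio and trace through Zazanis's Theorem~3 with it.
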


\proof
Zazanis~\cite{zazanis} proves this result in continuous time for
Poisson arrivals.  His proof amounts to showing that a particular
function is absolutely monotonic, and hence analytic.  He begins his
Theorem 3 with his Equation (1):
\[\frac{dP_{\lambda,T}}{dP_{a,T}}=\left(\frac{\lambda}{a}\right)
                e^{-T(\lambda-a)}\]
(A derivation of this formula, which Zazanis merely quotes,
can be found in Br\'{e}maud~\cite{bremaud}, particularly pages 190-191.)
The discrete time analogue (where $p$ replaces $\lambda$) is 
\begin{equation} \label{neo-zazanis}
\frac{dP_{p,T}}{dP_{a,T}}=
\left(\frac{p(1-a)}{a(1-p)}\right)^{N_{T}}
\left(\frac{1-p}{1-a}\right)^{\lfloor T \rfloor} 
\end{equation}
The rest of the changes to Zazanis' proof follow immediately from
replacing his equation 1 with Equation~\ref{neo-zazanis} above.
\EOP

A special case is, of course, a Bernoulli ring:
\begin{corollary}
For any fixed $N$, the expected queue length per node of a standard
Bernoulli ring is an analytic function of $p$ on $p\in [0
,\frac{2}{N})$.  The stationary probability of being in any fixed state
is also an analytic function of $p$ on the same interval.
\end{corollary}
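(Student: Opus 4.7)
The plan is to obtain this corollary as a direct application of Theorem~\ref{zazanis theorem} to the specific arrival structure of the standard Bernoulli ring. First I would check that the standard Bernoulli ring fits the hypotheses of Zazanis' theorem: the exogenous arrival process at each node is, by definition, an independent rate-$p$ Bernoulli process, and there is (in the obvious state space where we record queue lengths and hot-potato destinations) a discrete-time Markov chain whose classes are indexed by origin node (or equivalently by destination). So the arrival side of the hypothesis is immediate.

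The second piece I need is the ergodicity interval. Lemma~\ref{basic ring instability lemma} gives instability for $p > 2/N$, and Theorem~\ref{ergodicity payoff theorem} together with Corollary~\ref{ergodic ring corollary} establishes ergodicity for every $p$ with $r = pN/2 < 1$, i.e.\ for $p \in [0, 2/N)$. Thus the interval of ergodicity is exactly $[0, p_0)$ with $p_0 = 2/N$, which is what the corollary asserts.

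With these two ingredients in hand, Theorem~\ref{zazanis theorem} applies verbatim: the stationary probability of any fixed state is analytic on $[0, 2/N)$, and the expected queue length at any (hence every, by rotational symmetry) node is analytic on the same interval. I would close by remarking that no additional work is needed on the ``queueing network'' side --- the standard Bernoulli ring is a finite-class discrete-time network with Bernoulli exogenous arrivals, so it sits squarely within the framework of Theorem~\ref{zazanis theorem}.

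The only conceivable obstacle is a pedantic one: making sure the chain used to apply Zazanis is honestly ergodic (irreducible and aperiodic) on the chosen state space, not merely positive-recurrent on some equivalence classes. Aperiodicity is automatic because the state $\st{X}^{\otimes N}$ can be reached from itself in one step with positive probability (no arrivals, no packets to route), and irreducibility on the set of reachable states follows from the positive probability of a sequence of ``no new arrivals'' steps that drains the system and then a tailored sequence of arrivals that builds up any target state. So the hypotheses are clean and the corollary follows in one line.
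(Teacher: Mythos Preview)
Your proof is correct and follows essentially the same approach as the paper: invoke the ergodicity of the standard Bernoulli ring for $p\in[0,2/N)$ established via the fluid-limit results, and then apply Theorem~\ref{zazanis theorem} directly. The paper's proof is a one-liner citing Chapter~\ref{fluid chapter} for ergodicity and Theorem~\ref{zazanis theorem} for analyticity; your added checks on irreducibility and aperiodicity are sound but not strictly needed beyond what the paper assumes.
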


\proof
We established the ergodicity for $0\leq p <\frac{2}{N}$ in
Chapter~\ref{fluid chapter}, so we can use Theorem~\ref{zazanis
theorem}.
\EOP

\section{Light Traffic Limits} \label{light traffic section}

Note that the functions in Theorem~\ref{zazanis theorem}
are analytic at $p=0$.  This means that the
Taylor expansion around $p=0$ is well-defined and agrees with 
the actual function in some $\epsilon$-neighborhood.  
Calculations taken in the limit as $p\rightarrow 0$ are
sometimes called light traffic limits.  Theorem~\ref{zazanis theorem}
shows that for Bernoulli arrivals, the light traffic limits
are well defined.

\subsection{Product Form Results}

We can use these light traffic limits to prove that certain
stationary distributions are \emph{not} product form, answering 
a question posed in Section~\ref{exact future section}

\begin{theorem} \label{nonproduct form standard ring theorem}
\index{Theorem \ref{nonproduct form standard ring theorem}}
For $N\leq 3$, the stationary
distribution of a standard Bernoulli ring is product form.
For $N\geq 4$, the stationary distribution is 
not product form.
\end{theorem}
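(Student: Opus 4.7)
The $N\leq 3$ cases follow from prior results: trivially for $N=1$; from the non-blocking model for $N=2$ (no packet ever waits in queue, and whether a node is nonempty at time $t+1$ simply equals whether it had an exogenous arrival on step $t+1$, so the two nodes' states are independent); and for $N=3$ from Theorem~\ref{main exact theorem} applied to $L=2$, whose explicit product-form distribution is Equations~\ref{stationary equations}.

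For $N\geq 4$, I plan to invoke Theorem~\ref{zazanis theorem}, which makes every stationary probability an analytic function of $p$ on $[0,2/N)$. Under the product-form hypothesis,
\[
\pi(s_1,\ldots,s_N) = \prod_{i=1}^{N}\pi_{\mathrm{marg}}(s_i)
\]
(where cyclic symmetry forces a common marginal $\pi_{\mathrm{marg}}$) is an identity between analytic functions of $p$, so it is enough to exhibit one joint state whose Taylor series at $p=0$ disagrees with the corresponding product at some finite order.

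The witness I will use comes from the utilization law: letting $X_i$ be the indicator that node $i$ is nonempty, we have $E[X_i]=r=(N/2)p$ exactly. Product form would then force $E[X_1 X_2]=r^2=N^2p^2/4$ identically in $p$. I will compute the true $p^2$ coefficient of $E[X_1 X_2]$ by a direct light-traffic enumeration of the two-packet configurations with one packet at node $1$ and another at node $2$, and show that the result is not $N^2/4$ for any $N\geq 4$. The single-packet contribution vanishes because a lone packet cannot occupy two nodes at the same time instant, so the entire $O(p^2)$ contribution comes from pairs of distinct hot potatoes whose source nodes, destinations, and relative arrival times I can enumerate explicitly.

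The main obstacle is the bookkeeping: the enumeration involves summing over pairs of source nodes and pairs of destinations, weighted by each packet's conditional probability of being at the observed location. At $O(p^2)$ queueing does not yet contribute, which keeps the combinatorics manageable; but for larger $N$ the number of pairs grows, and the cleanest route is to carry out the calculation by treating the ring as a finite-state Markov chain (truncating queue lengths is harmless for Taylor coefficients up to any fixed order) and extracting the coefficient symbolically, in the spirit of Theorem~\ref{computer theorem} later in this chapter.
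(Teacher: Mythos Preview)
Your $N\leq 3$ cases are fine. For $N\geq 4$ there is a real gap: you never actually compute the $p^{2}$ coefficient of $E[X_{1}X_{2}]$, and you give no structural reason it must differ from $N^{2}/4$. Aggregate moments can match the product-form value even when individual stationary probabilities do not, so without the calculation the argument is incomplete. Your claim that ``at $O(p^{2})$ queueing does not yet contribute'' is also too optimistic: two packets collide with $O(1)$ conditional probability, so queueing events occur at order $p^{2}$ and shift the very trajectories that feed into $E[X_{1}X_{2}]$. Whether you work via balance equations or via a sample-path enumeration, those queued intermediate states cannot be ignored, and the bookkeeping you defer to a symbolic computation does not constitute a proof for all $N$.

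The paper avoids any aggregate computation by exploiting precisely the queueing you dismiss. It compares two specific states with the same multiset of node states --- two adjacent hot potatoes with remaining distances $(N-2,N-3)$ versus $(N-3,N-2)$ --- which product form would force to be equiprobable. Writing the balance equation for each to order $p^{2}$, the first state has a predecessor consisting of a single node in state $\stq{1}{N-2}$ (rest empty), reachable with just two arrivals; the mirrored queued predecessor for the second state needs three arrivals and hence is $O(p^{3})$. Under the product-form hypothesis the remaining predecessor terms pair off exactly, so this lone extra term, whose $p^{2}$ coefficient is nonzero, forces the two probabilities to differ --- a contradiction that works uniformly for every $N\geq 4$. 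That asymmetric queued predecessor is the idea your plan is missing.
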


\proof
Chapter~\ref{fluid chapter} shows the existence of stationary
distributions for all $N$ when the nominal load $r<1$.  The results of
Chapter~\ref{exact chapter} showed that the stationary distribution is
product form for $N\leq 3$.

Assume that $N\geq 4$.  I will continue to use the state notation
of Chapter~\ref{exact chapter}, where
\[\cdots-\stq{n}{t}-\cdots\]
represents a node with $n$ packets in queue, and a hot potato
with $t$ steps left to travel (so $1\leq t \leq N-1$).  An empty
node (no packets in queue, and no hot potato packet in the ring)
is represented as
\[\cdots-\st{X}-\cdots\]
Imagine that packets travel from left to right.

Consider the state $\sigma$ where all nodes are empty except for two
adjacent nodes:
\[\cdots-\st{N-2}-\st{N-3}-\cdots\]
The stationary distribution of $\Pr[\sigma]$ can be Taylor
expanded around zero in the form:
\[a_{0}p^{0}+a_{1}p^{1}+a_{2}p^{2}+\cdots\]
Define $\sigma_{0}$ to be the ground state, where every
node is in state $\st{X}$.

In Theorem~\ref{computer theorem}, I show that in order 
to calculate $a_{i}$, we only need
to consider contributions from states that are reachable
from $\sigma_{0}$ by inserting at most $i$ packets.
Since it takes
two packet arrivals to get to $\sigma$ from the ground state,
then $a_{0}=a_{1}=0$.  Let's figure out what terms
contribute to $a_{2}$.

We can calculate the stationary probability of $\sigma$ by adding the
stationary probabilities flowing in to it.  However, since we're only
going to look at terms of order $p^{2}$ and lower, then
we need only look at states that are attainable with two or fewer packets:
\begin{eqnarray} 
\lefteqn{
\Pr\left( \cdots-\st{N-2}-\st{N-3}-\cdots \right)  
} 
& & \nonumber \\
 & \stackrel{p^{2}} = & (1-p)^{N}\Pr\left(
\cdots-\st{N-1}-\st{N-2}-\cdots \right) \nonumber \\
 & + &
p(1-p)^{N-1}\Pr\left( \cdots-\st{N-2}-\st{X}-\cdots \right) \nonumber
\\
 & + & p(1-p)^{N-1}\Pr\left( \cdots-\st{N-1}-\st{X}-\cdots \right)
\nonumber \\
 & + & \frac{1}{N-1}(1-p)^{N}\Pr\left(
\cdots-\stq{1}{N-2}-\st{X}-\cdots\right) \nonumber \\
 \label{uniform nonproduct 1 equation}
\end{eqnarray}
(By $\stackrel{p^{2}}{=}$, I mean that the $p^{2}$ and lower
terms of the Taylor expansion are equal.)
Let $\tau$ be the result of reversing $\sigma$, i.e.\
all nodes are empty except for two adjacent nodes:
\[\cdots-\st{N-3}-\st{N-2}-\cdots\]
If we consider the possible prior states reachable with two
or fewer packet arrivals, there is no analogue of the 
$\cdots-\stq{1}{N-2}-\st{X}-\cdots$ state, i.e.\
\begin{eqnarray}
\lefteqn{
\Pr\left( \cdots-\st{N-3}-\st{N-2}-\cdots \right) 
} & &
	\nonumber \\
& \stackrel{p^{2}}{=} &
    (1-p)^{N}\Pr\left( \cdots-\st{N-2}-\st{N-1}-\cdots \right) 
	\nonumber \\
& + & p(1-p)^{N-1}\Pr\left( \cdots-\st{X}-\st{N-2}-\cdots \right)
	\nonumber \\
& + & p(1-p)^{N-1}\Pr\left( \cdots-\st{X}-\st{N-1}-\cdots \right)
	\nonumber \\
	\label{uniform nonproduct 2 equation}
\end{eqnarray}
(This asymmetry is due to the fact that three packet arrivals are
necessary to get to a state like $\cdots-\stq{N-3}{N-1}-\cdots$.)

Note that the $p^{2}$ term in $\cdots-\stq{1}{N-2}-\st{X}-\cdots$
is $1/(N-1)$, which is non-zero.  Now, assume for a moment that
the distribution were product form.  Then the first three terms
on the right hand side of Equation~\ref{uniform nonproduct 1 equation}
would equal the three terms on the right hand side of 
Equation~\ref{uniform nonproduct 2 equation}.  Since
the $p^{2}$ term in $\cdots-\stq{1}{N-2}-\st{X}-\cdots$ is nonzero,
it follows that the $p^{2}$ term in $\Pr[\sigma]$ is different
from the $p^{2}$ term in $\Pr[\tau]$.  Because these are analytic
functions of $p$ at $p=0$, it follows that
$\Pr[\sigma]\neq\Pr[\tau]$, except at a finite number of points.
This fact contradicts the assumption that the distribution was
product form.
\EOP

Another interesting family of rings are the geometric rings of Coffman
et alia~\cite{Leighton95},~\cite{Leighton98}.  (The following theorem
doesn't use light traffic limits, but it makes a nice counterpoint to
Theorem~\ref{nonproduct form standard ring theorem}.)

\begin{theorem}
Fix $\lambda, \mu$, such that $0<\lambda<\mu$.  Suppose for any $N$,
we have an $N$-node ring where a packet arrives at each node with
probability $p=\lambda/N$, and departs on each step it travels with
probability $\mu/N$.  Then there are only finitely many $N$ such that
the $N$ node ring has a product form stationary distribution.
\end{theorem}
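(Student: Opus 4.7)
The plan is to show that for all sufficiently large $N$, there exist two states $\sigma_A$ and $\sigma_B$ with the same multiset of queue lengths but with unequal stationary probabilities, contradicting product form. By the rotational symmetry of the directed ring, any product-form stationary distribution $\pi_N(q_1,\ldots,q_N) = \prod_i f_N(q_i)$ forces $\pi_N(\sigma) = \pi_N(\tau)$ whenever $\sigma$ and $\tau$ share the same multiset $\{q_i\}$; in particular, the probability of a configuration with two single-packet queues can depend only on the packet count, not on the cyclic distance between the two occupied nodes.

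Take $\sigma_A$ to be the state with a single packet at each of nodes $1$ and $2$, and $\sigma_B$ the state with a single packet at each of nodes $1$ and $3$. Invoking Theorem~\ref{zazanis theorem} for each $N$ separately (treating $\mu/N$ as a frozen network parameter and $p$ as the Taylor variable), $\pi_N(\sigma_A;p)$ and $\pi_N(\sigma_B;p)$ are analytic in $p$ on the stability interval, so one may expand and compare coefficients. A short balance-equation computation reveals that the $p^2$ coefficients coincide: the only constant-order transitions into these two-packet states come from the analogous predecessors ``$e_N + e_1 \to \sigma_A$'' and ``$e_N + e_2 \to \sigma_B$'' via simultaneous double-moves, and rotational symmetry makes those contributions match.

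The discrepancy must therefore appear at higher order. At $p^3$, new contributions enter from three-packet predecessors: $\sigma_A$ receives contributions from states $e_N + e_1 + e_j$ (cyclic-gap multiset $\{1,\, j-1,\, N-j\}$), while $\sigma_B$ receives contributions from $e_N + e_2 + e_j$ (gap multisets $\{1,1,N-2\}$, $\{1,2,N-3\}$, $\{2,2,N-4\}$, and so on). These families index genuinely different combinatorial types of three-packet configurations. Propagating the analysis through the hierarchy of balance equations (eventually invoking states with multiple packets in one queue, whose probabilities depend nontrivially on $f_N(2), f_N(3), \ldots$), I plan to show that the order-$p^k$ discrepancy is a nonzero polynomial expression in $N$ and $\mu$; substituting $p = \lambda/N$ then yields $\pi_N(\sigma_A) \neq \pi_N(\sigma_B)$ for all but finitely many $N$.

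The main obstacle is exactly this propagation: at each low order of $p$, the product-form ansatz keeps producing matching totals, and the true asymmetry may only surface once one accounts for states with multi-packet queues. A cleaner route I would explore in parallel is to use the balance equation at $\vec{0}$ to force the marginal $f_N$ (under the product form assumption) into a specific geometric form with ratio $\rho = \lambda/(\mu(1-\lambda/N))$, and then directly verify that this specific $f_N$ fails the balance equation at $e_1 + e_2$ for all large $N$; the failure would be a single identity in $\lambda, \mu, N$ which one expects to hold only at finitely many $N$.
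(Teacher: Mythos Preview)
Your proposal is a plan with acknowledged gaps rather than a proof, and the primary route has a logical hole beyond the incomplete computation: even if you establish that the Taylor coefficients of $\pi_N(\sigma_A;p)-\pi_N(\sigma_B;p)$ first differ at some order $p^k$ with a coefficient that is a nonzero expression in $N$ and $\mu$, this only shows the two analytic functions of $p$ are unequal \emph{as functions}. It does not rule out that the specific value $p=\lambda/N$ is one of their intersection points. You would need control of the full difference at $p=\lambda/N$, not merely its leading nonzero Taylor term, and you give no mechanism for that.

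The paper's argument is entirely different and avoids all order-by-order bookkeeping. It assumes product form holds for infinitely many $N$ and writes the balance equation at the empty ground state. Under product form, the sum over predecessors (each a subset of nodes holding exactly one packet, all of which must simultaneously depart while no new packets arrive) factors as $\bigl[(1-p)(\pi_X+(\mu/N)\pi_1)\bigr]^N$, where $\pi_X$ and $\pi_1$ are the marginal probabilities of an empty node and a single-packet node. Taking $N$th roots and using $\pi_X=1-r$ from Little's theorem forces $\pi_1=r(1-r)/(1-p)$ with $r=\lambda/\mu$. Hence the marginal probability of at least two packets at a node tends to $r^2>0$ as $N\to\infty$ along this subsequence, so the expected queue length per node is $\Omega(1)$. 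This contradicts the $O(1/N)$ bound of Coffman et al.\ for the geometric ring. Your ``cleaner route'' at the end shares the paper's first step (pinning down the marginal via balance at the empty state), but the paper's punchline is not to test a second balance equation; it is to invoke this external asymptotic estimate, which is what makes the argument short.
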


\proof
Suppose not.  Let us restrict our attention to the infinitely many $N$ with
product form stationary distributions.  Then the probability of the
ground state $\sigma$ (where all the nodes are empty) is the sum of all the
stationary probability flowing in to it.  A state $\tau$
preceding $\sigma$ has $t\leq N$ packets travelling in the ring
and no packets in queue, and becomes state $\sigma$ with
probability 
\[\left(\frac{\mu}{N}\right)^{t}(1-p)^{N}\]
Let $\Pr\st{X}$ be the marginal probability that a node is empty,
and $\Pr\st{1}$ be the marginal probability that a node has
one packet (travelling in the ring) in it.  Then by the product form,
\[\Pr[\sigma]=\left[\Pr\st{X}\right]^{N}\]
and by applying the product form to the possible previous states,
\[=\left[ \left(\Pr\st{X} + \frac{\mu}{N}\Pr\st{1}\right)(1-p)\right]^{N}\]
Taking the $N$th root and simplifying, we get
\[\Pr\st{1}=\frac{\lambda}{\mu}\frac{1}{1-p} \Pr\st{X}\]
The nominal load at any node is $r=\lambda/\mu$.
Now, by Little's theorem (Theorem~\ref{little's theorem}),
$\Pr\st{X}=1-r$.  Therefore, 
\[\Pr\st{1}=r\frac{1-r}{1-p}\]
If we take the limit of large $N$, we get
\[\lim_{N\rightarrow\infty}\Pr\st{1}= r(1-r)\]
Next, observe that the expected queue length per processor
is greater than the probability the queue is non-empty.
In the limit of large $N$, the probability of a non-empty
queue becomes
\[\lim_{N\rightarrow\infty}
1-\Pr\st{X}-\Pr\st{1}= 1-r-r(1-r)=r^{2}\] Therefore, the expected
queue length has an $\Omega(1)$ lower bound in $N$.  However,
Coffman et al.~\cite{Leighton95} shows that the expected queue length is
$o(1)$, which is a contradiction.
\EOP

\subsection{Explicit Calculations} \label{explicit subsection}

Suppose we perform a Taylor expansion in $s=(N-1)p$ at $s=0$.  Since
every packet insertion into the ring is one of $N-1$ equally likely
possibilities, then the coefficients of the Taylor expansion in $s$
are integral.  With some care, it's possible to write computer
programs to calculate these coefficients exactly, since there are no
rounding issues.  (I discuss the details in Appendix~\ref{computer
chapter}.)  I include two such calculations for the $N=4$ node
standard Bernoulli ring.

The expected queue length per node, for the first 18 coefficients,
is:
\begin{equation} \label{computed expected queue length equation}
\begin{array}{c}
9 s^{2}\\
+60 s^{3}\\
+360 s^{4}\\
+2178 s^{5}\\
+12786 s^{6}\\
+87036 s^{7}\\
+353364 s^{8}\\
+4334718 s^{9}\\
-1339320 s^{10}\\
+34239902 s^{11}\\
-2784053934 s^{12}\\
+53289152484 s^{13}\\
-706757636340 s^{14}\\
+10784818397940 s^{15}\\
-154169647942608 s^{16}\\
+2259931191910950 s^{17}\\
-32912356744493232 s^{18}
\end{array}
\end{equation}
The stationary probability of all the nodes being empty is:
\begin{equation} \label{computed stationary prob equation}
\begin{array}{c}
1 s^{0}\\
-24 s^{1}\\
+228 s^{2}\\
-1124 s^{3}\\
+3450 s^{4}\\
-8648 s^{5}\\
+18146 s^{6}\\
-57648 s^{7}\\
+1601326 s^{8}\\
-33833208 s^{9}\\
+507453786 s^{10}\\
-6464175792 s^{11}\\
+80039366294 s^{12}\\
-1052324918636 s^{13}\\
+14880952912160 s^{14}\\
-218279218629788 s^{15}\\
+3216382442758784 s^{16}\\
-47093125613982364 s^{17}\\
+686459780883843256 s^{18}
\end{array}
\end{equation}

We can deduce a few facts from these enormous polynomials.
\begin{theorem} \label{not abso mono theorem}
\index{Theorem \ref{not abso mono theorem}}
The expected queue length per node of the standard
Bernoulli ring is not always absolutely monotonic.
\end{theorem}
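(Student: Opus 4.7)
The plan is to exploit the equivalence between absolute monotonicity and non-negativity of Taylor coefficients, and then simply point to the explicit computation already displayed in Equation~\ref{computed expected queue length equation}.

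First I would recall that if $f$ is absolutely monotonic in some interval $[a,b)$, then by the equivalence of Definitions~\ref{abso mono D definition} and~\ref{abso mono Delta definition} (and the analyticity conclusion quoted from Bernstein and Widder), all derivatives $f^{(k)}(a)$ are non-negative, and $f$ is equal to its Taylor series around $a$ on a disk of radius $b-a$. In particular, every Taylor coefficient of $f$ at $a$ must be non-negative. By Theorem~\ref{zazanis theorem}, the expected queue length per node of the standard Bernoulli ring is analytic in $p$ on $[0, 2/N)$, so the Taylor expansion at $p=0$ genuinely represents the function on a neighborhood of the origin.

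Next I would instantiate this at $N=4$, where the expansion in $s=(N-1)p=3p$ is given in Equation~\ref{computed expected queue length equation}. Since $s$ is a positive scalar multiple of $p$, absolute monotonicity in $p$ on $[0, 2/N)$ is equivalent to absolute monotonicity in $s$ on the corresponding interval, and the signs of the Taylor coefficients in $s$ match those of the Taylor coefficients in $p$. The displayed expansion contains the term $-1339320\, s^{10}$, a strictly negative coefficient. This single negative coefficient is incompatible with absolute monotonicity on any interval $[0,b)$ with $b>0$, which contradicts what would be required if the expected queue length were absolutely monotonic on its interval of ergodicity.

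The main (essentially only) obstacle is ensuring the computed coefficient is correct; this is addressed by the exact integer arithmetic mentioned in Section~\ref{explicit subsection} and detailed in Appendix~\ref{computer chapter} (specifically Theorem~\ref{computer theorem}), which guarantees the calculation has no rounding error. Given that, the proof of Theorem~\ref{not abso mono theorem} reduces to citing Theorem~\ref{zazanis theorem} for analyticity, quoting the Bernstein--Widder characterization for the sign criterion, and observing the negative $s^{10}$ coefficient in Equation~\ref{computed expected queue length equation}.
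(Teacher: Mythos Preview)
Your proposal is correct and matches the paper's approach exactly: the paper's proof simply notes that for $N=4$ the $s^{10}$ coefficient in Equation~\ref{computed expected queue length equation} is negative, contradicting absolute monotonicity. Your write-up is in fact more careful than the paper's, spelling out the Bernstein--Widder sign criterion and the invariance under the positive rescaling $s=(N-1)p$; the paper also remarks that the $N=3$ case \emph{is} absolutely monotonic, which you could add for context but is not needed for the stated theorem.
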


\proof
If $N=3$, then the expected queue length \emph{is} absolutely
monotonic.  However, if $N=4$, then observe that the $s^{10}$ term in
Equation~\ref{computed expected queue length equation} is negative,
contradicting absolute monotonicity.
\EOP

As we'll see in the next section, Markovian networks
have absolutely monotonic expected queue lengths, so
Theorem~\ref{not abso mono theorem} disproves a natural
hypothesis on standard Bernoulli rings.

Next, we analyze the rationality of these functions.
\begin{definition}
Given a rational function $a(x)/b(x)$, where
$a(x)$ is an $\alpha$ degree polynomial, and 
$b(x)$ is a $\beta$ degree polynomial, 
define the degree of $a(x)/b(x)$ as $\alpha+\beta$.
\end{definition}

\begin{theorem}
Neither Equation~\ref{computed expected queue length equation}
nor Equation~\ref{computed stationary prob equation} are 
rational functions of degree less than 18.
\end{theorem}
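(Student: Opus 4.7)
The plan is to reduce the claim to a short, finite family of linear algebra tests that can be carried out in exact integer arithmetic. First, I would observe the following reduction: if $f(s) = \sum_{n \geq 0} c_n s^n$ (where $f$ is either the expected queue length or the stationary probability series) were rational of total degree at most $17$, say $f = a_0/b_0$ in lowest terms with $\deg a_0 = A$ and $\deg b_0 = B \leq 17 - A$, then $f$ is also representable as $a/b$ with $\deg a \leq A$ and $\deg b \leq 17 - A$. Setting $\alpha = A$ and $\beta = 17 - \alpha$, this means it suffices to rule out a type-$(\alpha,\beta)$ representation for each of the $18$ values $\alpha \in \{0, 1, \ldots, 17\}$.

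Second, for each fixed $\alpha$, I would normalize $b(0) = 1$ and write the identity $b(s)\, f(s) = a(s)$ coefficient by coefficient. The equations indexed by $n = 0, 1, \ldots, \alpha$ merely define $a_0, \ldots, a_\alpha$ from the unknowns $b_1, \ldots, b_\beta$, whereas the equations indexed by $n = \alpha+1, \ldots, 18$ yield an affine system
\[
\sum_{i=1}^{\min(n,\beta)} b_i \, c_{n-i} \;=\; -c_n
\]
of $18 - \alpha = \beta + 1$ equations in the $\beta$ unknowns $b_1, \ldots, b_\beta$. Existence of a type-$(\alpha,\beta)$ rational representation requires this overdetermined system to be consistent; so the task reduces to proving that all $18$ of these systems are inconsistent.

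Third, I would carry out the $18$ consistency tests by computing the ranks of each coefficient matrix and its corresponding augmented matrix over the rationals. Because the entries $c_n$ listed in Equations~\ref{computed expected queue length equation} and~\ref{computed stationary prob equation} are exact integers, no rounding errors can occur; the same computer algebra setup used to generate those coefficients in Section~\ref{explicit subsection} handles the verification trivially. A single strict rank inequality for each value of $\alpha$, done once for each of the two series, finishes the proof.

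The main obstacle is purely computational rather than conceptual: the coefficients grow past $10^{15}$, and the linear systems reach size roughly $18 \times 17$, so hand verification is impractical and the argument is unavoidably machine-assisted. One minor logical point worth flagging: inconsistency of the truncated systems is a strictly \emph{stronger} condition than $f$ failing to be rational of the given type, since beyond $c_{18}$ we have no further constraints to impose. This excess strength is what the available coefficient data supplies, and it only makes the conclusion more robust.
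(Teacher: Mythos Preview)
Your approach is essentially the same as the paper's: both reduce the question to a finite family of linear-algebraic tests on Hankel-type matrices built from the coefficients $c_0,\ldots,c_{18}$, one test for each pair $(\alpha,\beta)$ with $\alpha+\beta=17$, and both rely on a computer verification. The only cosmetic difference is that the paper keeps $b_0$ as an unknown and checks that the resulting $(\beta+1)\times(\beta+1)$ homogeneous system has full rank (noting that one may reduce modulo a large prime), whereas you normalize $b_0=1$ and check inconsistency of the resulting $(\beta+1)\times\beta$ affine system; these are equivalent formulations of the same Pad\'e-obstruction test.
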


\proof
Suppose that we have a partial Taylor expansion of a rational
function, something like:
\[\frac{a_{0}+a_{1}x+\cdots+a_{\alpha}x^{\alpha}}
{b_{0}+b_{1}x+\cdots+b_{\beta}x^{\beta}}
=c_{0}+c_{1}x+\cdots+c_{\gamma}x^{\gamma} +O(x^{\gamma+1})\] 
(Or,
telegraphically, $a(x)/b(x)=c(x)+O(x^{\gamma+1})$.  For notational
simplicity, I interpret $a_{i}=b_{j}=0$ if $i>\alpha$ or $j>\beta$,
or if $i,j<0$.)
Now, if $\gamma$ is too small relative to $\alpha$ and $\beta$, we
have no hope of reconstructing $a(x)$ or $b(x)$; in other words, for a
fixed $\gamma$, we can only detect rationality if we assume that
$\alpha$ and $\beta$ are sufficiently small.  So, suppose that
$\gamma\geq \alpha + \beta + 1$.  Consider the $x^{\alpha+1}$
coefficient of $b(x)c(x)$.  It's
\[c_{\alpha+1}b_{0}+c_{\alpha}b_{1}+\cdots+c_{\alpha+1-\beta}b_{\beta}
=a^{\alpha+1}=0\]
We can perform a similar operation for the coefficient for
the $x^{\alpha+2}$ term, and so on, up to $x^{\alpha+\beta+1}$.
We get a resulting matrix equation:
\[ 
\underbrace{
\left(
\begin{array}{ccccc}
	c_{\alpha+1} & c_{\alpha} & c_{\alpha-1} & \cdots 
						& c_{\alpha+1-\beta}  \\
	c_{\alpha+2} & c_{\alpha+1} & c_{\alpha} & \cdots 
						& c_{\alpha+2-\beta}  \\
	\vdots       & \ddots & & & \\
	c_{\alpha+\beta+1} & \cdots &  &  
						& c_{\alpha+1}  \\
\end{array}
\right)
}_{C}
\left(
\begin{array}{c}
b_{0} \\ b_{1} \\ \vdots \\ b_{\beta}
\end{array}
\right)
=
\left(
\begin{array}{c}
0 \\ 0 \\ \vdots \\ 0
\end{array}
\right)
\]
So, given $c(x)$, we can construct the matrix $C$ for any
$\alpha+\beta<\gamma$.  If the resulting matrix
doesn't have an annihilating vector (i.e.\ is of full rank),
then $c(x)$ can not be a rational function with
numerator degree $\leq \alpha$ and denominator degree $\leq \beta$.
I checked the resulting matrices for 
Equations~\ref{computed expected queue length equation}
and~\ref{computed stationary prob equation}
for all $\alpha+\beta=17$ exhaustively\footnote{And exhaustingly.}
via computer, and found that every matrix had full rank.  

(A computational note: it is sufficient to reduce the 
matrix modulo a large prime and show that the resulting
matrix is nonsingular by modular arithmetic.)
\EOP

Finally, the curious reader may wonder what the first few places of
the Taylor expansion of the expected queue length per node looks like
as a function of $N$.  It is possible to calculate these values, and
it begins:
\[0p^{0}+0p^{1}+\frac{N-2}{2}p^{2}+O(p^{3})\]
or, in terms of the nominal load $r$,
\[\frac{1-(2/N)}{N}r^{2}+O(r^{3})\]
Observe that the coefficient to $r^{2}$ is $O(1/N)$, as one might suspect.
The
proof can be extended to the $r^{3}$ term, but even the
calculations for the $r^{2}$ term are too lengthy to include here.

\section{A Class of Absolutely Monotonic Networks}

We now turn our attention from multiclass networks to simpler
Markovian networks.  First, we need a little combinatorial result.
\begin{lemma} \label{abso mono combo lemma}
\index{Lemma \ref{abso mono combo lemma}}
If $l<n$, then
\begin{equation} \label{absolute mono equation}
\sum_{k=l}^{n}(-1)^{k} \choose{k}{l} \choose{n}{k} = 0
\end{equation}
\end{lemma}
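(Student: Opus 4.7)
The plan is to reduce the sum to a standard binomial identity by rewriting the product $\binom{k}{l}\binom{n}{k}$ in a more symmetric form. The key observation is the ``subset of a subset'' identity
\[
\binom{n}{k}\binom{k}{l} = \binom{n}{l}\binom{n-l}{k-l},
\]
which follows from the combinatorial interpretation of choosing an $l$-subset inside a $k$-subset inside an $n$-set (or by direct manipulation of factorials). After substituting this, the factor $\binom{n}{l}$ is independent of $k$ and pulls outside the sum.

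Next I would re-index by setting $j = k - l$, so that $k$ ranging from $l$ to $n$ corresponds to $j$ ranging from $0$ to $n-l$. The sign $(-1)^k$ becomes $(-1)^{j+l} = (-1)^l (-1)^j$, and the factor $(-1)^l$ is also constant in the summation. What remains inside is exactly
\[
\sum_{j=0}^{n-l} (-1)^j \binom{n-l}{j},
\]
which is the binomial expansion of $(1-1)^{n-l}$. Since we assumed $l < n$, we have $n - l \geq 1$, so this sum equals $0$, which proves Equation~\ref{absolute mono equation}.

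There is essentially no obstacle here; the only care needed is checking the index bookkeeping and the hypothesis $l<n$ (which is exactly what prevents $(1-1)^0 = 1$ from appearing). The identity fails at $l=n$, where both sides would need to be interpreted separately, but that case is explicitly excluded. No deeper machinery is required.
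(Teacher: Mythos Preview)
Your proof is correct. The subset-of-subset identity $\binom{n}{k}\binom{k}{l} = \binom{n}{l}\binom{n-l}{k-l}$ is exactly right, the re-indexing is clean, and the hypothesis $l<n$ is used precisely where it must be, to ensure $(1-1)^{n-l}=0$ rather than $1$.

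The paper takes a genuinely different route. It sets up a combinatorial interpretation (painting $n$ objects red, green, or blue with constraints, signed by the number of green objects) and then proves the vanishing by induction on $n$, splitting on the color of the last object. Your approach is more algebraic and more direct: one identity plus the binomial theorem, no induction needed. The paper's argument has the advantage of being a self-contained bijective-style cancellation, but yours is shorter, requires no case analysis, and makes the role of the hypothesis $l<n$ transparent. Either proof would serve equally well as input to Lemma~\ref{abso mono taylor lemma}, which is the only place this identity is used.
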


\proof
Suppose we have $n$ distinct objects which we are allowed to paint
red, green, or blue.  We have the restriction that $l$ of the objects
must be red, and we weight each combination by $(-1)^{k}$ where $k$ of
the objects are green.  Then observe that the weighted sum of all
valid combinations of objects is exactly Equation~\ref{absolute mono
equation}.

I will prove the theorem by induction on $n$.
Observe that the theorem holds if $n=1$ (and hence $l=0$).

Assume, inductively, that the theorem holds on $n-1$.
Suppose that $l>0$.  We can sum all the weighted objects
as follows.  If the last object is red, then there must
be $l-1$ red objects among the other $n-1$ objects.  
If the last object is green or blue, there must be $l$ red 
objects among the other $n-1$ objects.  If it's
green,  though, we also must invert the weight of
the combination.  In equations,
\begin{eqnarray*}
\sum_{k=l}^{n}(-1)^{k} \choose{k}{l} \choose{n}{k} & = &
 \underbrace{\sum_{k=l-1}^{n-1}(-1)^{k} \choose{k}{l} \choose{n}{k}
}_{\mbox{last object red}} \\
& + & \underbrace{\sum_{k=l}^{n-1}(-1)^{k} \choose{k}{l} \choose{n}{k}
}_{\mbox{last object blue}} \\
& - & \underbrace{\sum_{k=l}^{n-1}(-1)^{k} \choose{k}{l} \choose{n}{k} 
}_{\mbox{last object green}}
\end{eqnarray*}
By induction,
\[=0+0-0=0\]
If $l=0$, then the last object can't be red, so the equations simplify:
\begin{eqnarray*}
\sum_{k=l}^{n}(-1)^{k} \choose{k}{l} \choose{n}{k} & = &
\underbrace{\sum_{k=l}^{n-1}(-1)^{k} \choose{k}{l} \choose{n}{k}
}_{\mbox{last object blue}} \\
& - & \underbrace{\sum_{k=l}^{n-1}(-1)^{k} \choose{k}{l} \choose{n}{k} 
}_{\mbox{last object green}}
\end{eqnarray*}
By induction,
\[=0-0=0\]
and we are done. \EOP

Next, I'm going to define a discrete version of a Taylor expansion,
and use Lemma~\ref{abso mono combo lemma} to find another
method of proving absolute monotonicity.

Let $f(x)$ be a function on $[0,P)$ that we suspect may
be absolutely monotonic.
Fix $x, h, n \geq 0$,
$n$ an integer,
such that $x+hn<P$.  Let $k=0,1,\cdots,\lfloor\frac{P-x}{h}\rfloor$.
\begin{itemize}
\item Let $f_{0}(x+kh)=f(x)$.  (So $f_{0}$ is a constant
function defined on $x, x+h, x+2h, \ldots, x+kh$.)
\item For $0<l\leq n$, let
\begin{equation} \label{f_n definition equation}
f_{l}(x+kh)=\left\{ \begin{array}{l}0 \mbox{ if } k < l \\
	        \left(f(x+lh)-\sum_{j=0}^{l-1}f_{j}(x+lh)\right)\choose{k}{l}
				\mbox{ else}
		   \end{array}
  	    \right.
\end{equation}
\end{itemize}

We can now prove the following lemma.
\begin{lemma} \label{abso mono taylor lemma}
\index{Lemma \ref{abso mono taylor lemma}}
The function $f(x)$ is absolutely monotonic iff 
$f_{n}(x+hn)\geq 0$ for all $n,x,h$ as above.
\end{lemma}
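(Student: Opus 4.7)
The plan is to establish by induction on $l$ the explicit closed form
\[f_{l}(x+kh) \,=\, \Delta_{h}^{l} f(x) \choose{k}{l} \mbox{ for all } k\geq l,\]
from which Lemma~\ref{abso mono taylor lemma} is immediate: setting $k=n$ gives $f_{n}(x+nh)=\Delta_{h}^{n}f(x)$, so the condition ``$f_{n}(x+nh)\geq 0$ for all admissible $n,x,h$'' coincides verbatim with Definition~\ref{abso mono Delta definition} of absolute monotonicity $(\Delta)$.

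The base case $l=0$ is immediate from $f_{0}(x+kh)=f(x)=\Delta_{h}^{0}f(x)\choose{k}{0}$. For the inductive step, assume the closed form for all indices below $n$. Evaluating the defining recursion~(\ref{f_n definition equation}) at $k=n$ and substituting the inductive hypothesis gives
\[f_{n}(x+nh) \,=\, f(x+nh) - \sum_{l=0}^{n-1}\Delta_{h}^{l}f(x)\choose{n}{l},\]
while for $k>n$ the recursion directly yields $f_{n}(x+kh)=f_{n}(x+nh)\choose{k}{n}$. Hence the whole induction collapses onto the single identity
\[f(x+nh) \,=\, \sum_{l=0}^{n}\choose{n}{l}\Delta_{h}^{l}f(x).\]

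This identity is precisely where Lemma~\ref{abso mono combo lemma} is used. Expanding $\Delta_{h}^{l}f(x)=\sum_{j=0}^{l}(-1)^{l-j}\choose{l}{j}f(x+jh)$ and swapping the order of summation,
\[\sum_{l=0}^{n}\choose{n}{l}\Delta_{h}^{l}f(x) \,=\, \sum_{j=0}^{n}(-1)^{j}f(x+jh)\sum_{l=j}^{n}(-1)^{l}\choose{n}{l}\choose{l}{j}.\]
After relabelling (the lemma's summation variable $k$ becomes my $l$, the lemma's $l$ becomes my $j$), Lemma~\ref{abso mono combo lemma} says the inner sum vanishes for $j<n$ and equals $(-1)^{n}$ for $j=n$, so everything collapses to $(-1)^{n}\cdot(-1)^{n}f(x+nh)=f(x+nh)$, which is exactly the identity needed.

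The argument is routine once the correct closed form is guessed; the only substantive combinatorial ingredient has already been packaged by the author into Lemma~\ref{abso mono combo lemma}. I anticipate no genuine obstacle, but the bookkeeping between the doubly-indexed recursion for $f_{l}(x+kh)$ and the ordinary forward-difference operator $\Delta_{h}^{l}$ is easy to miscopy, so I would verify the closed form by hand for $n=1,2,3$ before committing to writing out the general induction.
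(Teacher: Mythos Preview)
Your proof is correct and essentially the same as the paper's: both establish the identity $f_{n}(x+nh)=\Delta_{h}^{n}f(x)$ by swapping a double sum and invoking Lemma~\ref{abso mono combo lemma} to kill the off-diagonal terms. You wrap this in an induction yielding the full closed form $f_{l}(x+kh)=\Delta_{h}^{l}f(x)\choose{k}{l}$ (equivalently, Newton's forward-difference formula), whereas the paper substitutes the decomposition $f(x+kh)=\sum_{l}f_{l}(x+kh)$ directly into $\Delta_{h}^{n}f(x)$; the combinatorial core is identical.
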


\proof
Observe that if $0\leq k \leq n$, then
\[f(x+kh)=\sum_{l=0}^{n}f_{l}(x+kh)\]
So, if we plug into Equation~\ref{abso mono Delta equation}, we get
\[
\Delta_{h}^{n}f(x)=\sum_{k=0}^{n}(-1)^{n-k}\choose{n}{k}f(x+kh)\]
\[=\sum_{k=0}^{n}(-1)^{n-k}\choose{n}{k}\sum_{l=0}^{n}f_{l}(x+kh)\]
\[=\sum_{l=0}^{n}\sum_{k=0}^{n} (-1)^{n-k}\choose{n}{k} f_{l}(x+kh)\]
\[=\sum_{l=0}^{n}\sum_{k=l}^{n} (-1)^{n-k}\choose{n}{k} 
\choose{k}{l}\left(f(x+lh)-\sum_{j=0}^{l-1}f_{j}(x+lh) \right)\]
For a fixed $l$, the term $\left(f(x+lh)-\sum_{j=0}^{l-1}f_{j}(x+lh) \right)$
is independent of $k$, so
\[=\sum_{l=0}^{n} \left(f(x+lh)-\sum_{j=0}^{l-1}f_{j}(x+lh) \right)
\sum_{k=l}^{n} (-1)^{n-k}\choose{n}{k} \choose{k}{l}\]
By Lemma~\ref{abso mono combo lemma}, 
when $l<n$, each of the terms of the second sum is equal to zero.
Since $f_{n}(x+hl)=0$ if $l<n$,
\[=f_{n}(x+hn)\]
Therefore, the problem of showing that $\Delta_{h}^{n}f(x) \geq 0$
is equivalent to showing that $f_{n}(x+hn)\geq 0$.
\EOP

We are now ready to prove our main result about Markovian networks.

\begin{theorem} \label{absolute monotonicity theorem}
\index{Theorem \ref{absolute monotonicity theorem}}
Suppose we have a (discrete time) Markovian network with $N$ nodes,
where each node has a Bernoulli arrival process of rate $p$.

Suppose that if $p<P$, then the maximum nominal load at a node is less
than one.  Then the network is stable for $p<P$, and the expected time
that a packet spends in the system is an absolutely monotonic function
of $p$, for $0 \leq p < P$.
\end{theorem}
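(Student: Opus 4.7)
The plan is to combine the $\Delta$-form of absolute monotonicity with a color-coupling of Bernoulli arrival processes that exploits the Markovian (single-class) assumption.

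First, I would reduce from expected sojourn time to expected queue length. Little's Theorem gives $W(p)=L(p)/(Np)$, where $L(p)$ is the expected total number of packets in steady state. By Theorem~\ref{zazanis theorem}, $L$ is analytic on $[0,P)$, and at the power-series level absolute monotonicity of $L$ (nonnegative Taylor coefficients at every base point) transfers to $L(p)/p$, hence to $W(p)$. So it suffices to show $L$ is absolutely monotonic on $[0,P)$, and by Lemma~\ref{abso mono taylor lemma} this amounts to verifying
\begin{equation*}
\Delta_h^n L(x)=\sum_{k=0}^n(-1)^{n-k}\binom{n}{k}L(x+kh)\ge 0
\end{equation*}
for every $n\ge 1$ and every admissible $x,h$ with $x+nh<P$.

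Second, I would verify this inequality by coupling $n+1$ copies of the network. At each node, run a single rate-$(x+nh)$ Bernoulli arrival process and color each arrival independently: color $0$ with probability $x/(x+nh)$, and each of colors $1,\dots,n$ with probability $h/(x+nh)$. Share the service and routing Markov chains across all copies. For $S\subseteq\{1,\dots,n\}$, let $B(S)$ denote the resulting steady-state total queue length when only arrivals whose color lies in $\{0\}\cup S$ are admitted. Because the network is Markovian, packets are interchangeable and $E[B(S)]=L(x+|S|h)$ depends on $S$ only through $|S|$. Rearranging the alternating sum over sets of prescribed size converts $\Delta_h^n L(x)$ into
\begin{equation*}
\Delta_h^n L(x)=E\!\left[\sum_{S\subseteq\{1,\dots,n\}}(-1)^{n-|S|}B(S)\right].
\end{equation*}
What remains is to show the top-order M\"obius sum of $B(\cdot)$ is nonnegative in expectation (or, better yet, pathwise).

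Third, I would prove this complete-alternating property of $S\mapsto B(S)$ by induction on $n$ at the pathwise workload level. The base case $n=1$ is stochastic monotonicity from sample-path coupling. The supermodular case $n=2$ reduces to the convexity and monotonicity of the Lindley-type recursion $w\mapsto (w+v-1)^+$: adding color-$2$ arrivals to a heavier workload creates at least as much extra congestion as adding them to a lighter one. For general $n$ I would attach to each packet present at a given time its \emph{critical color set}, namely the minimal subset of $\{1,\dots,n\}$ whose admission is required for that packet to still be present; the alternating M\"obius sum then counts packets whose critical color set equals $\{1,\dots,n\}$, which is manifestly nonnegative.

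The main obstacle is precisely the pathwise complete-alternating statement above. Giving a protocol-independent definition of ``critical color set'' and showing the M\"obius sum equals that count across the broad class of work-conserving Markovian networks envisioned by the hypothesis will require careful combinatorial bookkeeping; this is also the step in which the Markovian assumption is indispensable, because multi-class interactions would break the interchangeability that makes $E[B(S)]$ depend only on $|S|$. All the analytic infrastructure (Little's theorem, Theorem~\ref{zazanis theorem}, and Lemma~\ref{abso mono taylor lemma}) is already in place from earlier in the chapter.
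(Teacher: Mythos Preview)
Your overall architecture---color-couple the Bernoulli streams, then verify the $\Delta$-form of absolute monotonicity---is the same as the paper's. The reduction from sojourn time to total queue length via Little's theorem is a harmless detour (the paper works with delay directly and derives the queue-length corollary afterwards), and your observation that $E[B(S)]$ depends only on $|S|$ is exactly where the Markovian hypothesis enters.

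The gap is in your third step. The ``critical color set'' mechanism does not work packet-by-packet. For a fixed packet $j$ present at time $t$, the indicator $I_j(S)$ is a monotone Boolean function of $S$, but it is \emph{not} in general of the form $1[C_j\subseteq S]$ for a single set $C_j$. Concretely, take a color-$0$ packet in a single server that has already departed in the base system but is still queued under either $\{1\}$ alone or $\{2\}$ alone; then both $\{1\}$ and $\{2\}$ are minimal admitting sets, and the packet's individual M\"obius contribution is $0-1-1+1=-1$. So the alternating sum cannot be interpreted as ``the count of packets whose critical set is $\{1,\dots,n\}$,'' and pathwise nonnegativity has to come from cancellation across packets, not from a per-packet count. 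Your $n=2$ sketch via convexity of $w\mapsto(w+v-1)^+$ is fine for a single Lindley queue, but lifting that to arbitrary multinode networks and to all $n$ is precisely the work you are deferring, and the critical-set device will not carry it.

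The paper supplies the missing idea: impose a \emph{priority discipline} on the colors (lower-indexed marks pre-empt higher ones) and pre-commit the routing decisions per mark class rather than per packet. Under this coupling the mark-$0$ packets experience exactly the rate-$x$ system, and Harchol-Balter's sample-path argument shows that the $s$th mark-$m$ departure from any node occurs no earlier when additional lower-priority colors are present. This yields superadditivity of the delay increments, which the paper then feeds into the $f_n$ recursion of Lemma~\ref{abso mono taylor lemma} (rather than the raw $\Delta^n$ sum you wrote) to run the induction on $n$. The priority trick is what replaces your critical-color-set bookkeeping; without it you are stuck at exactly the obstacle you flagged.
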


\proof
The stability is immediate because the network is Markovian;
see the discrete time fluid limits from Chapter~\ref{fluid chapter}
of this thesis, and Section 5 of Dai~\cite{Dai}.

Let $f(p)$ be the expected delay in the system when the arrival rate
is $p$.  Define $f_{n}(p)$ as in Equation~\ref{f_n definition
equation}.

I'll use what is sometimes
called the ``method of collective marks'' (see,
e.g.\ Kleinrock~\cite{kleinrock}, Chapter 7).  We want to compare
$f(x)$ with $f(x+hl)$.  We need very fine control over our Bernoulli
arrival process.  We will get this control as follows.

Let $S_{0}=[0,x)$.  Let $S_{1}=[x,x+h)$, and generally,
$S_{i}=[x+(i-1)h,x+ih)$.  (Note that the $S_{i}$ are disjoint.)  On
each time step, at each node, we select a number $a$ from $[0,1]$
uniformly at random.  Suppose that we inject a packet if $a \in
\bigcup_{i=0}^{l}S_{i}$.  Then we have simulated a rate $x+lh$
Bernoulli arrival process.

If a packet arrives because $a \in S_{i}$, let us mark it with an $i$
(hence the name ``collective marks''.)
The packets are now members of class $i$.  
Suppose that we give priority to packets based on their mark, so that
packets with lower marks get priority over packets with higher marks.

The key observations to make are twofold.  First, since the mark $0$
packets have priority over all the other packets, they behave as
though they were travelling in a system with a rate $x$ Bernoulli
arrival processes.  Therefore, the expected delay of the mark $0$
packets is the same as the expected delay from a rate $x$ Bernoulli
process, namely $f(x)$.

Second of all, the increase in expected delays from inserting multiple
classes of marked packets is superadditive.  To clarify this point,
let me give a canonical example (with $l=2$).  Suppose that we compare
the system with arrivals when
\begin{enumerate} 
\item $a \in S_{0}$,
\item $a \in S_{0}\bigcup S_{m}$,
\item $a \in S_{0}\bigcup S_{\widehat{m}}$, or
\item $a \in S_{0}\bigcup S_{m}\bigcup S_{\widehat{m}}$,
\end{enumerate}
where $\widehat{m}<m$.  Let us call the expected delay
in the systems $D_{1}$, $D_{2}$, $D_{3}$ and $D_{4}$, respectively. 

In both cases 2 and 3, we have Bernoulli arrival processes of with the
same rate.  Therefore, $D_{2}=D_{3}$.  In particular, the increase in
expected delay from cases 1 to 2, and from cases 1 to 3 is identical.
In the fourth case, the class $m$ packets will be delayed by the class
0 packets, and \emph{additionally} delayed by the class $\hat{m}$
packets.  Therefore,
\[D_{4}\geq D_{1}+(D_{2}-D_{1})+(D_{3}-D_{1})=D_{1}+2(D_{2}-D_{1})\]

To see more formally why the delay is superadditive (i.e.\ that the
increase from $D_{1}$ to $D_{4}$ is at least
$(D_{2}-D_{1})+(D_{3}-D_{1})$), let us examine the packets' paths a
little more closely.  Whenever a packet is ejected from a node, it
selects its outgoing edge based on some distribution.  Let us fix
these decisions ahead of time, per class, rather than dynamically as
the system runs.  That is, at start up we decide that the $s$th packet
that is marked $i$ at node $n$ will take edge $e$, for all $s$, $n$
and $i$.  (If no edge is selected for some $s$, then the packet must
leave the system.)

Let us compare the system with the class $\widehat{m}$ and class $m$
packets, versus the system with only the class $m$ packets.  I claim
that the $s$th class $m$ packet ejected from node $n$ will be ejected
at the same time or later in the $\widehat{m},m$ system than in the
$m$ system.  The proof follows immediately by induction on time.
(This technique was introduced by Harchol-Balter~\cite{mor}.)
Clearly, if the system departures for the class $m$ packets occur no
sooner in the $\widehat{m},m$ system than in the $m$ system, then the
expected delay of class $m$ packets in the former system is at least
as great as in the latter system.  In other words, the delays are
superadditive.

Now, let us consider $f_{1}(x+hl)$.  The 
addition of an additional class of marked packets can
only increase the total expected delay.
Therefore, if
the arrival rate is $x+hl$, with $l>0$, then
\[f(x+kh) \geq f(x)\]
and hence,
\[\Delta_{h}f(x)=f_{1}(x+h)=f(x+kh)-f(x)\geq 0\]

Next, consider $f_{2}(x+hl)$.  Suppose we let $D_{i}$ be the expected
delay from arrivals caused by $a \in S_{0}\bigcup S_{i}$, and $D_{0}$
by arrivals caused by $a\in S_{0}$ alone, for $1\leq i \leq l$.  There
are $\choose{l}{1}=l$ such $S_{0}\bigcup S_{i}$ arrival processes.
First of all, the $D_{i}-D_{0}$ increase in delay that each of these
processes offers over the $S_{0}$ process is identical.  Second of
all, if we consider the arrival process $a \in
S_{0}\bigcup_{i=1}^{l}S_{i}$, we can simply add all the differences
(because the delays are superadditive.)  Now,
$D_{0}=f_{0}(x)=f_{0}(x+h)$ and $D_{i}-D_{0}=f_{1}(x+h)$, for any
$i$, so
\begin{equation} \label{abso mono 3 equation}
f(x+hl) \geq f_{0}(x+h)+lf_{1}(x+h)
\end{equation}

How do we show that this process continues for $\Delta_{h}^{n}$,
for arbitrarily large $n$?
Well, we know that if $0\leq k \leq n-1$, then
\begin{equation} \label{abso mono equation}
f(x+kh)=\sum_{j=0}^{n-1}f_{j}(x+kh)
\end{equation}
Suppose, in addition, that for any $0\leq k <\lfloor\frac{P-x}{h}\rfloor$,
\begin{equation} \label{abso mono 2 equation}
f(x+kh)\geq \sum_{j=0}^{n-1}f_{j}(x+kh)
\end{equation}
hence
\[f_{n}(x+kh) = f(x+kh)- \sum_{j=0}^{n-1}f_{j}(x+kh) \geq 0\]
We will prove Equation~\ref{abso mono 2 equation} by induction
on $n$.  We've already proved the base case ($n=1,2$).  Assume
it holds for all $l<n$.  

As discussed in the $f_{2}(x+2h)$ case, 
the delay in packets from $f(x+lh)$ is greater than the sum of
\begin{itemize}
\item The delay in the packets arriving because $a\in S_{0}$.
\item The increase in delay caused by the packets arriving
in $S_{0}S_{m_{1}}$, for $1\leq m_{1} \leq l$.  There are $\choose{l}{1}$
such $S_{0}S_{m_{1}}$ sets.
\item The increase in delay caused by the packets arriving
in $S_{0}S_{m_{1}}S_{m_{2}}$, for $1\leq m_{1} <m_{2} \leq l$.  
There are $\choose{l}{2}$ such $S_{0}S_{m_{1}}S_{m_{2}}$ sets.
\item ....
\item The increase in delay caused by the packets arriving
in $S_{0}\bigcup_{i=1}^{k}S_{m_{i}}$, for $1\leq m_{i} <m_{i+1} \leq l$.  
There are $\choose{l}{k}$ such sets.
\end{itemize}
where $k=0,\cdots,l$ and $l< n$.  This sum is precisely equal to
\[\sum_{j=0}^{n-1}f_{j}(x+kh)\]
So, since $f(x+kh)$ is an upper bound,
\[f(x+kh) -\sum_{j=0}^{n-1}f_{j}(x+kh) \geq 0 \]
giving Equation~\ref{abso mono 2 equation} as desired.
\EOP

If we look at the preceding proof a bit more carefully,
it is possible to show that the expected delay is
strictly increasing and strictly convex.  
We will deduce some more interesting corollaries
on expected queue lengths below.

\begin{corollary}
Suppose that we have a Markovian network with Bernoulli
arrivals of rate $p$, with nominal loads less than
one so long as $p<P$.  Then the expected number of packets
in the system is absolutely monotonic, as is the 
expected (total) number of packets in queue.
\end{corollary}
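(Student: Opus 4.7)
The plan is to deduce the corollary from Theorem~\ref{absolute monotonicity theorem} via Little's theorem, together with the easy fact that the product of two absolutely monotonic functions is absolutely monotonic. Write $W(p)$ for the expected time a packet spends in the system. Theorem~\ref{absolute monotonicity theorem} states that $W$ is absolutely monotonic on $[0,P)$. Since each of the $N$ nodes receives exogenous Bernoulli arrivals at rate $p$, the aggregate exogenous arrival rate into the network is $Np$, and Little's theorem (Theorem~\ref{little's theorem}) gives
\[E[\mbox{packets in system}]=Np\cdot W(p).\]
The function $Np$ is a polynomial with nonnegative coefficients, hence absolutely monotonic on $[0,\infty)$. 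The Leibniz rule
\[(fg)^{(n)}(p)=\sum_{k=0}^{n}\choose{n}{k}f^{(k)}(p)\,g^{(n-k)}(p)\]
shows immediately that a product of absolutely monotonic functions has all derivatives nonnegative, so $Np\cdot W(p)$ is absolutely monotonic on $[0,P)$.

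For the expected number in queue, let $W_Q(p)$ denote the expected total time a packet spends waiting in queue (summed over every node it visits, excluding time in service). Because the network is \emph{Markovian} (classless), the path taken by a packet and the service-time distribution at each visited node are governed by kernels that do not depend on $p$; they are functions only of the packet's current node. Consequently the expected total service time $\bar{s}$ accumulated by a single packet is a constant independent of $p$, and
\[W_Q(p)=W(p)-\bar{s}.\]
Now $W_Q(p)\geq 0$ trivially, and $W_Q^{(n)}(p)=W^{(n)}(p)\geq 0$ for every $n\geq 1$, so $W_Q$ is itself absolutely monotonic on $[0,P)$. Applying Little's theorem to the queueing subsystem,
\[E[\mbox{packets in queue}]=Np\cdot W_Q(p),\]
and the same Leibniz argument as above shows this product is absolutely monotonic.

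The only step that requires any care is the identification of $W_Q=W-\bar{s}$ with $\bar{s}$ a $p$-independent constant; this is where the Markovian (single-class) hypothesis is used essentially. Everything else is mechanical: Little's theorem, the closure of absolute monotonicity under products with polynomials having nonnegative coefficients, and the preservation of absolute monotonicity when subtracting an affine function from an absolutely monotonic function that vanishes at the origin. No additional machinery beyond Theorem~\ref{absolute monotonicity theorem} is needed.
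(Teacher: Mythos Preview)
Your argument is correct. For the expected number of packets in the system, your proof is essentially identical to the paper's: both invoke Little's theorem to write $E[S](p)=Np\cdot W(p)$ and observe that multiplying an absolutely monotonic function by $p$ preserves absolute monotonicity.

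For the expected number of packets in queue, you take a genuinely different route. The paper subtracts at the level of expected counts, writing $E[Q](p)=E[S](p)-Mp$ where $Mp$ is the (linear in $p$) sum of nominal loads; it then argues via Taylor coefficients that only the $p^{1}$ term could possibly go negative, and rules this out using $E[Q]\geq 0$. You instead subtract at the level of waiting times, writing $W_{Q}(p)=W(p)-\bar{s}$ with $\bar{s}$ a $p$-independent constant (the Markovian hypothesis enters here, exactly as you note), observe that $W_{Q}\geq 0$ and $W_{Q}^{(n)}=W^{(n)}\geq 0$ for $n\geq 1$, and then multiply by $Np$. Your route is slightly more direct, since it avoids the separate Taylor-coefficient argument; the paper's route has the minor advantage of not needing to identify $\bar{s}$ explicitly. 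One small quibble: your closing summary sentence about ``subtracting an affine function from an absolutely monotonic function that vanishes at the origin'' does not quite describe what you actually did (you subtracted a constant from $W$, and $W(0)=\bar{s}\neq 0$), but the body of your argument is fine as written.
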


\proof
The expected delay in the system is equal to the sum of the expected
delay at each node.  By Little's Theorem (Theorem~\ref{little's
theorem}), the expected number of packets per node is the expected
delay per node multiplied by $p$.  Since every packet has
a rate $p$ Bernoulli arrival process, the expected
number of packets in the whole system is $p$ times
the expected delay.  Multiplying an absolutely
monotonic function by $p$ retains absolute monotonicity,
so we're done with the first half of the corollary.

Let $E[Q](p)$ be the expected total number of
packets in queue as a function of $p$, and $E[S](p)$ 
be the expected total number of packets in the system
as a function of $p$.

Little's Theorem also tells us that if a system is stable, then
expected queue length at a node differs from the expected number of
packets at that node by exactly the nominal load $r$.  Therefore, by
the linearity of expectation, $E[S](p)$ equals $E[Q](p)$ plus the sum
of the nominal loads.

The sum of the nominal loads are a linear multiple of $p$, say $Mp$.
Observe that $Mp$ is an analytic function.  Since absolute
monotonicity implies analyticity, then $E[S](p)$
is analytic.  Therefore, $E[Q](p)$
is the difference between two analytic functions, and hence analytic.

Consider a Taylor expansion of $E[Q](p)$ around zero.  The only
coefficient that differs from $E[S](p)$ (and hence the only
coefficient that could be negative) is the $p^{1}$ term.  If this
coefficient were negative, then for a sufficiently small
$p_{\epsilon}>0$, $E[Q](p_{\epsilon})$ would be negative.  However,
the $E[Q]$ is always non-negative (since it measures a non-negative
quantity.)  Therefore, the $p^{1}$ coefficient is non-negative, and
hence $E[Q](p)$ is absolutely monotonic.
\EOP.

Because an $N$ node ring is symmetric, is possible to translate from
expected total queue length to expected queue length per node; we
simply divide by $N$.  This fact gives us a final corollary:
\begin{corollary}
A geometric Bernoulli ring is Markovian, and hence
its expected queue length per node is
absolutely monotonic.
\end{corollary}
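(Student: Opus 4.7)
The plan is to decompose the statement into two parts: first verify that the geometric Bernoulli ring genuinely falls under the Markovian framework of the preceding corollary, and second transport the result from ``expected total queue length'' to ``expected queue length per node'' by exploiting the rotational symmetry of the ring.

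For the first step, I would verify the Markovian property by appealing to the memorylessness of the geometric distribution on destinations. Since each packet's remaining travel distance (conditioned on still being in the system) has the same geometric distribution regardless of how long it has already been in the network, any two packets sitting at the same queue are stochastically interchangeable: their future probability of departure on any given step is a common constant. This is precisely the condition in the definition of Markovian given earlier in the thesis. I would also quickly note that, with arrival rate $p$ at each node and a fixed geometric parameter, there is an explicit threshold $P$ below which the nominal load at each node is strictly less than one, so the hypothesis of the preceding corollary is met on $[0,P)$.

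For the second step, I would invoke the preceding corollary to conclude that the expected \emph{total} number of packets in queue, call it $E[Q](p)$, is absolutely monotonic on $[0,P)$. By cyclic symmetry of the ring (the arrival processes are i.i.d.\ across nodes and the routing distribution is translation-invariant), the stationary distribution is invariant under rotations, so the expected queue length at each individual node equals $E[Q](p)/N$. Dividing an absolutely monotonic function by the positive constant $N$ preserves absolute monotonicity (both Definitions~\ref{abso mono D definition} and~\ref{abso mono Delta definition} are closed under multiplication by positive scalars), so the expected queue length per node is absolutely monotonic on $[0,P)$.

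The only nontrivial step is checking the Markovian property, and even that is essentially a one-line appeal to the memoryless property; there should be no substantive obstacle. A minor point to handle carefully is that the ``queue length per node'' should refer to packets waiting at a node (excluding the hot potato in the ring), and one should confirm that the symmetry and the application of the preceding corollary both refer to the same quantity --- this is just a matter of consistent bookkeeping rather than a mathematical difficulty.
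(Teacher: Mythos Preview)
Your proposal is correct and follows essentially the same approach as the paper: the paper simply notes (in the sentence preceding the corollary) that by the ring's symmetry one divides the total expected queue length by $N$, and the Markovian property of the geometric Bernoulli ring is taken as evident (it was already observed earlier in the thesis). Your write-up is in fact more detailed than the paper's own justification, which amounts to a single sentence.
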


\section{Future Work}

Suppose we have a family of Markovian networks with Bernoulli
arrivals, $A_{i}$, for $i=0,1,...$.  Suppose $q_{i}(r)$ is the
expected number of packets in queue in network $A_{i}$ when the
nominal load is $r$ (this presupposes some notion of a system-wide
nominal load; for instance, the maximum nominal load on any node.)
From the results of the previous section, we know that $\sum_{i}
q_{i}(r)$ is absolutely monotonic for $0\leq r<1$.

Suppose, finally, that for any $r$, there exists $B_{r}$
such that $\sum_{i}q_{i}(r)<B_{r}$. Then
there exists a function $q(r)$ absolutely monotonic on
$0 \leq r < 1$ and a subsequence $i_{0}, i_{1},...$ such that
\[\lim_{j\rightarrow \infty}q_{i_{j}}(r)=q(r)\]

Now, since the $q_{i_{j}}$ are absolutely monotonic, it implies they
are monotonically increasing and convex, and hence that $q$ is also
monotonically increasing and convex.  Convexity implies continuity on
open intervals (see Rudin~\cite{rudin}, page 61), giving continuity on
$(0,1)$.  The monotonicity allows us to extend the continuity to
$[0,1)$.  By Dai's~\cite{Dai} Lemma 4.1, the $q_{i_{j}}$ converge
uniformly on compact sets, so it follows that $q$ is analytic, and the
Taylor coefficients are the limits of the coefficients of the
$q_{i_{j}}$.  Therefore, $q$ is absolutely monotonic.

Now, recall the known bounds for a standard Bernoulli ring:
\begin{itemize}
\item If $0\leq r < 1/2$, then $E[Q]$ is $O(1/N)$.
\item If $1/2\leq r < 1$, then $E[Q]$ is $O(1)$.
\end{itemize}

Suppose that the standard Bernoulli ring were absolutely monotonic.
Then the arguments above would let us conclude that the
expected queue length converges to an analytic function $E[Q]$,
which is identically zero on $0\leq r < 1/2$, and is analytic
on $1/2 \leq r < 1$.  By analytic continuation, it follows
that $E[Q]$ is zero on the whole interval $0\leq r < 1$,
i.e.\ the expected queue length per node would be $o(1)$!

Sadly, these arguments don't work.  A standard Bernoulli ring is not
Markovian, so Theorem~\ref{absolute monotonicity theorem} doesn't
apply; in fact, as we showed in Theorem~\ref{not abso mono theorem},
there exist $N$ for which the $N$ node standard Bernoulli ring is
provably not absolutely monotonic.  However, an interesting avenue of
future research would be to find some smoothness property analogous to
absolute monotonicity.  Using it, we might be able to make conclusions
about $r\geq 1/2$ based solely on analytic continuation arguments.

 \chapter{Ringlike Networks} \label{ringlike chapter}

\section{Introduction}
A ring is the simplest possible network with feedback.  If we
wished to generalize results about the ring to other networks,
where should we begin?

One way of characterizing a ring is to observe that it is a regular
degree 1 directed graph where all nodes are identical.  (By identical,
I mean that there exists a graph automorphism that sends any node to
any other node.  This property allows us to calculate the expected
queue length per node simply by dividing the total expected queue
length by $N$.)  A natural first step in generalization is to increase
the degree of the graph, but maintain regularity.  I will discuss two
possibilities, the butterfly and the torus.  First, though, I will
define a more general class of networks, of which butterflies and tori
are members.

\begin{definition} \index{layered network}
A directed graph is \emph{layered} if its nodes can be partitioned
into $k$ disjoint sets $G_{1},\ldots,G_{k}$ such that any edge lies
between $G_{i}$ and $G_{i+1}$ for some $i$.  A layered network is also
called \emph{feedforward}.\index{feedforward network}

A \emph{wrapped layered}\index{wrapped layered network} network allows
edges from $G_{k}$ to $G_{1}$, too.
\end{definition}

Now, to define the two graphs of interest:

\begin{definition} \index{torus} \label{torus definition} 
 A $N_{1}\times N_{2}\times \cdots\times N_{d}$ \emph{torus}
is a directed graph consisting of $\prod_{i=1}^{d}N_{i}$ nodes.  A
node is labeled $(n_{1},\cdots,n_{d})$, where $n_{i}$ is an integer
between $0$ and $N_{i}-1$.  There is an edge from
$(n_{1},\cdots,n_{d})$ to $(m_{1},\cdots,m_{d})$ iff
there is an $i$ such that $(n_{i}+1) \bmod N_{i} = m_{i}$,
and for all $j\neq i$, $n_{j}=m_{j}$.

\label{hypercube definition}
If $N_{i}=2$ for all $i$, the torus is called a 
\emph{$d$-dimensional hypercube}.\index{hypercube}
\end{definition}
If $k$ divides $N_{i}$ for all $i$, then the torus
can be written as a wrapped layered network with $i$
layers.

Another popular network for packet routing is the butterfly graph.
 \begin{definition} \index{butterfly}
 A \emph{(standard) $d$-dimensional butterfly} 
 is a directed, layered graph defined as follows: 
 nodes fall into one of $d+1$ disjoint layers, numbered 0 through $d$. Each
 layer consists of $N=2^d$ nodes, which we label with the $N$ binary
 strings of length $d$.  (So, a node is specified by a binary string and a
 layer number.)
 Consider any length $d$ binary string, say $b=b_{1}b_{2}\cdots b_{d}$.
 For each $i$ such that $0\leq i <d$,
 there is a directed edge from node $b$ of layer $i$ to node 
 $b_{1}b_{2}\cdots b_{i-1}0 b_{i+1}\cdots b_{d}$ of layer $i+1$, and 
 another directed edge from node $b$ of layer $i$ to node 
 $b_{1}b_{2}\cdots b_{i-1}1 b_{i+1}\cdots b_{d}$ of layer $i+1$.
 The nodes on layer 0 are called the input nodes, and the nodes on layer 
 $d$ are the output nodes.

 A \emph{wrapped butterfly}\index{wrapped butterfly}
 is a directed graph where the 
 nodes on the last layer are associated with the nodes on the 
 first layer.
 \end{definition}
See Figure~\ref{basic butterfly figure} for a drawing of
a three-dimensional butterfly graph.
\begin{figure}[ht]
\centerline{includegraphics[height=2in]{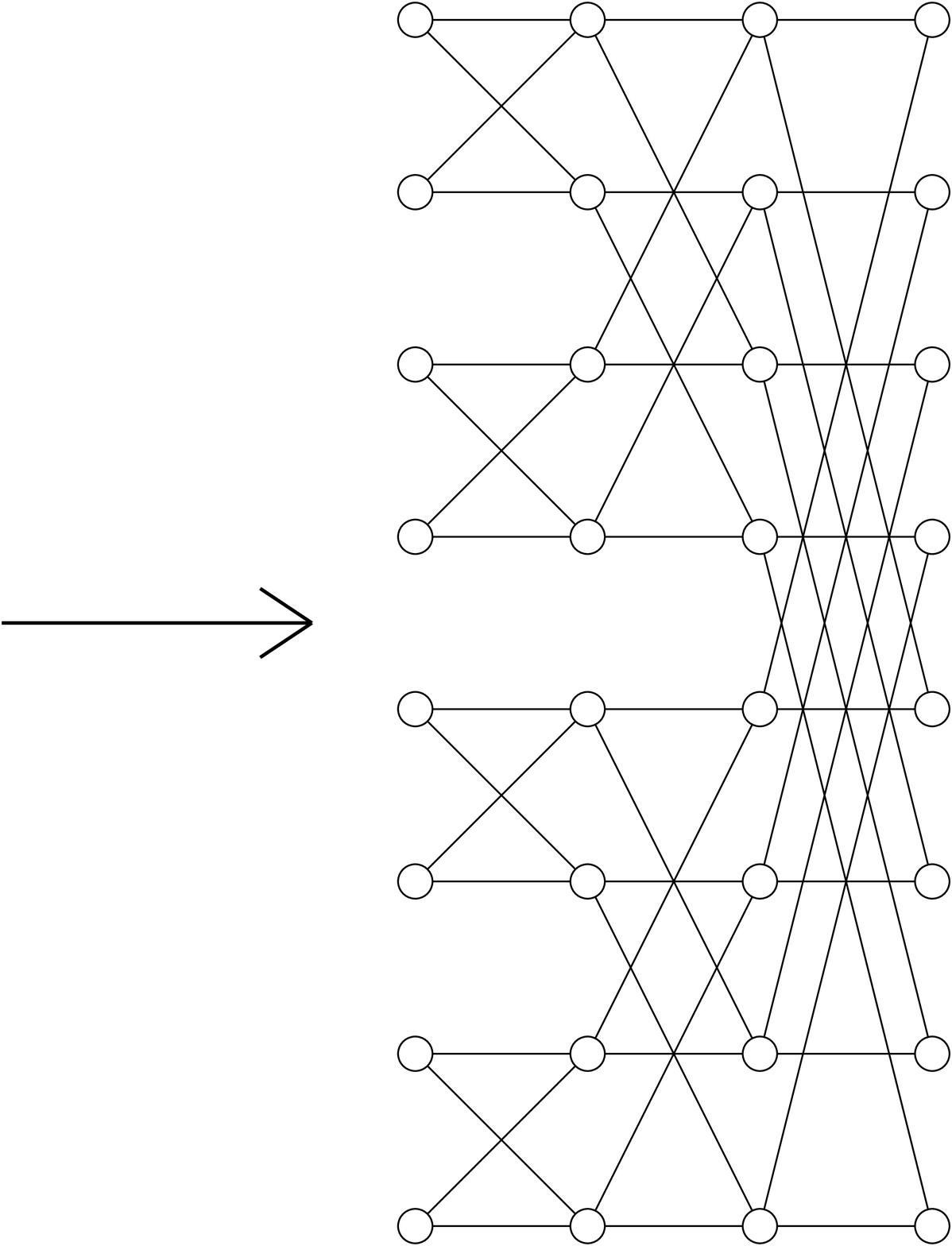} \hspace{.2in}  }
\caption{A 3 dimensional butterfly graph }
\label{basic butterfly figure}
\end{figure}
Note that tori and wrapped butterflies are both regular layered 
graphs where every node is identical.

It will be useful to keep these examples in mind during the next 
section.


\section{Convex Routing} \label{convex routing section}

\begin{definition}
Consider a node $n_{0}$ in a network.  Let $n_{1},...,n_{m}$
be the $m$ nodes with directed edges into $n_{0}$.  
Let $p_{i}$ be the probability that a packet 
travels from node $n_{i}$ to $n_{0}$, and
suppose that the probability is independent of
the class of the packet.

Suppose that 
\begin{equation} \label{convex routing equation}
\sum_{i=1}^{m}p_{i} \leq 1
\end{equation}
If this equation holds for all nodes
$n_{0}$, then we say that the network\footnote{Is
convex routing a property of the network or of the
protocol?  Although some may take issue with me, I 
view the selection of edges as a function of the
packet class, determined by the network.  The protocol,
on the other hand, selects which packet gets ejected,
not where it goes.\label{protocol footnote}}  has the
property of \emph{convex routing}.\index{convex routing}
\end{definition}

For example, if we are on a regular graph, and a packet
chooses its next edge uniformly at random, the network
has convex routing.

\begin{theorem} \label{convex routing theorem}
\index{Theorem \ref{convex routing theorem}}
Suppose we have a generalized Kelly network with convex routing.
Suppose further that we use any greedy protocol, and the network has
any topology.  If the nominal loads are less than one, then the
network is stable.

If all the interarrival and service times have finite variance, then
the expected queue length is finite, too.
\end{theorem}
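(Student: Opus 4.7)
My plan is to apply the discrete-time fluid limit framework developed in Chapter~\ref{fluid chapter}, which reduces the stability question to showing that every fluid limit $(\tilde Q(\cdot), \tilde T(\cdot))$ of the network drains to zero in bounded time from any bounded initial condition. Because a generalized Kelly network with all our structural assumptions fits the hidden-Markov framework of Section~\ref{packet routing defs section}, once fluid stability is in hand, ergodicity follows immediately. For the second statement, I would invoke the Dai--Meyn-style extension sketched in Section~\ref{fluid chapter}'s ``Future Work,'' which upgrades fluid stability to finiteness of expected queue length whenever the underlying renewal intervals (here, the interarrival and service times) have finite variance.

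For fluid stability itself, my plan is to build a Lyapunov function out of expected remaining workloads. Let $E_{cd} = (I-P)^{-1}_{cd}$ be the expected number of times a class $c$ packet will visit class $d$; this is finite because routing is transient. For each node $n$, set
\[
\phi_n(Q) = \sum_c Q_c \sum_{d \in C_n} \frac{E_{cd}}{\mu_d},
\]
the expected future service time that node $n$ will lavish on the packets currently in the system. Using the traffic equations $\alpha = (I-P')\lambda$ and the identity $\sum_c P_{dc} E_{ce} = E_{de} - [d=e]$, a direct computation shows that along any fluid trajectory, $d\phi_n/dt = \rho_n - \mathbf{1}[\text{node $n$ busy}]$. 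In particular, $d\phi_n/dt \leq \rho_n - 1 < 0$ whenever node $n$ is busy under the greedy discipline, and $d\phi_n/dt \leq \rho_n < 1$ otherwise.

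I then propose to take $V(Q) = \max_n \phi_n(Q)$ as the Lyapunov function and show $V$ has negative drift while positive. This is where the convex routing hypothesis earns its keep: the condition $\sum_i p_i \leq 1$ into every node bounds the rate at which fluid can accumulate at any \emph{empty} node by the aggregate service rate of its predecessors, so an empty node carrying the maximum of $\phi_n$ cannot grow faster than some busy node is draining. Making this precise, I expect to mimic the max-type potential-function analysis of Section~\ref{main section}: look at drift on a short time interval $[t, t+\Delta]$, split cases on whether the argmax node is busy at time $t$, and in the empty case use convex routing to identify a busy ``upstream'' node whose decrease dominates the increase at the argmax. Once $V$ has uniformly negative drift on $\{V > 0\}$, standard arguments give $V(t) \equiv 0$ for $t \geq V(0)/\epsilon$, proving fluid stability.

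The hard part will be this last drift step in full generality. For the ring, Dai and Weiss~\cite{Dai_and_Weiss} exploited the linear structure so that ``upstream'' is unambiguous; for arbitrary topologies under convex routing one must handle the case where the argmax node $n^*$ is empty but receives flow from several predecessors, none of which individually drain at rate one toward $n^*$. I expect that the right book-keeping is to track the sum $\sum_n \phi_n$ on short intervals (whose drift is the cleaner $\sum_n \rho_n - \#\{\text{busy nodes}\}$) and combine this with convex routing to argue that, averaged over a short fluid interval, the number of busy nodes is at least $\sum_n \rho_n + \epsilon$. The pointwise drift may fail, so the argument will almost certainly need to be on an integrated time scale rather than instantaneous.
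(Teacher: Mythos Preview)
Your setup is correct and matches the paper: reduce to fluid stability, define $\phi_n$ as the expected future work that node $n$ must do on fluid currently in the system, and take $\Phi = \max_n \phi_n$ as the Lyapunov function. You also correctly locate the only real difficulty: what if the argmax sits at an empty node? But your proposed resolution --- integrated drift on $\sum_n \phi_n$, or comparing upstream drain rates against the rate of increase at $n^*$ --- misses the clean idea and, as you yourself suspect, would be awkward to close.

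The paper's key step is a \emph{static} structural fact, not a drift comparison: under convex routing, whenever $\Phi>0$ one has
\[
\max_n \phi(n) \;=\; \max_{n:\, q(n)>0} \phi(n),
\]
so the maximum is always attained at a busy node, and the pointwise inequality $\dot\Phi \leq -\epsilon$ follows immediately with no time-averaging. To prove it, suppose $q(n_0)=0$ but $\phi(n_0)>0$. Build the tree of all paths $\gamma$ that run \emph{backward} from $n_0$ through empty nodes and terminate at the first busy node $n_1$ they meet; let $f_\gamma$ be the product of routing probabilities along $\gamma$. The convex-routing hypothesis $\sum_i p_i \leq 1$ at every node is exactly what yields the Kraft-type inequality $\sum_{\gamma} f_\gamma \leq 1$ over the leaves of this tree. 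One then checks that $\phi(n_0) = \sum_\gamma f_\gamma\, \psi(n_\gamma)$ for nonnegative quantities $\psi(n_\gamma)$ each satisfying $\psi(n_\gamma) \leq \phi(n_1)$ at the corresponding busy leaf. Since the weights are nonnegative and sum to at most one, $\phi(n_0) \leq \max_\gamma \psi(n_\gamma) \leq \phi(n_1)$ for some busy $n_1$, so $n_0$ cannot strictly dominate all busy nodes.

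In short: your instinct to look upstream is right, but you should be comparing \emph{levels} of $\phi$ across nodes, not rates of change. Once you see that the empty node's $\phi$-value is a sub-convex combination of busy nodes' $\phi$-values, the argument is instantaneous.
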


\proof
I'll prove this by using the (delayed) fluid limit technique.

Let $n$ be a node.
I'm going to define a potential function $\phi(n)$ on the fluid model
to be the analogue of the expected
congestion at node $n$ (i.e.\ the expected number of times
that packets now in the system will use node $n$).

To make this precise in the fluid regime, consider a fluid class $c$
and a node $n$.  Suppose that we have a unit of class $c$ fluid,
and suppose that class $c$ resides at node $n_{c}$.
Suppose we have a path $\gamma$ through the network (not necessarily
node disjoint), beginning at class $c$'s node and ending at node $n$.
Then some fraction $f_{\gamma}$ will pass through node $n$ along
path $\gamma$.  (Note that $f_{\gamma}$ is independent of
the class by the convexity of the routing.)
Since we have an open queueing network, all packets almost surely
leave the system.  In the fluid domain, this means that,
summing over all paths $\gamma$ from $n_{c}$ to $n$,
\[\sum_{\gamma}f_{\gamma}<\infty\]

Suppose that there is $q_{c}$ quantity of fluid of class $c$.  
Then, since there are a finite number of classes $c$, we can let
\[\phi(n)=\sum_{c} q_{c} \sum_{\gamma}f_{\gamma}<\infty\]

Notice that $\phi(n)$ is the total amount of fluid that would pass
through node $n$ if no new fluid arrived in the system.  Let
$\Phi=\max_{n}\phi(n)$.

Let $q(n)$ be the number of packets in queue at node $n$
(from all classes resident at $n$).
Let $a_{n}$ be the nominal arrival rate at node $n$. 
Let $s_{n}$ be the nominal service rate at node $n$.  
(Note that $a_{n}<s_{n}$, since the nominal loads are less than one.)
Let
\[\epsilon=\min_{n}(s_{n}-a_{n})>0\]

Observe that $\phi(n)$ is a Lipschitz function.
To see this, note that fluid increases at most
at a rate $a_{n}$, and decreases at most at a rate $s_{n}$.  It
follows that $\Phi$ is Lipschitz.

Now, Lipschitz functions are absolutely continuous, and hence
continuous and differentiable almost everywhere.  (See, e.g.,
Rudin~\cite{rudin}).  If we can show that $\Phi>0$ implies that
$\frac{d}{dt}\Phi\leq-\epsilon$ a.e., then it implies fluid stability
(because all fluid will empty from the system by time $1/\epsilon$),
and we will be done.

Observe that if $q(n)>0$, then 
\[\frac{d}{dt}\phi(n) \leq -\epsilon\]
almost surely.
Therefore, if the maximum value of $\phi(n)$ is attained
at a node with $q(n)>0$, then it follows almost surely that
$\frac{d}{dt}\Phi \leq -\epsilon$.

It suffices, therefore, to show that so long
as there exists an $n$ with $q(n)>0$, then
\begin{equation} \label{nonempty queue equation}
\max_{n}\phi(n)=\max_{n, q(n)>0}\phi(n)
\end{equation}
and we will have proved fluid stability.

Assume that $\Phi>0$.
If $q(n)>0$ for all $n$, then Equation~\ref{nonempty queue equation}
holds.  Assume, then, that there exists a node $n_{0}$
such that $q(n_{0})=0$, but $\phi(n_{0})>0$.

I am going to construct a tree of all the possible paths $\gamma$
from node $n_{1}$ with $q_{n_{1}}>0$ to node $n$, where every 
intermediate node $n_{2}$ on the path has $q_{n_{2}}=0$.
Since $q(n_{0})=0$ but $\phi(n)>0$, then there must exist
some such path, so the tree has more than one node.

It's certainly possible that by the third level of the tree, a
node from the network may show up in more than one place in the tree,
because there may be multiple paths from the node to $n$.
We treat these as formally distinct nodes.  (For instance, if we have
a diamond shape, as in Figure~\ref{diamond figure}, node $n_{3}$ from
the network will split into two different nodes in the tree.)
\begin{figure}[ht]
\centerline{\includegraphics[height=1.5in]{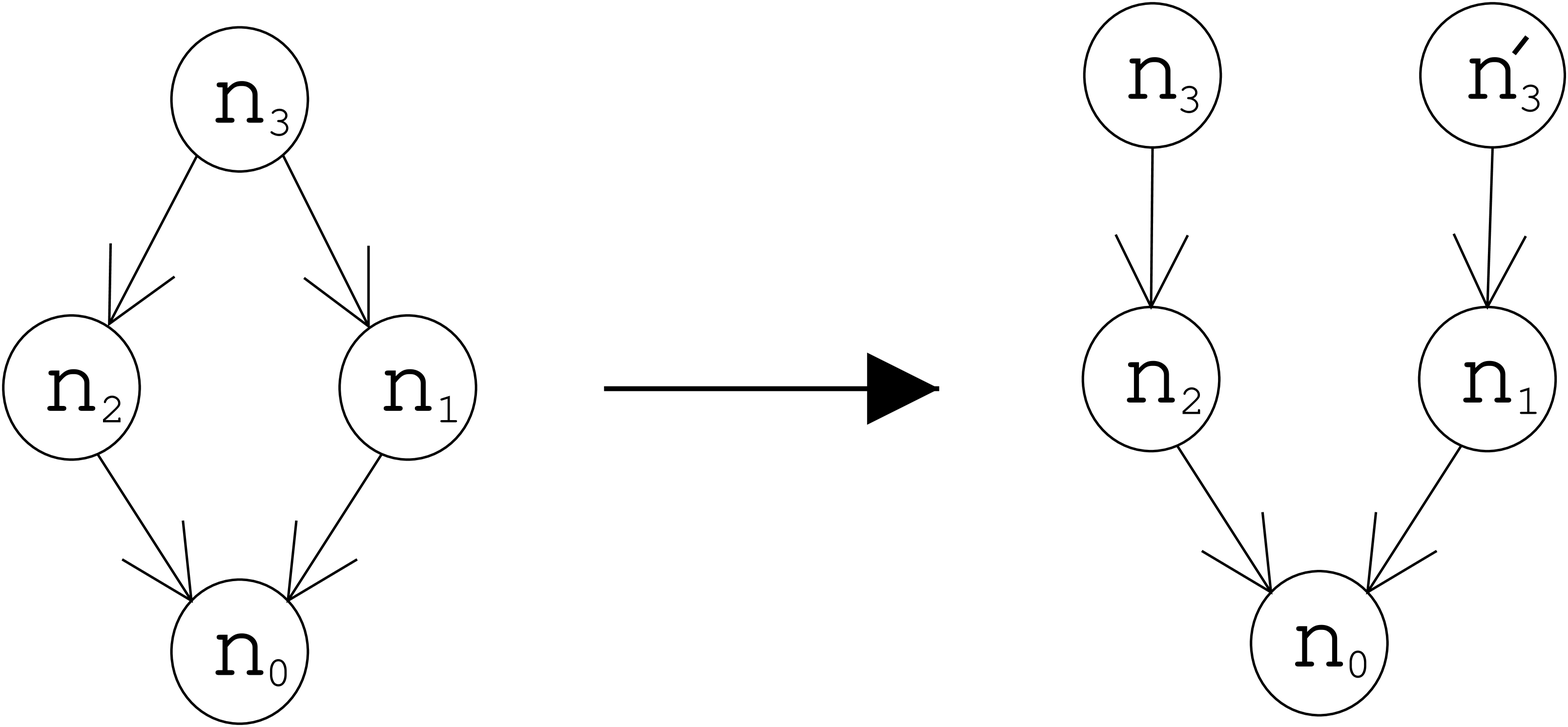} \hspace{.2in}  }
\caption{Converting the network into a tree of paths }
\label{diamond figure}
\end{figure}

The terminal nodes in the tree correspond to nodes in the network
with non-zero queues.  I will label the terminal nodes to represent
the amount of traffic that will follow a given path.  More precisely,
consider a node $n_{\gamma}$ in the tree, corresponding to 
node $n_{1}$ in the network, and the path $\gamma$ from
$n_{1}$ to $n_{0}$.
Let $\Gamma_{c}$ be the class of paths that start at the node where
class $c$ packets are located and end with $\gamma$.  Then
define
\[\psi(n_{\gamma})=\frac{1}{f_{\gamma}}
\sum_{c}q_{c}\sum_{\delta \in \Gamma_{c}} f_{\delta}\]
If $n_{\gamma}$ is a terminal node in the tree,
I will label it with $\psi(n_{\gamma})$.

Now, let $T$ be the set of terminal nodes in the tree.  
Observe that
\begin{equation} \label{convex combo equation}
\phi(n)=\sum_{\gamma\in T}f_{\gamma}\psi(n_{\gamma})
\end{equation}
Consider taking a random walk along the tree away from the
root node.  
Given the edges $e_{1}, \ldots, e_{m}$ leading
to node $n_{\gamma}$, let the probability of crossing
edge $e_{i}$ in our random walk be the probability
of crossing the edge $e_{i}$ into node $n_{\gamma}$.
Because the routing is convex, the sum of the
probabilities $p_{\gamma}$ is 
\[p_{\gamma}\leq 1\]
If $p_{\gamma}<1$, then with probability $1-p_{\gamma}$, we 
stop the walk in node $n_{\gamma}$.  Observe that
for any terminal node $n_{\gamma}$, 
the probability of stopping at node $n_{\gamma}$
is $f_{\gamma}$.  Since we have a distribution,
\begin{equation} \label{Kraft-like equation}
\sum_{\gamma : n_{\gamma}\in T}f_{\gamma}\leq 1
\end{equation}
(This inequality can also be proved from the Kraft
inequality of data compression theory.)

Equations~\ref{Kraft-like equation} and~\ref{convex combo equation}
combine to tell us that $\phi(n)$ is bounded by
a convex combination of the terminal nodes.

Note that if we have a convex combination of non-negative
reals $r_{i}$ that are all less than some bound $B$, then
there exists an $i$ such that the convex combination is
less than or equal to $r_{i}$.  Using the total work
in the system as a bound on $\psi()$, we can conclude that
\begin{equation} \label{convex bound equation}
\phi(n_{0}) \leq \psi(n_{\gamma})
\end{equation}
for some particular node $n_{\gamma}$.

Finally, observe that if node $n_{1}$ in the
network corresponds to terminal node $n_{\gamma}$ in the
tree, then (at least) $\psi(n_{\gamma})$ packets currently
in the system need to cross $n_{1}$.  Therefore,
\begin{equation} \label{psi as a bound equation}
\psi(n_{\gamma})\leq \phi(n_{1})
\end{equation}
Combining Equations~\ref{convex bound equation}
and~\ref{psi as a bound equation}, we get
\[\phi(n)\leq \phi(n_{1})\]
Note that
node $n_{1}$ has $q(n_{1})>0$ (because
$n_{\gamma}$ is a terminal node of the tree),
so we have established fluid stability.

Fluid stability, plus the finite variance of arrivals and service
times, implies finiteness of expected queue length.  (For details, see
Dai and Meyn~\cite{Dai_and_Meyn}).
\EOP

\begin{corollary} \label{ring stability by convexity corollary}
\index{Corollary \ref{ring stability by convexity corollary}}
Any ring network uses convex routing, and thus
is universally stable.
\end{corollary}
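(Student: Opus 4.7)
The plan is to reduce the corollary to a one-line application of Theorem~\ref{convex routing theorem}, with the only real content being the verification that every ring network satisfies the convex routing hypothesis. First I would unpack the definition: for a node $n_0$ in a unidirectional ring, the set of predecessors $\{n_1, \dots, n_m\}$ with directed edges into $n_0$ has exactly one element, namely the upstream neighbor. Thus the sum $\sum_{i=1}^m p_i$ appearing in Equation~\ref{convex routing equation} reduces to a single probability $p_1$, which is obviously at most $1$. This establishes convex routing for the standard unidirectional ring. For a bidirectional ring (as discussed in Section~\ref{bidirectional section}), each node has two predecessors, one on each side; the probability $p_i$ that a packet at predecessor $n_i$ is routed into $n_0$ is the fraction of its outgoing mass sent along that particular edge, and the two contributions still sum to at most $1$, since a packet cannot simultaneously head clockwise and counterclockwise.

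Once convex routing has been verified, I would invoke Theorem~\ref{convex routing theorem} verbatim. The ring is a generalized Kelly network (as noted already in the statement of Corollary~\ref{ergodic ring corollary}), and by hypothesis we assume nominal loads strictly less than one at each node and an arbitrary greedy protocol, exactly the hypotheses of the convex routing theorem. Fluid stability, and hence stochastic stability of the underlying Markov chain, follows immediately. If we further assume finite variance of the interarrival and service times, the same theorem yields finite expected queue length.

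There is no real obstacle: the routing sum at each node of a directed cycle is a single probability, so convexity is automatic and the rest is packaging. The only interpretive choice is what ``universally stable'' is taken to mean; I would read it as ``stable under any greedy protocol, for any generalized Kelly arrival and service distributions with nominal loads below one,'' which is exactly what Theorem~\ref{convex routing theorem} delivers in this setting.
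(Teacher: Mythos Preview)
Your approach is correct and matches the paper's, which gives no explicit proof at all---the corollary is stated as immediate from Theorem~\ref{convex routing theorem}, with only a note crediting Dai and Weiss for the original fluid stability result. Your observation that a unidirectional ring has a single predecessor per node, so the sum in Equation~\ref{convex routing equation} is a single probability and hence automatically at most $1$, is exactly the intended one-line verification.

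One small correction to your bidirectional aside: the argument that $p_1 + p_2 \leq 1$ because ``a packet cannot simultaneously head clockwise and counterclockwise'' is not valid. The quantities $p_1$ and $p_2$ are routing probabilities for packets at \emph{different} predecessor nodes $n_1$ and $n_2$, so the fact that any single packet chooses only one direction does not bound their sum. The correct way to handle the bidirectional ring is the decomposition of Section~\ref{bidirectional section}: model it as two independent unidirectional rings (one per direction), each of which then has a single predecessor per node and trivially satisfies convex routing. This is a side remark, though; the corollary as stated in the paper is really about the unidirectional ring, and your main argument for that case is fine.
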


\note The fluid stability of the ring was first proved by Dai and 
Weiss~\cite{Dai_and_Weiss}.

These results on convex routing have some fairly natural applications
to load balancing.  Suppose we have a $d$-dimensional wrapped
butterfly where each nodes is a processor, performing some
computations.  Occasionally, a node will decide that it has too much
work, and will insert a packet into the system, representing one
quantum of work.  The processor would like to share its work fairly
uniformly across the other processors.  (For the moment, I won't worry
about aggregating the completed work of the system.)

Sharing the load can be accomplished fairly easily on a wrapped
butterfly.  At every node, there are two outgoing edges; if a packet
selects each edge with probability $\frac{1}{2}$, then in $d$ (or
more) steps, its probability of being at any point in its current
layer is uniform.  Thus, we have a multi-class convex routing
problem, and we can use Theorem~\ref{convex routing theorem}
to deduce stability.

There is an even more efficient method of sending the load through the
network.  Since there are two outgoing edges at every node, we can
send out up to two packets per time step.  Suppose that 
we select the particular
edge at random.  Then we maintain convex routing (and hence stability),
while still guaranteeing uniform distribution over the final layer
in $d$ time steps.

Generally speaking, if we have a $d$-regular graph, then by selecting
each of the outgoing edges with equal probability, we have
convex routing.  We can also send out $d$ packets instead of 
1 packet.  Most interestingly, since at most $d$ packets arrive, we can
give them precedence over the packets in queue, i.e.\ use the
Greedy Hot Potato algorithm.  This choice opens up the possibility
of using the techniques from Chapter~\ref{bounds chapter} to
get bounds on the expected queue length per node.

\section{Superconcentration on a Pair of Butterflies}


The remainder of this chapter will examine some problems in
node-disjoint circuit switching.  Unlike the stochastic results of the
rest of this thesis, these results are more graph-theoretical and
structural in flavor.  The motivating problem can be described as
follows.  Suppose we have a directed graph with $N$ input and $N$
output nodes, both labelled from 1 to $N$.  For each input node $v$,
we choose an output node $\pi(v)$ to be its destination, for some
permutation $\pi$.  The problem is to find a collection of $N$
node-disjoint paths which each run from $v$ to $\pi(v)$ for all $v$.
A directed graph that can route all permutations $\pi$ is called
\emph{rearrangeable}.\index{rearrangeable network}
  (For some real-world applications of
node-disjoint routing, see, for example,~\cite{optical_networks}.)

A classic example of rearrangeability is the Bene\v{s} 
network\index{Bene\v{s} network}
 (see~\cite{leighton}).  
This network (i.e.\ directed graph) consists of a ``forward'' butterfly 
adjoined to a 
``reversed'' butterfly.  A natural question to ask is: if we attach 
two ``forward'' butterflies, is this network (the 
double butterfly) still rearrangeable?
This problem has been open for several decades.  At least one proof is 
currently under review~\cite{cam}.  This suggests a more general
hypothesis.  Suppose that we have two graphs, each isomorphic to a
butterfly, but not necessarily identical to each other.  If we attach
the output nodes of the first to the input nodes of the second, is 
the resulting graph rearrangeable?

At the current time, proving this kind of result seems far too much to
hope for.  So, rather than show that these types of networks are rearrangeable,
I will prove various concentration and superconcentration results.
\begin{definition} \index{superconcentrator}
\index{concentrator}
Consider a directed graph $G$.  Fix $n$ input nodes and $n$ output nodes.
Suppose that between any $k$ input and $k$ output nodes there exist 
$k$ node-disjoint paths.
(By ``node-disjoint'', I mean that a path intersects
neither itself nor any other path.)  Then we say that $G$ is a
$k$-concentrator.  If $G$ is a $k$-concentrator
for all $k\leq n$, then we call $G$ a \emph{superconcentrator}.
\end{definition}
(Observe that every selected input and output node occurs on exactly one 
path.  Note also that node-disjointness implies edge-disjointness of the paths.
See \cite{pippenger} or \cite{hwang} for more on superconcentrators.)

Clearly, rearrangeability implies subset routing-- just choose a permutation
that respects $v\in A$ iff $\pi(v)\in B$.  However, the converse is not true
for arbitrary networks (see Figure~\ref{subset counterexample}).
\begin{figure}[ht]
\centerline{\includegraphics[height=1in]{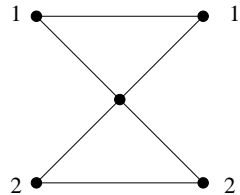} \hspace{.2in}  }
\caption{A non-rearrangeable superconcentrator (consider 
$1\rightarrow 2$, $2\rightarrow 1$) }
\label{subset counterexample}
\end{figure}
It's straightforward to show that a single butterfly does not route all 
subsets, so we need to use at least two butterflies to get interesting 
concentration results.

In the next several sections, I show that any concatenated pair of
$d$-dimensional butterflies (not necessarily identical to each other)
are $2^{k}$-con\-cen\-trators, for any $k\leq d$.  I can strengthen this
statement in a special case: if the butterflies are standard
butterflies with their layers shuffled (e.g.\ a Bene\v{s} network, or
the double buttefly in~\cite{cam}), the network is a
superconcentrator.

	The rest of this chapter is structured as follows.  Section 1
establishes some definitions and fixes notation.
Section 2 examines the structure of a graph related to a pair of
butterflies that highlights some of its connectivity properties.
Section 3 solves the problem in the case where $|A|=2^{m}$ for some
$m$, and proves a rearrangeability-type result when $|A|\leq ^{\lfloor
d/2 \rfloor}$ on certain networks.  Section 4 presents the main
result, except for one lemma that I postpone for section 5.  Section 6
is dedicated to closing remarks.

\section{Definitions and Notation}

Let us begin by defining and fixing notation for a butterfly.
A standard $d$-dimensional butterfly can be viewed as a network with 
$2^{d}$ nodes where we switch the first bit in the first layer
of edges, the second bit in the second layer of edges, and so forth.
If we choose to switch the bits in a different order, we get a
\emph{layer-permuted butterfly}.
\begin{definition}
A \emph{$d$-dimensional layer-permuted butterfly} 
is a directed, layered graph defined as follows: 
nodes fall into one of $d+1$ disjoint layers, numbered 0 through $d$. Each
layer consists of $N=2^d$ nodes, which we label with the $N$ binary
strings of length $d$.  Take some (fixed) permutation $\pi$ on $d$
objects.  Consider any such binary string, say $b=b_{1}b_{2}\cdots b_{d}$.
For each $i$ such that $0\leq i <d$,
there is a directed edge from node $b$ of layer $i$ to node 
$b_{1}b_{2}\cdots b_{\pi(i)-1}0 b_{\pi(i)+1}\cdots b_{d}$ of layer $i+1$, and 
another directed edge from node $b$ of layer $i$ to node 
$b_{1}b_{2}\cdots b_{\pi(i)-1}1 b_{\pi(i)+1}\cdots b_{d}$ of layer $i+1$.
\end{definition}
(Note that these butterflies are all graph-isomorphic to each other.)
Finally, the networks we'll be looking at consist of pairs of these
butterflies.
\begin{definition}
Suppose that we have two graphs, $G_{1}$ and $G_{2}$.  Suppose that 
$G_{1}$ has $n$ output nodes and $G_{2}$ has $n$ input nodes, each
numbered from 1 to $n$.  Then
we say that $G$ is the \emph{concatenation} of $G_{1}$ and $G_{2}$ if 
we form $G$ by associating the output node $i$ of $G_{1}$ with 
the input node $i$ of $G_{2}$.  
 
If $G_{1}$ and $G_{2}$ are each isomorphic to a standard butterfly
(but not necessarily identical to each other), we call $G$ 
a \emph{pair of butterflies}.  Similarly, if $G_{1}$ and $G_{2}$ are
layer-permuted butterflies, we have a 
\emph{pair of layer-permuted butterflies}.  Finally, if $G_{1}$ and 
$G_{2}$ are both standard butterflies, we have a \emph{double butterfly}.
\end{definition}
Note that these graphs have $2d+1$ layers of
nodes (0 through $2d$).  Since I imagine the paths from inputs to outputs
to be running from 
left to right, I will refer to the butterfly on layers 0 through $d$ as
the \emph{left butterfly}, and the one on layers $d$ through $2d$ as the
\emph{right butterfly}.  Note also an alternate way of specifying
a pair of butterflies: consider a network consisting of
two standard butterflies, but permute the labels of the output nodes
of the left butterfly.  
Observe that these two definitions give rise to the same class of 
graphs (up to isomorphism).

Over the course of this chapter, I construct directed node-disjoint 
paths from input nodes to output nodes.  So, for example, a path from an 
input
node to an output node on a pair of butterflies is exactly $2d+1$ nodes
long-- the path can't double back on the layers.  

Suppose I have a set of input nodes $A$ and a set of output nodes $B$ of the
same size (i.e.\ $|A|=|B|$)
Then if I specify a collection of node-disjoint paths from $A$ to $B$, observe
that I can extend these paths into a consistent setting of all the 
switches in the network.  These switches will induce $N$  node disjoint
paths from \emph{every} input to every output node, and retain the
feature that a path begins in $A$ iff it ends in $B$.  So, on a
switching network, node-disjoint routing of a subset implies there
exists a node-disjoint routing of a permutation $\pi$ such that 
$v\in A$ iff $\pi(v)\in B$.  Since this is an ``if and only if'' statement,
we get the following lemma:
\begin{lemma} \label{set complement}
\index{Lemma \ref{set complement}}
If we can find node-disjoint paths from $A$ to $B$ on a switching
network, then we can find node-disjoint paths from the complements
$A^{c}$ to $B^{c}$.
\end{lemma}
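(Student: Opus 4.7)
The plan is essentially the argument sketched informally in the paragraph immediately preceding the lemma statement; I would make it rigorous in three short steps.

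First, I would extend the given collection of node-disjoint paths from $A$ to $B$ into a complete switch setting for the entire network. In a switching network every internal node is a $2\times 2$ switch, and a path passing through it fixes which (ingress, egress) pair that switch uses. The $|A|$ paths therefore determine the settings of the switches they traverse, while every other switch is free to be set arbitrarily (say, to the straight-through position). Because any admissible setting of the remaining switches is consistent with the settings already forced by the $A$-to-$B$ paths, this produces a legitimate full configuration of the network.

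Second, I would read off from this completed configuration the induced bijection $\pi$ between the $N$ inputs and the $N$ outputs: follow every input through its switch settings until it exits. These $N$ input-to-output paths are node-disjoint by construction, because a switching configuration assigns to each node a fixed internal wiring and no two paths can share a (ingress, egress) pair. The $|A|$ paths we started with are among them, so $\pi(A)=B$. Since $\pi$ is a bijection and $|A^c|=|B^c|$, it follows that $\pi(A^c)=B^c$. The restriction of the $N$ node-disjoint paths to those starting in $A^c$ then furnishes a node-disjoint routing from $A^c$ to $B^c$.

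The only nontrivial step is the extension in the first paragraph, and this is where I would be most careful. For a pair of butterflies (or any network built from $2\times 2$ switches), the extension is immediate because partial switch settings never obstruct each other: each switch is set independently, and the paths from $A$ only touch $|A|$ switches per layer. For a more general ``switching network'' one has to verify that the remaining inputs can always be completed to a node-disjoint permutation routing, but on the networks of interest in this chapter (concatenations of butterflies) this follows directly from the switch-based structure. No deeper combinatorial argument is needed, because the lemma only asserts the symmetry $A\leftrightarrow A^c$, $B\leftrightarrow B^c$ once one routing already exists.
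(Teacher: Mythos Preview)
Your proposal is correct and is exactly the argument the paper gives; the paper's entire proof is the paragraph preceding the lemma statement, which you have simply unpacked in more detail. The three steps you outline (extend the partial routing to a full switch setting, read off the induced bijection $\pi$, restrict to $A^c$) are precisely what the paper means by ``extend these paths into a consistent setting of all the switches'' and ``$v\in A$ iff $\pi(v)\in B$.''
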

Throughout this chapter, I will use $A$ to represent a collection of
input nodes, $B$ a collection of output nodes, and assume that
$|A|=|B|$.

\section{The Sub-Butterfly Connectivity Graph} \label{connectivity}

Suppose we specify a path of length $m$ on a standard butterfly 
(for $m\leq d$) from an input node.
By choosing which edge to take, the path changes $m$ bits of its
location any way we want.  Suppose we select the first bits to be
$b=b_{1}b_{2}b_{3}\cdots b_{m}$.  Then from layers $m+1$ to $d$, 
the first $m$ bits will remain equal to $b$.  Let's specify 
the resulting sub-graph of the butterfly in the following definition:

\begin{definition}
Consider a $d$ dimensional standard butterfly.  Take an $m$ bit
binary string $b=b_{1}b_{2}b_{3}\cdots b_{m}$  ($m\leq d$).
Consider the sub-graph formed by the nodes on layers $m$ through $d$
(inclusive) whose first $m$ bits are $b$.  Observe that this graph
is (isomorphic to) a $(d-m)$-dimensional butterfly.  Let us call it 
the \emph{sub-butterfly $b*$}.

If we specify a suffix instead and consider layers 0 through $d-m$,
we get the \emph{sub-butterfly $*b$}.

If we have a graph isomorphic to a standard butterfly, the isomorphism
will induce (isomorphic) images of the sub-butterfly, so we can
meaningfully refer to sub-butterflies on any butterfly-isomorphic graph.
\end{definition}
I will be considering sub-butterflies in a pair of butterflies.  In
this context, $b*$ is the sub-butterfly residing on layers $m$
through $d$ (and stopping there), i.e.\ only in the left butterfly.
I'll also be interested in sub-butterflies on the right side.  
These inhabit layers $d$ through $2d-m$.

It will be useful to investigate the structure of the connections between
the $q$-dimensional sub-butterflies on the right and left sides of
a $d$-dimensional pair of butterflies, that is, the sub-butterflies of the
 form $x*$ or $*x$ where $x$ is a binary string of length $m$ (such that
$m+q=d$).  Note that these sub-butterflies inhabit layers $m=d-q$ through
 $d$, and $d$ through $d+q$.  Let us represent each 
sub-butterfly by a vertex in a bipartite graph; the vertex is on the left
side of the bipartite graph iff the sub-butterfly is on the left
side of the pair of butterflies.  I will label each vertex by its associated
sub-butterfly, abusing the label notation somewhat.  
Place an edge between two vertices $x*$ and $*y$ iff
the two sub-butterflies are connected, that is, iff $x*$ and $*y$
(as sub-butterflies) share at least
one common node on layer $d$ of the pair of butterflies.
Equivalently, there is an edge between the nodes in the bipartite graph
iff there exists a path from every layer $d-q$ input node 
of $x*$ to every layer $d+q$ output node of $*y$. 
I will refer to this graph as the \emph{$q$-dimensional 
sub-butterfly connectivity graph},
or just the \emph{connectivity graph}.
Observe that there are $2^{m}$ vertices
on either side of this graph.  How are the vertices connected?

I will consider progressively more specialized cases in order to derive 
various results in later sections.  Suppose, first, that we build a bipartite
connectivity graph, but if there are $x$ common nodes on layer $d$ 
between a sub-butterfly on the left and one on the right, 
we insert $x$ edges (instead of only 1 edge).  Let us call this the 
\emph{enriched connectivity graph}.  

\begin{lemma} \label{enriched graph is regular}
\index{Lemma \ref{enriched graph is regular}}
For any pair of butterflies, its enriched connectivity graph is regular.
\end{lemma}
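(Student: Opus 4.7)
The plan is to show that every vertex of the enriched connectivity graph has degree $2^q$, where $q = d-m$ is the dimension of the sub-butterflies under consideration. The observation that makes this essentially immediate is that each node on layer $d$ of the pair of butterflies belongs to exactly one left sub-butterfly $x*$ (namely the one determined by the first $m$ bits of its left-butterfly label) and to exactly one right sub-butterfly $*y$ (the one determined by the last $m$ bits of its right-butterfly label). Thus each layer-$d$ node is responsible for exactly one edge of the enriched graph, with its two endpoints being the unique $x*$ and $*y$ to which it belongs.

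Given this, the proof will proceed in two short steps. First, I would fix a left vertex $x*$ and note that as a $q$-dimensional sub-butterfly on layers $m$ through $d$, it contains exactly $2^q$ nodes on layer $d$ (one for each of the $q$-bit completions of the prefix $x$ in the left-butterfly labeling). By the observation above, each of these $2^q$ layer-$d$ nodes contributes exactly one edge incident to $x*$ in the enriched graph, so $\deg(x*) = 2^q$. Second, by the symmetric argument applied on the right side, any right vertex $*y$ also has $\deg(*y) = 2^q$.

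Since every vertex on either side of the bipartite enriched connectivity graph has degree $2^q$, the graph is regular, which is what we wanted to prove. I do not foresee any real obstacle here: the argument is purely a counting exercise once the definitions are unpacked, and the crucial property, that layer-$d$ nodes partition uniquely into left sub-butterflies under the left labeling and (independently) into right sub-butterflies under the right labeling, holds for any pair of butterflies regardless of how the two butterfly-isomorphisms are chosen. In particular, no assumption of standard or layer-permuted structure is needed for this lemma; it relies only on the fact that each of $G_1$ and $G_2$ is isomorphic to a standard butterfly, which ensures that the sub-butterfly decomposition of layer $d$ on each side is a partition into classes of size $2^q$.
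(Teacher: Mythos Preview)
Your proposal is correct and takes essentially the same approach as the paper: the paper's proof is the one-liner ``since each sub-butterfly has $2^{q}$ output nodes, then all nodes in the enriched connectivity graph have degree $2^{q}$,'' and your argument simply unpacks why this is so by observing that each layer-$d$ node lies in a unique left sub-butterfly and a unique right sub-butterfly, hence contributes exactly one incident edge to each.
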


\begin{proof} Since each sub-butterfly has $2^{q}$ output nodes, then all 
nodes in the enriched connectivity graph have degree $2^{q}$.
\end{proof}

Now we move our attention to the special case of layer-permuted butterflies.
First, let us analyze the structure of one connected compnent of
the $q$-dimensional connectivity graph.

\begin{lemma} \label{connected components}
\index{Lemma \ref{connected components}}
Each connected component in the connectivity graph of a layer-permuted
butterfly is a completely connected bipartite graph.
\end{lemma}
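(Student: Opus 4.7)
The plan is to identify exactly which bit positions are ``fixed'' by each sub-butterfly, and then read off the component structure from an intersection of fixed position sets.

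Suppose the left layer-permuted butterfly uses permutation $\pi_1$ and the right uses $\pi_2$. First, I would observe that a $q$-dimensional sub-butterfly on the left, sitting on layers $m$ through $d$ (where $m=d-q$), is obtained by fixing a particular assignment $\alpha$ of bits at the positions $L=\{\pi_1(1),\ldots,\pi_1(m)\}$ (those positions are exactly the bits that get switched by the first $m$ edge-layers of the left butterfly, so the remaining bits are free and the subgraph on the later layers is indeed a $q$-dimensional butterfly). Symmetrically, a $q$-dimensional sub-butterfly on the right, sitting on layers $d$ through $d+q$, is obtained by fixing an assignment $\beta$ of bits at the positions $R=\{\pi_2(q+1),\ldots,\pi_2(d)\}$. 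Thus every left vertex of the connectivity graph is labeled by $\alpha:L\to\{0,1\}$ and every right vertex by $\beta:R\to\{0,1\}$.

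Next, I would characterize adjacency. A node on layer $d$ belongs to the left sub-butterfly $\alpha$ iff its bits agree with $\alpha$ on $L$, and belongs to the right sub-butterfly $\beta$ iff its bits agree with $\beta$ on $R$. Such a common node exists iff $\alpha$ and $\beta$ agree on $L\cap R$; when they do, the number of common layer-$d$ nodes is $2^{d-|L\cup R|}$, which is positive. Hence edges of the connectivity graph are precisely the pairs $(\alpha,\beta)$ that agree on $L\cap R$.

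From this characterization, both claims follow immediately. First, partition the left vertices by their restriction to $L\cap R$ and the right vertices similarly: two left vertices share no common neighbor unless they agree on $L\cap R$, so the connected components refine this partition; conversely, any left $\alpha$ and right $\beta$ whose restrictions to $L\cap R$ coincide are adjacent, so within each class of the partition every left vertex is joined to every right vertex. This shows each connected component is a complete bipartite graph (specifically $K_{r,r}$ with $r=2^{m-|L\cap R|}$), which is the statement of the lemma.

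The main obstacle is purely notational: one must justify cleanly that the ``sub-butterflies'' of a layer-permuted butterfly really are the subgraphs obtained by fixing bits in the positions $L$ (or $R$), rather than some other isomorphic copies. Once one commits to this identification — which matches the natural one induced by the isomorphism to a standard butterfly, with bit positions relabeled via $\pi_1^{-1}$ — the remainder is a one-line set-theoretic computation. No further combinatorial ingredient is needed.
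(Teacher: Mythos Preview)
Your proposal is correct and follows essentially the same approach as the paper: identify the set of bit positions fixed by a left sub-butterfly (your $L$) and by a right sub-butterfly (your $R$), observe that adjacency holds iff the assignments agree on $L\cap R$, and conclude that the components are complete bipartite. The paper states the same adjacency criterion (its displayed condition on $\pi$ and $\sigma$) and then asserts the conclusion; your version makes the partition-by-restriction step more explicit, but the argument is the same.
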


\begin{proof}
Suppose that the layer-permuted butterfly on the left has permutation
$\pi$, and the butterfly on the right has permutation $\sigma$.
Consider a sub-butterfly $b*$ in the left butterfly.  This corresponds to 
a sub-graph on layers $m$ through $d$ where the value of bit $\pi(i)$ 
is $b_{i}$.  Notice that a sub-butterfly $b*$ in the left butterfly 
connects to 
a  sub-butterfly $*c$ in the right butterfly if and only if
\begin{equation} \label{sub-butterfly connection}
\forall i < q, \forall j > m, \mbox{ if }\pi(i)=\sigma(j) 
\mbox{ then } b_{i} = c_{j} 
\end{equation}
Thus, each connected 
component is a complete bipartite graph (with the same
number of nodes on each side.)
\end{proof}

Next, suppose that we have a pair of layer-permuted butterflies.  
How does the graph change as we specify one more
layer?  That is, if we compare the connectivity graphs between $q$ 
and $q-1$ dimensional sub-butterflies, what happens?

Therefore, determining the structure of the connectivity graph on pairs
of layer-permuted butterflies reduces to determining the connected components.
Consider one connected component in the connectivity graph looking at
$q$-dimensional sub-butterflies.  When we advance to the 
$(q-1)$ dimensional sub-butterflies, each node becomes two nodes (because each
$q$ dimensional sub-butterfly splits into two $q-1$ dimensional 
sub-butterflies).  There are  essentially three cases that can occur.
\begin{itemize} \label{three cases}
\item \textbf{(No reused dimensions)}
Suppose that 
$\sigma(q-1) \neq \pi(j)$ for any $1 \leq j \leq m+1$ and
$\pi(m+1) \neq \sigma(j)$ for any $q-1 \leq j \leq d$.
Then if the $q$-dimensional sub-butterfly $b*$ is adjacent
to $*c$, it follows that $bb_{m_1}*$ is adjacent to 
$*c_{m+1}c$ for $b_{m-1}, c_{m-1} = 0,1$.  

In the connectivity graph, that means that the connected component doubles
the number of nodes, but remains completely connected.
\item \textbf{(One reused dimension)}
Suppose that there exists (exactly) one $i$ such that either 
\begin{itemize}
\item
$\sigma(q-1) = i = \pi(m+1)$, or
\item
$\sigma(q-1) = i = \pi(j)$ for some $1 \leq j \leq m+1$ and
$\pi(m+1) \neq \sigma(k)$ for any $q-1 \leq k \leq d$, or
\item
$\sigma(q-1) \neq \pi(j)$ for any $1 \leq j \leq m+1$ and
$\pi(m+1) = i = \sigma(k)$ for some $q-1 \leq k \leq d$
\end{itemize}
Then the connected component splits into two connected components,
based on the value of the $i$th bit.
\item  \textbf{(Two reused dimensions)}
Suppose that $\sigma(d-m-1) = i = \pi(j)$ for $1 \leq j \leq m+1$ and
$\pi(m+1) = l = \sigma(k)$ for $d-m-1 \leq k \leq d$, and $i \neq l$.
Then the connected component splits into four connected components, based
on the four possible values that the $i$ and $l$ bits can take.
\end{itemize}

\section{Subsets of Size $2^m$}
We want to select a collection of node-disjoint paths from input set $A$
to output set $B$ on a pair of butterflies.
Although I've expressed this problem in terms of paths, it's often easier
to express the proof in terms of packets travelling through the network.
In particular, if packets travel forward (node disjointly, and
without stopping) from every input node in $A$,
and backwards from every output node in $B$, and we can match up the 
packets on level $d$, then the paths traced by the packets give us the
collection of paths we're looking for.  I will switch between the path
and packet descriptions of the problem whenever it seems helpful.
\begin{lemma} \label{simple splitting}
\index{Lemma \ref{simple splitting}}
Suppose we have a set $A$ of input nodes on a butterfly.
By passing from layer 0 to layer 1 of a butterfly, there exist
paths that send
$\lceil |A|/2 \rceil$ of the packets to sub-butterfly $0*$, and 
$\lfloor |A|/2 \rfloor$ of the packets to sub-butterfly $1*$.
Similarly, we could send 
$\lceil |A|/2 \rceil$ of the packets to sub-butterfly $1*$, and 
$\lfloor |A|/2 \rfloor$ of the packets to sub-butterfly $0*$.
Mutatis mutandi, this applies to packets in output nodes
travelling backwards, by passing from layer $2d$ to $2d-1$.
\end{lemma}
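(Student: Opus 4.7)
The plan is to reduce the lemma to a simple pairing argument on the first-bit structure of the butterfly. First I would recall that in a standard $d$-dimensional butterfly (or a layer-permuted one, with the switched bit being $\pi(1)$ rather than bit $1$), an input node $b = b_1 b_2 \cdots b_d$ on layer $0$ has exactly two outgoing edges to layer $1$: one to $0 b_2 \cdots b_d$, lying in sub-butterfly $0*$, and one to $1 b_2 \cdots b_d$, lying in sub-butterfly $1*$. Crucially, a layer-$1$ node $c_1 c_2 \cdots c_d$ has in-degree exactly two from layer $0$, the two predecessors $0 c_2 \cdots c_d$ and $1 c_2 \cdots c_d$ sharing the same suffix $c_2 \cdots c_d$.

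Next I would partition $A$ by suffix $s = b_2 \cdots b_d$. For each suffix $s$, either both of the input nodes $0s, 1s$ lie in $A$ (call this a paired suffix), or exactly one does (a singleton suffix). Let $p$ be the number of paired suffixes and $q$ the number of singletons, so $|A| = 2p + q$. For a paired suffix, the two inputs compete for the same pair of layer-$1$ targets $\{0s, 1s\}$, so node-disjointness both forces and permits one of them to go to $0*$ and the other to $1*$; thus every paired suffix contributes exactly one packet to each sub-butterfly, with no further choice. For a singleton suffix, the unique input in $A$ can be freely routed to either of its two layer-$1$ neighbors, and (since the neighbors have distinct suffixes from the neighbors of any other singleton) the choice is independent of all other routing decisions, so no node-disjointness conflict can arise.

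With the pairs accounted for, I would distribute the $q$ singletons to achieve the desired totals: send $\lceil q/2 \rceil$ of them into $0*$ and $\lfloor q/2 \rfloor$ into $1*$, giving $p + \lceil q/2 \rceil = \lceil |A|/2 \rceil$ packets in $0*$ and $p + \lfloor q/2 \rfloor = \lfloor |A|/2 \rfloor$ in $1*$. Swapping the roles of the singleton assignments yields the mirror distribution with $\lceil |A|/2 \rceil$ in $1*$ and $\lfloor |A|/2 \rfloor$ in $0*$. The backward direction from layer $2d$ to layer $2d-1$ is completely symmetric: reversing every edge of a butterfly gives an isomorphic butterfly, so the same suffix-pairing argument (applied to the last bit switched by the right butterfly) handles the output set $B$.

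There is no serious obstacle here; the only thing to watch is that, for paired suffixes, we are using both layer-$1$ target nodes, so no singleton from another suffix class can ever interfere — this is guaranteed because distinct suffixes give disjoint target pairs on layer $1$. Extending the argument to a layer-permuted butterfly is a notational change only: replace ``first bit'' by ``bit $\pi(1)$'' throughout and pair inputs by the remaining $d-1$ bits.
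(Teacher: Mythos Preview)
Your proof is correct and is essentially identical to the paper's argument: the paper groups layer-$0$ nodes into $N/2$ ``switches'' $\{0s,1s\}$ (your suffix pairs), observes that switches with both nodes in $A$ automatically split one-one between $0*$ and $1*$, and then freely assigns the singleton switches to balance the counts. Your version is slightly more explicit about the arithmetic $p+\lceil q/2\rceil=\lceil|A|/2\rceil$ and about why distinct suffixes cannot collide on layer~$1$, but the underlying idea is the same.
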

\begin{proof}
The $N$ nodes on the first layer of the butterfly can be grouped into 
$N/2$ switches, where the nodes labelled $T_{0}=0t_{2}t_{3}\cdots t_{d}$
and $T_{1}=1t_{2}t_{3}\cdots t_{d}$ form one switch.  Observe that 
each switch can be set straight or crossed, that is, 
we have to send $T_{i}$ to $T_{i}$ on the next layer 
(for $i=$ both 0 and 1), or $T_{i}$ to $T_{1-i}$.  Setting switches in one of 
these two states guarantees that paths are node-disjoint, so I 
will always set them accordingly. 

	For all the switches such that $T_{0},T_{1}\in A$, half
of these packets get sent to $0*$, and half to $1*$.  If 
$T_{0},T_{1}\not\in A$, half of these (zero) packets get sent
to each sub-butterfly, too.  Consider all of the remaining
packets.  Each of these is the sole packet in the switch.  So,
by setting $\lceil |A|/2 \rceil$ of the switches to send the
packets to sub-butterfly $0*$, and $\lfloor |A|/2 \rfloor$ of them 
to $1*$, we prove the first part of the lemma.  The rest
follows by symmetry. 
\end{proof}

This lemma allows a surprisingly simple proof of $2^{m}$ concentration.
\begin{theorem} \label{powers of two}
\index{Theorem \ref{powers of two}}
Suppose $|A|=2^{m}=|B|$.  Then there exist node-disjoint paths
from any input set $A$ to any output set $B$ on a pair of butterflies.
(In other words, a pair of butterflies is a $2^{m}$-concentrator.)
\end{theorem}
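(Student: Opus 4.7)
The plan is to use Lemma \ref{simple splitting} repeatedly on both sides to fully distribute the packets among $(d-m)$-dimensional sub-butterflies, and then reduce the remaining ``middle'' routing to finding a perfect matching in the enriched sub-butterfly connectivity graph.

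First, I would think of $A$ as $2^m$ packets entering the left butterfly at layer $0$ and $B$ as $2^m$ packets entering the right butterfly backwards at layer $2d$. Applying Lemma \ref{simple splitting} once on the left splits $A$ into two halves of size $2^{m-1}$, one entering sub-butterfly $0*$ and one entering $1*$. Since each sub-butterfly is itself a $(d-1)$-dimensional butterfly, I can apply the lemma again inside each half, and iterate $m$ times in total. Because $|A|=2^m$ is a power of two, every split is exact, and at the end each of the $2^m$ packets sits at the unique input node of its own distinct $(d-m)$-dimensional sub-butterfly $b*$ in the left butterfly (where $b$ ranges over all length-$m$ binary strings). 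I would perform the symmetric procedure on the right, so each packet of $B$ ends up at a unique output node of its own distinct $(d-m)$-dimensional right sub-butterfly $*c$.

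Next, I would observe two structural facts about layer $d$. The $2^m$ left $(d-m)$-dimensional sub-butterflies partition the $2^d$ nodes of layer $d$ into $2^m$ blocks of size $2^{d-m}$, and the $2^m$ right ones do the same. Form the enriched connectivity graph whose left vertices are the left sub-butterflies, whose right vertices are the right sub-butterflies, and which has one edge for each layer-$d$ node joining the unique left and right sub-butterflies containing it. By Lemma \ref{enriched graph is regular} this bipartite multigraph is $2^{d-m}$-regular. A regular bipartite (multi)graph satisfies Hall's condition, so it admits a perfect matching $M$; for each matched pair $(L_i, R_{M(i)})$, choose a layer-$d$ node $v_i \in L_i \cap R_{M(i)}$ witnessing the edge. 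Because distinct left sub-butterflies are disjoint on layer $d$, the $v_i$ are pairwise distinct.

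Finally, I would complete the paths: inside $L_i$, route its unique packet from its layer-$m$ input to $v_i$ (any path in the $(d-m)$-dimensional sub-butterfly works, since $L_i$ holds only one packet); symmetrically route the packet in $R_{M(i)}$ from $v_i$ to its layer-$(2d-m)$ output. Node-disjointness on layers $m$ through $d$ holds because distinct left sub-butterflies share no nodes on those layers, the layer-$d$ meeting nodes $v_i$ are distinct, and the symmetric statement holds on the right. The only real step to check carefully is that Hall's condition follows from regularity in the multigraph setting, which is standard; there is no significant obstacle, and the result drops out of the two ingredients already in hand---Lemma \ref{simple splitting} and Lemma \ref{enriched graph is regular}.
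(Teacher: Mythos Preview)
Your proof is correct and follows essentially the same approach as the paper: recursively apply Lemma~\ref{simple splitting} $m$ times on each side to isolate one packet per $(d-m)$-dimensional sub-butterfly, invoke Lemma~\ref{enriched graph is regular} and Hall's theorem to obtain a perfect matching between left and right sub-butterflies, and route each packet freely within its own sub-butterfly to the designated layer-$d$ meeting node. One small imprecision: after the $m$ splits a packet sits at \emph{some} layer-$m$ node of its sub-butterfly, not ``the unique input node'' (each $(d-m)$-dimensional sub-butterfly has $2^{d-m}$ nodes on layer $m$); but since there is only one packet per sub-butterfly this does not affect the argument.
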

\begin{proof}
Consider the left butterfly.
We can apply Lemma~\ref{simple splitting} recursively for $m$ steps.  On
step 1, we split $A$ so that $2^{m-1}$ packets go to $0*$ and 
$2^{m-1}$ go to $1*$.  Since $0*$ and $1*$ are themselves $d-1$ 
dimensional butterflies, we can apply the lemma again, on each
of them, giving us 4 sub-butterflies, each with $2^{m-2}$ 
paths.  After $m$ steps, we end up with $2^{m}$ sub-butterflies
(which is all of the $d-m$ dimensional sub-butterflies),
each of which has exactly 1 packet.  Now, on each of these butterflies,
we can send the packet along any path we want for the remainder
of the left butterfly (i.e.\ until we hit layer $d$); since it's the only 
packet on its sub-butterfly, there's no possibility of any other 
packet's path crossing its own.

We can perform the same construction on the output packets in $B$,
moving backwards toward the input layer.  When we reach layer $2d-m$,
there will be 1 packet per sub-butterfly.

At this point, observe that the sub-butterfly connectivity graph 
determines the connections between these butterflies.
By Lemma~\ref{enriched graph is regular},
this graph is a regular bipartite graph.
By Hall's theorem, there exists a 
perfect matching.  
This matching in the connectivity graph implies a matching in the
set of sub-butterflies, which implies a matching between the (unique)
packets in each sub-butterfly.  By construction of the connectivity
graph, there exists a path (not necessarily unique) between matched
packets.  As observed above, these paths are node-disjoint,
so we're done. 
\end{proof}

\subsection{Some Corollaries}

We get a very short corollary:
\begin{corollary}
Suppose $|A|=|B| = 2^{d} - 2^{m}$.  Then there exist node-disjoint paths
from $A$ to $B$ on a pair of butterflies.
\end{corollary}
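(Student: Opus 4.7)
The plan is to invoke Theorem~\ref{powers of two} on the complementary sets and then dualize via Lemma~\ref{set complement}. The key observation is that $|A^{c}| = |B^{c}| = 2^{d} - (2^{d} - 2^{m}) = 2^{m}$, so the complements have cardinality that is a power of two and fall directly within the scope of the $2^{m}$-concentrator result already established.

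Concretely, I would first apply Theorem~\ref{powers of two} to the input set $A^{c}$ and output set $B^{c}$ to obtain a collection of $2^{m}$ node-disjoint paths from $A^{c}$ to $B^{c}$ on the pair of butterflies. Then I would apply Lemma~\ref{set complement} to these paths: since the pair of butterflies is a switching network, the existence of node-disjoint paths from $A^{c}$ to $B^{c}$ implies the existence of node-disjoint paths from $(A^{c})^{c} = A$ to $(B^{c})^{c} = B$, which is what we want.

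The proof should be essentially one line, with no real obstacle, provided Lemma~\ref{set complement} is applied correctly. The one subtlety worth flagging is to confirm that the lemma's complementation argument really is symmetric, i.e.\ that the conclusion direction ($A \to B$ implies $A^{c} \to B^{c}$) can be inverted; but as the excerpt explicitly notes, the reasoning for Lemma~\ref{set complement} runs through the extension of a subset routing to a full permutation routing in which $v \in A$ iff $\pi(v) \in B$, and this biconditional is symmetric in $A$ and $A^{c}$. Hence no additional work is needed.
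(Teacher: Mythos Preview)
Your proposal is correct and matches the paper's approach exactly: the paper's proof is the single sentence ``Use Lemma~\ref{set complement} and Theorem~\ref{powers of two} on the complements of $A$ and $B$.'' Your observation about the symmetry of Lemma~\ref{set complement} is valid and already implicit in the paper's biconditional formulation.
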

\begin{proof}
Use Lemma~\ref{set complement} and Theorem~\ref{powers of two} on 
the complements of $A$ and $B$. 
\end{proof}

We can use Theorem~\ref{powers of two} to give us information about
a kind of rearrangeability on sufficiently small input and output sets.
\begin{corollary} \label{mini-rearrangeability}
\index{Corollary \ref{mini-rearrangeability}}
Suppose we have a pair of $d$-dimensional butterflies.
Suppose that there is a path between each node on layer 
$\lfloor d/2 \rfloor$ (in the left butterfly) and each node
on layer $2d - \lfloor d/2 \rfloor$ (in the right butterfly).
Then if we select any input set $A$ and output set $B$ with
$A=B \leq 2^{\lfloor d/2 \rfloor}$,  and any permutation $\rho$ from
$A$ to $B$, there exists a collection of node-disjoint paths
from $A$ to $B$ such that for every $a \in A$, the path from $a$
ends at $\rho(a)$.
\end{corollary}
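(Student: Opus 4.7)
The plan is to extend the strategy of Theorem~\ref{powers of two}, which did not need to realize any specific permutation, to the present setting where $\rho$ is prescribed; the added hypothesis on middle connectivity will make this upgrade essentially automatic. Write $m=\lfloor d/2\rfloor$ and $k=|A|=|B|\leq 2^{m}$.

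First, I would apply the recursive splitting of Lemma~\ref{simple splitting} (exactly as used in Theorem~\ref{powers of two}) to the left butterfly.  This routes the $k$ packets of $A$ through layers $0,\ldots,m$ and distributes them across the $2^{m}$ sub-butterflies $b*$ sitting on layers $m,\ldots,d$.  Since $k\leq 2^{m}$, the recursive halving leaves each $b*$ with at most one packet; say packet $a$ arrives at node $x_{a}$ inside some sub-butterfly $b_{a}*$.  The paths produced are node-disjoint on layers $0,\ldots,m$ by construction of the splitting lemma.  Symmetrically, I would run the procedure backwards from the $k$ output nodes of $B$ through the last $m$ layers of the right butterfly: the destination $\rho(a)$ is carried backwards to a node $y_{a}$ lying in a distinct right sub-butterfly $*c_{a}$, with these backwards paths node-disjoint on layers $2d-m,\ldots,2d$.

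For each $a\in A$, the hypothesis supplies some path from $x_{a}$ to $y_{a}$ through the middle layers $m,\ldots,2d-m$.  The key observation is that any such path is necessarily confined to $b_{a}*\cup *c_{a}$: on layers $m,\ldots,d$ the first $m$ bits of a node cannot change (the standard butterfly only switches bits $m+1,m+2,\ldots$ on those layers), so the left-half portion of the path must remain in $b_{a}*$, and symmetrically for the right half.  Different packets land in different left sub-butterflies and different right sub-butterflies; since distinct $b*$ (respectively $*c$) have pairwise disjoint node sets on layers $m,\ldots,d$ (respectively $d,\ldots,2d-m$), the middle segments for different packets cannot collide.  Concatenating the three segments for each $a$ yields a collection of $k$ node-disjoint paths from $A$ to $B$ with $a$ ending at $\rho(a)$.

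Notice that the permutation $\rho$ is never consulted by the splitting procedure itself; this is precisely the feature that makes the proof work.  We have no control over \emph{which} left sub-butterfly each packet lands in during step~1, nor \emph{which} right sub-butterfly $\rho(a)$ comes from in step~2, but we do not need such control because \emph{every} pair $(b_{a}*,*c_{a})$ is connectable by hypothesis.  The main subtlety I would want to write out carefully is the confinement claim for the middle path: it is what upgrades the hypothesis (bare existence of a single $x_{a}\!\to\! y_{a}$ path) into pairwise disjointness of all $k$ middle paths, and it relies specifically on the bit-preservation property of the butterfly that the sub-butterflies inherit.
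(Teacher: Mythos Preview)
Your proposal is correct and follows essentially the same route as the paper's proof: split packets recursively for $\lfloor d/2\rfloor$ steps on each side so that each $\lceil d/2\rceil$-dimensional sub-butterfly holds at most one packet, then use the middle-connectivity hypothesis to connect the sub-butterfly containing $a$ to the one containing $\rho(a)$, with disjointness coming from the fact that distinct sub-butterflies are node-disjoint. The only cosmetic differences are that the paper pads $A$ and $B$ with dummy packets to get exactly $2^{\lfloor d/2\rfloor}$ packets and phrases the connection step via the sub-butterfly connectivity graph (which the hypothesis forces to be complete bipartite), whereas you work directly with $k\leq 2^{\lfloor d/2\rfloor}$ and argue confinement explicitly.
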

\begin{proof}
If the corollary holds when $A=B = 2^{\lfloor d/2 \rfloor}$, then,
by using dummy packets to make up the difference, the corollary 
holds for $A=B \leq 2^{\lfloor d/2 \rfloor}$.  So, suppose that 
$A=B = 2^{\lfloor d/2 \rfloor}$.  We can use the same argument in
Theorem~\ref{powers of two} to split the packets until there is one
packet on each $\lceil d/2 \rceil$ dimensional sub-butterfly.
By the assumption in the corollary, the resulting connectivity graph
is a complete bipartite graph on \emph{all} nodes, so we can select
node-disjoint paths between the path originating at any $a$ and
send it to the path terminating at $\rho(a)$.
\end{proof}

Note that if we have a pair of standard butterflies, the corollary holds.
Also, suppose we have a pair of layer-permuted butterflies.
Suppose further that we insist that
\begin{itemize}
\item  if $i \leq \lfloor d/2 \rfloor$, then 
$\pi(i) \leq \lfloor d/2 \rfloor$ (where $\pi$ is the left layer permutation)
 on the left butterfly, and
\item
 if $i \geq \lceil d/2 \rceil$, then 
$\sigma(i) \geq \lceil d/2 \rceil$ (where $\sigma$ is the right layer 
permutation) on the right butterfly.
\end{itemize}
(In other words, we permute the layers but don't send any layer from the
left half of the butterfly to the right half.)
Then Corollary~\ref{mini-rearrangeability} holds.

\section{The General Case}

Proving node-disjoint 
subset routing for an arbitrary input and output set (of the same 
size) is somewhat more challenging.  However, 
for pairs of layer-permuted butterflies,
the same basic approach
from Theorem~\ref{powers of two} works.  Looking at the proof, there are
two parts: first, we split the packets into a number of sub-butterflies,
until we have one packet per sub-butterfly.  Then, we view the 
problem as an
exact matching problem on a particular bipartite graph, and show that
a matching exists.  

The proof for the general case runs the same way.  In order to find 
a matching, it's clearly necessary that each connected component
of the bipartite connectivity graph has as many packets 
on the left side as on the right.  
In the next section, I'll prove that this condition (roughly speaking)
is sufficient for the existence of a matching on the 
connectivity graph.
But assuming for now that it holds, we can prove the main result:
\begin{theorem} \label{main}
\index{Theorem \ref{main}}
For any input set $A$ and output set $B$ on a pair of $d$-dimensional 
layer-permuted butterflies,
such that $|A|=|B|$, there exist node-disjoint paths from $A$ to
$B$.
\end{theorem}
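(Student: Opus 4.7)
The plan is to mimic the proof of Theorem~\ref{powers of two} but use the flexibility in Lemma~\ref{simple splitting} (the freedom to decide whether the ``extra'' packet in an unbalanced switch routes to $0*$ or $1*$) to split $A$ and $B$ into sub-butterflies in a way that respects the connected component structure of the sub-butterfly connectivity graph of Section~\ref{connectivity}. Packets are pushed forward from $A$ into the left butterfly and backward from $B$ into the right butterfly, one layer at a time. After $m$ layers of splitting we obtain functions $L_m,R_m$ giving the packet counts in each of the $2^m$ sub-butterflies of dimension $d-m$ on the left and right, respectively.

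The invariant I would carry through the induction on $m$ is: for every connected component $C$ of the $(d-m)$-dimensional sub-butterfly connectivity graph, the total packet count on the left-side vertices of $C$ equals the total packet count on the right-side vertices of $C$. At $m=0$ this is just $|A|=|B|$. Assuming the invariant can be propagated through a splitting step, I continue splitting until every sub-butterfly carries at most one packet (which certainly holds by $m=d$, and usually much sooner). By Lemma~\ref{connected components}, each connected component of the final connectivity graph is a complete bipartite graph, and the invariant forces its two sides to contain equally many packet-bearing sub-butterflies. A perfect matching within each such complete bipartite component therefore exists; combined with the paths already chosen inside each sub-butterfly and the single crossing edge at layer $d$, this produces the required node-disjoint routing from $A$ to $B$.

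The main obstacle, which I would defer to the next section, is verifying that the invariant can actually be preserved through each individual splitting step. Lemma~\ref{simple splitting} only controls the left/right imbalance inside each \emph{switch} of a single butterfly, whereas the invariant couples the left and right distributions across an entire component. When advancing from dimension $q$ to $q-1$, a connected component splits according to the three cases enumerated on page~\pageref{three cases}: no reused dimension (component doubles but remains one piece), one reused dimension (splits into two), or two reused dimensions (splits into four). In the last case the split is indexed by two independent bit coordinates, one chosen on the left and one on the right, and we must pick splittings on both sides so that the four resulting sub-components end up with matching totals on each side simultaneously. The postponed lemma will establish that this is always possible; I expect the argument to reduce, after writing out the bipartite constraint graph whose vertices are the new sub-components and whose edges encode which coordinated splits of the original component feed into which sub-components, to a Hall-type or network-flow feasibility check, with the hypothesis $\sum L_m = \sum R_m$ on the current component playing the role of the flow-balance condition.
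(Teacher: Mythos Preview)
Your outline is plausible and probably can be made to work, but it takes a different route from the paper, and the deferred lemma you would need is strictly more general than the one the paper actually proves.

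The paper does \emph{not} maintain the component-balance invariant layer by layer. Instead it inducts on the bit-length $m$ of $|A|$. Given $|A|$ with $2^{m}\le |A|<2^{m+1}$, it picks $A'\subseteq A$, $B'\subseteq B$ with $|A'|=|B'|=|A|-2^{m}$, applies the inductive hypothesis to $(A',B')$, and then reuses the \emph{rounding decisions} (which child gets the ``extra'' packet at each odd sub-butterfly) from that smaller routing when splitting $A$ and $B$. Because the $(A,B)$ counts and the $(A',B')$ counts differ by exactly $2^{m-k}$ at every level-$k$ sub-butterfly, parity is preserved and the same rounding choices are available. After $m$ levels of splitting, every sub-butterfly carries exactly $1$ or $2$ packets, and the component-balance property is inherited from the $(A',B')$ routing. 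The postponed Lemma~\ref{matching} is then invoked \emph{once}, and only in this $1$-or-$2$ case, to pass to the $0$-or-$1$ case with balanced components.

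What this buys is that Lemma~\ref{matching} only has to handle nodes holding $1$ or $2$ packets. Your approach, by contrast, would need the balancing step at every layer, where nodes hold $k$ or $k+1$ packets for arbitrary $k$. After stripping off the common part this reduces to two sub-cases: when $k$ is even the constrained nodes contribute nothing and the feasibility check is a clean $2\times 2$ transportation problem; but when $k$ is odd you are back to exactly the situation of the paper's Lemma~\ref{matching}, with constrained imbalances $a_i,b_j$ that the free packets must repair. So your deferred lemma is not easier than the paper's---it contains it---and the ``Hall-type or network-flow'' check you anticipate is precisely the Case~3 inequality-chasing the paper carries out for the $1$-or-$2$ situation. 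The paper's induction on $|A|$ is the device that lets it avoid ever stating or proving the general-$k$ version.
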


\begin{proof}
If $|A| = 2^{d}$, then we are done, by (for example) 
Theorem~\ref{powers of two}.  So throughout, we can assume that
$|A| < 2^{d}$.
Suppose that, in binary, $|A|=b_{m}b_{m-1}\cdots b_{1}$, where
$m\leq d$.  I will prove the lemma by induction on $m$.
The exact statement that I will be inducting on is: 

\emph{
Over the course of $m+1$ steps, 
we can recursively split the packets over the sub-butterflies, 
so that if sub-butterfly $x*$ has $p$ packets in it, then $x0*$ will
have $\lceil p/2 \rceil$ or $\lfloor p/2 \rfloor$ packets, and $x1*$ will
have $\lfloor p/2 \rfloor$ or $\lceil p/2 \rceil$ packets, respectively.
The same holds on the right butterfly.
(There will then be 0 or 1 packets in each sub-butterfly on level $m+1$
and level $2d-m-1$).  We can then select a matching between the 
sub-butterflies giving us node-disjoint paths from $A$ to $B$.
}

First, the base case: if $m$=0 or 1, then we are done (by 
Theorem~\ref{powers of two}).

Next, the inductive step.  Fix $m$ and assume the theorem holds
for all $m'<m$.  Let us try to reproduce the proof of 
Theorem~\ref{powers of two} with $2^{m+1}>|A| \geq 2^{m}$ 
packets to see where complications arise.  If $|A|\neq 2^{m}$, then
we will not be able to divide the packets evenly in half at every
sub-butterfly for $m$ steps.  A sub-butterfly $x*$ may have an odd number of
packets, so we must send the ``extra'' packet either to $x0*$ or $x1*$.
I will refer to this choice (the ``0'' or ``1'') as the \emph{rounding
decision}.  Note that there is no actual packet that is distinguished
as the ``extra'' one-- there's just a surplus of one more packet that
either goes to $x0*$ or $x1*$.  But it's helpful to imagine that one
of the packets is the extra one when describing the paths.

Choose $A' \subseteq A$ and $B' \subseteq B$ such that
$|A'|=|B'|=|A|-2^{m}$.   Let $m'$ be the integer such that 
$|A'|=b_{m'}b_{m'-1}\cdots b_{1}$ (so, $m'<m$).  
By induction, we can find node-disjoint paths
from $A'$ to $B'$.  The information that we keep from the
induction is not the actual paths themselves.
Instead, we keep the rounding decisions that every sub-butterfly
makes.  Note that even after
we've split $A'$ until there's only 1 packet per sub-butterfly,
we are still splitting with an extra packet; it's just that if
$p=1$, then $\lceil p/2 \rceil =1$, and $\lfloor p/2 \rfloor=0$.
Hence, the almost exact recursive splitting part of the inductive
hypothesis holds not just for the first $m'$ steps, but for the
first $m$ steps.  We need to keep this rounding
information, too.

Consider, now, the original sets $A$ and $B$.  Using 
Lemma~\ref{simple splitting} recursively for $m-1$ steps
on the right and left
butterflies, we can send the ``extra'' packet on each sub-butterfly
the same way on level $k<m$ as we did when routing $A'$ to $B'$.  
To do this, we need to know that the same sub-butterflies have an
odd number of packets in them.
Observe that if a sub-butterfly on level $k$ that has $t$ packets in it
in the $(A', B')$ case, then it  
has $t+2^{m-k}$ in the $(A,B)$ case.  
As long as $k<m$, then
$t$ and $t+2^{m-k}$ have the same parity; therefore, extra packets
exist in the same sub-butterflies.  When we reach step $m$, all
the $m$-level sub-butterflies that had one packet in them in the $(A',B')$
case now have 2 packets, and all the sub-butterflies that had no
packets now have 1.

We shift now to the matching problem on the sub-butterfly connectivity graph.
Consider one connected component of the connectivity graph.
Every node on the left hand side represents a sub-butterfly with
one or two packets on it, as does
every node on the right hand side.  By induction, the total number of
packets on each side is the same.  (If they weren't, the packets in
the $(A',B')$ case couldn't match up.)
Using Lemma~\ref{matching} of the 
next section, we can split the packets over level $m$ (and $2d-m$)
to get 0 or 1 packet per sub-butterfly, with the same number on the 
LHS and RHS of each of the connected components.  By 
Lemma~\ref{connected components} each component is completely connected,
and we're done. 
\end{proof}

Note that since we're using the direction of the ``extra'' packet, rather
than any particular path, the actual packets going into the upper or
lower sub-butterflies are not necessarily the same between the $(A',B')$
case and the $(A,B)$ case.  In particular, $A'$ will not necessarily still
be routed to $B'$.

\section{The Matching Lemma}
\begin{lemma} \label{matching}
\index{Lemma \ref{matching}}
Suppose we have a pair of $d$-dimensional layer-permuted butterflies.
Consider its $q$-dimensional
sub-butterfly connectivity graph, where the sub-butterflies
reside on layer $m=d-q$, and $2d-m$.  Suppose
that each node has 1 or 2 packets
on it.  Finally, assume that there are the same number of packets
on the LHS and the RHS of each connected component.

Then, when passing from the $q$-dimensional connectivity graph to the 
$(q-1)$-dimensional connectivity graph, we can send each packet to
a different sub-butterfly such that each connected component has
the same number of packets on the LHS and the RHS.
\end{lemma}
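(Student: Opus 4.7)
The plan is to work on one connected component $C$ of the $q$-dimensional connectivity graph at a time and handle the three cases from the trichotomy immediately preceding the section, which classifies how $C$ evolves as one bit is revealed on each side. Throughout, the splitting of packets from level $m$ to level $m{+}1$ is constrained: a sub-butterfly holding two packets \emph{must} send one packet to each of its two children (to preserve the target of at most one packet per level-$(m{+}1)$ sub-butterfly), while a one-packet sub-butterfly may freely choose which child receives its packet.

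Case~1 (no reused dimensions) is essentially immediate: $C$ remains a single connected component, LHS and RHS totals are trivially preserved, and any assignment of the one-packet nodes suffices. In Case~2 (one reused dimension), $C$ splits into two pieces indexed by a bit $b$. Writing $L_1,L_2,R_1,R_2$ for the counts of one- and two-packet nodes on each side of $C$, and $L_{1,b},R_{1,b}$ for the numbers routed to the bit-$b$ side, balance becomes the pair of equations $L_2 + L_{1,b} = R_2 + R_{1,b}$ for $b\in\{0,1\}$. Summing recovers the hypothesis $2L_2 + L_1 = 2R_2 + R_1$, so the equations are consistent, and a non-negative integer solution for $L_{1,0},R_{1,0}$ is easy to exhibit.

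Case~3 (two reused dimensions) is where the real work lies, and I expect it to be the main obstacle. Here $C$ splits into up to four pieces indexed by $(i,l)\in\{0,1\}^2$, where $i$ is a left-specified bit newly becoming a matching constraint and $l$ is the symmetric bit from the right. The critical structural observation is that, within $C$, the bits $i$ and $l$ are \emph{free} (not matching constraints before the split), so by the complete-bipartite description of components in Lemma~\ref{connected components} the left nodes of $C$ are equidistributed between $i=0$ and $i=1$, and the right nodes are equidistributed between $l=0$ and $l=1$. Refining the notation to $L_j^a$ (left, $j$ packets, $i=a$) and $R_j^b$ and introducing the obvious variables $L_{1,b}^a$, $R_{1,a}^b$, one obtains four balance equations with four normalization constraints. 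Summing the balance equations in pairs shows the system has rank three once the global packet-count hypothesis is invoked, leaving essentially one free integer parameter. The plan is to show that, thanks to the equidistribution identities $L_1^0+L_2^0 = L_1^1+L_2^1$ and $R_1^0+R_2^0 = R_1^1+R_2^1$, the interval of feasible values of this parameter (non-negative and not exceeding the available one-packet nodes on either side) is non-empty.

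The chief subtlety is precisely that pathological failure modes, such as a two-packet node being forced to deposit a packet in a ``dead-end'' sub-component with no right side to match, are ruled out only by the equidistribution provided by Lemma~\ref{connected components}: a naive counting argument that ignores this symmetry admits apparent counterexamples. Once the equidistribution is exploited, Case~3 becomes a routine linear feasibility check analogous to Case~2, and the component-by-component construction glues together to the full splitting required by the lemma.
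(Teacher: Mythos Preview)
Your plan follows the paper's three-case structure and is essentially correct. In Case~3 the paper proceeds by an explicit construction rather than your linear-feasibility check: it writes the constrained-packet counts as $a_1,a_2$ (left) and $b_1,b_2$ (right), derives $a_1+a_2=b_1+b_2$ (your $L_2^0+L_2^1=R_2^0+R_2^1$, which indeed falls out of equidistribution plus global balance), assumes WLOG $a_1\ge b_1\ge b_2\ge a_2$, and then allocates free packets to balance the four pieces, checking three inequalities of the form $a_1-b_i\le 2^{m-1}-b_i$. Your one-parameter feasibility argument and the paper's explicit allocation use the same ingredients and are equivalent in content.

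One imprecision in your Case~2 (which the paper shares): your balance equations $L_2+L_{1,b}=R_2+R_{1,b}$ tacitly assume the sub-case where the single reused bit is the one being revealed on \emph{both} sides. In the other two sub-cases the reused bit $i$ is already fixed on one side (say the left), so that side's children differ in a non-constraint bit and land in the \emph{same} component $C_a$; a two-packet left node then sends both packets to $C_a$, and one-packet left nodes have no component choice at all. The balance check becomes one-sided --- you need $R_{1,a}=(N+L_2^a)-R_2\ge 0$, which follows from equidistribution ($L_2^{1-a}\le N$) and $L_2=R_2$ --- but this is not what your written equations say.
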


\begin{proof}
Since the behavior of the two-packet sub-butterflies is determined
(one packet goes to $x0*$, one to $x1*$),
this proof will eventually come down to making the correct 
rounding decision for the sub-butterflies with single packets.

There are three cases we have to consider, reflecting the three
possible behaviors of the connectivity graph as outlined on 
page~\pageref{three cases}.

\textbf{Case 1:  (No reused dimensions)}  If the connected components don't
split between the $q$ and $q-1$ dimensional sub-butterflies,
then the lemma is trivially true.

\textbf{Case 2:  (One reused dimension)}  Suppose that each connected component
splits into two connected components.  Consider one connected component
$C$ in the $q$-dimensional connectivity graph that splits into 
$C_{0}$ and $C_{1}$ in the $q-1$-dimensional connectivity graph.

Suppose that there are
$x$ nodes in $C$ with two packets on them, and $y$ nodes with one 
packet on them.  We must send $x$ packets to $C_{0}$ and $x$ to $C_{1}$
on both the left and the right sides
because the behavior of two-packet sub-butterflies is determined.
We can send the $y$ packets from one-packet nodes to either component;
we simply send
$\lfloor y/2 \rfloor$ to $C_{0}$ and $\lceil y/2 \rceil$ to $C_{1}$
on both the left and the right sides.  Then the lemma holds.

\textbf{Case 3:  (Two reused dimensions)} Suppose that each connected
component in the $q$-dimensional connectivity graph splits into four
connected components, e.g.\ $C$ splits into $_{0}C_{0}$, $_{1}C_{0}$, 
$_{0}C_{1}$, and $_{1}C_{1}$.  

Let us calculate how many
of the packets from the 2-packet butterflies arrive in each of these
splintered components.  

If we have a sub-butterfly $x*$ on level $m$ with 2 packets in it, then 
we must send exactly 1 packet to $x0*$ and one to 
$x1*$.  I will refer to these packets as \emph{constrained} packets.  
(By contrast, if a sub-butterfly $x*$ on level $m$ has only 1 packet in it,
we can send the packet either to $x0*$ or $x1*$; such a packet is 
a \emph{free} packet.)  We shift our view back to the corresponding
connectivity graph.
Let us label the number of constrained packets on each side of each 
$_{i}C_{j}$.
Observe first of all that because constrained packets come in pairs,
for a fixed $i=0$ or 1, there are as many constrained
packets on the LHS of $_{i}C_{0}$ as of $_{i}C_{1}$, and similarly
as many on the RHS of $_{0}C_{i}$ as of $_{1}C_{i}$.  
Let the number of packets on the LHS of
$_{0}C_{0}$ be $a_{1}$, and the number of packets on the LHS of $_{1}C_{0}$
 be $a_{2}$.
Let the number of packets on the RHS of 
$_{0}C_{0}$ be $b_{1}$, and the number of packets on the RHS of $_{0}C_{1}$
 be $b_{2}$.
See Figure~\ref{constrained}. 
\begin{figure}[ht] 
\centerline{\includegraphics[height=1.5in]{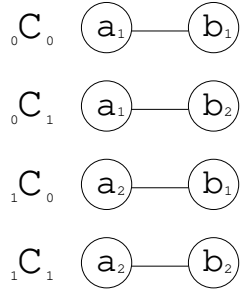}
 \hspace{.2in}} 
\caption{The number of constrained packets} 
\label{constrained}
\end{figure}

Observe that, since each sub-butterfly in layer $m$ has
either one free or two constrained packets, then the number of
packets on the LHS is 
\[\# \mbox{free}_{LHS} + \# \mbox{constrained}_{LHS}=
2^{m}+\frac{1}{2}(\# \mbox{constrained}_{LHS})\]
Since the analogous
equation holds on the RHS, and since the total number of packets are equal,
we get that 
\[2^{m}+\frac{1}{2}(\# \mbox{constrained}_{LHS})=
2^{m}+\frac{1}{2}(\# \mbox{constrained}_{RHS}),\]
so there's the same total number of constrained
packets on the RHS and the LHS.  Therefore, adding up the 
constrained packets in Figure~\ref{constrained} and dividing by two, we get
\[a_{1} + a_{2}=b_{1}+b_{2}\]
Also, any particular $a_{i}$ or $b_{i}$ can't be larger than 
$2^{m-1}$, so 
\[a_{i}, b_{i} \leq 2^{m-1}\]
Due to symmetry, we can assume w.l.o.g. that $a_{1}\geq a_{2}$,
$b_{1} \geq b_{2}$, and $a_{1}\geq b_{1}$.  Putting this together,
we can assume that
\[2^{m-1}\geq a_{1} \geq b_{1} \geq b_{2} \geq a_{2} \geq 0\]

Generally speaking, $a_{i}\neq b_{j}$, so there will not be the 
same number of constrained packets on the RHS and LHS of each
connected component of Figure~\ref{constrained}.  However, we
still have the free packets to allocate.  The situation is as drawn in
Figure~\ref{free 1}.
\begin{figure}[ht] 
\centerline{\includegraphics[height=2.5in]{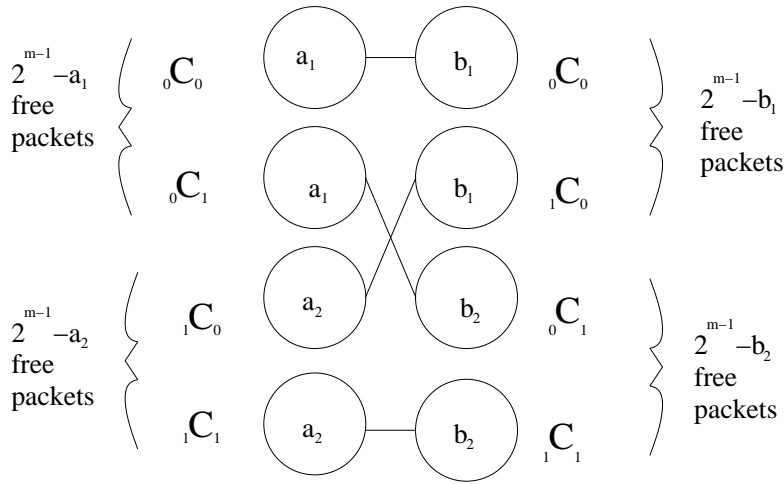} \hspace{.4in}  }
\caption{The number of free and constrained packets} 
\label{free 1}
\end{figure}
Since we assume that $a_{1}\geq b_{1}\geq b_{2} \geq a_{2}$, then 
in order to balance the packets on the LHS and RHS, we have to
add packets as in Figure~\ref{free 2}.
\begin{figure}[ht]
\centerline{\includegraphics[height=2.5in]{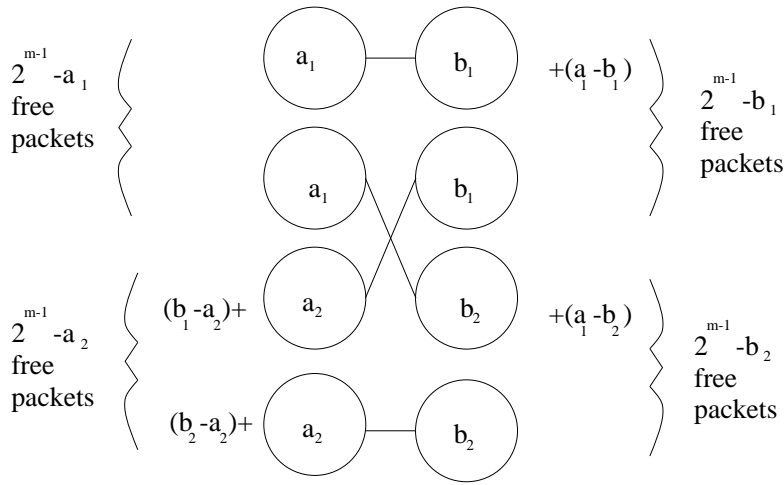} \hspace{.4in}  }
\caption{Adding free packets to balance the bipartite graph}
\label{free 2}
\end{figure}
We have to show that there are enough free packets to add.
There are three inequalities to check.  First,
for $_{1}C_{0}$ and $_{1}C_{1}$ on the LHS,
let us calculate how many free packets are required.
\[(b_{1}-a_{2})+(b_{2}-a_{2}) = b_{1}+b_{2}-2a_{2}=a_{1}+a_{2}-2a_{2}
=a_{1}-a_{2}\]
Now, $a_{1}\leq 2^{m-1}$, so we need no more than $2^{m-1}-a_{2}$ 
free packets, which we have.
For the other two cases, (namely $i=0$ and $i=1$), note that
\[a_{1}-b_{i}\leq 2^{m-1}-b_{i}\]
and in each case, there are $2^{m-1}-b_{i}$ free packets.
So, in all cases, we can use a subset of the free packets to
make the total number of packets on the RHS and LHS equal.
Since all the remaining unmatched free packets on the left are
connected to all the unmatched free packets on the right, we can
choose an exact matching to match these packets, send them to
the appropriate connected component,
and we're done.
\end{proof}

\section{Conclusion}
Are all pairs of butterflies superconcentrators?  Or only the layer-permuted
ones?  It's certainly natural to conjecture that the stronger statement is
true.  As a piece of support, Theorem~\ref{powers of two} can be extended
to prove that any pair of butterflies is a $(2^{m}+1)$ concentrator.
Unfortunately, the pathological cases (from unusual butterfly isomorphisms)
make the general analysis more complicated than I could solve.

The concentration and superconcentration 
results in this chapter all spring from a splitting
and matching approach.
This method holds out a tantalizing suggestion of a
proof of the rearrangeability of pairs of butterflies.
Theorem~\ref{main} can be viewed as follows: if we number each
input and output node 0 or 1, and have the same number of 
zeroes among the inputs and outputs, we can route a permutation
that sends $0\rightarrow 0$ and $1\rightarrow 1$.  
Suppose we labelled the input and output nodes
0,1,2, or 3, with the same size restraints.  The proofs above seem
likely to apply to this case, too.  If we could just continue
doubling the number of labels up to $N=2^{d}$, we'd have proved
rearrangeability.  Getting the proofs to work for an arbitrary
$2^{m}$ seems pretty challenging, though.

Another natural network to try these methods on is the hypercube.
Typically, rearrangeability on the hypercube requires that each
edge is used at most once, ever, and concerns edge-disjointness, rather
than node-disjointness.  A result analogous to 
Theorem~\ref{main} would be more likely to apply to a hypercube
that uses each edge at most once per time step, but possibly multiple
times over several time steps.  However, edge-disjointness might
be strengthened to node-disjointness.  Unfortunately, the translation
to a hypercube is not trivial.

Proving that a graph is a superconcentrator can also be viewed as a
max flow/ min cut problem; thus, Theorem~\ref{main} can be viewed as 
saying that
for any collection of $k$ input and $k$ output nodes, it is 
necessary to delete at least $k$ edges to prevent any (single-pass)
paths from the input to the output sets.  One might optimistically hope
that these results might translate to other max flow problems, at least
on switching networks.

On a possibly more practical note, it's interesting to observe that the
Theorem~\ref{main} makes use of the size of the 
input set, rather than the set itself (i.e.\ $|A|$, not $A$).  It follows that
once you calculate the rounding decisions for a particular sized
input set, the same rounding decisions solve the problem for all
input sets of the same size.  This also suggests another method for
proving concentration results.

 %

 \appendix
 \chapter{Analysis and Probability} \label{probability chapter}

\section{Markov Chains} \label{basic probability section}
The most common stochastic object in this thesis is the Markov chain.

\begin{definition} \index{Markov chain}
A \emph{countable discrete time Markov chain} is a stochastic process
$X(t)$ defined on a countable state space $\ssx$ at discrete moments in time
$t\in\inte$.  It has the property that the distribution of states at
time $t>t_{0}$ is independent of the distribution of states at time
$t<t_{0}$, conditional on the state at time $t_{0}$.

A Markov chain is \emph{time independent},\index{time independence}
if the probability of transfering from state $x$ to $y$
in one time step is independent of the time $t$.

Suppose that for any pair of states $x$ and $y$, there is a nonzero
probability of travelling from state $x$ to $y$ in a finite number of
steps, and from $y$ to $x$ in a finite number of steps.  We call such
a Markov chain\emph{irreducible}.\index{irreducible}

A Markov chain is \emph{periodic with period $p$}
\index{periodic Markov chain}
if the number of time steps
it takes to get from any node $x$ back to itself is always a multiple
of $p$, for $p>1$.  A Markov chain is called
\emph{aperiodic}\index{aperiodic Markov chain} if it is not periodic.
\end{definition}

The Markov chains I will be studying will always be irreducible
and aperiodic.

The first property of interest in studying Markov chains is
their stationary distributions.
\begin{definition} \index{stationary distribution}
Suppose we have an irreducible, aperiodic discrete time Markov
chain with a countable state space.  Take any state $x$.
Start the Markov chain in state $x$, and
let $f_{x}(t)$ be the amount of time that the Markov
chain has spent in state $x$ during time $<t$.  Define
\[\pi(x)=\lim_{t\rightarrow \infty}\Ex \left[\frac{f_{x}(t)}{t}\right]\]

Suppose that $\pi$ forms a distribution on $\ssx$, i.e.\
\[\sum_{x\in\ssx}\pi(x)=1\]
Then we call $\pi$ a \emph{stationary distribution}
of the Markov chain.

If the Markov chain has a stationary distribution, then it is called
\emph{ergodic}\index{ergodicity}.  It is also called
\emph{stable}\index{stability} or \emph{positive
recurrent}\index{positive recurrence}.
\end{definition}

It turns out (see Lawler~\cite{lawler}) that either $\pi(x)=0$ for all
$x$, or $\pi$ is a distribution on $\ssx$.  Here are some 
basic facts about ergodicity and Markov chains:
\begin{theorem} \label{basic ergodic facts theorem}
\index{Theorem \ref{basic ergodic facts theorem}}
The stationary distribution for an irreducible, aperiodic Markov
chain, if it exists, is unique.

Let $S_{\sigma}(t)$ be the amount of time spent in state $\sigma$ between
time 0 and $t$.  If we have a ergodic, irreducible, aperiodic Markov chain,
with stationary distribution $\pi$, then
\[\lim_{t\rightarrow \infty}\frac{S_{\sigma}(t)}{t} = \pi (\sigma) \]
almost surely.  (i.e.\ $\pi(\sigma)$ equals the average fraction of
time spent in state $\sigma$ a.s.)

Suppose we have an irreducible, aperiodic Markov chain, and a state
$x\in\ssx$.  Then the Markov chain is ergodic iff the expected
number of steps between visits to $x$ is finite.
\end{theorem}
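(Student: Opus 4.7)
The plan is to prove the three assertions in sequence, leveraging the regenerative structure induced by returns to a fixed reference state.

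First I would handle uniqueness. Suppose two stationary distributions $\pi$ and $\pi'$ arise as the Ces\`aro limits of $\Ex[f_{x}(t)/t]$ from some starting states. I would couple two copies of the chain via the standard coupling argument for irreducible aperiodic chains on a countable state space: because the pair chain is irreducible (using aperiodicity plus irreducibility of each coordinate), once the two copies meet they can be coupled to agree forever, and the total variation distance between the time-$t$ distributions tends to zero. Since Ces\`aro limits of distributions that are asymptotically identical must themselves coincide, $\pi = \pi'$.

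Next I would prove the strong law statement for $S_{\sigma}(t)/t$. Fix a reference state $x_{0}$ with $\pi(x_{0})>0$ (we'll handle the degenerate case afterwards). By irreducibility the chain returns to $x_{0}$ infinitely often almost surely. Let $T_{1}, T_{2}, \ldots$ be the successive return times and let $C_{k}$ be the length of the $k$-th excursion and $V_{k}(\sigma)$ the number of visits to $\sigma$ during that excursion. By the strong Markov property the pairs $(C_{k}, V_{k}(\sigma))$ are i.i.d., so the ordinary strong law of large numbers gives
\[
\frac{\sum_{k=1}^{n} V_{k}(\sigma)}{\sum_{k=1}^{n} C_{k}} \;\longrightarrow\; \frac{\Ex[V_{1}(\sigma)]}{\Ex[C_{1}]} \quad \text{a.s.}
\]
Embedding this into continuous time $t$ via the renewal counting process for returns to $x_{0}$ yields $S_{\sigma}(t)/t \to \Ex[V_{1}(\sigma)]/\Ex[C_{1}]$ almost surely. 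Taking expectations and applying bounded convergence (using $S_{\sigma}(t)/t \le 1$) identifies this limit as $\pi(\sigma)$, establishing the a.s. statement.

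Finally, for the characterization via expected return times, one direction is immediate: if the chain is ergodic then $\pi(x_{0})>0$ and the displayed identity with $\sigma = x_{0}$ forces $\Ex[C_{1}] = 1/\pi(x_{0}) < \infty$. For the converse, assume $\Ex[C_{1}] < \infty$ for some (hence, by irreducibility, every) state $x_{0}$. Define $\hat\pi(\sigma) = \Ex[V_{1}(\sigma)]/\Ex[C_{1}]$; this is a probability distribution since $\sum_{\sigma} V_{1}(\sigma) = C_{1}$, and the strong Markov property combined with the regenerative decomposition shows $\hat\pi$ is invariant under the transition kernel. The renewal/SLLN argument above then gives $f_{\sigma}(t)/t \to \hat\pi(\sigma)$ a.s., and bounded convergence upgrades this to convergence of $\Ex[f_{\sigma}(t)/t]$, exhibiting $\hat\pi$ as the stationary distribution. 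The main obstacle is this last direction: one must verify integrability carefully to pass from the almost sure limit to the expected fraction of time, and one must check that $\hat\pi$ is genuinely stationary (not merely invariant on average) without begging the question; the cleanest route is to invoke the cycle-based renewal reward theorem directly, which is justified precisely by the finiteness of $\Ex[C_{1}]$.
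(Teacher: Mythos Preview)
The paper does not actually prove this theorem: its entire proof reads ``See Lawler~\cite{lawler}.'' So there is no substantive argument to compare against. Your proposal supplies a genuine proof where the paper gives only a citation, and the route you take---coupling for uniqueness, regenerative decomposition and the strong law of large numbers over excursions for the almost-sure time average, and the cycle formula $\pi(\sigma)=\Ex[V_1(\sigma)]/\Ex[C_1]$ for the return-time characterization---is exactly the standard textbook approach one would find in Lawler or in Durrett. In that sense your argument is correct and aligned with what the cited reference contains.

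One framing quibble: in your uniqueness paragraph you phrase the hypothesis as ``suppose two stationary distributions $\pi$ and $\pi'$ arise as Ces\`aro limits from some starting states,'' but under the paper's definition $\pi(x)$ is already defined pointwise as a single limit, so there is nothing to suppose. The content of the uniqueness claim is really that any probability vector satisfying $\pi P=\pi$ must coincide with this $\pi$; your coupling argument establishes precisely that, but you should state the hypothesis accordingly. Also, in the second part you fix $x_0$ with $\pi(x_0)>0$ and promise to ``handle the degenerate case afterwards,'' but ergodicity already guarantees $\pi(x)>0$ for every $x$ in an irreducible chain, so there is no degenerate case to handle---you can simply drop that clause.
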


\proof See Lawler~\cite{lawler}. \EOP

Let $p(x,y)$ be the probability of travelling from state $x$ to state
$y$ in one time step.  Suppose we have a distribution $\pi$ on $\ssx$
such that for any $x$,
\begin{equation} \label{kolmogorov equations}
\pi(x)=\sum_{y\in\ssx}p(y,x)\pi(y)
\end{equation}
Then there exists a stationary distribution, and the distribution is
$\pi$.  Equation~\ref{kolmogorov equations} is actually a family of
equations, one for each $x\in\ssx$; these are sometimes called the
Kolmogorov equations.  (See Lawler~\cite{lawler}).

\begin{definition}\index{product form}
Suppose we have a stationary distribution defined on an $N$ node
queueing network.  The state of the network can be specified 
by the state of each of its nodes.  We can write this as
$\sigma=(\sigma_{1},\cdots,\sigma_{N})$.

Suppose that 
\[\Pr(\sigma)=\prod
_{i=1}^{N}\Pr(\sigma_{i})\]
for every state $\sigma$, i.e.\ the marginal probabilities multiply
together as though they were independent.  Then we say
that the stationary distribution is of \emph{product form}.
\end{definition}

\section{Tail Bounds}
First, let's construct an exponential upper bound on the tail of sums
of Bernoulli random variables.  (Bounds of this type sometimes go
under the name of ``Hoeffding inequalities''.)\index{Hoeffding
inequality}
\begin{lemma} \label{sums of bernoulli lemma}
\index{Lemma \ref{sums of bernoulli lemma}}
Given a collection of $n$ independent Bernoulli random variables
$X_{1}, X_{2},\ldots,X_{n}$, where $\Pr[X_{k}=1]\leq P_{k}$
for $1\leq k \leq n$, then
\[\Pr[X \geq \beta P] \leq e^{(1-\frac{1}{\beta}-\ln \beta)\beta P}\]
where $\beta>1$, $X=X_{1}+\cdots+X_{n}$, and $P=P_{1}+\cdots+P_{n}$.
\end{lemma}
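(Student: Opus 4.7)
The plan is to use the standard Chernoff bound approach. First I would apply Markov's inequality to the exponentiated random variable: for any $t > 0$,
\[\Pr[X \geq \beta P] = \Pr[e^{tX} \geq e^{t\beta P}] \leq e^{-t\beta P}\,\Ex[e^{tX}].\]

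Next, I would use independence to factor the moment generating function as $\Ex[e^{tX}] = \prod_k \Ex[e^{tX_k}]$. For each Bernoulli-dominated term I would bound
\[\Ex[e^{tX_k}] \leq 1 + P_k(e^t - 1) \leq e^{P_k(e^t-1)},\]
using $1+x \leq e^x$. Multiplying gives $\Ex[e^{tX}] \leq e^{P(e^t-1)}$, so
\[\Pr[X \geq \beta P] \leq e^{P(e^t - 1) - t\beta P} = e^{P(e^t - 1 - t\beta)}.\]

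Finally, I would optimize the free parameter $t > 0$. Differentiating the exponent in $t$ gives $e^t - \beta$, which vanishes at $t = \ln \beta$ (which is positive since $\beta > 1$, as required). Substituting this optimal choice yields
\[\Pr[X \geq \beta P] \leq e^{P(\beta - 1 - \beta \ln \beta)} = e^{(1 - \tfrac{1}{\beta} - \ln \beta)\beta P},\]
which is exactly the stated bound.

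There is no real obstacle here; the only subtlety worth double-checking is the inequality $\Ex[e^{tX_k}] \leq e^{P_k(e^t-1)}$ when we only know $\Pr[X_k = 1] \leq P_k$ rather than equality. This is fine because $\Ex[e^{tX_k}] = 1 + \Pr[X_k=1](e^t - 1) \leq 1 + P_k(e^t - 1)$ since $e^t - 1 > 0$, and then $1 + P_k(e^t-1) \leq e^{P_k(e^t-1)}$. The sign of $1 - 1/\beta - \ln\beta$ is negative for $\beta > 1$, confirming that the right-hand side is a genuine exponentially decaying tail bound.
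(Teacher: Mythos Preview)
Your proof is correct and is exactly the standard Chernoff/moment-generating-function argument. The paper does not give its own proof of this lemma but simply cites Leighton's textbook; your argument is precisely that textbook proof, as is confirmed by the paper's treatment of the companion lower-tail lemma, which spells out the identical MGF technique and calls it ``almost identical to the proof of Lemma~\ref{sums of bernoulli lemma} from Leighton.''
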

\proof
See Leighton~\cite{leighton}, page 168. 
Incidentally, $\beta>1$ implies that $1-\frac{1}{\beta}-\ln \beta<0$,
as can be seen by taking the derivative, so the bound in the
theorem is non-trivial.
\EOP

Next, a lower bound on sums of Bernoulli random variables.
\begin{lemma} \label{bernoulli lower bound lemma}
\index{Lemma \ref{bernoulli lower bound lemma}}
Given a collection of $n$ independent Bernoulli random variables
$X_{1}, X_{2},\ldots,X_{n}$, where $\Pr[X_{k}=1]\geq P_{k}$
for $1\leq k \leq n$, then
\begin{equation} \label{bern lower bound equation}
\Pr[X \leq \beta P] \leq e^{(1-\frac{1}{\beta}+\ln \beta )\beta P}
\end{equation}
where $0<\beta<1$, $X=X_{1}+\cdots+X_{n}$, and $P=P_{1}+\cdots+P_{n}$.
\end{lemma}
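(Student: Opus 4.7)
The plan is to mirror the Chernoff--Markov derivation that establishes Lemma~\ref{sums of bernoulli lemma}, but applied to $e^{-tX}$ rather than $e^{tX}$ in order to control the lower tail of $X$. First I would fix an optimization parameter $t>0$ and invoke Markov's inequality on the non-negative variable $e^{-tX}$:
\[\Pr[X \leq \beta P] \;=\; \Pr\bigl[e^{-tX} \geq e^{-t\beta P}\bigr] \;\leq\; e^{t\beta P}\,\Ex[e^{-tX}].\]
Independence of the $X_k$ factors the moment generating function on the right, and for each Bernoulli $X_k$, writing $p_k = \Pr[X_k=1] \geq P_k$, one has
\[\Ex[e^{-tX_k}] \;=\; 1 + p_k(e^{-t}-1) \;\leq\; 1 + P_k(e^{-t}-1) \;\leq\; \exp\bigl(P_k(e^{-t}-1)\bigr),\]
using (i) that $e^{-t}-1 \leq 0$, so increasing $p_k$ only shrinks the factor and the worst case is $p_k = P_k$; and (ii) the elementary bound $1+x \leq e^x$. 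Multiplying across $k$ gives $\Ex[e^{-tX}] \leq \exp\bigl(P(e^{-t}-1)\bigr)$, hence $\Pr[X \leq \beta P] \leq \exp\bigl(t\beta P + P(e^{-t}-1)\bigr)$.

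Second, I would optimize the exponent over $t>0$. Differentiating in $t$ yields $\beta P - Pe^{-t}$, which vanishes at $t^{*} = -\ln\beta$; this is strictly positive precisely because $0<\beta<1$, so it lies in the permitted range and is a genuine minimum (the second derivative $Pe^{-t}$ is positive). Substituting $t^{*}$ back gives the exponent
\[-\beta P\ln\beta + P(\beta-1) \;=\; \beta P\bigl(1 - \tfrac{1}{\beta} - \ln\beta\bigr),\]
which is the natural lower-tail analogue of the exponent appearing in Lemma~\ref{sums of bernoulli lemma} and matches the form of the claimed bound.

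There is essentially no technical obstacle here beyond the bookkeeping: the whole argument is a standard two-step Chernoff bound. The only thing worth pausing on is that the direction of monotonicity of the map $p \mapsto 1+p(e^{-t}-1)$ is reversed compared with the upper-tail proof, so the adversarial choice of $p_k$ among values $\geq P_k$ is now $p_k = P_k$ itself rather than $p_k$ as large as possible. This reversal is exactly what lets the hypothesis $\Pr[X_k=1]\geq P_k$ (rather than $\leq P_k$) feed through cleanly into a lower-tail estimate, and it is the one place a careless adaptation of the upper-tail proof could introduce a sign error.
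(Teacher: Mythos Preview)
Your approach is essentially identical to the paper's: apply Markov's inequality to $e^{-\lambda X}$, bound each factor via $1+x\le e^{x}$ using $\Pr[X_k=1]\ge P_k$ and $e^{-\lambda}-1<0$, multiply, and optimize at $\lambda=-\ln\beta$. Your final exponent $\beta P\bigl(1-\tfrac{1}{\beta}-\ln\beta\bigr)$ is what the paper's own computation actually yields as well; the $+\ln\beta$ appearing in the displayed statement (and in the paper's last line) is a sign slip, so your derivation is the correct one.
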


\note
Equation~\ref{bern lower bound equation} implies that if $\gamma>0$, then
\[\Pr[X \leq (1-\gamma) P] \leq e^{-\gamma P}\]

\proof
It's tempting to try to prove this result by using Lemma~\ref{sums of
bernoulli lemma}.  Since $1-X_{i}$ is also a Bernoulli random variable,
then the upper bound of Lemma~\ref{sums of bernoulli lemma} translates
into a lower bound.  Unfortunately, if we consider the behavior
for large $n$, our value of $\beta$ will be order $1+\frac{1}{n}$,
and it becomes difficult to analyze exactly what's going to happen.

Instead, I'll prove this using a fresh moment generating function.
(This technique is almost identical to the proof of Lemma~\ref{sums
of bernoulli lemma} from Leighton~\cite{leighton}.)

First, observe that for any $\lambda>0$,
\begin{eqnarray*}
\Ex\left[e^{-\lambda X_{k}}\right] & = & 
	\Pr[X_{k}=1]e^{-\lambda}+1-\Pr[X_{k}=1]	\\
& = &	1-\Pr[X_{k}=1](1-e^{-\lambda}) 		\\
&\leq &	1-P_{k}(1-e^{-\lambda}) 		\\
&\leq &	e^{-P_{k}(1-e^{-\lambda})}
\end{eqnarray*}
since $e^{-\lambda}<1$ and $1-x\leq e^{x}$ for all $x$.  Since
the $X_{k}$'s are independent, it follows that
\begin{eqnarray*}
\Ex\left[e^{-\lambda X}\right] & = & 
\Ex\left[e^{-\lambda X_{1}}\cdots e^{-\lambda X_{n}}\right] \\
&=& \Ex\left[e^{-\lambda X_{1}}\right]
\cdots \Ex\left[e^{-\lambda X_{n}}\right] \\
&\leq& 
e^{-P_{1}(1-e^{-\lambda})} \cdots e^{-P_{n}(1-e^{-\lambda})} \\
&\leq& 
e^{-P(1-e^{-\lambda})}
\end{eqnarray*}
By Markov's inequality,
\begin{eqnarray*}
 \Pr[e^{-\lambda X} \geq e^{-\lambda \beta P}] &\leq&
	\frac{\Ex[e^{-\lambda x}]}{e^{-\lambda \beta P}} \\
&\leq&
e^{-P(1-e^{-\lambda})+\lambda \beta P}
\end{eqnarray*}
If we set $\lambda= -\ln \beta$, which minimizes the bound, then
\begin{eqnarray*}
\Pr[X \leq \beta P] 
& = &
\Pr[e^{\lambda X}\leq e^{\lambda \beta P}] \\
& = &
\Pr[e^{-\lambda X}\geq e^{-\lambda \beta P}] \\
&\leq & 
e^{-P(1-\beta)- \beta \ln\beta P}  \\
&\leq & 
e^{(1-\frac{1}{\beta} +\ln\beta)\beta P}
\end{eqnarray*} 
Note that by taking derivatives,
it is straightforward to show that if
$0<\beta <1$, then
\[1-\frac{1}{\beta} +\ln \beta < 0\]
so the bound in the theorem is non-trivial.
\EOP

\section{The Comparison Theorem and Drift} 
  \label{comparison theorem section} The Comparison theorem is a
powerful theorem that allows us to say, roughly: if a real,
non-negative function of the state space has expected negative drift,
then the expected return times of the system are finite.  The theorem
follows from Dynkin's formula.  This whole exposition is stolen,
pretty much whole hog, from Meyn and Tweedie's
book~\cite{Meyn_and_Tweedie}.  This theorem works equally well for
continuous and discrete time.

We consider a stochastic process $X(t)$ giving the state of 
a Markov chain at time $t=0,1,2,\ldots$.  Let $Z$ be a 
function from $\ssx$ to the non-negative reals.
(This can 
be made more general, but it's not useful to do so.)  For example,
$Z(X(t))$ might be the total queue length of $X(t)$.

For any stopping time $\tau$, define
\[\tau^{n} = \min\{n, \tau, \inf\{k\geq 0 : Z(X(k))\geq n\}\}\]

\begin{theorem}[Dynkin's Formula]
\index{Dynkin's formula}
For each $x\in\ssx$ and non-negative integer $n$,
suppose that $X(0)=x$.  Then
\[\Ex[Z(X(\tau^{n}))] = \Ex[Z(X(0))] +
\Ex \left[ \sum_{i=1}^{\tau^{n}} (\Ex[Z(X(i)) \, | \, X(i-1)] - Z(X(i-1))\right]\]
\end{theorem}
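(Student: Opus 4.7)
The plan is to establish Dynkin's formula by the standard telescoping identity combined with the tower property of conditional expectation, relying crucially on the fact that $\tau^{n}$ is a bounded stopping time (namely, $\tau^{n}\leq n$) to legitimize all interchanges of expectation and summation.

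First I would write the trivial pathwise identity
\[
Z(X(\tau^{n})) - Z(X(0)) \;=\; \sum_{i=1}^{\tau^{n}}\bigl[Z(X(i)) - Z(X(i-1))\bigr]
\]
and then rewrite the random-length sum as an infinite sum with an indicator:
\[
Z(X(\tau^{n})) - Z(X(0)) \;=\; \sum_{i=1}^{\infty} \mathbf{1}_{\{i\le\tau^{n}\}}\bigl[Z(X(i)) - Z(X(i-1))\bigr].
\]
The key observation is that $\{i\le\tau^{n}\} = \{\tau^{n}\ge i\}$ is the complement of $\{\tau^{n}\le i-1\}$, which is $\mathcal{F}_{i-1}$-measurable because $\tau^{n}$ is a stopping time. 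Next I would take expectations, pull the $\mathcal{F}_{i-1}$-measurable indicator past a conditional expectation, and apply the tower property together with the Markov property $\Ex[Z(X(i))\mid \mathcal{F}_{i-1}] = \Ex[Z(X(i))\mid X(i-1)]$ to obtain
\[
\Ex\bigl[\mathbf{1}_{\{i\le\tau^{n}\}}(Z(X(i))-Z(X(i-1)))\bigr] = \Ex\bigl[\mathbf{1}_{\{i\le\tau^{n}\}}(\Ex[Z(X(i))\mid X(i-1)] - Z(X(i-1)))\bigr].
\]
Summing over $i$ and recombining the indicator with the summation bound yields the claimed identity.

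The main obstacle, and really the only delicate point, is justifying the interchange of the infinite sum with expectation. Here I would lean on the definition of $\tau^{n}$: since $\tau^{n}\le n$, only the first $n$ indicators are ever nonzero, so the sum has at most $n$ terms almost surely and the interchange is immediate once each term is integrable. For integrability of the individual terms, note that for $i\le \tau^{n}$ we have $Z(X(i-1))<n$ by the definition of $\tau^{n}$, so $Z(X(i-1))$ is bounded on the relevant event, and the one-step conditional expectation $\Ex[Z(X(i))\mid X(i-1)]$ is finite state-by-state; the finitely many increment terms therefore have finite expectation under the mild background assumption that one-step moves of $Z$ are integrable from every state. With this, the telescoping argument goes through and Dynkin's formula follows. \EOP
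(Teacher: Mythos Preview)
Your proof is correct and follows essentially the same route as the paper's: telescope $Z(X(\tau^n))-Z(X(0))$, rewrite the random-length sum using the indicator $1_{\{\tau^n\ge i\}}$, use $\mathcal{F}_{i-1}$-measurability of that indicator together with the Markov property to replace increments by conditional increments, and then collapse back to a sum up to $\tau^n$. If anything, you are more explicit than the paper about why the interchange of sum and expectation is legitimate (boundedness $\tau^n\le n$ and $Z(X(i-1))<n$ on $\{i\le\tau^n\}$), whereas the paper simply takes expectations without comment.
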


\proof  
For each $n\in \inte_{+}$,
\[Z(X(\tau^{n})) = Z(X(0)) + \sum_{i=1}^{\tau^{n}}
(Z(X(i))-Z(X(i-1)))\]
\[=Z(X(0)) + \sum_{i=1}^{n}1_{\{\tau^{n} \geq i\}}(Z(X(i))-Z(X(i-1)))\]
Taking expectations and noting that $\Ex[1_{\{\tau^{n} \geq i\}}|X(i-1)]
=\Ex[1_{\{\tau^{n} \geq i\}}]$, we get
\[\Ex[Z(X(\tau^{n}))] = \Ex[Z(X(0))] + 
\Ex\left[\sum_{i=1}^{n}\Ex[Z(X(i))-Z(X(i-1)) | X(i-1)]
1_{\{\tau^{n} \geq i\}}\right] \]
\[=\Ex[Z(X(0))] + \Ex\left[\sum_{i=1}^{\tau_{n}}\Ex[Z(X(i))|X(i-1)] - Z(X(i-1))\right] \]
\EOP

We can use Dynkin's formula to analyze drift in a system.
As a corollary of this, we get our main result.
\begin{theorem}[The Comparison Theorem] \label{comparison theorem}
\index{Comparison Theorem}
\index{Theorem \ref{comparison theorem}}
Suppose that $Z, f,$ and $s$ are functions from $\ssx$ to
the non-negative reals.  Suppose further that, for $X(0)$ 
equal to any fixed $x$,
\begin{equation} \label{eq:drift}
\Ex Z(X(1)) \leq Z(x) - f(x) + s(x)
\end{equation}
Then for any stopping time $\tau$,
\[\Ex \left[ \sum_{k=0}^{\tau-1} f(X(k)) \right]
\leq Z(x) + \Ex\left[ \sum_{k=0}^{\tau -1} s(X(k))\right] \]
\end{theorem}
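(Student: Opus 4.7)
The plan is to derive the Comparison Theorem directly from Dynkin's formula by first proving it for the truncated stopping time $\tau^n$ and then sending $n \to \infty$ via monotone convergence.

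First, I would apply Dynkin's formula at the truncated stopping time $\tau^n$ (starting from $X(0)=x$) to obtain
\[\Ex[Z(X(\tau^n))] = Z(x) + \Ex\left[\sum_{i=1}^{\tau^n}\bigl(\Ex[Z(X(i))\mid X(i-1)] - Z(X(i-1))\bigr)\right].\]
The one-step drift hypothesis \eqref{eq:drift}, applied with the strong Markov property at each $X(i-1)$, gives $\Ex[Z(X(i))\mid X(i-1)] - Z(X(i-1)) \leq -f(X(i-1)) + s(X(i-1))$ almost surely. Substituting into Dynkin's formula and re-indexing the sum from $k=0$ to $\tau^n - 1$ yields
\[\Ex[Z(X(\tau^n))] \leq Z(x) + \Ex\left[\sum_{k=0}^{\tau^n - 1}\bigl(s(X(k)) - f(X(k))\bigr)\right].\]

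Next, I would rearrange and use non-negativity of $Z$. Since $Z(X(\tau^n)) \geq 0$, we can drop the left-hand side (after moving the $f$-sum to the left), giving
\[\Ex\left[\sum_{k=0}^{\tau^n - 1} f(X(k))\right] \leq Z(x) + \Ex\left[\sum_{k=0}^{\tau^n - 1} s(X(k))\right].\]
This is already the claimed inequality but with $\tau^n$ in place of $\tau$.

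Finally, I would pass to the limit $n \to \infty$. By construction $\tau^n \uparrow \tau$ (since $\tau^n$ is the minimum of $n$, $\tau$, and the first exit time from the sublevel set $\{Z < n\}$, and each of these caps relaxes as $n$ grows), so the partial sums $\sum_{k=0}^{\tau^n - 1} f(X(k))$ and $\sum_{k=0}^{\tau^n - 1} s(X(k))$ are monotonically non-decreasing in $n$ (because $f, s \geq 0$) with pointwise limit equal to the corresponding sums up to $\tau - 1$. The monotone convergence theorem then lets us pass the limit through the expectation on both sides, yielding the theorem. The only delicate point is the limit step: it works precisely because we assumed $f, s \geq 0$ (so monotone convergence applies) and because $Z \geq 0$ (so we could discard $\Ex[Z(X(\tau^n))]$ without needing to control its growth). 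Everything else is bookkeeping around Dynkin's formula.
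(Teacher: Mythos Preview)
Your approach matches the paper's almost exactly, but you skip one technical safeguard that the paper includes, and without it your rearrangement step is not justified.  After Dynkin's formula you write
\[
\Ex[Z(X(\tau^n))] \leq Z(x) + \Ex\Bigl[\sum_{k=0}^{\tau^n-1}\bigl(s(X(k)) - f(X(k))\bigr)\Bigr]
\]
and then ``move the $f$-sum to the left'' to get separate expectations of the $f$-sum and the $s$-sum.  That split requires at least one of $\Ex\bigl[\sum_{k<\tau^n} f(X(k))\bigr]$ or $\Ex\bigl[\sum_{k<\tau^n} s(X(k))\bigr]$ to be finite.  Although $\tau^n\leq n$, the functions $f$ and $s$ are unbounded on $\ssx$, and the definition of $\tau^n$ only controls $Z(X(k))$ for $k<\tau^n$, not $f(X(k))$ or $s(X(k))$.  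So neither expectation is finite a priori, and the split can fail.

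The paper fixes this by first replacing $f$ with $f\wedge N$ for a fixed integer $N$.  Then $\sum_{k<\tau^n}(f(X(k))\wedge N)\leq nN$ is bounded, so its expectation is finite and can legitimately be added to both sides.  One then obtains
\[
\Ex\Bigl[\sum_{k<\tau^n}\bigl(f(X(k))\wedge N\bigr)\Bigr]\leq Z(x)+\Ex\Bigl[\sum_{k<\tau}s(X(k))\Bigr],
\]
and finishes by letting $n\to\infty$ and then $N\to\infty$, both via monotone convergence.  Your proof becomes correct once you insert this truncation; everything else you wrote is fine.
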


\note The functions $Z, f$, and $s$ in the Comparison theorem should
be interpreted as follows.  $Z$ is the function whose drift we're 
considering, e.g.\ the total queue length.  The amount
that $Z$ drifts down by in one step is $f$.  Finally, $s$ is an
exception parameter; typically, it equals a constant on some
``bad'' set of (finitely many) exceptions, and zero everywhere else.

\proof
Since we have a Markov chain, our assumption in Equation~\ref{eq:drift}
actually tells us that for all $t$,
\[\Ex Z(X(t+1)|X(t)) \leq Z(X(t)) - f(X(t)) + s(X(t))\]
Fix some integer $N>0$ and note that
\[\Ex [Z(X(t+1)) | X(t)] \leq Z(X(t)) - f(X(t))\wedge N + s(X(t))\]
(where $a\wedge b=\min \{ a,b \}$.)  By Dynkin's formula, 
\[0\leq \Ex [Z(X(\tau^{n}))]\leq Z(X(0)) +
\Ex\left[ \sum_{i=1}^{\tau^{n}}(s(X(i-1)) -[f(X(i-1)) \wedge N])\right] \]
and hence by adding the finite term
\[\Ex \left[ \sum_{k=1}^{\tau^{n}}[f(X(k-1))\wedge N] \right] \]
to each side we get
\[\Ex \left[ \sum_{k=1}^{\tau^{n}}[f(X(k-1))\wedge N] \right] 
\leq Z(X(0)) +
\Ex\left[ \sum_{i=1}^{\tau^{n}}s(X(i-1)) \right] \]
\[ \leq
Z(X(0)) +
\Ex\left[ \sum_{i=1}^{\tau}s(X(i-1)) \right] \]
Letting $n\rightarrow \infty$, and then $N\rightarrow \infty$ gives the 
result by the monotone convergence theorem.
\EOP

Special cases of the Comparison Theorem give some frequently
used stability criteria.

\begin{corollary}[Foster's Criterion] \label{foster's criterion}
\index{Corollary \ref{foster's criterion}}
\index{Foster's criterion}
Suppose we have a discrete time irreducible, aperiodic Markov chain
with a countable state space.  Suppose we construct a potential
function $Z:\ssx \rightarrow \real^{+}$.  Suppose that for any real
$B$, there are only finitely many states $x\in\ssx$ with $Z(x)<B$.

Finally, suppose that there exists a $B_{0}$ and
an $\epsilon>0$ such that
if $Z(X(0))>B_{0}$, then
\[\Ex[Z(X(1))]\leq Z(X(0)) - \epsilon\]
Then the Markov chain is stable.
\end{corollary}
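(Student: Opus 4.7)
The plan is to use the Comparison Theorem (Theorem~\ref{comparison theorem}) to bound the expected return time to some fixed state, and then invoke the third clause of Theorem~\ref{basic ergodic facts theorem}, which reduces ergodicity of an irreducible aperiodic chain to exactly that property.

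First I will take the sublevel set $B = \{x \in \ssx : Z(x) \leq B_0\}$, which is finite since it is contained in $\{Z < B_0+1\}$. Assuming implicitly that $\Ex[Z(X(1)) \mid X(0)=x] < \infty$ for each $x \in B$ (automatic in every queueing network considered in this thesis), let $C = \epsilon + \max_{x \in B}\Ex[Z(X(1)) \mid X(0)=x]$; this maximum is finite because $B$ is finite. Setting $f \equiv \epsilon$ and $s(x) = C \cdot \mathbf{1}_B(x)$, the drift hypothesis of the corollary together with this choice of $C$ gives
\[
\Ex[Z(X(1)) \mid X(0)=x] \leq Z(x) - f(x) + s(x) \quad \text{for every } x \in \ssx,
\]
which is exactly the hypothesis of the Comparison Theorem. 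Applying it with the stopping time $\tau_B^{+} = \inf\{k \geq 1 : X(k) \in B\}$ and starting from $x \notin B$, the path $X(0), \ldots, X(\tau_B^{+} - 1)$ avoids $B$, so the $s$-contribution vanishes and we get the clean bound $\epsilon \, \Ex_x[\tau_B^{+}] \leq Z(x)$. For $x \in B$, a one-step conditioning on $X(1)$ combined with this estimate yields $\Ex_x[\tau_B^{+}] \leq 1 + \Ex_x[Z(X(1))]/\epsilon < \infty$.

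Finally, I will promote finite hitting-of-$B$ to finite return to a single state. Pick any $x_0 \in B$. The successive visits of $X(\cdot)$ to $B$ form an irreducible Markov chain on the finite set $B$ (irreducibility inherited from the original chain), hence are automatically positive recurrent, so the induced return time $N$ to $x_0$ has finite expectation. Combining this with the uniform bound $M := 1 + \max_{y \in B}\Ex_y[Z(X(1))]/\epsilon$ on the conditional mean length of each excursion from $B$ gives $\Ex_{x_0}[\tau_{x_0}^{+}] \leq M \cdot \Ex[N] < \infty$, and Theorem~\ref{basic ergodic facts theorem} then delivers ergodicity. The main obstacle will be this last step: the excursion lengths are not i.i.d., so Wald's identity does not apply verbatim, but a tower-property / optional-stopping argument using the uniform conditional bound on excursion length produces the required $M \cdot \Ex[N]$ estimate. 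Everything else is essentially a one-shot application of the Comparison Theorem.
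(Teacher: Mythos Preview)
Your proof is correct and follows the same route as the paper: apply the Comparison Theorem with $f\equiv\epsilon$ and $s = C\,\mathbf{1}_B$ supported on the finite sublevel set $B=\{Z\le B_0\}$, then conclude ergodicity from finite expected return times. The paper's own proof is literally the one-line remark ``this result follows from the Comparison Theorem''; you have supplied considerably more detail, in particular the passage from finite expected return to the set $B$ to finite expected return to a single state $x_0$, which the paper (both here and in the analogous step of Theorem~\ref{lim stab}) leaves implicit.

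One minor simplification: your separate one-step conditioning for $x\in B$ is unnecessary, since the Comparison Theorem already handles that case directly---with $\tau=\tau_B^+$ and $x\in B$, the only nonzero $s$-term in the right-hand sum is $s(X(0))=C$, giving $\epsilon\,\Ex_x[\tau_B^+]\le Z(x)+C$ immediately.
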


\proof
This result follows from the Comparison Theorem.  It was originally
proved (in a slightly weaker form) by Foster~\cite{foster}.  
\EOP

It's interesting to node that the converse is also true.  Suppose we
have a stable Markov chain.  Fix some state $x_{0}\in\ssx$.  
Define $Z(x_{0})=0$, and otherwise let $Z(x)$ be the expected number
of time steps until $x$ returns to $x_{0}$.  (By stability, this
expectation is finite.)  If $Z(x)>1$, then the expected change is
exactly $-1$ in one time step.

Foster's Criterion amounts to taking a constant $f(x)$ in the
Comparison Theorem.  To prove stronger results, we need to let $f(x)$
grow.  For example, suppose we wanted to show that the
expected queue length was finite, where $q(x)$
is the length of the queue of state $x$.  Then
it would suffice to find a $B_{0}$ and $f(x)$ such
that
\[\mbox{if } f(x)>B_{0},\mbox{ then }f(x)\geq q(x)\]
If we choose a $f(x)$ that grows even faster, we
can prove stronger results.

\begin{definition}
Consider a discrete time Markov chain.  Take a $\beta>0$ and
$f:\ssx\rightarrow \real^{+}$ such that $f(x)\geq 1$ for all $x$.  Let
$\Delta f(x)$ be the expected change in $f(x)$ after one time step.
Suppose there is a bound $B$ such that for any $x\in\ssx$ with
$f(x)>B$, we have
\[\Delta f(x) \leq -\beta f(x)\]
Then we say that the Markov chain is \emph{geometrically
ergodic}.\index{geometric ergodicity}
\end{definition}

There are a host of interesting facts about geometrically
ergodic Markov chains.  The two I use in this thesis are
the following:

\begin{corollary} \label{geometric ergodicity corollary}
\index{Corollary \ref{geometric ergodicity corollary}}
If we have a geometrically ergodic Markov chain,
then the return time to any state has
an exponential tail.  Also,
\[\Ex[f(x)]<\infty\]
\end{corollary}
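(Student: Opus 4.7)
The plan is to leverage the multiplicative drift inequality $\Ex[f(X(1)) \mid X(0)=x] \leq (1-\beta)f(x)$, which holds whenever $f(x) > B$, to construct an exponential supermartingale; this simultaneously yields the tail bound and, via the Comparison Theorem, the integrability of $f$ against the stationary distribution.

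First I would establish an exponential tail on the hitting time of the small set $B_0 = \{x \in \ssx : f(x) \leq B\}$. Let $\tau_{B_0}$ denote the first time the chain enters $B_0$. The drift condition is exactly the statement that, on the event $\{n < \tau_{B_0}\}$, we have $\Ex[(1-\beta)^{-(n+1)} f(X(n+1)) \mid \mathcal{F}_n] \leq (1-\beta)^{-n} f(X(n))$, so $M(n) = (1-\beta)^{-(n \wedge \tau_{B_0})} f(X(n \wedge \tau_{B_0}))$ is a non-negative supermartingale. Since $f \geq 1$, Markov's inequality applied to $M(n)$ gives $\Pr[\tau_{B_0} > n \mid X(0)=x] \leq (1-\beta)^n f(x)$.

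Next I would promote this to a tail bound on the return time $\tau_{x_0}$ to an arbitrary state $x_0$. Each excursion of the chain out of $B_0$ and back has a geometric tail by the previous paragraph, and by irreducibility (combined with the finiteness of $B_0$, which is implicit in every application in this thesis) there is a uniform positive probability that any one such excursion-plus-return actually terminates at $x_0$. The number of excursions needed is therefore stochastically dominated by a geometric random variable, and a geometric sum of random variables with exponential tails again has an exponential tail, yielding the first claim.

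For $\Ex_\pi[f] < \infty$, I would extend the drift condition to all of $\ssx$ by noting that on the finite set $B_0$, $\Ex[f(X(1)) \mid x]$ is bounded by some constant $C$; this gives $\Ex[f(X(1)) \mid x] - f(x) \leq -\beta f(x) + C \cdot 1_{B_0}(x)$ for all $x$. Applying the Comparison Theorem with $Z = f$ and stopping time $\tau_{x_0}$, a return time to a fixed $x_0 \in B_0$, yields $\Ex\bigl[\sum_{k=0}^{\tau_{x_0}-1} \beta f(X(k))\bigr] \leq f(x_0) + C\,\Ex[\tau_{x_0}]$, whose right-hand side is finite because geometric ergodicity implies positive recurrence via Foster's criterion. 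Dividing by $\Ex[\tau_{x_0}]$ and recognizing the left-hand side as $\beta\,\Ex_\pi[f]\cdot\Ex[\tau_{x_0}]$ via the standard renewal representation of $\pi$ gives $\Ex_\pi[f] < \infty$. The main obstacle is the bookkeeping in the third paragraph, namely cleanly converting the geometric tail on hitting $B_0$ into one on returning to a specific state; this is mechanical once $B_0$ is finite, but would require a small-set/minorization argument à la Meyn and Tweedie if that hypothesis were dropped.
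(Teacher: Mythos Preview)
Your proposal is correct and is essentially the standard argument; the paper itself does not give a proof but simply cites Meyn and Tweedie, Chapter~15, for the exponential-tail claim and remarks that $\Ex[f(x)]<\infty$ follows ``by a more immediate appeal to the Comparison Theorem''---which is exactly your third paragraph. Your supermartingale construction for the hitting time of $B_0$ and the subsequent geometric-regeneration argument are precisely the Meyn--Tweedie route, and you have correctly flagged that finiteness of $B_0$ (true in every use in this thesis, via the bounded-norm hypothesis) is what makes the promotion from $\tau_{B_0}$ to $\tau_{x_0}$ elementary rather than requiring a minorization condition.
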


\proof
See Meyn and Tweedie~\cite{Meyn_and_Tweedie}, Chapter 15.
One can also prove
\[\Ex[f(x)]<\infty\]
by a more immediate appeal to the Comparison Theorem.
\EOP

\section{Uniform Integrability Facts} 
\label{uniform integrability appendix}
Suppose we have a sequence of functions $f_{n}(t)$ that converge
to $f(t)$.  It would be nice if $\Ex [f_{n}(t)]$ converged to
$\Ex[f(t)]$.  Uniform integrability allows one to
make conclusions like that.  More precisely:

A collection of random variables $\{ Y_{n} \}$ is called 
\emph{uniformly integrable} if
\[\lim_{M\rightarrow \infty} \sup_{n} \Ex [|Y_{n}| 1_{\{|Y_{n}|>M\}}] =0\]

We can now state the main theorem of significance to us:
\begin{theorem}  \label{main unif int}
\index{Theorem \ref{main unif int}}
For $\{ Y_{n} \}$ and $Y$ in $\mathcal{L}^{1}$, $\lim_{n\rightarrow
\infty} \Ex |Y_{n}-Y| = 0$ if and only if both $Y_{n}\rightarrow Y$ in
probability and $\{Y_{n}\}$ are uniformly integrable.
\end{theorem}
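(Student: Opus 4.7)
The plan is to prove the two implications separately, recognizing this as a version of Vitali's convergence theorem.

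For the forward direction, assume $\Ex|Y_n - Y| \to 0$. Convergence in probability is immediate from Markov's inequality: $\Pr(|Y_n - Y| > \epsilon) \leq \Ex|Y_n - Y|/\epsilon \to 0$. For uniform integrability, I would fix $\epsilon > 0$ and choose $N$ so that $\Ex|Y_n - Y| < \epsilon/2$ for $n \geq N$. Since $Y \in \mathcal{L}^1$, the single random variable $\{Y\}$ is uniformly integrable, so for $M$ large $\Ex[|Y| 1_{\{|Y|>M\}}] < \epsilon/2$. Using $|Y_n| \leq |Y| + |Y_n - Y|$ and the absolute continuity of the integral of $|Y|$ (controlled via $\Pr(|Y_n| > M) \leq \Ex|Y_n|/M$, which is uniformly bounded), I can handle the tail for $n \geq N$; the finitely many $n < N$ each give singletons which are trivially uniformly integrable.

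For the reverse direction, assume $Y_n \to Y$ in probability and $\{Y_n\}$ is uniformly integrable. First I would verify $Y \in \mathcal{L}^1$: pass to an almost sure convergent subsequence (which exists by convergence in probability) and apply Fatou combined with the uniform bound $\sup_n \Ex|Y_n| < \infty$ implied by uniform integrability. The main step is a truncation argument. Let $\phi_M(x) = x$ if $|x| \leq M$ and $M \,\mathrm{sign}(x)$ otherwise. Then I would split
\[
\Ex|Y_n - Y| \leq \Ex|Y_n - \phi_M(Y_n)| + \Ex|\phi_M(Y_n) - \phi_M(Y)| + \Ex|\phi_M(Y) - Y|.
\]
Given $\epsilon > 0$, choose $M$ so large that the first term is at most $\epsilon/3$ uniformly in $n$ (possible by uniform integrability) and the third term is at most $\epsilon/3$ (possible since $Y \in \mathcal{L}^1$). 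For the middle term, since $\phi_M$ is continuous and bounded, $\phi_M(Y_n) \to \phi_M(Y)$ in probability, and the uniform bound $|\phi_M(Y_n) - \phi_M(Y)| \leq 2M$ lets the bounded convergence theorem drive this term below $\epsilon/3$ for $n$ sufficiently large.

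The main obstacle is the reverse direction, specifically justifying the uniform bound on the first term of the truncation split. The estimate $\Ex|Y_n - \phi_M(Y_n)| = \Ex[(|Y_n| - M) 1_{\{|Y_n| > M\}}] \leq \Ex[|Y_n| 1_{\{|Y_n| > M\}}]$ reduces this to the defining condition of uniform integrability, so the obstacle is really bookkeeping rather than deep; the only subtle point is ensuring $Y \in \mathcal{L}^1$ before the splitting is meaningful, which is handled by the preliminary Fatou argument on a subsequence.
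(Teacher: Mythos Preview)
Your argument is correct and is essentially the standard proof of Vitali's convergence theorem. The paper does not actually prove this result at all; it simply cites Dudley, Theorem~10.3.6, so there is no in-paper proof to compare against. One small remark: the hypothesis already places $Y$ in $\mathcal{L}^{1}$, so your preliminary Fatou argument to establish $Y\in\mathcal{L}^{1}$ is unnecessary here (though harmless, and needed in the more general formulation where $Y$ is not assumed integrable a priori).
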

A proof of this result
can be found in a number of sources, e.g.\ Dudley~\cite{Dudley}, Theorem
10.3.6. 

We can make the following 
\begin{corollary} \label{lim exp}
\index{Corollary \ref{lim exp}}
Suppose that $\{ Y_{n} \}$ are a sequence of real-valued 
random variables.  
Suppose further that there exists a bound $d$ such that for
every $n$, $Y_{n}\in[-d,d]$.  Suppose finally that 
$\lim_{n} Y_{n}=0.$  Then
\[\lim_{n\rightarrow \infty} \Ex [Y_{n}]=0\]
\end{corollary}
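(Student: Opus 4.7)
The plan is to deduce this as a direct corollary of Theorem~\ref{main unif int}, by verifying its two hypotheses for the sequence $\{Y_n\}$ with limiting random variable $Y \equiv 0$.

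First, I will observe that the hypothesis $\lim_n Y_n = 0$ (which I read as convergence either almost surely or in probability) immediately gives convergence in probability of $Y_n$ to $0$, since a.s.\ convergence implies convergence in probability. Both $Y \equiv 0$ and each $Y_n$ lie in $\mathcal{L}^1$, since $|Y_n| \leq d$ forces $\Ex|Y_n| \leq d < \infty$. So the ``convergence in probability'' half of Theorem~\ref{main unif int} is in hand.

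Next, I will check uniform integrability of $\{Y_n\}$. This is where the uniform bound $d$ does all the work: for any $M > d$, the event $\{|Y_n| > M\}$ is empty, so $|Y_n| 1_{\{|Y_n| > M\}} \equiv 0$ and in particular
\[
\sup_n \Ex\bigl[|Y_n| 1_{\{|Y_n|>M\}}\bigr] = 0
\]
for all $M > d$. Hence the limsup as $M \to \infty$ is zero, which is exactly the definition of uniform integrability.

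With both hypotheses of Theorem~\ref{main unif int} verified, I conclude that $\Ex|Y_n - 0| \to 0$, i.e.\ $\Ex|Y_n| \to 0$. Since $|\Ex[Y_n]| \leq \Ex|Y_n|$ by the triangle inequality for expectations, it follows that $\Ex[Y_n] \to 0$, as desired. There is no real obstacle here; the only subtlety is the innocuous reading of ``$\lim_n Y_n = 0$'' as convergence in probability (or a.s., which is stronger), and the observation that uniform boundedness trivially implies uniform integrability. Alternatively, one could skip Theorem~\ref{main unif int} entirely and invoke the bounded convergence theorem directly, but using the theorem already stated in the appendix keeps the exposition self-contained.
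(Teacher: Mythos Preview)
Your proof is correct and follows essentially the same route as the paper: use the uniform bound $d$ to get both $\mathcal{L}^1$ membership and uniform integrability, upgrade pointwise convergence to convergence in probability, and then apply Theorem~\ref{main unif int}. The paper's version is simply terser, and it likewise remarks afterward that the dominated (bounded) convergence theorem would give a more direct proof.
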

\proof
The bound $d$ tells us that the $\{ Y_{n} \}$ are in $\mathcal{L}^{1}$,
and also that they are uniformly integrable.  Since the $\{ Y_{n} \}$
converge pointwise to 0, then in particular they converge to 0 in probability.
Therefore, Theorem~\ref{main unif int} gives us the result. 
\EOP

A more straightforward proof of Corollary~\ref{lim exp} follows from Lebesgue's
dominated converge theorem (see, for instance, Rudin~\cite{rudin},
page 26).  However, generalizations of Corollary~\ref{lim exp} for
unbounded random variables need to use Theorem~\ref{main unif int},
so I include it here.

\section{Queueing Theory} \label{queueing theory section}
\begin{theorem} \label{birth-death theorem}
\index{Theorem \ref{birth-death theorem}}
Suppose we have a discrete time single server queue.  Suppose
that packets are inserted by a Bernoulli
process such that a packet arrives with probability 
$\hat{A}$.  Suppose that service times are geometrically
distributed such that a packet departs with
probability $\hat{D}$.  

Define $A=\hat{A}(1-\hat{D})$ and $D=\hat{D}(1-\hat{A})$.
($A$ is the chance of a net gain of one packet; $D$ is 
the chance of a net departure of one packet.)

Suppose that we measure the 
system after new packets have arrived and before
old packets have departed.  Then the stationary distribution
is:
\[\Pr[\mbox{0 packets}]=\frac{\hat{D}-\hat{A}}{\hat{D}}\]
\[\Pr[\mbox{1 packet}]=\frac{\hat{D}-\hat{A}}{\hat{D}}
\frac{\hat{A}}{D}\]
and for $n>1$,
\[\Pr[\mbox{$n$ packets}]=
\left(\frac{A}{D}\right)^{n-1}\frac{\hat{A}}{\hat{D}}
\frac{D-A}{D}\]
so the expected queue length is:
\[\frac{A\hat{A}}{D(D-A)}(1-\hat{A})
=\frac{\hat{A}^{2}(1-\hat{D})}{\hat{D}(\hat{D}-\hat{A})}\]

Suppose that we measure the system after packets have departed
and before new packets have arrived.  Then the stationary distribution is:
\[\Pr[\mbox{$n$ packets}]=
\left(\frac{A}{D}\right)^{n}\frac{D-A}{D}\]
so the expected queue length is:
\[\frac{A^{2}}{D(D-A)}\]
\end{theorem}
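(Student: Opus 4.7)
The plan is to recognize this as a discrete-time birth--death chain on $\{0,1,2,\ldots\}$ and solve it by detailed balance, then extract the expected queue length by a standard geometric-series computation. Under the first convention (post-arrival, pre-departure), within one time step the dynamics first attempt a departure and then an arrival. From state $n\ge 1$ this gives transition probabilities $n\to n+1$ with probability $(1-\hat D)\hat A = A$, $n\to n-1$ with probability $\hat D(1-\hat A)=D$, and $n\to n$ with the remainder. The boundary state $n=0$ is special: no departure is possible, so the upward transition has probability $\hat A$ rather than $A$, while $0\to 0$ has probability $1-\hat A$.

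Applying detailed balance, I would write $\pi_0\hat A=\pi_1 D$ at the boundary and $\pi_n A=\pi_{n+1} D$ for $n\ge 1$. Solving the recursion gives $\pi_n=\pi_0\,(\hat A/D)(A/D)^{n-1}$ for $n\ge 1$. The key algebraic identity driving normalization is $D-A=\hat D-\hat A$, so summing the geometric tail yields
\[
\pi_0\Bigl(1+\tfrac{\hat A}{D-A}\Bigr)=\pi_0\cdot\tfrac{\hat D}{\hat D-\hat A}=1,
\]
giving $\pi_0=(\hat D-\hat A)/\hat D$ and matching the stated formulas for $\pi_1$ and $\pi_n$, $n>1$. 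For the expected queue length I would use $\sum_{n\ge 1}n\,x^{n-1}=1/(1-x)^2$ with $x=A/D$ to compute $E[X]=\hat A D/[\hat D(\hat D-\hat A)]$, and then subtract the expected number of packets in service, namely $1-\pi_0=\hat A/\hat D$, since by the conventions of Section~3 the ``queue'' excludes the packet currently in service. After simplification using $D-\hat D+\hat A=\hat A(1-\hat D)$, this produces exactly $\hat A^2(1-\hat D)/[\hat D(\hat D-\hat A)]$, which is the claimed value rewritten in $(A,D,\hat A,\hat D)$ form.

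For the second convention (post-departure, pre-arrival), the order within a step reverses to arrival-then-departure, which removes the boundary asymmetry: from every state $n\ge 0$ the upward rate is $A$ and (from $n\ge 1$) the downward rate is $D$. Detailed balance then yields the purely geometric distribution $\pi_n=((D-A)/D)(A/D)^n$, and the same series identity gives $E[X]=A/(D-A)$; subtracting $1-\pi_0=A/D$ yields the queue length $A^2/[D(D-A)]$. There is no serious obstacle in any of this; the only delicate point is the boundary correction in the first convention, where one must remember to use $\hat A$ rather than $A$ for the $0\to 1$ transition, and the main bookkeeping task is verifying the identity $D-A=\hat D-\hat A$ and reconciling the ``queue length'' with the ``number in system'' by subtracting $\Pr[X\ge 1]$.
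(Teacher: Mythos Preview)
Your proposal is correct and in fact carries out considerably more than the paper does: the paper's proof of this theorem consists of the single sentence ``Given the stationary distributions above, it is straightforward to plug them in and verify that they work.'' You have supplied the actual birth--death derivation via detailed balance, correctly identified the boundary asymmetry in the first convention (upward rate $\hat A$ from state $0$ versus $A$ from states $n\ge 1$), used the key identity $D-A=\hat D-\hat A$, and correctly obtained the queue length by subtracting $\Pr[X\ge 1]$ from $E[X]$. So your approach is the standard constructive route of which the paper's verification is the abbreviated form.
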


\proof
Given the stationary distributions above, it is straightforward
to plug them in and verify that they work.  
\EOP

\begin{theorem}[Discrete Time Pollaczek-Khinchin Formula]
\index{Pollaczek-Khinchin formula}
\label{p-k theorem}
\index{Theorem \ref{p-k theorem}}
If arrivals are Bernoulli with parameter $\lambda$, 
and $Z$ is the distribution of service times, then
\[E[\mbox{queue length}] = 
\frac{\lambda^{2}(E[Z^{2}] - E[Z])}{2(1-\lambda E[Z])}\]
\end{theorem}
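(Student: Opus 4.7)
The plan is to adapt the classical M/G/1 embedded-Markov-chain derivation to discrete time, being careful about Bernoulli-specific second moment calculations.

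First, I would pass to the embedded Markov chain $\{N_n\}$ defined as the number of packets in the system immediately after the $n$-th service completion. Writing $V_{n+1}$ for the number of Bernoulli arrivals that occur during the $(n+1)$-st service time, the usual Lindley-style bookkeeping gives the recursion
\[
N_{n+1} \;=\; (N_n - 1)^{+} + V_{n+1},
\]
since if the system is nonempty we remove one packet and count arrivals over a full service period, while if it is empty we simply wait for the next arrival and then count over its service. Taking first moments in stationarity and using $E[V] = \lambda E[Z] = \rho$ recovers the utilization law $P_0 = 1 - \rho$, which I will need in the second-moment step.

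Next, I would square the Lindley recursion. Using the identity $(N-1)^{+} = N - 1 + \mathbf{1}\{N=0\}$ and the corresponding expansion of $((N-1)^{+})^2$, then taking expectations in stationarity (so $E[N_{n+1}^{2}] = E[N_n^{2}]$), the $E[N^{2}]$ terms cancel, leaving a linear equation for $E[N]$ in terms of $P_0$, $E[V]$ and $E[V^{2}]$. I compute $E[V^{2}]$ by conditioning on $Z$: since $V \mid Z$ is Binomial$(Z,\lambda)$, I get $E[V^{2}] = \lambda(1-\lambda)E[Z] + \lambda^{2}E[Z^{2}]$. Substituting $P_0 = 1-\rho$ and simplifying should collapse the algebra to
\[
E[N] \;=\; \rho + \frac{\lambda^{2}(E[Z^{2}] - E[Z])}{2(1-\rho)}.
\]
Finally, the queue length (packets waiting, not counting the one in service) is $N$ minus the number in service, and $E[\text{number in service}] = P(N \geq 1) = \rho$, which peels off the extra $\rho$ and yields the stated formula.

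There are two steps I expect to be most delicate. The first is justifying that $V_{n+1}$ is independent of $N_n$; in discrete time this depends sensitively on the ``route, then arrive'' convention spelled out in the opening chapter, and I need to check that arrivals during service are driven by fresh Bernoulli trials uncorrelated with the history up to time of the $n$-th departure. The second, and main, obstacle is relating the \emph{embedded} (departure-epoch) distribution of $N$ to the \emph{time-stationary} distribution that the theorem statement refers to. In continuous time this is standard (the departure and arrival distributions coincide by a level-crossing argument, and PASTA identifies arrival with time averages); in discrete time I would invoke the Bernoulli analogue (BASTA), together with the fact that a single-server discrete-time queue changes state by at most $\pm 1$ per transition, so up- and down-crossings of each level occur equally often in the long run and the departure-epoch distribution agrees with the arrival-epoch distribution, which under Bernoulli arrivals agrees with the time-stationary distribution.
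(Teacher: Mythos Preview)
Your proposal is correct and follows the standard embedded-Markov-chain derivation; the paper itself gives no proof beyond citing Gallager for the continuous-time version and remarking that the discrete-time conversion is ``fairly straightforward.'' Your outline is substantially more detailed than what the paper provides, and your care with the Bernoulli second moment $E[V^{2}]=\lambda(1-\lambda)E[Z]+\lambda^{2}E[Z^{2}]$ and with the BASTA/level-crossing justification for equating departure-epoch and time-stationary distributions covers exactly the points where the discrete case differs from the continuous one.
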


\proof The continuous time version can be found in Gallager~\cite{kn:b3},
pages 85-87.  The conversion to discrete time is fairly straightforward.
\EOP

Why does the regular Pollaczek-Khinchin formula work need to be 
changed at all, since discrete time queues should be a special case of
continuous time?  Well, the values of the expected queue length
are sampled at discrete intervals, so this skews the formula
slightly.

\begin{theorem}[Little's Theorem] \label{little's theorem}
\index{Little's theorem}
\index{Theorem \ref{little's theorem}}
The time average number of packets at a node is equal to the time
average waiting time multiplied by the mean arrival rate.  Similarly,
the time average number of packets in queue equals the time average
waiting in the queue times the mean arrival rate.

This result implies that if a node has nominal load $r$, then the
probability of that node being idle is $1-r$, assuming ergodicity.
\end{theorem}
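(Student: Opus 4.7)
The plan is to prove Little's theorem by a sample-path argument and then derive the utilization law as a corollary. Fix a node, and let $N(t)$ denote the number of packets present at time $t$, $A(t)$ the cumulative number of exogenous and internal arrivals by time $t$, and $W_i$ the sojourn time of the $i$th arriving packet. The key geometric identity is that the area under the step function $N(t)$ between $0$ and $T$ can be decomposed packet-by-packet: each packet contributes its sojourn time to this area. Thus
\[
\int_0^T N(t)\,dt \;=\; \sum_{i : \text{arrived and departed by } T} W_i \;+\; R(T),
\]
where $R(T)$ is a remainder accounting for packets still in the system at time $T$. Under the ergodicity hypothesis, $R(T)/T \to 0$ almost surely, since the expected number of packets in the system is finite.

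Next, I would divide both sides by $T$. The left-hand side tends to the time-average number of packets, which by the ergodic theorem equals $\Ex[N]$ almost surely. The right-hand side can be rewritten as
\[
\frac{A(T)}{T} \cdot \frac{1}{A(T)} \sum_{i=1}^{A(T)} W_i.
\]
The first factor tends to the mean arrival rate $\lambda$, and the second factor tends to the time-average waiting time $\Ex[W]$ by another application of the ergodic theorem to the sequence of sojourn times. This yields $\Ex[N] = \lambda \Ex[W]$. The same argument, applied to the queue (excluding the packet in service), gives the second statement.

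For the utilization law, I would apply Little's theorem to the subsystem consisting only of the packet currently in service. The time-average number of packets ``in service'' is exactly the probability that the node is busy, i.e.\ $1 - \Pr[\text{idle}]$. The mean arrival rate to this subsystem is $\lambda$, and the mean time each packet spends in service is the mean service time $\Ex[Z]$. Hence $1 - \Pr[\text{idle}] = \lambda \Ex[Z]$, which by the definition of nominal load $r = \lambda \Ex[Z]$ gives $\Pr[\text{idle}] = 1 - r$.

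The main obstacle will be making the remainder term $R(T)/T$ rigorously vanish in the general setting (e.g.\ when arrivals are driven by hidden Markov processes as in Chapter~\ref{fluid chapter}), since one must ensure that the number of packets still present at time $T$ grows sublinearly. Under ergodicity, this follows from $\Ex[N] < \infty$ together with a standard truncation argument, but in settings where only fluid stability is known without bounded second moments, the step from fluid stability to finite $\Ex[N]$ requires the finite-variance hypothesis of Dai and Meyn~\cite{Dai_and_Meyn}. Outside of that regime, the statement should be interpreted as an equality between almost-sure time averages rather than as a statement about expectations.
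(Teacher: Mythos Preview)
Your sample-path argument is the standard proof of Little's theorem and is correct in outline; the paper itself does not give a proof at all but simply cites Gallager~\cite{kn:b3}. So there is nothing to compare: you have supplied what the paper omits. Your derivation of the utilization law by applying Little's theorem to the ``in service'' subsystem is also the conventional route and matches how the paper uses the result elsewhere (e.g.\ in Section~\ref{queueing theory section} and Lemma~\ref{lower bound big N lemma}).
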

\proof A proof can be found in Gallager~\cite{kn:b3}. \EOP

In the single node case, the $1-r$ probability is particularly useful.
If $r<1$, then the system is ergodic, so Little's Theorem holds.
Therefore, the stationary probability that there are no packets in
queue is $1-r$.  If we have $N>1$ nodes in our network, though, it
does not generally follow that the probability that all of them are
empty is $(1-r)^{N}$ (unless, for instance, the stationary
distribution is product form.)

\begin{corollary}
For a fixed node $n$ in an ergodic network,
\[\Ex[\mbox{total packets at node } n] = 
\Ex[\mbox{packets in queue at node } n] + r\]
\end{corollary}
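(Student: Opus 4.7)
The plan is to decompose the total number of packets at node $n$ into two pieces and then apply the idle-probability consequence of Little's theorem that was stated in the preceding theorem.

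First I would write, for any time $t$,
\[
(\text{total packets at }n) = (\text{packets in queue at }n) + (\text{packets in service at }n),
\]
where ``in service'' refers to the single packet (if any) currently occupying the server — on a ring this is the hot potato passing through the node. Because the network permits at most one packet to depart per time step, the ``in service'' count is always either $0$ or $1$, so it is the indicator of the event that node $n$ is non-idle. Taking expectations in the stationary regime and using linearity of expectation gives
\[
\Ex[\text{total packets at }n] = \Ex[\text{packets in queue at }n] + \Pr[\text{node }n\text{ is busy}].
\]

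Next I would invoke Theorem~\ref{little's theorem}, whose final sentence records that in an ergodic network the stationary probability that node $n$ is idle is $1-r$, where $r$ is the nominal load at $n$. Consequently $\Pr[\text{node }n\text{ is busy}] = r$, and substituting this into the displayed identity yields exactly
\[
\Ex[\text{total packets at }n] = \Ex[\text{packets in queue at }n] + r,
\]
as required.

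There is essentially no obstacle here — the only point that deserves a moment's care is making sure the accounting convention used in Section~\ref{queueing theory section} (``routing happens, then arrivals, then measurement,'' or the alternative ordering) is consistent between the queue-length quantity and the ``packet in service'' indicator, so that one is not double-counting or missing the hot potato on the step it enters or leaves. Once that bookkeeping is fixed, the proof is a one-line consequence of Little's theorem.
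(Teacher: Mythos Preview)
Your proof is correct and is essentially the same as the paper's: the paper writes $\Ex[\text{total}]=\sum_{n\ge 1} n\Pr[n]$ and splits $n=(n-1)+1$, which is exactly your pointwise decomposition into queue length plus the busy indicator, followed by $\Pr[\text{busy}]=1-\Pr[\text{idle}]=r$ via Little's theorem.
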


\proof
\[\Ex[\mbox{total packets at node } n] = 
\sum_{i=1}^{\infty}n\Pr[\mbox{$n$ packets total}]\]
\[=\left(\sum_{i=1}^{\infty}(n-1)\Pr[\mbox{$n$ packets total}]\right)+
\left(\sum_{i=1}^{\infty}\Pr[\mbox{$n$ packets total}]\right)\]
Now, $\sum_{i=1}^{\infty}\Pr[\mbox{$n$ packets total}]=1-
\Pr[\mbox{0 packets total}]$.  By Little's Theorem, this
equals $1-(1-r)=r$.  The result follows.
\EOP

Finally, let us turn to renewal reward functions.

\begin{definition}\index{renewal reward function}
\index{delayed renewal process}
Suppose we have discrete time, aperiodic, irreducible, countable state
space Markov chain $X(t)$.  Let us specify a special state
$\sigma_{renew}$, and consider the system to undergo a \emph{renewal}
when it enters that state.  If we start this Markov chain in state
$\sigma_{renew}$, it is called a \emph{renewal process}; if we start
it in an arbitrary state, it's called a \emph{delayed renewal
process}.  (The ``delay'' refers the time until the first entrance to
state $\sigma_{renew}$.)

Let $\omega$ be the path of states leading up to the current state;
so, at time $t$,
\[\omega=(X(0),X(1),...,X(t))\]
Let $\Omega$ be the collection of all such paths.

A \emph{renewal reward function} 
is a function $R:\Omega\rightarrow \real$ such that
$R$ depends only on the last renewal period.

In other words, suppose that $\omega_{1}=(X(0), X(1), ..., X(t_{1}))$
and $\omega_{2}=(X'(0), X'(1), ..., X'(t_{2}))$.  Suppose
that there exists a $k$ such that $X(t_{1}-k)=X'(t_{2}-k)=\sigma_{renew}$
and for any $k'<k$, $X(t_{1}-k')=X'(t_{2}-k')$.
Then $R(\omega_{1})=R(\omega_{2})$.
\end{definition}

The Renewal Reward Theorem can be expressed in the following way:
the time average value of the a renewal reward function $R$
is the expected value per renewal, divided by the expected
length of a renewal.  Here's the precise version.

\begin{theorem}[Renewal Reward Theorem] \label{delayed renewal reward theorem}
\index{renewal reward theorem}
\index{Theorem \ref{delayed renewal reward theorem}}
(This theorem goes by several other names, such as the
\emph{Key Renewal Theorem} or the \emph{Strong Law
for Delayed Renewal Processes}.)

Suppose we have a delayed renewal process $X$ and a renewal
reward function $R$.  Let $\omega_{1}$ be a path consisting
of exactly one renewal.
Formally speaking, let $\omega_{1}=(X(0),...,X(t))$ be a sample path
with $X(0)=X(t)=\sigma_{renew}$, where $x(k)\neq \sigma_{renew}$
for $k=1,...,t-1$.
Let $\omega_{2}=(X(0),...,X(t-1))$.
Then $\Ex[R(\omega_{2})]$ is the expected value of $R$
over one renewal.  

Let $\bar{X_{2}}$ be the expected number of time steps until a
renewal.  Let $\hat{\omega}(t)$ be the path at time $t$.  Then
\begin{equation} \label{delayed reward renewal equation}
\lim_{t\rightarrow \infty}\frac{1}{t}\sum_{\tau=0}^{t}
R(\hat{\omega}(t))
=\frac{\Ex[R(\omega_{2})]}{\bar{X_{2}}}
\mbox{ \,\,\, almost surely}
\end{equation}
\end{theorem}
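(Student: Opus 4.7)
The plan is to reduce the delayed case to the ordinary renewal case, then apply the strong law of large numbers to an i.i.d.\ sequence of per-cycle rewards.

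First I would set up notation. Let $T_0 < T_1 < T_2 < \ldots$ be the successive times at which $X$ visits $\sigma_{renew}$, and let $N(t) = \max\{n : T_n \leq t\}$ be the number of renewals by time $t$. Define the per-cycle reward $Y_n = R(\hat{\omega}(T_{n+1}-1))$ computed from the path $X(T_n),\ldots,X(T_{n+1}-1)$, and the cycle length $L_n = T_{n+1} - T_n$. Because $R$ depends only on the current renewal cycle and because $X$ is a Markov chain renewing at $\sigma_{renew}$, the pairs $(Y_n, L_n)$ for $n \geq 1$ are i.i.d.\ (and independent of the delay period $[0, T_0)$), with common distribution equal to the law of $(R(\omega_2), t)$ appearing in the theorem statement, where the notation $\omega_2$ from the theorem corresponds to one complete renewal cycle.

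Next I would decompose the partial sum into cycles. For any $t$,
\[
\sum_{\tau=0}^{t} R(\hat{\omega}(\tau))
 = \underbrace{\sum_{\tau=0}^{T_0 - 1} R(\hat{\omega}(\tau))}_{\text{delay term}}
 + \sum_{n=0}^{N(t)-1} Y_n
 + \underbrace{\sum_{\tau=T_{N(t)}}^{t} R(\hat{\omega}(\tau))}_{\text{boundary term}}.
\]
The delay term is a finite random variable (by ergodicity, $T_0 < \infty$ almost surely), so dividing by $t$ kills it. For the main sum, Kolmogorov's strong law of large numbers gives $\frac{1}{n}\sum_{k=1}^{n} Y_k \to \mathbb{E}[R(\omega_2)]$ almost surely, and the same SLLN applied to the cycle lengths gives $T_n/n \to \bar{X_2}$ a.s., hence $N(t)/t \to 1/\bar{X_2}$ a.s.\ (this is the standard elementary renewal argument: sandwich $T_{N(t)} \leq t < T_{N(t)+1}$ and divide by $N(t)$).

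Combining these, $\frac{1}{t}\sum_{n=0}^{N(t)-1} Y_n = \frac{N(t)}{t} \cdot \frac{1}{N(t)}\sum_{n=0}^{N(t)-1} Y_n \to \frac{\mathbb{E}[R(\omega_2)]}{\bar{X_2}}$ almost surely. The main obstacle, and the only delicate step, is controlling the boundary term $\sum_{\tau=T_{N(t)}}^{t} R(\hat{\omega}(\tau))$, which ranges over a partial cycle of random length up to $L_{N(t)+1}$. The standard trick is to dominate it by $\max_{k \leq N(t)+1} |Y_k|$, and then show that $\frac{1}{t}\max_{k \leq N(t)+1}|Y_k| \to 0$ a.s.; this follows from $\mathbb{E}|Y_1| < \infty$ via a Borel--Cantelli argument (if we have only $L^1$ integrability) or immediately from the theorem's hypotheses on $R$. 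Once this negligibility is established, adding up the three pieces yields Equation~\ref{delayed reward renewal equation}.
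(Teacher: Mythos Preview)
The paper does not actually prove this theorem: its entire proof reads ``See Gallager~\cite{kn:b3}, Sections 3.4--3.7.'' So there is nothing in the paper to compare your argument against; the result is simply quoted from a standard reference.

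Your proposal is the standard textbook proof of the renewal reward theorem and is essentially correct. One small point worth tightening: your bound on the boundary term by $\max_{k \le N(t)+1}|Y_k|$ is not quite right as stated, since a partial sum within a cycle need not be dominated by the full-cycle reward when $R$ can change sign. The clean fix is either to treat the positive and negative parts of $R$ separately (so that partial sums are monotone and bounded by the full-cycle sum), or to define $Y_n^{*} = \sup_{T_n \le s < T_{n+1}} \bigl|\sum_{\tau=T_n}^{s} R(\hat\omega(\tau))\bigr|$, note that the $Y_n^{*}$ are i.i.d.\ with finite mean (since they are bounded by the total variation over the cycle), and then show $Y_{N(t)}^{*}/t \to 0$ a.s.\ via Borel--Cantelli exactly as you indicate. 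With that adjustment the argument is complete.
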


\proof
See Gallager~\cite{kn:b3}, Sections 3.4-3.7.
\EOP

 \chapter{A Primer on Fluid Limits}
\label{dummies chapter}

\section{Introduction} \label{dummies intro section}
Suppose we're routing a finite class of packets, in discrete time,

with Bernoulli arrival processes and constant service times.  In this
appendix, I prove a stripped down version of the fluid limit results
of Dai~\cite{Dai} that apply to this case.  By assuming Bernoulli
arrivals and deterministic service times, the proofs become much
simpler, shorter, and more self-contained than in Dai's paper.
However, all the basic ideas of the fluid limit are present.  This
Appendix is intended to be a shadow to Chapter~\ref{fluid chapter},
which discusses ergodicity in a much more general setting.

Actually, it is possible to prove quite a bit more than stability for
Bernoulli arrivals.  For instance, both the expected queue length and
the variance of the queue length are finite.  However, I won't be
proving that result here; the interested reader may consult
Dai and Meyn~\cite{Dai_and_Meyn}.

I'm going to make the following assumptions about the network:
\begin{itemize}
\item Some classes may have exogenous (external) arrivals, as opposed
to the internal packets that arrive from other classes.  I will assume that
these arrivals form a Bernoulli process, i.e.\ for each class $c$ 
there exists $0\leq \alpha_{c} \leq 1$ such that the probability a packet 
arrives in class $c$ on each time step is $\alpha_{c}$.  
(If there are no exogenous arrivals, $\alpha_{c}=0$.)  Note that
the expected arrival rate of class $c$ packets is $\alpha_{c}$.

Let $a_{i}(s)=1$ if there is an arrival to class $i$ on time step $s$,
for $s=1,2,\ldots$, and 0 otherwise.  I assume that if $s\neq s'$, then
$\{a_{1}(s),\ldots,a_{C}(s)\}$ is independent of 
$\{a_{1}(s'),\ldots,a_{C}(s')\}$ (i.e.\ the Bernoulli arrivals are 
independent in time).  Note that $a_{a}(s)$ and $a_{b}(s)$ 
\emph{can} depend on each other on the same time step.  So, for example,
if classes $a$ and $b$ both arrive at node $i$, you could guarantee that
they never both arrive simultaneously.

\item Let $\phi_{l,k}(s)=1$ if the $s$th packet departing
class $l$ enters class $k$, and 0 otherwise.  For a fixed
$l$ and $k$, this forms another Bernoulli process.

\item It takes a packet (exactly) one time step to cross an edge, and
only one packet crosses a particular edge on one time step.
\item There are a finite number of classes.  (This number can
be countably infinite, but I won't deal with that case here.)
\item Observe that since the arrivals are determined by the 
sum of a finite number of Bernoulli processes, then there is
a maximum arrival rate.  Since at most one packet crosses
each edge on one time step, there is a maximum departure rate.
Let $B$ be the maximum of these two numbers.
\end{itemize}

As a simple, but hopefully sufficient
introduction to fluid limit theorems, I offer the following proofs.

\section{An Analytic Fact}
I'm going to need a result from analysis.
\begin{theorem} \label{lipschitz}
\index{Theorem \ref{lipschitz}}
Suppose we have a family of functions $f_{j}(t)$, where the
$f_{j}(t)$ are Lipschitz, and all have the same
Lipschitz coefficient (i.e.\ there exists some
bound $\hat{B}$ such that  for any $j$, 
\[|f_{j}(t)-f_{j}(s)| \leq \hat{B}|t-s| \]
holds.)  Suppose further that for any $j$, $f_{j}(0)=0$.
Then there exists a subsequence $\{f_{j_{k}}\}$ and a function
$f$ such that for any $t$, 
\[\lim_{k\rightarrow \infty} f_{j_{k}}(t)=f(t)\]
Also, $f(t)$ is a Lipschitz function with Lipschitz
coefficient $\hat{B}$.
\end{theorem}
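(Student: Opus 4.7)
The plan is to prove this via a standard Arzelà–Ascoli style argument: extract a subsequence converging on a countable dense set by a diagonal procedure, then use the uniform Lipschitz bound to extend convergence to all of $\real$.

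First I would establish pointwise boundedness: since $f_j(0)=0$, the Lipschitz bound gives $|f_j(t)| = |f_j(t)-f_j(0)| \leq \hat{B}|t|$ for every $j$ and every $t$. Thus at each fixed $t$, the values $\{f_j(t)\}$ lie in a compact interval, so by Bolzano–Weierstrass there is a convergent subsequence. Next, enumerate the rationals as $q_1, q_2, \ldots$. By a diagonal argument, I would construct a subsequence $\{f_{j_k}\}$ such that $f_{j_k}(q_n)$ converges as $k \to \infty$ for every rational $q_n$: first extract a subsequence converging at $q_1$, then a sub-subsequence converging at $q_2$, and so on, and take the diagonal.

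Next, I would promote convergence from the rationals to all reals. Fix any $t \in \real$ and any $\epsilon>0$, choose a rational $q$ with $|t-q| < \epsilon/(3\hat{B})$. For $k,l$ large,
\[
|f_{j_k}(t) - f_{j_l}(t)| \leq |f_{j_k}(t)-f_{j_k}(q)| + |f_{j_k}(q)-f_{j_l}(q)| + |f_{j_l}(q)-f_{j_l}(t)|
\leq 2\hat{B}|t-q| + |f_{j_k}(q)-f_{j_l}(q)|,
\]
and both terms can be made less than $\epsilon$ by choosing $k,l$ large (using convergence at $q$). So $\{f_{j_k}(t)\}$ is Cauchy in $\real$, hence converges; define $f(t)$ to be this limit.

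Finally, I would verify that $f$ inherits the Lipschitz bound: for any $s,t$, the inequality $|f_{j_k}(t) - f_{j_k}(s)| \leq \hat{B}|t-s|$ holds for every $k$, and passing to the limit on the left gives $|f(t)-f(s)| \leq \hat{B}|t-s|$. The main (minor) technical obstacle is simply bookkeeping in the diagonal construction and being careful that the $3\epsilon$ argument genuinely upgrades rational convergence to real convergence uniformly enough to define $f$ on all of $\real$; everything else is essentially mechanical. No deep analytical input beyond Bolzano–Weierstrass and the diagonal trick is needed.
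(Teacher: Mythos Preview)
Your proposal is correct and follows essentially the same approach as the paper: pointwise boundedness from $f_j(0)=0$ plus the uniform Lipschitz constant, a diagonal extraction over an enumeration of the rationals, then extension to all reals via the Lipschitz bound, and finally passing the Lipschitz inequality to the limit. The only cosmetic difference is that the paper phrases the extension step as showing $\limsup$ and $\liminf$ agree rather than your $3\epsilon$ Cauchy argument, but these are interchangeable.
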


\proof Order the rationals $q_{1}, q_{2}, \ldots$.  Observe that for any $t$,
$|f_{j}(t)|\leq \hat{B}t$.  The set of values for $f_{j}(t)$
for a fixed $t$ and for all $j$ is contained in a compact set (namely
$[-\hat{B}t,\hat{B}t]$.)  Therefore, there exists a subsequence
$j_{1}^{1}, j_{2}^{1},\ldots$ such that
\[\lim_{i\rightarrow \infty} f_{j_{i}^{1}}(q_{1})=f(q_{1})\]
for some constant $f(q_{1})$.  

Next, we look for a sub-subsequence $f_{1}^{2}, f_{2}^{2},\ldots
\subseteq \{f_{k}^{1}\}$ such that
\[\lim_{i\rightarrow \infty} f_{j_{i}^{2}}(q_{2})=f(q_{2})\]
Observe that since this is a sub-subsequence, we still have that
\[\lim_{i\rightarrow \infty} f_{j_{i}^{2}}(q_{1})=f(q_{1})\]
We can continue on in this way for all the rationals, constructing
$\{j_{i}^{k}\}$ for $k=1,2,\ldots$.  

Unfortunately,
$\cap_{k=1}^{\infty} \{j_{i}^{k}\}$ may be empty, so we're not
done yet.  However, consider the diagonal sequence $j_{i}$
defined by $j_{i}=j_{i}^{i}$ for $i=1,2,\ldots$.  Observe that
this sequence is infinite (and non-empty), and that for any $i$,
\[ \lim_{k\rightarrow \infty} f_{j_{i}}(q_{i})=f(q_{i})\]
(because after the first $i$ terms, $\{j_{i}\}$ is contained in
$\{j_{k}^{i}\}$.)  Therefore, $\{f_{j_{i}}\}$ converges on all
rationals.

Now, take any $t$ (not necessarily rational).  For any rational
$q_{k}$, for any $f_{j_{i}}$, we have that
\begin{equation} \label {eq:lipsch}
|f_{j_{i}}(q_{k}) - f_{j_{i}}(t)| \leq \hat{B}|q_{k}-t| 
\end{equation}
So, by taking a series of rationals that converge to $t$, we
can show both $\limsup_{i}f_{j_{i}}(t)$ and 
$\liminf_{i}f_{j_{i}}(t)$ are finite and equal to each
other.  Therefore, $f(t)$ exists for all $t$.  Equation~\ref{eq:lipsch}
also tells us that $f(t)$ is Lipschitz with the same Lipschitz
coefficients.
\EOP

\section{Fluid Limits}

The purpose of this section (and fluid limit models in general) is to
show that Equation~\ref{eq:limit stability} (page~\pageref{eq:limit
stability}) can be proved by certain very natural rescalings of the
underlying stochastic process.  It takes a while to establish all the
limits, but never fear: all will come together in Theorems~\ref{fluid
limit} and~\ref{stabilities}.

Let $x$ be the initial state of the system, i.e.\ $X(0)=x$.
To indicate that a process starts in state $x$, I will
stick a superscript $x$ on the process, e.g.\ $X^{x}(t)$ is
the state at time $t$ when it started in state $X^{x}(0)=x$.

We can always view a discrete time process as embedded in continuous
time.  As an example, let us consider $Q(t)$, the length of a queue at
time $t$.  One natural way of converting to continuous time is to use
step functions: $Q(t)=Q(\lfloor t \rfloor)$.  However, in order to get
the simplest possible proofs, I'm going to ``connect the dots''
between integer values: if $\lfloor t \rfloor < t < \lceil t \rceil$,
then $Q(t)=(t-\lfloor t \rfloor)Q(\lceil t \rceil) + (\lceil t \rceil
- t)Q(\lfloor t \rfloor)$.  (In order to preserve Markovity, then, the
value at $Q(\lceil t \rceil)$ must be known when we are at time
$t_{0}>X(\lfloor t \rfloor)$.  One simple way to do this is to have
$Q(t)$ be the queue length at time $t-1$.)  Observe that since the
discrete time process changes by at most $B$ packets in one time step,
then this continuous version is Lipschitz, with coefficient $B$.

First, some definitions:
\begin{itemize}
\item
$A_{l}(t)$ is the total number of exogenous arrivals to class $l$
by time $t$, i.e.\
	\[A_{l}(t)=\sum_{s=1}^{\lfloor t \rfloor} a_{l}(s)\]
(Remember, $a_{l}$ was defined in Section~\ref{dummies intro section}.)
\item
$S_{l}(t)$ is the total number of departures from class $l$
after $t$ units of service, i.e.\
\[S_{l}(t)=\sum_{s=1}^{\lfloor t \rfloor} 1 = \lfloor t \rfloor
\mbox{ or }\lfloor t \rfloor\]
(That is, if a processor has not been idle for $t$ units of time, then
exactly $\lfloor t \rfloor$ packets will have been emitted in that 
period.)
\item
$\Phi_{l,k}(t)$=total number of transitions from class
$l$ to class $k$ at the time of the $\lfloor t \rfloor$th
transition out of class $l$, i.e.\
\[\Phi_{l,k}(t)=\sum_{s=1}^{\lfloor t \rfloor} \phi_{l,k}(s)\]
\end{itemize}
Note that all three of these quantities are independent of the 
initial state, e.g.\ $A^{x}_{l}(t)=A_{l}(t)$.

\begin{lemma}
Let $\{x_{n}\}\subseteq \ssx$ be such that $\lim_{n\rightarrow \infty}
|x_{n}|=\infty$.  Then almost surely, for any (fixed) $t\geq 0$,
\[\lim_{n\rightarrow\infty} \frac{1}{|x_{n}|} 
A_{l}(t|x_{n}|) = \alpha_{l}t\]
\[\lim_{n\rightarrow\infty} \frac{1}{|x_{n}|} 
S_{l}(t|x_{n}|) = t\]
\[\lim_{n\rightarrow\infty} \frac{1}{|x_{n}|} 
\Phi_{l,k}(t|x_{n}|) = P_{l,k}t\]
\end{lemma}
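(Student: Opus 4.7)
The plan is to derive all three limits from the Strong Law of Large Numbers (SLLN), with a small bit of bookkeeping to produce a single null set on which the convergence holds simultaneously for all $t$, all classes $l$, and all pairs $(l,k)$.

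First, I would dispose of $S_l$ immediately: since each unit of service completes exactly one packet, $S_l(t)=\lfloor t\rfloor$ deterministically, so $S_l(t|x_n|)/|x_n|=\lfloor t|x_n|\rfloor/|x_n|\to t$ for every $t\geq 0$ and every sequence $\{x_n\}$ with $|x_n|\to\infty$. No probability theory is required for this piece.

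Next I would handle $A_l$. The indicators $\{a_l(s)\}_{s\geq 1}$ form an i.i.d.\ Bernoulli sequence with mean $\alpha_l$ (the temporal independence assumed in Section~\ref{dummies intro section} is exactly what is needed; dependence across classes within a single time step is irrelevant here). By the SLLN, there is a measurable set $\Omega_l$ with $\Pr(\Omega_l)=1$ such that on $\Omega_l$, $A_l(m)/m\to\alpha_l$ as the integer $m\to\infty$. Fix $\omega\in\Omega_l$ and $t>0$. Because $|x_n|\to\infty$, the integer $m_n:=\lfloor t|x_n|\rfloor$ tends to infinity, so $A_l(m_n)/m_n\to\alpha_l$. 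Writing
\[
\frac{A_l(t|x_n|)}{|x_n|}=\frac{A_l(m_n)}{m_n}\cdot\frac{m_n}{|x_n|},
\]
and noting $m_n/|x_n|\to t$, the product converges to $\alpha_l t$. The case $t=0$ is immediate because $A_l(0)=0$. Crucially, the null set $\Omega_l^c$ on which this argument can fail does not depend on $t$ or on the sequence $\{x_n\}$, only on the underlying Bernoulli sample path.

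The argument for $\Phi_{l,k}$ is identical in structure: $\{\phi_{l,k}(s)\}_{s\geq 1}$ is an i.i.d.\ Bernoulli sequence with mean $P_{l,k}$ (the routing probability), so the SLLN yields a full-measure set $\Omega_{l,k}$ on which $\Phi_{l,k}(m)/m\to P_{l,k}$, and the same rescaling manipulation shown above gives $\Phi_{l,k}(t|x_n|)/|x_n|\to P_{l,k}t$ for every $t\geq 0$. Since the class set $C$ is finite, the intersection $\Omega^\ast:=\bigcap_l\Omega_l\cap\bigcap_{l,k}\Omega_{l,k}$ still has probability one, and on $\Omega^\ast$ all three displayed limits hold simultaneously for every $t\geq 0$, every $l$, every $(l,k)$, and every sequence $\{x_n\}$ with $|x_n|\to\infty$. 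There is no real obstacle here; the only subtlety worth flagging is to apply the SLLN once per sequence, indexed by the deterministic integer clock $m$, rather than trying to apply it directly along the random indexing $t|x_n|$, so that the null exceptional set can be chosen independently of $t$ and of $\{x_n\}$.
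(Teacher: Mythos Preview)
Your argument is correct and follows the same route as the paper: both reduce the claim to the strong law of large numbers applied to the i.i.d.\ Bernoulli sequences $\{a_l(s)\}$ and $\{\phi_{l,k}(s)\}$, via the factorization $A_l(\lfloor t|x_n|\rfloor)/|x_n|=(A_l(m_n)/m_n)(m_n/|x_n|)$. You are more explicit than the paper on two points---you dispose of $S_l$ deterministically rather than lumping it with the other two, and you take care to secure a single full-measure set $\Omega^\ast$ independent of $t$ and of $\{x_n\}$---but these are refinements of presentation, not a different strategy.
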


\proof Fix $t$.  By the strong law of large numbers, for any i.i.d.\ random
variables $X_{i}$,
\[\lim_{n\rightarrow \infty}\frac{1}{n}\sum_{i=1}^{\lfloor nt \rfloor}X_{i}
= t \lim_{n\rightarrow \infty}\frac{1}{tn}\sum_{i=1}^{\lfloor nt \rfloor}X_{i}
= t \Ex [X_{1}]\]
almost surely.  The lemma follows immediately in all three cases.
\EOP

A few more definitions:
\begin{itemize}
\item Suppose that class $l$ occurs at node $i$.  Then
define $T_{l}^{x}(t)$ as the cumulative amount of time node
$i$ has spent on class $l$ packets by time $t$, starting
in state $x$.  (``$T$'' stands for ``throughput''.)
\item For a node $i$, define
$I_{i}^{x}(t)$ to be the cumulative amount of time that node
$i$ is idle, i.e.\
\[I_{i}^{x}(t)=t-\sum_{l \in C_{i}} T_{l}^{x}(t)\]
\end{itemize}
(Note that the subscript for throughput refers to a \emph{class},
whereas the subscript for the idleness refers to a \emph{node}.)
Now let's try to calculate the total queue length of class $l$ packets at
a particular node at time $t$.  We can figure this out by taking the
class $l$ packets that we start out with (in state $x$), adding all the 
packets entering class $l$, and subtracting all the packets that
leave it.  The departures are equal to the total amount of time
spent processing class $l$ packets, fed into the function telling us
how many packets depart in that amount of time.  This is simpler as an 
equation:
\begin{equation} \label{eq:4.9}
Q_{l}^{x}(t)=Q_{l}^{x}(0) + A_{l}(t)
+ \left[ \sum_{k=1}^{C} \Phi_{k,l} (S_{k}(T_{k}^{x}(t))) \right]
-S_{l}(T_{l}^{x}(t))
\end{equation}
Some other useful facts that follow immediately are
\begin{equation} \label{eq:4.10}
Q_{l}^{x}(t)\geq 0
\end{equation}
\begin{equation} \label{eq:4.11}
T_{l}^{x}(t) \mbox{ is nondecreasing and } T_{l}^{x}(0)=0
\end{equation}
\begin{equation}\label{eq:4.12}
I_{i}^{x}(t)=t-\sum_{l \in C_{i}} T_{l}^{x}(t)
\mbox{ is nondecreasing and } I_{i}^{x}(0)=0
\end{equation}
Observe that $T_{l}^{x}(t)$ and $I_{i}^{x}(t)$ are Lipschitz
functions, hence they are continuous and almost-everywhere 
differentiable (see Rudin~\cite{rudin}, page 146).

The protocols that I'm interested in are all work-conserving, 
or greedy.  That is, a node $i$ should be idle \emph{only if} 
its queue is empty (of any class of packets).  One way of expressing 
this is by saying that at node $i$,
\begin{equation} \label{eq:4.13}
\dot{I}_{i}^{x} \left( \sum_{k \in C_{i}}Q_{k}^{x}(t) \right)=0
\mbox{ a.e.}
\end{equation}
(By the comment above, the derivative $\dot{I}_{i}^{x}(t)$ exists
almost everywhere.)

Finally, there might be some additional constraints specific to the 
routing protocol we're using.  For completeness, I'll list
\begin{equation} \label{eq:4.14}
\mbox{some additional conditions specific to the routing protocol}
\end{equation}

Now we come to our first major fluid limit theorem:
\begin{theorem} \label{fluid limit}
\index{Theorem \ref{fluid limit}}
I am going to construct three functions, $\flu{Q}_{l}(t), \flu{T}_{l}(t)$,
and $\flu{I}_{i}(t)$, that satisfy certain limits below.

Consider a greedy routing protocol.  For almost all sample paths
and any sequence of initial states $\{x_{n}\} \subseteq \ssx$
with $|x_{n}|\rightarrow \infty$, there is a subsequence
$\{z_{n}\} \subseteq \{x_{n}\}$ with 
$|z_{n}|\rightarrow \infty$ such that, for all classes $l$,
the following finite limits exist
\begin{equation} \label{eq:4.15}
\lim_{n\rightarrow \infty} Q_{l}^{z_{n}}(0) = \flu{Q}_{l}(0) 
\end{equation}
\begin{equation} \label{eq:4.16}
\lim_{n\rightarrow \infty} T_{l}^{z_{n}}(|z_{n}|t) = \flu{T}_{l}(t)
\end{equation}
Furthermore, $\flu{Q}_{l}(t)$ and $\flu{T}_{l}(t)$ satisfy the following:
\begin{equation} \label{eq:4.17}
\flu{Q}_{l}(t)=\flu{Q}_{l}(0) + \alpha_{l}t +\sum_{k\in C}P_{k,l}
\flu{T}_{l}(t)
\end{equation}
\begin{equation} \label{eq:4.18}
\flu{Q}_{l}^{t}\geq 0
\end{equation}
\begin{equation}\label{eq:4.19}
\flu{T}_{l}^{x}(t) \mbox{ is nondecreasing and } \flu{T}_{l}^{x}(0)=0
\end{equation}
Let class $l$ be served at node $i$.  Then
\begin{equation} \label{eq:4.20}
\flu{I}_{i}^{x}(t)=t-\sum_{l \in C_{i}} \flu{T}_{l}^{x}(t)
\mbox{ is nondecreasing and } \flu{I}_{i}^{x}(0)=0
\end{equation}
\begin{equation} \label{eq:4.21}
\dot{\flu{I}}_{i}^{x} \left( \sum_{k \in C_{i}}\flu{Q}_{k}^{x}(t) \right)=0
\mbox{ a.e.}
\end{equation}
\begin{equation} \label{eq:4.22}
\mbox{some additional conditions specific to the routing protocol}
\end{equation}
\end{theorem}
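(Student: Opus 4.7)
The plan is to build the fluid limit by rescaling the stochastic processes and extracting a convergent subsequence via the compactness argument of Theorem~\ref{lipschitz}, then verifying each of Equations~\ref{eq:4.17}--\ref{eq:4.22} in turn by passing to the limit in the corresponding pre-limit equations~\ref{eq:4.9}--\ref{eq:4.14}.

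First I would introduce the rescaled processes: for a sequence $\{x_n\}\subseteq\ssx$ with $|x_n|\to\infty$, define
\[\bar{Q}_l^{x_n}(t)=\frac{1}{|x_n|}Q_l^{x_n}(|x_n|t),\quad \bar{T}_l^{x_n}(t)=\frac{1}{|x_n|}T_l^{x_n}(|x_n|t),\quad \bar{I}_i^{x_n}(t)=\frac{1}{|x_n|}I_i^{x_n}(|x_n|t).\]
Because at most $B$ packets arrive or depart per discrete step, each $Q_l^{x_n}$ is Lipschitz in $t$ with coefficient $B$, so $\bar{Q}_l^{x_n}$ is Lipschitz with the same coefficient $B$ uniformly in $n$. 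Similarly $\bar{T}_l^{x_n}$ and $\bar{I}_i^{x_n}$ are Lipschitz with coefficient $1$ since throughput and idleness each grow at rate at most $1$ in real time. At $t=0$, $\bar{Q}_l^{x_n}(0)=Q_l^{x_n}(0)/|x_n|\le 1$ by the definition of $|\cdot|$, and $\bar{T}_l^{x_n}(0)=\bar{I}_i^{x_n}(0)=0$. Since there are finitely many classes and nodes, I can invoke Theorem~\ref{lipschitz} once for each coordinate and then diagonalize to extract a single subsequence $\{z_n\}\subseteq\{x_n\}$ along which $\bar{Q}_l^{z_n}(0)\to \flu{Q}_l(0)$ and $\bar{T}_l^{z_n}(\cdot)\to \flu{T}_l(\cdot)$ pointwise for every class $l$, with each $\flu{T}_l$ Lipschitz; $\bar{Q}_l^{z_n}(\cdot)\to\flu{Q}_l(\cdot)$ will follow from Equation~\ref{eq:4.9} once the other limits are in hand.

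Next I would use the strong law of large numbers in the form already proved above to conclude that almost surely
\[\tfrac{1}{|z_n|}A_l(|z_n|t)\to\alpha_l t,\quad \tfrac{1}{|z_n|}S_l(|z_n|t)\to t,\quad \tfrac{1}{|z_n|}\Phi_{k,l}(|z_n|t)\to P_{k,l}t,\]
where (by a standard subsequence-of-subsequence trick) this holds simultaneously for all $l,k$ and all rational $t$, hence all $t$ by monotonicity. Dividing Equation~\ref{eq:4.9} by $|z_n|$ and using $\bar{T}_l^{z_n}\to\flu{T}_l$ (which is continuous, so $S_l(T_l^{z_n}(|z_n|t))/|z_n|\to \flu{T}_l(t)$ and similarly for $\Phi_{k,l}\circ S_k\circ T_k^{z_n}$) yields Equation~\ref{eq:4.17}, and Equations~\ref{eq:4.18}--\ref{eq:4.20} follow immediately by passing to the limit in their discrete-time analogues. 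Equation~\ref{eq:4.22} is handled case-by-case depending on the specific protocol and presents no new difficulty at this level of generality.

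The hard step will be Equation~\ref{eq:4.21}, the greedy-protocol condition $\dot{\flu{I}}_i(t)\,\sum_{k\in C_i}\flu{Q}_k(t)=0$ a.e. Since $\flu{I}_i$ and $\flu{Q}_k$ are Lipschitz they are differentiable almost everywhere, but pointwise convergence of the pre-limit functions does not automatically transfer the pre-limit product condition~\ref{eq:4.13} to the limit. The approach I would take is to fix a time $t_0$ at which $\sum_k \flu{Q}_k(t_0)>0$ and use continuity to find an interval $[t_0-\delta,t_0+\delta]$ on which this sum is bounded below by some $\eta>0$; then for all sufficiently large $n$ the pre-limit sum $\sum_k Q_k^{z_n}(|z_n|t)$ is at least $(\eta/2)|z_n|$ throughout $[|z_n|(t_0-\delta),|z_n|(t_0+\delta)]$, so node $i$ is never idle on that interval and $I_i^{z_n}$ is constant there. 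Passing to the limit, $\flu{I}_i$ is constant on $[t_0-\delta,t_0+\delta]$, so $\dot{\flu{I}}_i(t_0)=0$. This shows the derivative vanishes at every point where $\sum_k\flu{Q}_k>0$, which is exactly Equation~\ref{eq:4.21}. The main subtlety is ensuring the pre-limit queues stay uniformly positive on the interval, which requires the Lipschitz bound $B$ on $\bar{Q}_l^{z_n}$ and the uniform-on-compacts nature of the convergence that this Lipschitz bound provides.
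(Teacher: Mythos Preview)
Your proposal is correct and follows essentially the same route as the paper's proof: compactness of $[0,1]^C$ for the initial queues, Theorem~\ref{lipschitz} applied to the rescaled throughputs $\bar T_l^{x_n}$ to extract a convergent subsequence, the strong law of large numbers for the primitives $A_l,S_l,\Phi_{k,l}$, and then passing to the limit in Equations~\ref{eq:4.9}--\ref{eq:4.12} to obtain Equations~\ref{eq:4.17}--\ref{eq:4.20}. For the greedy condition~\ref{eq:4.21} the paper argues slightly differently---it uses pointwise convergence at the single time $t$ together with the Lipschitz bound $B$ on the \emph{pre-limit} rescaled queues to carve out the interval on which $\sum_k Q_k^{z_h}(|z_h|s)>0$, whereas you first use continuity of the \emph{limit} to find the interval and then invoke uniform-on-compacts convergence---but these are equivalent formulations of the same idea and both work.
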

\proof
Observe that for any $x_{n}$, any $l$,
\[0 \leq \frac{1}{|x_{n}|}Q_{l}^{x_{n}}(0) \leq 1\]
Since $[0,1]$ is compact, there exists a convergent subsequence that
converges to some value between 0 and 1.  Similarly, since $[0,1]^{C}$
is compact, we can choose $x_{n_{j}}$ that simultaneously converge for
any $l=1,\ldots,C$.  

Next, let $f_{j}(t)=\frac{1}{|x_{n_{j}}|}T(|x_{n_{j}}|t)$.  Observe
that $f_{j}(t)$ is non-decreasing, and grows no faster than $t$, i.e.\
for any $s,t\geq 0$,
\[ |f_{j}(t)-f_{j}(s)|\leq t-s \]
Hence, the $f_{j}$ are a collection of Lipschitz functions with the same
Lipschitz coefficient (namely 1).  Also, $f_{j}(0)=0$.  This allows
us to use Lemma~\ref{lipschitz} to find a subsequence $\{ z_{h}\}
\subseteq \{x_{n_{j}} \}$ such that for any $t$,
\[\lim_{h\rightarrow \infty} \frac{1}{|z_{h}|}T(|z_{h}|t)
=\flu{T}_{l}(t) \mbox{ a.s.}\]
for some function $\flu{T}_{l}$.  This implies equations~\ref{eq:4.15}
and \ref{eq:4.16}.  This also tells us that all these objects are
Lipschitz functions, and hence continuous and almost-everywhere 
differentiable.  In particular, $\dot{\flu{I}}_{i}(t)$ exists
almost everywhere.

Now, observe that all of our 
equations can be expressed in terms of $\flu{Q}_{l}(0)$ and
$\flu{T}_{l}(t)$, so equations \ref{eq:4.9},
\ref{eq:4.10}, \ref{eq:4.11}, and \ref{eq:4.12} imply equations
\ref{eq:4.17}, \ref{eq:4.18}, \ref{eq:4.19}, and \ref{eq:4.20},
respectively.  

Next, we're left with the ``greedy'' condition, formula~\ref{eq:4.21}.
Suppose that at some node $i$, for some $t>0$, 
$\sum_{k\in C_{i}} \flu{Q}_{k}(t)=\epsilon>0$.  Therefore, for
some sufficiently large bound $d$, we have
\[
\mbox{if $h\geq d$ then }
\frac{1}{|z_{h}|}
\sum_{k\in C_{i}} Q_{k}(|z_{h}|t)>\epsilon /2\]
Since at most $B$ packets can arrive or leave the system on any given
time step, then 
\[ \frac{1}{|z_{h}|}
\sum_{k\in C_{i}} Q_{k}(|z_{h}|(\delta + t))>\epsilon /2 -B|\delta| 
-1/|z_{h}|
\]
So, if we take $d'\geq d$ sufficiently large, we can guarantee that
for $h\geq d'$,
\[ \frac{1}{|z_{h}|}
\sum_{k\in C_{i}} Q_{k}(|z_{h}|(\delta + t))>\epsilon /2 -2B\delta \]
and hence that if $s\in G=(\min \{ 0 , t- \epsilon/4B \},
t + \epsilon/4B)$, then
\[ \frac{1}{|z_{h}|}
\sum_{k\in C_{i}} Q_{k}(s|z_{h}|)>0\]
Therefore, by Equation~\ref{eq:4.13}, $(1/|z_{h})I_{i}(s|z_{h}|)$
will be constant for every $h\geq d'$ and $s\in G$.  Therefore, the limit
$\flu{I}_{i}(s)$ will be constant on $G$, so $\dot{\flu{I}}_{l}(s)$
is identically equal to zero a.e. on the interval.  In particular,
\[\dot{\flu{I}}_{l}(t)\sum_{k\in C_{i}}\flu{Q}_{k}(t)=0\]

Proving conditions in Equation~\ref{eq:4.22} must be done on a
case-by-case basis, of course.  Some examples will be discussed in 
the next section.
\EOP

\begin{definition}\index{fluid limit model}\index{fluid stability}
 Fix a greedy routing protocol.  The limits
$\flu{T}$ and $\flu{Q}$ from equations~\ref{eq:4.16}
and~\ref{eq:4.15}, respectively, are referred to as the
fluid limit of the protocol.  Any solution to~\ref{eq:4.17}
through~\ref{eq:4.22} is referred to as a fluid model
of the protocol.  (So, the fluid limits are a subset 
of the fluid models.)  We say that a fluid limit
model (respectively fluid model) is stable if there
exists a constant $T$, depending only on the $\alpha_{l}$
and the $P_{k,l}$ such that for any fluid limit (resp.
fluid models) with $\sum_{k=1}^{C}\flu{Q}_{k}=1$, we 
have for any $k$, $Q_{k}(t)=0$ for any $t\geq T$.
\end{definition}

Finally, we're ready to illuminate the relationship between
stability for the fluid models and stability for the underlying
stochastic models.  
\begin{theorem} \label{stabilities}
\index{Theorem \ref{stabilities}}
Fix a greedy routing protocol.  If the fluid limit model of the
queueing discipline is stable, then the original
Markov chain $X$ is positive recurrent.
\end{theorem}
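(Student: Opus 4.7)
The plan is to reduce Theorem~\ref{stabilities} to the drift criterion Theorem~\ref{lim stab}, using the fluid limit machinery of Theorem~\ref{fluid limit} to verify the required hypothesis. Take the bounded norm $|x|$ on $\ssx$ to be the total queue length (or the total queue length plus one, to make it strictly positive; since queue lengths take values in $\natu$, the sets $\{|x|\leq k\}$ are finite whenever the state space truncates queue lengths in a reasonable way, and this is the natural norm for this problem). To apply Theorem~\ref{lim stab}, I need to exhibit some $T>0$ such that
\[
\lim_{|x|\to\infty}\frac{1}{|x|}\Ex\bigl|X^{x}(|x|T)\bigr|=0.
\]
The candidate is the fluid stability time: choose $T$ to be the constant (depending only on $\alpha$ and $P$) guaranteed by the hypothesis that every fluid limit with $\sum_k\flu{Q}_k(0)=1$ satisfies $\flu{Q}_k(T)=0$ for all $k$.

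First I would argue by contradiction: assume the limit fails, so there is a sequence $\{x_n\}\subseteq\ssx$ with $|x_n|\to\infty$ and some $\epsilon>0$ for which $\frac{1}{|x_n|}\Ex|X^{x_n}(|x_n|T)|\geq\epsilon$ along the sequence. Apply Theorem~\ref{fluid limit} to extract a further subsequence $\{z_n\}$ along which the rescaled processes converge almost surely to a fluid limit $(\flu{Q},\flu{T})$. Since $\frac{1}{|z_n|}Q_l^{z_n}(0)\to\flu{Q}_l(0)$ and $\sum_l\frac{1}{|z_n|}Q_l^{z_n}(0)\leq 1$ (by our choice of norm), the limit satisfies $\sum_l\flu{Q}_l(0)\leq 1$. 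By linearity of the fluid equations and the hypothesis of fluid stability applied to the rescaled initial condition (with $\sum_l\flu{Q}_l(0)=1$ if we first normalize; otherwise fluid stability still yields $\flu{Q}_l(T)=0$ by the scale invariance of the fluid model), we conclude $\flu{Q}_l(T)=0$ for every class $l$, and hence $\frac{1}{|z_n|}|X^{z_n}(|z_n|T)|\to 0$ almost surely.

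The main obstacle, and the step I would devote the most care to, is passing from this almost sure convergence to convergence of expectations. This requires uniform integrability of the rescaled norms $\{\frac{1}{|z_n|}|X^{z_n}(|z_n|T)|\}_n$. Here the discrete time, bounded arrival structure helps enormously: since at most $B$ packets enter the system per time step (and at most $B$ leave), we have the deterministic bound
\[
|X^{z_n}(|z_n|T)|\leq |z_n|+B|z_n|T,
\]
so $\frac{1}{|z_n|}|X^{z_n}(|z_n|T)|\leq 1+BT$ for every sample path. With this deterministic bound in hand, Corollary~\ref{lim exp} from Appendix~\ref{probability chapter} immediately yields $\Ex\bigl[\frac{1}{|z_n|}|X^{z_n}(|z_n|T)|\bigr]\to 0$, contradicting the standing assumption $\geq\epsilon$. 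The conclusion is that the hypothesis of Theorem~\ref{lim stab} holds, so $X$ is positive recurrent.

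The only subtlety I foresee is verifying that the fluid limit produced by Theorem~\ref{fluid limit} for our rescaled initial conditions genuinely falls within the class of fluid limits assumed to be stable; concretely, that fluid stability, stated for initial mass one, applies equally well to any initial mass in $[0,1]$. This follows because the fluid equations~\ref{eq:4.17}--\ref{eq:4.22} are homogeneous under joint rescaling of $(\flu{Q},\flu{T})$, so a stable fluid model with $\sum_k\flu{Q}_k(0)=c\leq 1$ empties by time $cT\leq T$. Hence no new work is needed there, and the proof is complete.
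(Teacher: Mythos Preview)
Your proposal is correct and follows essentially the same route as the paper: extract a convergent subsequence via Theorem~\ref{fluid limit}, use fluid stability to get $\flu{Q}_l(T)=0$, invoke the deterministic bound from the bounded per-step arrivals together with Corollary~\ref{lim exp} to upgrade almost sure convergence to convergence in expectation, and conclude via Theorem~\ref{lim stab}. The only cosmetic difference is that you frame the argument by contradiction while the paper argues directly via the $\limsup$; your explicit remark on scale invariance (handling $\sum_l\flu{Q}_l(0)\leq 1$ rather than $=1$) is a point the paper glosses over.
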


\proof 
Assume that the fluid model is stable.  
For any sequence
of initial states $\{x_{n}\}$, by Theorem~\ref{fluid limit},
there exists a subsequence $\{x_{n_{j}}\}$ such that for 
any class $l$,
\[\lim_{j\rightarrow \infty} 
\frac{1}{|x_{n_{j}}|}
	Q_{l}^{x_{n_{j}}}(|x_{n_{j}}|T)=\flu{Q}_{l}(T)=0\]
where this equals zero by fluid stability.

Observe that because we assumed that there was a maximum rate of arrivals,
then $0\leq\frac{1}{|x_{n_{j}}|} Q_{l}(|x_{n_{j}}|T)\leq B(T+1)$.
We can then use Corollary~\ref{lim exp} from 
Section~\ref{uniform integrability appendix} to conclude that 
\[\lim_{j\rightarrow \infty} 
\frac{1}{|x_{n_{j}}|}
	\Ex[Q_{l}^{x_{n_{j}}}(|x_{n_{j}}|T)]=0\]
(Note that the $\frac{1}{|x_{n_{j}}|}$ is a constant for each $j$,
so we can pull it out of the expectation.)

Now, our choice of sequences $\{x_{n}\}$ was arbitrary.  In particular,
let $\{ \hat{x}_{n} \}=\ssx$ in some ordering.  Consider
\[\limsup_{n \rightarrow \infty} \frac{1}{|\hat{x}_{n}|} 
\Ex \sum_{k=1}^{C}Q_{k}^{\hat{x}_{n}}
(|\hat{x}_{k}| T)\]
The Lipschitz condition implies that the equation above is bounded,
so the lim sup is finite.
Let us take a subsequence $\{x_{n}\} \subseteq \{\hat{x}_{n}\}$ that
has the lim sup as the limit, i.e.\
\[\lim_{n} \frac{1}{|x_{n}|} \Ex \sum_{k=1}^{C}Q_{k}^{x_{n}}
(|x_{k}| T)\]
Therefore, if we take the subsequence $\{ x_{n_{j}} \}$ as above,
we get that
\begin{eqnarray}
\limsup_{n} \frac{1}{|\hat{x}_{n}|} \Ex \sum_{k=1}^{C}Q_{k}^{\hat{x}_{n}}
(|\hat{x}_{k}| T) & = &
\lim_{n} \frac{1}{|x_{n}|} \Ex \sum_{k=1}^{C}Q_{k}^{x_{n}}
(|x_{k}| T) \nonumber \\ & = &
\lim_{n_{j}} \frac{1}{|x_{n_{j}}|} \Ex \sum_{k=1}^{C}Q_{k}^{x_{n_{j}}}
(|x_{k}| T) \nonumber \\ 
& = & 0 \nonumber
\end{eqnarray}
where the second equality follows because we are taking a subsequence
of a sequence with a limit.  Since $Q_{k}$ is non-negative, we 
conclude that 
\[\lim_{n} \frac{1}{|\hat{x}_{n}|} \Ex \sum_{k=1}^{C}Q_{k}^{\hat{x}_{n}}
(|\hat{x}_{k}| T)=0\]
so by Theorem~\ref{lim stab}, the underlying Markov chain is positive
recurrent.
\EOP

One final comment on the relationship between fluid models and 
fluid limit models.  In principal, it would suffice to prove
results about fluid limit models, then use Theorem~\ref{stabilities}
to push the result back to the stochastic case.  Unfortunately,
it is practically impossible to distinguish the fluid models that
arise from fluid limits from the fluid models that exist only
as solutions to equations~\ref{eq:4.17} through~\ref{eq:4.22}.
Therefore, in practice, one proves results about \emph{all} fluid
models.  The results will then apply to the subset of fluid
models that happen to be fluid limits.

\section{Specific Protocols}
Theorem~\ref{fluid limit}, minus Equation~\ref{eq:4.22}, works
perfectly well.  For example, if you want to show that every
greedy protocol on a ring is stable, you're ready to go.  However,
you may wish to prove that a particular protocol (possibly on
a particular network) is stable.  In that case, you need
to instantiate Equation~\ref{eq:4.22} with some equation
appropriate to the protocol.  I'm going to mention just two
protocols here; generally speaking, dealing with these sort
of limits is technically necessary but not the hard part
of the problem.

\subsection{FIFO}
The FIFO protocol demands that the packet that has been waiting
for the longest time
at a node be the next served.  Ties are broken in any 
manner; in our packet routing model, it doesn't make any difference.
To write an equation capturing FIFO, define $D_{k}^{x}(t)$ as the
total number of departures from class $k$ by time $t$.  Define
$W_{i}^{x}(t)$ as the total amount of work left to do at node $i$
at time $t$, that is, $W_{i}^{x}(t)=\sum_{k\in C_{i}} Q_{k}^{x}(t)$.  Then
if class $k$ packets live at node $i$, the equation
\[D_{k}^{x}(t+W_{i}^{x}(t))=Q_{k}^{x}(0)+A_{k}(t)\]
specifies FIFO.  The fluid limit of this follows pretty easily.

More on this can be found in Bramson~\cite{Bramson96}, which also shows that
FIFO is always stable on a packet routing network as I've defined it.
(Because all edge crossings take one time step, it's a generalized
Kelly network.)  Note that this contradicts the intuition given
by the adversarial result that FIFO is not always stable (against
an adversary).

Note also that FIFO either requires an infinite number of classes,
or (as is standard) that the packets at each queue are ordered.

\subsection{Priority Disciplines}
A priority discipline always gives precedence to certain classes
over others.  We have to define, in addition to the regular
throughput, a special throughput for all the classes that 
effect class $k$ packets, i.e.\ all the packets of greater than
or equal priority.  Let $H_{k}$ be the set of packets of priority
greater than or equal to $k$'s priority that
are served at the same node as $k$.  (Note: the only class
with priority equal to $k$'s is $k$ itself.) Then, define
\[T_{k}^{x,+}(t)=\sum_{k\in H_{k}} T_{k}^{x}(t)\]
We then can define
\[I_{k}^{x,+}=t-T_{k}^{x,+}(t)\]
\[Q_{k}^{x,+}(t)=\sum_{k\in H_{k}} Q_{k}^{x}(t)\]
We then get a new greediness condition:
\[\dot{I}_{k}^{x,+}(t)Q_{k}^{x,+}(t)=0\]
almost everywhere.  The fluid limits to these equations
follow pretty readily-- the new greediness condition
can be proved the same way we proved the old one.



 \chapter{Fluid Limit Examples} \label{fluid limit examples chapter}

This appendix is a summary of the relevant known results about
stability (i.e.\ ergodicity) on networks.  These results apply to fluid
models, so they work with any kind of fluid limit.  In other words,
whether we use the results of Dai~\cite{Dai}, the simpler version of
Appendix~\ref{dummies chapter}, or the more general results of
Chapter~\ref{fluid chapter}, the stability results still apply.

I also include some counterexamples in Appendix~\ref{fluid limit
counterexamples chapter}, to dampen our hopes.

This appendix and the next one will just be a long list of known cases,
one after another.  If the result is known, I'll reference the 
author; otherwise, I'll prove the case myself.  All the 
counter-examples are by other authors, except the last two.
Note: the fluid limit theorems don't have converses,
so an unstable fluid model does not imply the existence of
an unstable stochastic model.  I'll clarify what's 
known about the counter-examples in each particular case.

\begin{definition} \label{simple definition} \index{simple network}
A network is \emph{simple} if a packet never
returns to the same node twice.
\end{definition}
\begin{definition}\index{Kelly network}\index{generalized Kelly network}
\label{Kelly network}
 A \emph{generalized Kelly network} is a network
where all packets serviced at the same node have the same expected
service time.
A network with \emph{uniform expected service
times} is a generalized Kelly network where any packet served at any
node takes the same expected amount of service time.
\end{definition}
(Incidentally, a regular Kelly network assumes that the arrival
process is Poisson and all the service times are exponentially
distributed.  In that situation, under FIFO routing, the
network offers a particular nice product-form solution.)

When I say that a network is stable, I implicitly assume that the
nominal loads are all less than one.  The examples I look at are the
following:
\begin{enumerate}
\item A generalized Kelly ring network is stable under any greedy protocol.
\item A layered (i.e.\ feedforward) network is stable under any greedy
protocol.  The hypercube under dimensional routing is an example.
\item Suppose we have a collection of networks $N_{1},\ldots,N_{m}$
that have stable fluid limits.  Then if we add directed edges such
that $e$ crosses from $N_{i}$ to $N_{j}$ only if $i<j$, then the
resulting network is stable.  (This is sort of a meta-feedforward
network.)  In particular, tori are stable under dimensional routing.
\item Any network, with any greedy protocol, is stable under convex routing.
\item Generalized Kelly networks under FIFO are stable.
\item Suppose we have a simple network and we rank all the
possible paths in a fixed priority list.  Then the network is stable.
For example, if a packet's route is determined solely by its current
location and final destination, then prioritizing packets based on
destination will give a stable network.  (Other natural examples are
given.)
\item On any simple network, Farthest To Go (FTG) is stable.
\item On any simple network, Closest To Origin (CTO) is stable.
\item Longest in System (LIS) is stable.
\item Shortest in System (SIS) is stable.
\item For a re-entrant line, Nearest to Go (NTG) is always stable.
\item A host of round-robin type protocols are stable on all
networks.
\item ``Leaky buckets'' can stabilize any network under
any greedy protocol.
\item There is a general method in which adversarial stability results can
be translated into stochastic stability results (via a fluid model).
\end{enumerate}

After the good news, I turn to known counter-examples in
Appendix~\ref{fluid limit counterexamples chapter}:
\begin{enumerate}
\item The stability need not be monotonic in the arrival rate.
(That is, there exist networks with Bernoulli arrivals that are
unstable, but become stable by \emph{increasing} the arrival rate.)
\item There exists a simple network unstable under NTG.
\item There exists a generalized Kelly network with uniform service times
that is unstable under NTG.
\item There exists a generalized Kelly network without immediate feedback
that is unstable.
\item FIFO can be unstable.
\item There exists a network and protocol that is stable against a 
stochastic process but unstable against a bounded adversary.
\item There exists a network and protocol that is stable against a 
bounded adversary but unstable against a stochastic process.
\end{enumerate}

\section{The Ring} \label{ring example section}
An $N$-node unidirectional ring is a... well, you ought to know what a
ring network is by this point.  If the network is also a generalized
Kelly network, then any greedy protocol is stable on it.  There's a
proof of the fluid limit portion of this result in Dai and
Weiss~\cite{Dai_and_Weiss}.  (The title claims that the paper concerns
re-entrant lines, which the ring is not; nevertheless, it appears as
Theorem 6.2.)

\section{Layered Networks}
\begin{theorem}
A layered network is stable under any greedy protocol.
\end{theorem}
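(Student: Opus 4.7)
The plan is to invoke the fluid limit machinery of Chapter~\ref{fluid chapter} (or Appendix~\ref{dummies chapter}): to show ergodicity it suffices to prove fluid stability of the associated fluid model. For a layered network with layers $G_1, \ldots, G_k$, I would prove fluid stability by induction on the layer index, showing that after a finite deterministic time $T_\ell$, all fluid residing at nodes in layers $1, \ldots, \ell$ has drained to zero and stays at zero thereafter (while fluid of course continues to flow \emph{through} such nodes at the nominal rates).

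For each node $i$, define the fluid workload $\bar W_i(t) = \sum_{c \in C_i} \bar Q_c(t)/\mu_c$. Greediness (Equation~\ref{eq:4.21}) implies that whenever $\bar W_i(t) > 0$, work is consumed at rate $1$. For a layer-$1$ node, work arrives only from exogenous arrivals, at the rate $\sum_{c \in C_i} \alpha_c/\mu_c = \rho_i < 1$, so $\dot{\bar W}_i(t) \le \rho_i - 1 < 0$ whenever $\bar W_i(t) > 0$. Hence every layer-$1$ node drains by some time $T_1 \le 1/(1-\rho_{\max})$ and remains empty thereafter. The inductive step is the same: suppose layers $1, \ldots, \ell$ have drained by time $T_\ell$. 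For $t \ge T_\ell$, a class $c$ in layer $\ell$ that is empty must be feeding its output at the fluid arrival rate, so the effective arrival rate into each class $c'$ in layer $\ell+1$ equals $\lambda_{c'}$. Therefore for any layer-$(\ell+1)$ node $i$ and any $t \ge T_\ell$ at which $\bar W_i(t) > 0$, we have $\dot{\bar W}_i(t) \le \rho_i - 1 < 0$. Since $\bar W_i(T_\ell)$ is bounded (by the initial fluid plus total arrivals accumulated by time $T_\ell$, which is a uniform constant under the fluid scaling $\sum_c \bar Q_c(0) \le 1$), node $i$ drains in bounded additional time, giving a finite $T_{\ell+1}$. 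After $k$ iterations we reach $T_k < \infty$ with $\bar Q_c(t) = 0$ for all classes and all $t \ge T_k$, which is fluid stability, and Theorem~\ref{stabilities} (or its hidden-Markov analogue) yields ergodicity.

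The main obstacle is making the ``arrivals at rate $\lambda_c$ for $t \ge T_\ell$'' step fully rigorous inside the fluid model: one must extract from the fluid equations~\ref{eq:4.17}--\ref{eq:4.21} the fact that an empty class in the fluid sense genuinely forwards its incoming fluid at the instantaneous rate it receives, so that the effective arrival rate into the next layer agrees with $\lambda_{c'}$ almost everywhere after the layer below has drained. This follows by differentiating Equation~\ref{eq:4.17} and applying the greediness identity~\ref{eq:4.21} on the set where $\bar Q_c = 0$, but the bookkeeping requires care when several classes coexist at the same node, since the Lipschitz decomposition of $\dot{\bar T}_c$ among the classes of $C_i$ is not uniquely determined by the fluid equations alone. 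For the stability conclusion, however, only the aggregate workload $\bar W_i$ matters, and the workload bound $\dot{\bar W}_i \le \rho_i - 1$ survives this ambiguity, so the induction goes through without needing to pin down the per-class service allocation.
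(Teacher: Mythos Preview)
Your proof sketch is correct and is essentially the standard fluid-model argument that the paper cites to Dai~\cite{Dai}, Section~6: induct on layers, use the workload $\bar W_i$, and observe that once upstream layers have drained, the input rate of work to node $i$ is exactly $\rho_i$, so $\dot{\bar W}_i\le \rho_i-1<0$ whenever $\bar W_i>0$. Your worry about the per-class allocation is well placed but, as you yourself note, immaterial: once every upstream class $k$ has $\bar Q_k\equiv 0$ on an interval, differentiating the fluid balance equation forces $\dot{\bar T}_k$ to equal the incoming rate to $k$, and an inner induction on layers gives $\dot{\bar T}_k=\lambda_k$ outright; even without that, the aggregate workload bound survives.

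The paper itself does not reprove this; it just cites Dai and then offers a one-sentence alternative that is genuinely different from your argument: the output process of a stable node is a hidden Markov process, so the thesis's hidden-Markov fluid limit theorem (Chapter~\ref{fluid chapter}) can be applied to each successive layer in turn, reducing the whole theorem to the stability of a single node. That route bypasses any direct manipulation of the multi-node fluid equations, at the cost of invoking the heavier hidden-Markov machinery. Your approach is more self-contained within the basic fluid framework of Appendix~\ref{dummies chapter} and yields an explicit bound on the draining time, whereas the paper's one-liner is shorter but opaque about constants.
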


A proof of this result can be found in Dai~\cite{Dai}, Section 6.
Also, for the world's shortest conceivable proof of this, 
note that the output process of a stable queueing system
is itself a hidden Markov process; therefore, the fluid
stability of a single node network under the hidden Markov
fluid model implies stability for layered networks.

A good example of this technique is the hypercube network
defined on page~\pageref{hypercube definition}.  But how should we 
route packets on a hypercube?

\begin{definition}\index{dimensional routing}
Consider a $d$-dimensional torus.  The network\footnote{See the
footnote on page~\pageref{protocol footnote}} uses \emph{dimensional
routing} if packets proceed as follows: to get from
$(x_{1},\ldots,x_{d})$ to $(y_{1},\ldots,y_{d})$, we find the first
$i$ such that $x_{i} \neq y_{i}$, travel around that dimension for
$y_{i} - x_{i} \bmod l_{i}$ steps, and repeat for all successive
dimensions (in order).
\end{definition}

Because the hypercube is a special case of the torus, we can use
dimensional routing on it, too.  If we consider the edge graph of the 
hypercube under dimensional routing, it is layered, and hence 
the network is stable.  Formally,

\begin{corollary}
Under any i.i.d.\ or hidden Markov arrival process, under any greedy
protocol, the hypercube is stable under dimensional routing.
\end{corollary}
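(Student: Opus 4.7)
The plan is to reduce this corollary to the preceding theorem on layered networks by passing to the edge graph of the hypercube, as the excerpt already suggests. First I would translate the routing problem into the queueing theorist's framework: replace each edge of the hypercube by a node (a queue), and let a packet's route become a sequence of visits to these edge-nodes. Under dimensional routing, a packet going from $(x_1,\ldots,x_d)$ to $(y_1,\ldots,y_d)$ either crosses or does not cross exactly one edge in dimension $i$, for $i=1,2,\ldots,d$ in that order (since each $l_i = 2$, there is at most one edge per dimension to cross). The packet's class can be encoded by its destination together with the dimension it is about to process.

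Next I would exhibit the layering explicitly. Define layer $i$ (for $1 \leq i \leq d$) to be the set of edge-nodes that carry dimension $i$ edges, and place exogenous arrivals in a fictitious ``layer $0$.'' Every routing decision sends a packet either out of the network or from some node in layer $i$ to some node in layer $i+1$, because dimensional routing processes the dimensions in a fixed order and never revisits a completed dimension. Thus the edge graph, together with the class structure induced by dimensional routing, is a feedforward network in the sense of the layered-network definition.

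With the layering in hand, I would verify that the hypothesis of the preceding layered-network theorem applies in our generalized setting: the arrival process is i.i.d.\ or hidden Markov (so it fits the framework of Section~\ref{packet routing defs section}), the service and routing processes are those of the underlying hypercube, and the protocol is an arbitrary greedy one. The nominal loads-less-than-one assumption is inherited from the statement. Invoking the layered-network stability theorem for greedy protocols then yields fluid stability of the edge graph, which by Theorem~\ref{stabilities} (or its hidden Markov counterpart in Chapter~\ref{fluid chapter}) gives positive recurrence of the original Markov chain.

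The only real subtlety, which I expect to be the main (though modest) obstacle, is bookkeeping: one must check that the class structure induced by dimensional routing on the edge graph still satisfies the convention that each class resides at a unique node, and that the transition matrix $P$ on classes is transient. Both follow from the fact that each packet's remaining route is a prefix of a fixed dimensional sequence of length at most $d$, so the class graph is acyclic and visits each class at most once; this is the same observation that makes the edge graph layered, so no new work is required.
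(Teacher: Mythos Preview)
Your proposal is correct and follows exactly the paper's approach: the paper's entire argument is the single sentence preceding the corollary, ``If we consider the edge graph of the hypercube under dimensional routing, it is layered, and hence the network is stable,'' and your proposal simply fleshes out that observation (edge graph, layer $i$ = dimension-$i$ edges, apply the layered-network theorem). The extra bookkeeping you note about class structure and transience of $P$ is more detail than the paper provides, but it is the right detail and follows immediately from the acyclicity you identify.
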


\section{Meta-layered Networks}

We can generalize the layered results to a wider class of
networks.

\begin{definition} \index{meta-layered network}
Suppose we have a collection of networks $N_{1},\ldots,N_{m}$.
Suppose that we add directed edges $e$ such that $e$ crosses from a
node in $N_{i}$ to a node in $N_{j}$ only if $i<j$.  Then we
will call this network a \emph{meta-layered network}
where we call $N_{i}$ the $i$th layer.
\end{definition}

\begin{theorem} \label{meta-layered theorem}
\index{Theorem \ref{meta-layered theorem}}
Suppose we have a meta-layered network, each of whose layers
has a stable fluid limit.  Then the meta-layered network
is stable.
\end{theorem}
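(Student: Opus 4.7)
My plan is to prove Theorem~\ref{meta-layered theorem} by induction on the layer index $i$, showing that the fluid in each layer empties within a finite amount of time. The result then follows from Theorem~\ref{stabilities} applied to the whole meta-layered network.

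For the base case, consider the fluid model restricted to layer 1. By the definition of a meta-layered network, no edges enter $N_1$ from any other layer, so in the fluid model the dynamics inside $N_1$ are driven solely by the exogenous arrival rates $\alpha_c$ for classes $c\in N_1$ (together with its routing matrix and service rates). These are precisely the rates appearing in the fluid model of $N_1$ viewed as a standalone network. By hypothesis, that fluid model is stable, so the fluid in layer 1 starting from any initial mass $\leq 1$ empties by some time $T_1$ depending only on $\alpha$, $\mu$, and $P$.

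For the inductive step, suppose the fluid in layers $1,\ldots,i-1$ has emptied by time $T_{i-1}$. I want to show that the fluid in layer $i$ empties by some later time $T_i$. During $[0,T_{i-1}]$ the fluid in layer $i$ could have grown, but only at a rate bounded by the total possible inflow (exogenous plus the Lipschitz bound on output from lower layers), so its mass at time $T_{i-1}$ is bounded by a constant $M_i$ depending only on the initial mass, $T_{i-1}$, and the network's parameters. After time $T_{i-1}$, the lower layers sit at zero fluid but continue to route packets through at the equilibrium rates determined by $(I-P')^{-1}\alpha$; consequently, the effective arrival rates into layer $i$ are exactly the $\lambda_c$ used in $N_i$'s standalone fluid model. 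Applying the stability hypothesis to $N_i$ with starting mass $M_i$ (and using the linear scaling of fluid models) then gives that the fluid in layer $i$ empties within a further finite time.

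Iterating through all $m$ layers yields a time $T=T_m$, depending only on $\alpha$, $\mu$, and $P$, by which every layer is empty. Hence the fluid limit model of the meta-layered network is stable, and by Theorem~\ref{stabilities} the underlying Markov chain is ergodic. The main obstacle is the inductive step: one must argue carefully that the transient phase, during which residual fluid in layers $<i$ injects ``extra'' flow into layer $i$, is bounded both in duration and in the amount of fluid deposited, and that the post-transient regime coincides exactly with the standalone fluid model of $N_i$ at its effective arrival rates. This requires an upper bound on inflow during $[0,T_{i-1}]$ (immediate from Lipschitz continuity of the fluid trajectories) and a verification that equilibrium throughput from an empty fluid queue matches the nominal $\lambda$ rates used in the stability hypothesis for $N_i$.
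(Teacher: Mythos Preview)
Your proposal is correct and follows essentially the same approach as the paper's proof: induct on the layer index, observing that once layers $1,\ldots,i-1$ have drained (by some time $T_{i-1}$), layer $i$ holds only a bounded amount of fluid and thereafter sees the equilibrium throughput from the earlier layers as its arrivals, so the stability hypothesis for $N_i$ applies. The paper's argument is more terse but structurally identical.
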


\proof
This result follows immediately from the hidden Markov stability
results.  Or, one can reason as follows: Since nothing enters $N_{1}$
except for its original arrivals, and we know that the fluid model is
stable, then the fluid drains from $N_{1}$ by some finite amount of
time $T_{1}$.  In the first $T_{1}$ time steps, the fluid that has
pooled in $N_{2}$ can only have worsened by a finite, bounded amount.
From $T_{1}$ onward, the fluid flowing in to $N_{2}$ is the same as if
$N_{1}$ weren't attached, and $N_{2}$ just had a higher arrival rate.
Since we assume that it's fluid model is stable, there exists a time
$T_{2}$ such that it has emptied by time $T_{2}$.  Proceeding in this
fashion, by a finite amount of time $T_{n}$ the whole system will have
emptied.
\EOP

We can apply this result to the torus:

\begin{corollary}
Under any i.i.d.\ or hidden Markov arrival process, under any greedy
protocol, the torus is stable under dimensional routing.
\end{corollary}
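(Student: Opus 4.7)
The plan is to reduce the statement to the meta-layered network theorem (Theorem~\ref{meta-layered theorem}) together with the ring stability result of Section~\ref{ring example section}. The natural decomposition is to split packets by the dimension along which they are currently being routed. For each $i = 1, \ldots, d$, let $T_i$ consist of a copy of the node set $\prod_{j} N_j$ whose packets are precisely those currently routing along their $i$th coordinate. When a packet in $T_i$ completes its traversal in dimension $i$, dimensional routing dictates that it either leaves the system (if $i = d$) or proceeds to be routed in some dimension $j > i$; equivalently, there is a directed edge from the corresponding node of $T_i$ to a node of $T_j$ with $j > i$. Crucially, no edges go ``backward'' from $T_j$ to $T_i$ with $j > i$, so $T_1, \ldots, T_d$ together form a meta-layered network.

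Next I would check that each layer $T_i$ is itself stable. Within $T_i$ every packet moves along its $i$th coordinate with the other $d-1$ coordinates frozen. Therefore $T_i$ decomposes as a disjoint union of $\prod_{j \ne i} N_j$ independent unidirectional rings, each of length $N_i$. By Section~\ref{ring example section}, each such ring is stable under any greedy protocol (with i.i.d.\ or hidden Markov arrivals, using the hidden Markov generalization from Chapter~\ref{fluid chapter}). Since a disjoint union of stable queueing systems with independent arrivals is trivially stable (its fluid limit is simply the product of the component fluid limits), $T_i$ is stable.

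Finally I would invoke Theorem~\ref{meta-layered theorem} to the sequence $T_1, \ldots, T_d$, concluding that the full torus under dimensional routing is stable. This immediately yields the corollary.

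The mildly delicate part will be justifying the decomposition into layers rigorously. One must keep in mind that although the physical nodes of the torus are shared across different dimensions, the meta-layered construction requires disjoint node sets; the fix is to replicate each physical node into $d$ logical copies indexed by the routing dimension, with a packet being routed to dimension $j$ entering the corresponding copy in $T_j$. One must also verify that the resulting arrival process into $T_i$ (for $i>1$) is still of the form handled by the generalized fluid limit theory of Chapter~\ref{fluid chapter} — but this is exactly the content of the proof of Theorem~\ref{meta-layered theorem}, where the output of each earlier layer is absorbed as additional ``exogenous'' arrivals into later layers after a finite transient.
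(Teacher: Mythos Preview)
Your approach is essentially identical to the paper's: decompose the torus by routing dimension into layers $T_1,\ldots,T_d$, observe that each layer is a (disjoint union of) ring(s) and hence fluid-stable by Section~\ref{ring example section}, and then invoke Theorem~\ref{meta-layered theorem}. The paper's proof is a single sentence to this effect; you have simply supplied the details it omits, including the point about replicating physical nodes into logical copies per dimension.
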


\proof
Viewing each layer of a torus as a ring, which 
we know has a stable fluid limit from Section~\ref{ring example section},
we can
use Theorem~\ref{meta-layered theorem}.
\EOP

\section{Convex Routing}

See Section~\ref{convex routing section}.  The stability
of wrapped butterflies (under convex routing) follows from this result.

\section{Generalized Kelly Networks with FIFO}
\index{generalized Kelly network!FIFO stability}
This stability result is in Bramson~\cite{Bramson96}.  The parts
relevant for implying stochastic stability are a relatively small
subset of the paper: Sections 1 through 5, plus one lemma from Section
6, suffice to imply stability when the nominal loads are all less than
1.

\section{Prioritizing All Paths} \label{sec:priority}

\begin{theorem} \label{priority}
\index{Theorem \ref{priority}}
Suppose that we have a simple network, and each class of packet
follows a deterministic, fixed path.  Suppose that we prioritize all
the (finitely many) paths, so that packets travelling along
higher priority paths have precedence.  Then the network is stable.
\end{theorem}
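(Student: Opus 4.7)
The plan is to establish fluid stability by induction on the number of priority classes, ranked from highest to lowest priority as $k=1,2,\ldots,K$, and then apply the fluid limit theorem (or Theorem~\ref{stabilities}) to conclude positive recurrence of the stochastic Markov chain. The key structural feature to exploit is that, in the fluid model, a higher-priority class is completely oblivious to lower-priority classes: its service capacity is never consumed by them. Thus the class-1 fluid evolves as if it were alone in the network. Because the network is simple, class 1 traverses a fixed finite sequence of distinct nodes $n_{1,1},\ldots,n_{1,L_1}$, and along this path the fluid dynamics form a tandem configuration.

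The base case amounts to stability of the single-class tandem fluid model: at node $n_{1,1}$ fluid accumulates at rate $\alpha_1$ and drains at rate $\mu$ whenever present, so since $\alpha_1/\mu<1$ it empties by a bounded time $T_1^{(1)}$; thereafter class-1 fluid passes through at the balanced rate $\alpha_1$, which feeds node $n_{1,2}$, and the argument cascades through the whole path to give a uniform emptying time $T_1$ (depending only on $\alpha_1$, $\mu$, and $L_1$) whenever the initial class-1 mass is bounded by $1$. For the inductive step, suppose that classes $1,\ldots,k-1$ have been shown to empty uniformly by some time $T_{k-1}$. After $T_{k-1}$, each such class, being in the ``balanced'' regime at every node on its path, consumes capacity at exactly its arrival rate. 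Therefore, viewed after $T_{k-1}$, class $k$ sees at each node $n$ on its path an effective service rate $\mu_n^{(k)} = \mu - \sum_{i<k\,:\,i\text{ visits }n}\alpha_i$. The nominal-load hypothesis $\sum_{i\text{ visits }n}\alpha_i/\mu<1$ gives $\alpha_k/\mu_n^{(k)}<1$ at every such $n$, so the reduced tandem fluid model for class $k$ is stable by the base-case argument, and empties in uniform additional time. Iterating through $k=1,\ldots,K$ yields a total emptying time $T=T_K$, establishing fluid stability.

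The main obstacle is making the ``after $T_{k-1}$, higher classes consume exactly $\alpha_i$ of capacity'' statement rigorous at the level of the fluid model, since the fluid equations for class $k$ do not literally decouple during the transient regime $t<T_{k-1}$ when the priority classes themselves are draining. I would handle this by carrying along a monotone bound on the class-$k$ fluid that ignores any service it might receive before $T_{k-1}$, showing only that it grows at most linearly; once $T_{k-1}$ has passed, the preceding classes remain empty (their arrivals balance their departures for all subsequent time by the single-class analysis), and from that moment onward the effective-capacity argument is exact and the reduced fluid tandem drains in bounded time depending only on $\{\alpha_i\}_{i\leq k}$ and $\mu$. A secondary technical point is checking the auxiliary constraint~(\ref{eq:4.22}) for the priority discipline, but this is the standard priority-discipline constraint $\dot{\flu{I}}_k^{x,+}\,\flu{Q}_k^{x,+}=0$ that one derives exactly as in the FTG/CTO examples of this appendix.
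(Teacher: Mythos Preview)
Your proposal is correct and follows essentially the same route as the paper: induct on priority level, observing that the highest-priority class sees a feedforward (tandem) network because the graph is simple and hence empties in bounded time, then renormalize the residual service capacity at each node and repeat. The paper phrases the renormalization as replacing the mean service time $\mu_l$ by $\mu_l/(1-r_1)$ rather than subtracting $\sum_{i<k}\alpha_i$ from the rate, and it dispatches the tandem base case by citing the layered/feedforward stability result instead of spelling it out, but the argument is otherwise identical, including the linear bound on lower-priority fluid during the transient.
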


\proof
List the classes from highest priority to lowest priority, as
$c_{1},\ldots,c_{n}$.  Suppose that the total arrival rate in 
the whole network of class $c_{l}$ packets is $\lambda_{l}^{total}$.
The class $c_{1}$ packets will see a 
feed-forward network (because the network is simple), and hence
all the fluid will drain by some time $t_{1}$ regardless of the
initial fluid configuration.  In this amount of time, the class
$c_{2}$ packets will have fluid volume at most 1+$\lambda_{2}^{total} t_{1}$
(which is finite).  

Now, there will never be any more class $c_{1}$ fluid in queue.  However,
there is still some processing capacity taken up by the nominal load
of this class.  Let me make this precise.  Suppose that at some node
$k$, the nominal arrival rate of class $l$ packets is $\lambda_{l}$,
the mean service time is $\mu_{l}$, so the nominal load contributed
by class $l$ packets is $r_{l}=\lambda_{l}\mu_{l}$.  The nominal arrival
rate at $k$ is, of course, $r=\sum r_{l}<1$.  

When there is no longer any class $c_{1}$ fluid in queue, node $k$
behaves as though $\mu_{l}$ had been replaced by $\mu / (1-r_{1})$ for
all classes $l$, and analogously at each node $k$.  We can now
repeat the whole feed-forward argument above on classes $c_{2},\ldots,
c_{n}$, and continue (by induction), and we're done.
\EOP

\note This analysis is similar to the ``push starts'' 
\index{push starts} of Dai and Vande Vate~\cite{Dai_Vande_Vate}.
They are interested in non-simple networks, but can only analyze
networks with at most two (!) nodes.  For packet routing purposes, the
above theorem is obviously much more relevant.

There are some immediate corollaries of great use for packet routing.
\begin{corollary}
Suppose that to route a packet to its destination, only its destination
and current node are necessary to find the path, and all paths
are simple.  Suppose that we
rank the destinations in any order, and let packets with higher
ranked destinations have precedence over packets with lower ranked
destinations.  Then the network is stable.
\end{corollary}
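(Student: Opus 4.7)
The plan is to reduce the corollary directly to Theorem~\ref{priority}. Under the hypothesis, a packet's entire route is determined by its current node together with its destination, so once a packet is injected at some source node with a given destination, its path through the network is a deterministic, fixed function of (source, destination). Since all such paths are simple, each (source, destination) pair corresponds to a single simple path, and there are only finitely many such paths (because the network is finite).

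Next I would convert the destination ranking into a ranking of paths. List the destinations in the given order $d_{1}, d_{2}, \ldots, d_{m}$, and within the set of paths terminating at each $d_{j}$, impose any total ordering (say, by source node label). Concatenating these orderings yields a total priority list on all paths such that any path ending at $d_{i}$ outranks any path ending at $d_{j}$ whenever $i < j$. I would identify packets with the class of their path, so the given protocol — higher destination rank beats lower destination rank, with ties among equal-destination packets broken in some greedy way — is a specialization of the protocol covered by Theorem~\ref{priority}.

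At this point Theorem~\ref{priority} applies directly: it asserts stability for any simple network where packets follow fixed deterministic paths and all paths are prioritized. Since our nominal loads are less than one (implicit in the stability claim) and the prioritization induced above is exactly of the form Theorem~\ref{priority} handles, the fluid model drains in finite time, hence the stochastic network is ergodic.

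The main obstacle, if one exists, is only a bookkeeping concern: making sure the class structure I impose is compatible with the greedy tie-breaking implicit in the original protocol statement. This is not really an obstacle, because Theorem~\ref{priority} allows ties among classes of equal priority to be broken arbitrarily (and its proof proceeds by peeling off priority levels one at a time, which is insensitive to how ties are resolved). Therefore the proof is essentially an application, not a new argument.
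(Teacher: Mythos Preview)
Your proposal is correct and matches the paper's approach exactly: the paper's proof is simply ``This follows immediately from Theorem~\ref{priority}.'' Your write-up just fills in the bookkeeping details of that immediate reduction.
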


\proof 
This follows immediately from Theorem~\ref{priority}.
\EOP


\begin{corollary}  Suppose we have a simple network, and 
packets follow fixed paths.  When each packet is created, label it
with an integer between 1 and $P$.  This integer is called the
\emph{priority} of the packet.  If two packets $x$ and $y$ contend for
service, and they have priorities $i$ and $j$, respectively, then $x$
has precedence if $i<j$, and $y$ has precedence if $j<i$.  If $i=j$,
then the contention is resolved in an arbitary fashion (but fixed for
that $i$).  If all the nominal loads are less than one, then this
protocol is stable.
\end{corollary}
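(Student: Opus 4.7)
The plan is to reduce this corollary to Theorem~\ref{priority} by refining the notion of a class so that the existing priority theorem applies directly. A packet's trajectory through the network is determined entirely by (a) the fixed path it follows and (b) the priority label assigned to it at creation; there are only finitely many paths and only $P$ possible priorities, so the collection of (path, priority) pairs is finite. I would take each such pair to be a new ``refined class'' in the sense of Theorem~\ref{priority}.

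Next, I would define a total order on these refined classes: rank them first by priority, with smaller priority labels coming earlier, and break ties among refined classes sharing a common priority $i$ using the arbitrary-but-fixed rule guaranteed by the corollary's hypothesis. Because that tie-breaking rule is fixed (rather than state-dependent or adversarial), it extends consistently to a genuine total order on refined classes. Under this order, the protocol described in the corollary coincides exactly with the all-paths-prioritized protocol of Theorem~\ref{priority}: whenever two packets contend for service, the one whose refined class is ranked earlier wins.

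Finally, I would verify that the nominal-load hypothesis of Theorem~\ref{priority} still holds after refinement. Splitting a class into sub-classes (one per priority label) does not alter the aggregate mean arrival rate or mean service time at any node, so the per-node nominal loads are unchanged and remain strictly below one. Theorem~\ref{priority} then delivers stability immediately. There is really no hard step here; the only subtlety is noticing that the word ``fixed'' in the hypothesis is precisely what converts an ostensibly ambiguous within-priority rule into a bona fide total priority, so that the corollary becomes a straightforward repackaging of Theorem~\ref{priority} for the case where multiple paths may share a common priority level.
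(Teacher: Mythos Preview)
Your proposal is correct and matches the paper's proof essentially line for line: the paper also refines the $n$ native classes into $Pn$ classes $c_{l}^{i}$ indexed by (original class, priority), ranks them lexicographically by priority first and then by the fixed arbitrary rule within a priority level, and invokes Theorem~\ref{priority}. Your added remark that the per-node nominal loads are unchanged under this refinement is a useful observation the paper leaves implicit.
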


\note This is my attempt to imitate Ranade's ghost 
packet\index{ghost packet algorithm} algorithm on an arbitrary
network.  See Leighton~\cite{leighton}, Section 3.4.6
for details.

\proof We natively have, say, $n$ classes arriving, namely
$c_{1},\ldots,c_{n}$.  By sticking on the priority flag, we get
$Pn$ classes, $c_{1}^{1},\ldots,c_{n}^{P}$.  To get the corollary,
we just have to rank the classes such that $c_{l}^{i}<c_{m}^{j}$
if $i<j$.  The arbitrary resolution if $i=j$ is determined by the
arbitary ranking between $c_{l}^{i}$ and $c_{m}^{i}$ for all the 
$l$ and $m$.  The result follows by Theorem~\ref{priority}.
\EOP

\note Suppose we take $P$ to be even.  Then,
for the ``arbitary resolution'' if $i=j$, 
suppose we rank $c_{l}^{2i+1}<c_{m}^{2i+1}$ 
iff $c_{l}^{2i}>c_{m}^{2i}$.  Then the 
probability that a class $l$ packet is of higher rank 
than class $m$ is exactly $\frac{1}{2}$.  This may mitigate some
worry about the arbitrariness of the $i=j$ case.

\section{Farthest To Go} \index{FTG protocol!fluid stability}
In the FTG protocol, the packet farthest from its destination gets
precedence.  For this statement to be meaningful, a packet must be
created with a fixed number of steps to cross.  (For example,
if packets are born with destinations, this property holds.)
If two packets are equidistant, there are several
possible interpretations of the protocol.  For our purposes,
I'm going to assume that there's an arbitrary but fixed resolution;
for example, if packets are born with a path to travel, we can
place an arbitrary priority on the destinations, or origin/destination
pairs, and use that to resolve ties.  (But see the note below
for a FIFO generalization.)

\begin{theorem}
FTG is stable on all simple networks.
\end{theorem}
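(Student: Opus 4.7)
The plan is to reduce FTG on a simple network to Theorem~\ref{priority} (``Prioritizing All Paths'') by refining the class structure. Because the network is simple, every packet follows a finite acyclic route. I would split each original class $c$ into refined sub-classes $(c,i)$, where $i$ is the number of hops already travelled along the route of a class $c$ packet. Each refined sub-class has a deterministic one-step transition: a packet in $(c,i)$ either exits or moves to the uniquely determined successor sub-class $(c,i+1)$. Hence every refined class follows a fixed deterministic path, exactly the hypothesis of Theorem~\ref{priority}. The simplicity of the original network guarantees that only finitely many such refined sub-classes exist.

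Next I would rank the refined sub-classes so that higher priority is assigned to classes with more hops \emph{remaining} (i.e. smaller $i$ within each original route, or more precisely, larger $|\mathrm{route}(c)|-i$), breaking ties between classes with the same remaining hop count arbitrarily but in a fixed manner. Since a packet's distance to its destination equals its remaining hops, this total ordering realizes FTG: the packet farther from its destination always has strictly higher priority. The arbitrary tie-breaking is permitted because the FTG specification is silent on ties, so any fixed tie-breaker constitutes a valid instance of FTG.

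With the refinement and ordering in place, Theorem~\ref{priority} applies verbatim and yields fluid stability, which combined with Theorem~\ref{stabilities} from Appendix~\ref{dummies chapter} (or the hidden-Markov version in Chapter~\ref{fluid chapter}) gives stochastic stability. Unrolling Theorem~\ref{priority}: the top-priority sub-classes (those at maximal remaining-hop level $L$) see only exogenous arrivals and feed only into strictly lower levels, so they form a feedforward subsystem that drains in bounded fluid time; each successive level inherits the same feedforward property once the higher levels have cleared, and the inductive peeling proceeds down to level $0$.

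The only subtlety---and the ``main obstacle'' worth flagging---is confirming that the nominal-load hypothesis transfers to the refined network. This follows because the refinement is a mere relabelling of physical packets at physical nodes: solving the traffic equations for the refined system assigns each $(c,i)$ the effective arrival rate $\lambda_c$ of its parent class, and the sum $\sum_{c\in C_n}\lambda_c/\mu_c=\rho_n<1$ is unchanged at each node. Once this bookkeeping is verified, Theorem~\ref{priority} delivers the result directly, with no further work on FTG-specific dynamics required.
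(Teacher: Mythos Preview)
Your level-peeling argument in the ``unrolling'' paragraph is correct and is essentially what the paper does: induct on the remaining-hop count $k$ from the maximal value $\ell$ down to $1$; at each level the classes with exactly $k$ hops remaining receive input only from already-stabilized higher levels (or exogenously), there are no intra-level transitions, ties within a level are broken by a fixed rule, and the level drains in bounded fluid time after which one renormalizes service rates and proceeds to level $k-1$.

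What does \emph{not} work is the framing as a verbatim reduction to Theorem~\ref{priority}. That theorem requires a priority ranking on \emph{paths}, i.e., a packet carries one fixed priority for its entire lifetime. Under your refinement a packet born into $(c,0)$ has priority corresponding to remaining distance $\ell$, but after one hop it sits in $(c,1)$ with remaining distance $\ell-1$; if at that node it meets a fresh $(c',0)$ packet on another length-$\ell$ path, the newcomer outranks it. So the priority is not constant along the packet's trajectory, and the hypothesis of Theorem~\ref{priority} fails. The paper makes exactly this point in the remark immediately following its proof (the Figure~\ref{FTG figure} example): two paths can have opposite FTG-priorities at different nodes where they meet, so no consistent path-level ranking realizes FTG. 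The paper's own citation of ``Section~\ref{sec:priority}'' is to the inductive peeling-and-renormalizing \emph{technique} from that proof, applied level by level, not to the theorem statement as a black box.

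So: keep your refinement $(c,i)$ and the nominal-load bookkeeping (which is fine), drop the claim that Theorem~\ref{priority} applies verbatim, and present the remaining-hop induction directly as the proof.
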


\proof 
A proof is in Chen and Yao~\cite{stoch_net}.  I offer another here:

Consider the fluid model.  Because we have a simple network, there is
a longest possible path that a packet can take, say of length $l$.
Consider all classes of packets that have $l$ steps to take.  These
are of highest priority.  They may conflict with each other, but these
conflicts are resolved according to a fixed priority discipline.
Section~\ref{sec:priority} shows that this is stable.  As in
Theorem~\ref{priority}, once the fluid in queue from these classes
drops to zero, we can renormalize the service times and remove the
classes from the network.  Repeating for $l-1, l-2,\ldots,1$, we prove
stability.
\EOP

\note If we have a generalized Kelly network, then we can resolve
ties between equidistant packets with FIFO, and the network
will still be stable.

Why can't we just use Theorem~\ref{priority}?  Well, consider
a network containing a subgraph like Figure~\ref{FTG figure}.
\begin{figure}[ht]
\centerline{\includegraphics[height=1.2in]{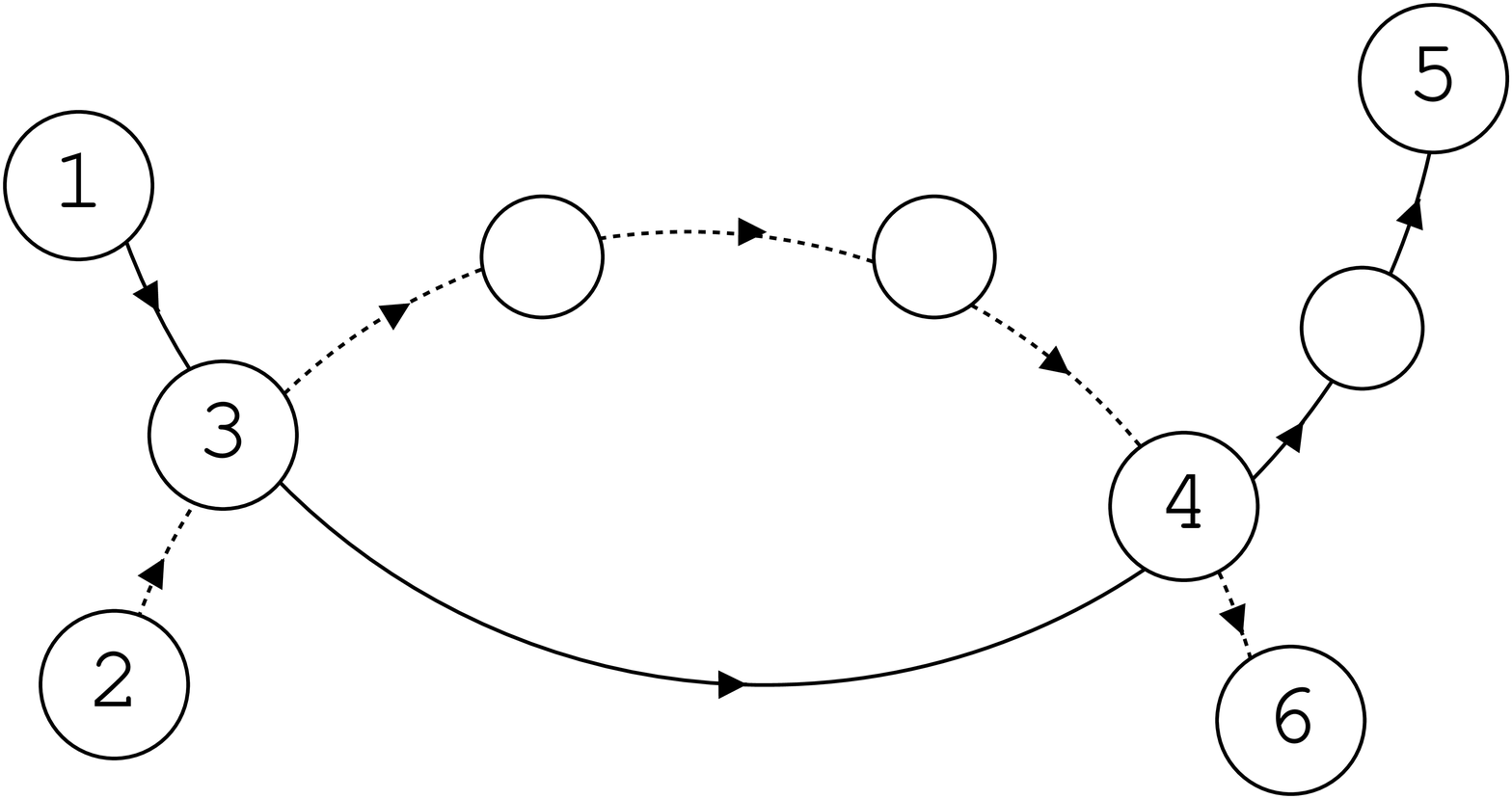} \hspace{.2in}  }
\caption{Non-prioritizable paths in FTG}
\label{FTG figure}
\end{figure}
Consider a packet $z_{1}$ travelling from node 1 to node 5
along the solid line, and a packet $z_{2}$ travelling from
2 to 6 along the dotted line.  At node 3, $z_{2}$ has priority
over $z_{1}$, but at node 4, it's the reverse.  Therefore,
we can't consistently prioritize the paths, so we can't use 
Theorem~\ref{priority}.

\section{Closest to Origin} \index{CTO protocol!fluid stability}
Closest to Origin (CTO) proceeds almost exactly as FTG does.  
Packets are ranked according to their distance from their origin, 
with packets closer to their origin getting priority.  Ties
can be resolved by arbitrary priority, or, in the case of
a generalized Kelly network, by FIFO.

\begin{theorem}
CTO is stable on all simple networks.
\end{theorem}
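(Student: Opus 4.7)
The plan is to mirror the Farthest To Go argument, only iterating from smallest to largest ``distance from origin'' rather than from largest to smallest ``distance to destination.'' Since the network is simple, every packet crosses at most $l$ edges along its route (where $l$ is the length of the longest simple path in the network), so the number of steps a packet has already taken lies in $\{0,1,\ldots,l-1\}$. I would split each native class into subclasses $D_{0},D_{1},\ldots,D_{l-1}$, where a packet belongs to $D_{k}$ iff it has already been routed across $k$ edges; under CTO, $D_{k}$ has strict priority over $D_{k'}$ whenever $k<k'$, with within-class ties broken by the fixed tiebreaking rule.

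First I would handle the base of the induction. New $D_{0}$ fluid at a node comes only from exogenous arrivals, because a packet enters $D_{0}$ when it is born and leaves $D_{0}$ the first time it is serviced. Thus within $D_{0}$ there is no node-to-node routing at all; the $D_{0}$ subsystem is just a collection of independent single-node queues, each of highest priority, with ties resolved by a fixed rule. The fluid argument of Theorem~\ref{priority} (or, equivalently, the trivial feedforward argument) applies, and since the nominal load at every node is strictly less than one, the $D_{0}$-load in particular is, so the $D_{0}$ fluid in queue drains to zero by some fluid time $t_{0}$ depending only on $\alpha$, $\mu$, and $P$.

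For the inductive step, suppose that by fluid time $t_{k}$ all queues of $D_{0},D_{1},\ldots,D_{k}$ have drained. Exactly as in Theorem~\ref{priority}, I would renormalize the service rate at each node $v$ by subtracting the nominal load $\sum_{j\leq k}\rho_{v}^{(j)}$ consumed (in fluid balance) by these higher-priority subclasses. The $D_{k+1}$ fluid at $v$ is fed only by the deterministic routing of serviced $D_{k}$ packets from predecessor nodes, and within $D_{k+1}$ itself there is no node-to-node routing (a serviced $D_{k+1}$ packet immediately becomes a $D_{k+2}$ packet). Hence $D_{k+1}$ is again a feedforward subsystem, with effective capacity $1-\sum_{j\leq k}\rho_{v}^{(j)}$ at each $v$ and with load $\rho_{v}^{(k+1)}<1-\sum_{j\leq k}\rho_{v}^{(j)}$ there (because the aggregate load $\sum_{j}\rho_{v}^{(j)}$ is less than one). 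Theorem~\ref{priority} then gives a fluid time $t_{k+1}$ by which the $D_{k+1}$ queues drain as well. Iterating $k=0,1,\ldots,l-1$ yields a bounded time $T$ by which every queue in the fluid model is empty, which is fluid stability; Theorem~\ref{stabilities} (or the hidden-Markov extension of Chapter~\ref{fluid chapter}) then upgrades this to stochastic stability.

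The main obstacle, just as for FTG, is justifying the renormalization: once the $D_{j}$ queues have drained for $j\leq k$, one must verify that each such $D_{j}$ really consumes only its nominal share $\rho_{v}^{(j)}$ of service at $v$, leaving a strictly positive residual capacity available to $D_{k+1}$. This is a standard consequence of fluid balance under a work-conserving priority discipline on a simple network, and it is precisely the ingredient that lets the inductive feedforward reduction go through cleanly.
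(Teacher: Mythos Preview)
Your proposal is correct and matches the paper's approach exactly: induct on distance from origin starting at $0$, show each distance-layer drains (here each $D_k$ is trivially a collection of single-node queues, so the feedforward/priority argument applies), renormalize the service rates, and continue up to the maximum path length $l$ guaranteed by simplicity. The paper states this in four sentences and explicitly notes it is ``exactly like FTG, except we induct in the opposite direction''; you have simply spelled out the details.
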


\proof  We consider all packets that are at their origin;
as in FTG, we show that these are stable, and then 
renormalize the mean service times.  We then consider 
packets that are distance 1 from their origin, then
distance 2, and so forth.  By simpleness, there exists
a finite $l$ such that no packets travel farther than
$l$ steps from their origin, and we're done.
\EOP

\note  This proof is exactly like FTG, except we induct
in the opposite direction.





\section{Longest In System} \label{LIS stability section}
\index{LIS protocol!fluid stability}

It is rumored that Maury Bramson has a proof of the stability of LIS
for some class of networks; I haven't been able to find it, so I offer
this proof.

First, a technical point.
For time-based protocols, like LIS and SIS (see the next section), 
the reader may become worried about the structure of the state
space.  If we keep a time-stamp on every
packet, the system is clearly not going to be stable-- since time
keeps increasing, we would never get to close to returning
to the same state.  To solve this problem, every time the system is 
emptied of all packets, just reset the system clock to zero.
This resetting doesn't change the protocol.

\begin{theorem} \label{LIS theorem}
\index{Theorem \ref{LIS theorem}}
All networks are stable under the Longest In System (LIS) protocol.
\end{theorem}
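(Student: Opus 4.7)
The defining feature of LIS is that fluid arriving at time $s$ has absolute priority over fluid arriving at any time $s' > s$. Consequently, in the fluid model, for each $s \geq 0$ the subsystem consisting of fluid with arrival time $\leq s$ (including initial fluid) evolves autonomously: its trajectory is identical to that of a fluid model in which arrivals are shut off after $s$. I plan to exploit this autonomy via a workload Lyapunov function, refined to account for per-node loads.

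Let $w_c = [(I-P')^{-1}\mathbf{1}/\mu]_c$ be the expected total amount of work a class-$c$ packet generates before leaving the network; since $P$ is transient, $w$ is finite and strictly positive. Set $V(\flu{Q}) = \sum_c w_c \flu{Q}_c$. A direct computation shows that whenever class $c$ is being served at rate $\mu_c$, the contribution to $\dot V$ is $-\mu_c(w_c - \sum_d P_{cd} w_d) = -1$. Thus in a fluid model with no arrivals, greediness (which keeps at least one node busy while fluid remains) yields $\dot V \leq -1$, so the initial fluid drains by time $V(\flu{Q}(0))$. This gives a clean first step: the ``no-arrivals'' fluid model always drains in time linear in its initial workload.

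The main step is to transfer this drainage bound to the cohort-$\leq s$ subsystem under LIS. After time $s$ the cohort-$\leq s$ system experiences no further arrivals, so the clean $\dot V \leq -1$ argument applies and the cohort drains by time $s + V(F^{(s)}(s))$. To bound $V(F^{(s)}(s))$, however, one cannot use $V$ alone, since the aggregate input rate $w \cdot \alpha = \sum_i \rho_i$ may exceed $1$. I would therefore refine the argument using per-node workloads $W_i(\flu{Q})$, exploiting the hypothesis $\rho_i < 1$ at each individual node: under LIS, the oldest cohort's work at node $i$ is drained before any younger cohort contributes there, so the cohort-$\leq s$ workload at node $i$ satisfies a drift of at most $\rho_i - 1 < 0$ whenever positive. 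Combining these per-node estimates bounds $V(F^{(s)}(s))$ uniformly.

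Finally, every fluid particle present at time $t$ belongs to some cohort $\leq t$, so $\flu{Q}(t) = F^{(t)}(t)$; balancing the per-node drift bounds against the cohort time $s$ yields a uniform $T$, depending only on $(\alpha, \mu, P)$, such that $\flu{Q}(t) \equiv 0$ for all $t \geq T$ whenever $|\flu{Q}(0)| = 1$. Fluid stability then upgrades to positive recurrence of the underlying Markov chain by Theorem~\ref{stabilities}. The hard part will be the per-node refinement: $V$ alone is too coarse, and one needs to exploit the strict inequality $\rho_i < 1$ at every individual node rather than in aggregate. This is the point at which the LIS priority must be used not just as a preemption rule over younger fluid, but as a guarantee that at each node the oldest cohort's work is drained without interference, letting the per-node queueing inequality $\rho_i < 1$ do the real work.
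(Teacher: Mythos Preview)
Your plan follows the same skeleton as the paper's proof: switch to a total-remaining-work norm, use LIS priority so that each cohort of fluid drains as though later arrivals did not exist, and sum a geometric series. The paper's argument is very brief: one unit of initial work drains by time at most $1$; during that interval ``at most $r$ more units of fluid'' enter (with $r=\max_i\rho_i<1$); that cohort then drains in time at most $r$; iterating gives total drain time at most $1+r+r^{2}+\cdots=1/(1-r)$. So you have the paper's idea, and in fact you are being more careful than the paper: your observation that total work enters at rate $w\cdot\alpha=\sum_i\rho_i$, which can exceed $1$ in a multi-node network, is correct, and the paper's proof does not address this point---it simply asserts the $r$ bound.

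Your proposed per-node refinement is the natural patch and goes beyond what the paper supplies. One caution for when you carry it out: the per-node \emph{immediate} workload of cohort $\leq s$ at node $i$ can increase transiently due to internal routing (an upstream node discharging its backlog into node $i$ at rate $1$), so a naive drift bound ``$\dot W_i^{(s)}\leq\rho_i-1$ whenever positive'' is not literally true. What \emph{is} controlled by $\rho_i$ is the total service the cohort $\leq s$ will ever require at node $i$, which is fixed once $s$ is fixed and grows at rate $\rho_i$ in $s$; tracking that quantity (rather than the instantaneous queue) is what makes the per-node argument close. That bookkeeping is the step that takes real work, and neither your sketch nor the paper's two-paragraph proof fully carries it out.
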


\proof
Take the fluid limit, but change the norm slightly; rather than take
the sum of the queue at each node, take the sum of the remaining 
expected work of each packet.  Because the expected work
of each packet is finite, this change still yields a bounded
norm, and we can take a fluid limit.

Suppose that we place 1 fluid unit of work in the system, and 
don't let any new fluid enter.  So long as there is a non-empty
queue, that queue will be performing work at a rate of one unit
per time step.  Therefore, by time $t\leq 1$, the system
will have emptied of all fluid.

So, suppose we place 1 fluid unit of work in the system, and
allow new fluid to enter.
Because older fluid has priority in LIS, then this fluid
behaves as though no 
new fluid had entered the system, and will empty by time $t$.
Because the nominal loads are less than one (say $r<1$), then
at most $r$ more units of fluid will enter the system during
that initial $t<1$ time interval.  This fluid will, in turn,
empty in at most $r$ time steps.  Continuing this process,
the entire system will empty by, at the latest,
\[1+r+r^{2}+r^{3}+\cdots=\frac{1}{1-r}<\infty\]
\EOP

\section{Shortest In System} \label{SIS stability section}
\index{SIS protocol!fluid stability}

\begin{theorem}
All networks are stable under the Shortest In System (SIS) protocol.
\end{theorem}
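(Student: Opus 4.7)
The plan is to follow the template of the LIS proof of Section~\ref{LIS stability section} with the priority ordering reversed, establishing fluid stability and then invoking the fluid limit theorem of Chapter~\ref{fluid chapter}. As there, I reset the system clock whenever the network empties so that the state space admits a bounded norm; it then suffices to show that any fluid solution with one unit of initial work drains in a bounded time depending only on $\alpha$, $\mu$, and $P$.

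The key step is a decomposition. Classify fluid at time $t$ as ``old'' (present at $t=0$) or ``new'' (arrived after $t=0$). Under SIS, new fluid has strict priority over old fluid at every node, and---crucially---new fluid never yields to old fluid anywhere in the system. Consequently the new fluid, viewed as its own subsystem, is simply a fluid model with zero initial content and the same arrivals, services, and routing as the original. Because every nominal load $\rho_i$ is strictly less than one, such a subsystem carries no queued fluid in the fluid limit, so $\flu{Q}_n(t)=0$ for all $t\geq 0$.

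With $\flu{Q}_n\equiv 0$, new fluid passes through each node $i$ at its nominal rate, consuming fraction $\rho_i$ of server $i$'s time and leaving fraction $1-\rho_i\geq 1-r>0$ for old fluid, where $r=\max_i\rho_i$. Introduce the potential
\[
\flu{W}(t)=\sum_c \flu{Q}_{o,c}(t)\,w_c,
\]
where $w_c$ satisfies $w_c=1/\mu_c+\sum_d P_{cd}\,w_d$ and represents the expected remaining service time a class-$c$ packet will accumulate before leaving the network (finite by transience of $P$). A short telescoping computation shows that $\flu{W}$ decreases by exactly one per unit of service time spent on old fluid, so the total rate of decrease of $\flu{W}$ is the sum of $1-\rho_i$ over nodes currently holding old fluid. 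Whenever $\flu{W}(t)>0$ at least one such node exists, forcing $d\flu{W}/dt\leq -(1-r)<0$, and hence $\flu{W}(t)=0$ once $t\geq \flu{W}(0)/(1-r)$; this is fluid stability.

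The main obstacle is justifying the old/new decomposition rigorously at the fluid-model level. In the prelimit stochastic system the statement is immediate, since SIS grants each newly arrived packet absolute precedence over every older one; but after rescaling one must verify that this structure survives in the limit---that $\flu{Q}_n\equiv 0$, that the server allocates exactly fractions $\rho_i$ and $1-\rho_i$ of its time to new and old fluid respectively, and that the resulting $\flu{Q}_o$ dynamics are as claimed. The cleanest route is to augment the state space so each packet carries its birth time as a tag, rederive Theorem~\ref{dai's 4.1 theorem} on the augmented process, and then read the decomposition off from the extended fluid model. With that in hand the rest of the argument is the direct dual of the LIS proof, with the roles of old and new swapped.
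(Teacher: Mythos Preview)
Your proposal is correct and takes essentially the same approach as the paper. The paper's proof is a three-sentence version of your argument: using the work-based norm, new fluid has priority under SIS and so never queues (since nominal loads are below one), leaving residual service capacity at least $1-r$ at each nonempty node for the old fluid, which therefore drains by time $1/(1-r)$. Your potential $\flu{W}$ with weights $w_c=1/\mu_c+\sum_d P_{cd}w_d$ is exactly the ``remaining expected work'' norm the paper invokes, and your old/new decomposition is the same one the paper uses implicitly; you are simply more careful about spelling out what needs to be justified at the fluid level, a point the paper glosses over entirely.
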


\proof
Let us create a work-based norm, as in Theorem~\ref{LIS theorem},
and take the fluid limit.

New fluid arriving doesn't see any of the (older) fluid queues, and
thus immediately exits the system.  Therefore, no new fluid is added
to queues.  Because the nominal loads are less than one, there is
a least $\epsilon>0$ such that every node does work at a rate
of at least $\epsilon$ when it has a non-empty queue.  Therefore,
if we start the system with one unit of fluid, it will
empty by time $\frac{1}{\epsilon}<\infty$.
\EOP

\section{Nearest To Go for Re-entrant Lines}
\index{NTG protocol!fluid stability}
A re-entrant line is a network where every packet follows the same
path.  However, there may be multiple classes present at each node, so
priority is important.  Clearly, if the network is also simple, we
have a linear array (which is layered, and hence stable), so this
problem is only non-trivial if we have a non-simple network.  The
stability of NTG in this case (and FTG, for that matter) can be found
in Dai and Weiss~\cite{Dai_and_Weiss}.

\section{Round Robin} \label{round robin stability section}
\index{round robin protocol!fluid stability}
Normally speaking, a node uses a ``round robin'' protocol if it
switches between all the non-empty classes present in that node in
some order.  The (more general) protocols I'm going to be considering
might better be called ``weighted round robin'', because certain
classes might appear several times in the same cycle.

\begin{theorem}[Bramson]
Consider a fluid model.  Let $r_{c}^{n}$ be the nominal
load of class $c$ packets at node $n$.  Suppose that
there exists an $\epsilon>0$ such that
node $n$ always dedicates at least $r_{c}^{n}+\epsilon$
of its resources to class $c$, for every $n$, whenever
class $c$ is non-empty.  Then the protocol is stable.
\end{theorem}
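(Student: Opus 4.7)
The plan is to construct an explicit work-based Lyapunov function on the fluid model and show it has a uniformly negative drift whenever the system is non-empty. For each class $c$ let $W_c$ denote the expected total service time a packet currently at class $c$ will accrue before leaving the network. These are the unique non-negative solution of $W_c = 1/\mu_c + \sum_k P_{ck}W_k$, i.e.\ $(I-P)W = (1/\mu_c)_c$, which exists because $P$ is transient. Define
\[V(t) = \sum_c W_c\,\flu{Q}_c(t),\]
which is Lipschitz, non-negative, and comparable to $\sum_c \flu{Q}_c$.

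First I would compute $\dot V$ at a point of differentiability using the fluid balance $\dot{\flu{Q}}_c = \alpha_c + \sum_k P_{kc}\mu_k\dot{\flu{T}}_k - \mu_c\dot{\flu{T}}_c$. Swapping the sum and using $W_c - \sum_k P_{ck}W_k = 1/\mu_c$ causes the cross terms to telescope, yielding
\[\dot V(t) = \sum_c W_c\alpha_c - \sum_c \dot{\flu{T}}_c(t).\]
The first sum can be rewritten via $W^{\top}\alpha = W^{\top}(I-P^{\top})\lambda = ((I-P)W)^{\top}\lambda = \sum_c \lambda_c/\mu_c = \sum_n \rho_n$, so $\dot V(t) = \sum_n \rho_n - \sum_c \dot{\flu{T}}_c(t)$.

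The substantive step is to bound $\sum_c \dot{\flu{T}}_c(t)$ from below by $\sum_n \rho_n + \epsilon$. Partition classes into $\mathcal{E}(t) = \{c : \flu{Q}_c(t) > 0\}$ and $\mathcal{Z}(t) = \{c : \flu{Q}_c(t) = 0\}$, and set $\delta_c := \mu_c\dot{\flu{T}}_c - \lambda_c$. For $c\in\mathcal{E}$ the protocol hypothesis gives $\dot{\flu{T}}_c \geq r_c^{n(c)} + \epsilon$, so $\delta_c \geq \epsilon\mu_c$. For $c\in\mathcal{Z}$, the standard fact that $\dot{\flu{Q}}_c = 0$ a.e.\ on $\{\flu{Q}_c = 0\}$ forces $\mu_c\dot{\flu{T}}_c = \alpha_c + \sum_k P_{kc}\mu_k\dot{\flu{T}}_k$; subtracting the equilibrium identity $\lambda_c = \alpha_c + \sum_k P_{kc}\lambda_k$ gives $\delta_c = \sum_k P_{kc}\delta_k$. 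Restricting this system to $\mathcal{Z}$ yields $\delta_{\mathcal{Z}} = (I-P^{\top}_{\mathcal{Z},\mathcal{Z}})^{-1}P^{\top}_{\mathcal{E},\mathcal{Z}}\delta_{\mathcal{E}}$; since the inverse has non-negative entries (transience of $P$), $\delta_c\geq 0$ everywhere. Therefore
\[\sum_c \dot{\flu{T}}_c = \sum_c\frac{\lambda_c + \delta_c}{\mu_c} = \sum_n\rho_n + \sum_c\frac{\delta_c}{\mu_c} \geq \sum_n\rho_n + |\mathcal{E}|\epsilon,\]
so $\dot V(t) \leq -|\mathcal{E}(t)|\epsilon \leq -\epsilon$ whenever $V(t)>0$. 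Consequently $V$ reaches $0$ by time $V(0)/\epsilon$, which is fluid stability, and Chapter~\ref{fluid chapter} converts this into positive recurrence of the underlying Markov chain.

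The hard part is not the algebra but the careful handling of empty classes: one must justify that $\delta_c = \sum_k P_{kc}\delta_k$ propagates correctly through the empty set even when the network contains loops. Transience of $P$ is exactly what makes this recursion well-posed and guarantees $\delta\geq 0$. Once that is in place, the argument reduces to the observation that the protocol spends a strictly positive fraction $\epsilon$ of each non-empty class's service capacity on draining queued work rather than merely keeping pace with arrivals.
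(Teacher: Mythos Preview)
The paper does not actually prove this theorem; it simply cites Bramson~\cite{Bramson98}. Your proposal therefore goes well beyond what the thesis does here, supplying a self-contained argument where the paper only gives a reference.

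Your argument is correct. The work-based Lyapunov function $V=\sum_c W_c\flu{Q}_c$ with $W=(I-P)^{-1}(1/\mu_c)_c$ is exactly the right object: the telescoping that collapses $\dot V$ to $\sum_n\rho_n-\sum_c\dot{\flu T}_c$ is clean, and your treatment of the empty classes via $\delta_{\mathcal Z}=(I-P^{\top}_{\mathcal Z,\mathcal Z})^{-1}P^{\top}_{\mathcal Z,\mathcal E}\delta_{\mathcal E}\ge 0$ is the crucial step and is handled properly (transience of $P$ makes the submatrix inverse a convergent nonnegative series). The one point worth stating explicitly is that the partition $\mathcal E(t),\mathcal Z(t)$ varies with $t$, so the inequality $\dot V\le -\epsilon$ is established pointwise for a.e.\ $t$; since there are finitely many classes and each $\flu Q_c,\flu T_c$ is Lipschitz, the finitely many a.e.\ exceptions union to a null set and the conclusion follows.

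By contrast, Bramson's original proof for the processor-sharing family uses an entropy-type functional rather than expected remaining work, and is considerably more intricate. Your route is more direct for this particular hypothesis because the assumption ``each non-empty class gets rate at least $r_c^n+\epsilon$'' decouples the classes enough that a linear Lyapunov function suffices; Bramson's entropy machinery is really needed for the sharper HLPPS and FIFO Kelly results where no uniform per-class slack is assumed. So your approach buys simplicity at no loss of generality for the theorem as stated, while Bramson's buys coverage of protocols your linear $V$ would not handle.
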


A proof of this result can be found in Bramson~\cite{Bramson98}.

This gives us some interesting corollaries.

\begin{corollary}
Suppose that we route packets according to the following 
protocol:  at each node $n$, for classes $c_{1},\ldots c_{m}$
that pass through $n$, we spend $s_{c_{1}}^{n}$ steps passing class 
$c_{1}$, then $s_{c_{2}}^{n}$ steps passing class $c_{2}$
packets, and so on.  If there are no more packets of class
$c_{i}$, we (immediately) move to the next class.

If 
\[\frac{s_{c_{i}}}{\sum_{j=1}^{m}s_{c_{j}}} > r_{c_{j}}^{n}\]
for all $c_{i}$ at all nodes $n$, then the system is stable.
Observe that if the nominal loads are less than one,
then such a choice of $s_{c_{i}}$ always exists (and
easy to figure out.)
\end{corollary}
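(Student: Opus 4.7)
The plan is to reduce this corollary directly to the weighted round robin theorem of Bramson stated immediately above. That theorem asks only for a uniform $\epsilon>0$ such that whenever class $c$ has fluid at node $n$, the node dedicates at least $r_c^n+\epsilon$ of its service capacity to $c$; so the task is to exhibit such an $\epsilon$ for the cyclic scheme described.

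First I would fix a node $n$ with classes $c_1,\ldots,c_m$ and analyze one full cycle. Because the scheme skips empty classes immediately, the total length of a cycle is at most $\sum_{j=1}^m s_{c_j}^n$, while the time devoted to $c_i$ during that cycle, provided $c_i$ is non-empty throughout, is exactly $s_{c_i}^n$. Hence the long-run fraction of service that node $n$ lavishes on $c_i$ while $c_i$ is non-empty is bounded below by $s_{c_i}^n/\sum_{j} s_{c_j}^n$, and this bound only improves if other classes happen to be empty. (Making this precise at the fluid level is the only mildly delicate step: one argues that on any interval where the $c_i$ fluid stays positive, the empirical fraction of capacity allocated to $c_i$ over a cycle is at least $s_{c_i}^n/\sum_j s_{c_j}^n$, and cycles are of bounded length so the fluid derivative satisfies the same lower bound almost everywhere.)

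Next, by hypothesis
\[
\frac{s_{c_i}^n}{\sum_{j=1}^m s_{c_j}^n} > r_{c_i}^n
\]
for every class $c_i$ at every node $n$. Since there are finitely many such pairs $(c_i,n)$, the minimum of these strict gaps is some $\epsilon>0$, which is exactly the hypothesis of Bramson's theorem. Applying that theorem yields fluid stability, which via the fluid limit machinery of this chapter implies stability of the stochastic network.

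For the final observation, I would note that if all nominal loads are less than one, then $\sum_i r_{c_i}^n<1$ at each node, so we can choose $s_{c_i}^n$ proportional to $r_{c_i}^n+\delta$ for a small enough $\delta>0$ (uniform over nodes) to ensure $s_{c_i}^n/\sum_j s_{c_j}^n > r_{c_i}^n$ everywhere. The main obstacle, such as it is, lies in the fluid-level justification that skipping empty classes never degrades the guaranteed fraction $s_{c_i}^n/\sum_j s_{c_j}^n$; once that monotonicity is in hand, the rest is just arithmetic and a citation of Bramson.
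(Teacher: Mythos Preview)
Your approach is correct and matches the paper's intent: the paper states this corollary immediately after Bramson's theorem with no explicit proof, so the intended argument is precisely the direct reduction to Bramson that you carry out. You have supplied the details the paper leaves implicit (the cycle-length bound, the finiteness argument for a uniform $\epsilon$, and the construction of the $s_{c_i}^n$ when loads are below one), but the route is the same.
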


\begin{corollary}
Suppose that we route packets according to the following protocol: Let
$c_{1},\ldots ,c_{m}$ be the classes \emph{currently} present at node
$n$.  Suppose that the nominal loads of these classes at node $n$ are
$r_{c_{1}},\ldots,r_{c_{m}}$.  When selecting the next packet to go,
choose one at random according to some fixed distribution (determined
by the classes that are currently present at the node) such that
\[\Pr[\mbox{class } c_{i} \mbox { is chosen}]
> r_{c_{i}}\]
Then the system is stable.

If the nominal loads are less than one, such distributions
always exists.
\end{corollary}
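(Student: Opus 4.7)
The plan is to apply the preceding theorem of Bramson directly: I must exhibit an $\epsilon > 0$ such that at every node $n$, whenever class $c$ is non-empty, the node devotes at least $r_c^n + \epsilon$ of its resources to $c$. The protocol as described is defined in terms of which subset of classes is currently present, so the main task is to extract a uniform $\epsilon$ from this subset-dependent specification.

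First I would fix a node $n$ and enumerate the (finitely many) nonempty subsets $S$ of classes resident at $n$. By hypothesis, for each $S$ there is a distribution $p_S$ on $S$ with $p_S(c) > r_c^n$ for every $c \in S$. Because $S$ is finite and there are only finitely many such $S$, I can set
\[
\epsilon_n \;=\; \min_{S,\; c \in S}\bigl(p_S(c) - r_c^n\bigr) \;>\; 0,
\]
and then $\epsilon = \min_n \epsilon_n > 0$ works uniformly. When class $c$ is non-empty at node $n$, the set of currently-present classes is some $S \ni c$, and by construction the instantaneous fraction of service awarded to $c$ is $p_S(c) \geq r_c^n + \epsilon$. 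Averaged over time (in the fluid regime, at any instant the active distribution is some $p_S$ with $c \in S$), this gives the rate hypothesis of Bramson's theorem, so stability follows.

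For the existence claim, assume the nominal loads at node $n$ satisfy $\sum_{c \in C_n} r_c < 1$. Given any nonempty $S \subseteq C_n$, the quantity $\delta_S = 1 - \sum_{c \in S} r_c$ is strictly positive (since $\sum_{c \in S} r_c \leq \sum_{c \in C_n} r_c < 1$), so the assignment
\[
p_S(c) \;=\; r_c \,+\, \tfrac{\delta_S}{|S|}, \qquad c \in S,
\]
is a valid probability distribution on $S$ with $p_S(c) > r_c$ for every $c \in S$.

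The only possible obstacle is the one I already flagged: getting a single $\epsilon$ that works irrespective of which subset of classes happens to be present. This is resolved by the finiteness of the class set at each node and of the network, which lets the $\min$ above be strictly positive; no additional machinery beyond Bramson's theorem is needed.
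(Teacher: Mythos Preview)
Your proposal is correct and follows exactly the intended route: the paper presents this as an immediate corollary of Bramson's theorem without writing out any argument, and your proof simply makes explicit the extraction of a uniform $\epsilon$ via finiteness and the explicit existence construction. There is nothing to add.
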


\section{Leaky Buckets}
The idea of a ``leaky bucket'' is to reduce the burstiness of the
packets travelling in a network.  Formally, for every class transition
from class $c_{1}$ to class $c_{2}$, we insert a new, single class
node $n_{c_{1},c_{2}}$.  The packets from $c_{1}$ must travel to
node $n_{c_{1},c_{2}}$ before bouncing back to the location of
the $c_{2}$ packets.

It turns out that it is possible to stabilize any network with
the judicious use of leaky buckets.  See Bramson~\cite{Bramson98}
for details.

\section{The Utility of Adversarial Results}
Just as one can prove ergodicity by taking fluid
limits, it is possible to prove stability against
bounded adversaries by taking a slightly different
kind of fluid limit.  The details were worked out by
Gamarnik~\cite{Gamarnik}.

The resulting class of functions that can be the limits of adversarial
networks is larger than the functions generated by stochastic fluid
limits.  Optimistically, then, one might hope that stability results
for adversarial queues might have clear fluid analogues, which would
then apply to the special case of stochastic fluid limits.
Unfortunately, I don't know of any instances of this technique
actually producing new theorems yet.

\chapter{Fluid Limit Counterexamples} 
\label{fluid limit counterexamples chapter}

This appendix is the twin to Appendix~\ref{fluid limit examples
chapter}.  It consists of surprising examples of instability
in queueing networks.

\section{Nonmonotonic stability}
Uriel Fiege~\cite{feige} has some fascinating results
showing how pathological a stability region can be.
He constructs a 20 node network with a
simple and natural adaptive greedy routing protocol.
Packets are injected according to Bernoulli arrival processes
at rate $q$.  He shows that the system is stable
iff $q\in [0,1/3)\cup (2/3,1]$, but unstable for
the $[1/3,2/3]$ region in the middle.

\section{Virtual Stations and Instability} \index{virtual stations}
There is a very clever general technique for generating unstable
queueing networks even when the nominal loads are less than one.  If
two classes at two distinct nodes are never simultaneously in service,
then they act as though they were sharing service in the same node,
forming a ``virtual station''.  By including virtual stations in a
network, it gives extra restrictions on stability, analogous to the
restrictions on the nominal load.  If these restrictions are violated,
the network can easily be shown to be unstable.  See Dai and Vande
Vate~\cite{Dai_Vande_Vate} or Bertsimas, Gamarnik and
Tsitsiklis~\cite{Bertsimas_et_al} for more on this.

The counterexamples in the next three sections all rely
on virtual stations for their instability.

\section{NTG can be Unstable}\index{NTG protocol!instability}
There is a simple two-node network where Nearest To Go (NTG) is
unstable.  See Dai and Weiss~\cite{Dai_and_Weiss}, Figure 4.  (I mean
``simple'' in the sense of Definition~\ref{simple definition}, not
colloquially.)

\section{Uniformly Generalized Kelly Networks can be Unstable}
\index{generalized Kelly network!instability}
See Dai and Weiss~\cite{Dai_and_Weiss}, section 6, remark 2.
Observe that their two-node network is a re-entrant line, and not simple.

\section{A Generalized Kelly Network without Immediate Feedback can be
Unstable}
A network has immediate feedback if it is possible for a packet to
return to a node without travelling to any intervening nodes.  An
immediate feedback-free generalized Kelly network can be found in Dai
and Weiss~\cite{Dai_and_Weiss}, Figure 5.

If you have as much difficulty looking up the reference given by Dai
and Weiss as I did, you may prefer to consider the following system.
Suppose we have a generalized Kelly network with immediate feedback,
and insert extra stations along the edges with immediate feedback.
Let each new node have the same mean service time as the (unique) node
preceding it.  Observe that if we consider the fluid limit and don't
place any initial fluid on these new nodes, no fluid will ever queue
there.  Therefore, the fluid model will evolve identically to the
fluid model with immediate feedback, which we can make unstable.

\section{FIFO can be Unstable}\index{FIFO protocol!instability}
Check out Bramson~\cite{Bramson_fifo_1},~\cite{Bramson_fifo_2},
or ~\cite{Bramson_fifo_3}.  For a simple and short, but stochastically
unsettling account, check out Seidman~\cite{Seidman}.

\section{Adversarially Unstable, Stochastically Stable}
Since all generalized Kelly networks are stable under FIFO
(see Bramson~\cite{Bramson96}), then the 
counterexamples showing FIFO to be adversarially unstable
(see Andrews et al.~\cite{Andrews}) show this.

\section{Stochastically Unstable, Adversarially Stable}
Consider a one node network (i.e.\ a single queue)
in discrete time.
Consider the stochastic arrival process where with
probability $\frac{1}{2}$, no packets arrive, and with probability
$\frac{1}{2}$, two packets arrive.  Each packet takes 1 time step
to leave the network.

If we consider the state space generated from the ``new packets
arrive, then packets depart'' cycle, it's easy to see that
all states are equally likely.  (A state is determined entirely 
by the number of packets in queue.)  Therefore, the
network is unstable (but null-recurrent).

If we consider a rate $(1,w)$ adversary on a single node, i.e.\
for every window of $w$ steps, no more than $w$ packets can 
arrive, then it's easy to show that there can never be more than
$w$ packets in the system, so the system is stable against
a bounded adversary.

 \chapter{Analytic Computing} \label{computer chapter}

As a first step towards understanding the
behavior of packet routing networks, many researchers
find it useful to write programs that can simulate
the behavior of the systems.  Coffman et al.~\cite{Leighton95},
for instance, based Hypothesis~\ref{ring hypothesis} on
the results of massive simulations.

Such work has a certain value in making hypotheses plausible.
However, from a mathematical point of view, it doesn't prove anything.
How pleasant it would be, though, to perform exact, error-free
analytic calculations on a computer!  It almost seems to be too much
to hope for.

Surprisingly, it is possible to calculate a great deal
of information about packet routing networks exactly,
and with no rounding errors.  This appendix explains
how, focussing on the mathematically interesting parts.
(In this spirit I have not included the source code,
as no one would want to read it.)

\section{Exact Information about Stationary Distributions}
Consider a state $\sigma$ in an $N$ node standard Bernoulli ring.
From the results of Chapter~\ref{analytic chapter}, we know that the
stationary probability of being in state $\sigma$
is an analytic function of the arrival rate, $p$, and 
can be Taylor expanded around 0.  Let us see how
to calculate these Taylor coefficients.

Suppose we wished to calculate the first $k+1$ coefficients
of $\sigma$, i.e.\
\[\Pr(\sigma)=a_{0}+a_{1}p + a_{2}p^{2}+\cdots+a_{k}p^{k} +O(p^{k+1})\]
Let us start the network in the ground state,
$\sigma_{0}$, where no packets are in any of the nodes.

Suppose that it takes more than $k$ packet arrivals to get
from $\sigma_{0}$ to $\sigma$.  For each packet
arrival, the contribution to $\Pr(\sigma)$ picks up
an extra factor of $p$.  Therefore, $a_{i}=0$ for all
$i\leq k$.

Now, if there are more than $k$ packets in $\sigma$, total, then
clearly more than $k$ packet arrivals are needed to get from
$\sigma_{0}$ to $\sigma$.  Therefore, the only states that have
non-zero coefficients of order less than $k$ must have fewer than $k$
packets in them.  There is a finite set of such states.

Suppose we restrict our attention to that finite set of non-zero
states.  We can view the coefficients as time progresses through
the system.  At time $t=0$, we have $\Pr(\sigma_{0})=1$,
and for all $\sigma\neq\sigma_{0}$, $\Pr(\sigma)=0$.  At
time $t=1$, the states adjacent to $\sigma_{0}$ may have 
non-zero coefficients.  As time goes on, the probabilities
are converging to their steady state values, so we might
hope that the Taylor coefficients are converging, too.

At this point, one might expect us to take the limit as time grows
large, and try to bound the error in the evolution of the coefficients.
Shockingly, the coefficients converge in a finite (and
explicitly calculable) amount of time.

Why does this happen?  Let's sketch a proof.

\begin{theorem} \label{computer theorem}
\index{Theorem \ref{computer theorem}}
Assume that there is a maximum path length in the network.  Then in a
finite amount of time, the $k$th degree Taylor coefficients of the
stationary probabilities will converge to their final value.
\end{theorem}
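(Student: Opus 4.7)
The plan is to proceed by induction on $k$ and analyze the recurrence that the Taylor coefficients satisfy under the ``no arrival'' evolution. Write $\Pr_{t}(\sigma)$ for the probability of being in state $\sigma$ at time $t$ starting from the ground state $\sigma_{0}$, and set $a_{t,k}(\sigma)=\frac{1}{k!}\partial_{p}^{k}\Pr_{t}(\sigma)|_{p=0}$; the stationary analogue $a_{\infty,k}(\sigma)$ is well-defined by Theorem~\ref{zazanis theorem}. The base case $k=0$ is immediate: $a_{t,0}(\sigma)=\Pr_{t}(\sigma)|_{p=0}$ is the probability of state $\sigma$ conditional on no arrivals ever occurring, which, starting from $\sigma_{0}$, equals $\mathbf{1}[\sigma=\sigma_{0}]=a_{\infty,0}(\sigma)$ for every $t\geq 0$.

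For the inductive step, Taylor expand the one-step kernel as $K(\tau\to\sigma)=\sum_{\ell\geq 0}K_{\ell}(\tau\to\sigma)p^{\ell}$ and match the $p^{k}$ coefficients in $\Pr_{t+1}=\Pr_{t}K$. This gives $a_{t+1,k}(\sigma)=\sum_{\tau}a_{t,k}(\tau)K_{0}(\tau\to\sigma)+c_{t,k}(\sigma)$, where $c_{t,k}(\sigma)=\sum_{\tau}\sum_{i<k}a_{t,i}(\tau)K_{k-i}(\tau\to\sigma)$ involves only lower-order coefficients. Any $\tau$ with $a_{t,i}(\tau)\neq 0$ for some $i<k$ must be reachable from $\sigma_{0}$ with fewer than $k$ arrivals, hence contains fewer than $k$ packets; there are only finitely many such states, and the inductive hypothesis stabilizes each of them by some finite time, so taking the maximum yields a uniform time $T_{k-1}$ after which $c_{t,k}(\sigma)=:c_{k}(\sigma)$ is constant in $t$. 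The stationary coefficient obeys the same relation, so the discrepancy $\Delta_{t}(\sigma):=a_{t,k}(\sigma)-a_{\infty,k}(\sigma)$ propagates under the no-arrival operator: $\Delta_{t+1}(\sigma)=\sum_{\tau}\Delta_{t}(\tau)K_{0}(\tau\to\sigma)$ for $t\geq T_{k-1}$.

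Two observations then close the argument. First, $\Delta_{T_{k-1}}$ has finite support, since both $a_{t,k}$ and $a_{\infty,k}$ vanish on states with more than $k$ packets, and this support is preserved by $K_{0}$ (which never increases packet counts). Second, under pure no-arrival dynamics a configuration with at most $k$ packets drains completely to $\sigma_{0}$ within $T^{\prime}=k+L+1$ steps: each nonempty node ejects one packet per step, so the $\leq k$ packets all get serviced within $k$ steps, and then each travels at most $L$ further steps. Iterating the $\Delta$-recurrence $T^{\prime}$ times collapses everything onto the ground state: $\Delta_{T_{k-1}+T^{\prime}}(\sigma)=\mathbf{1}[\sigma=\sigma_{0}]\sum_{\tau}\Delta_{T_{k-1}}(\tau)$. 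But $\sum_{\tau}a_{t,k}(\tau)$ is the $p^{k}$ coefficient of $\sum_{\tau}\Pr_{t}(\tau)\equiv 1$, which is zero for $k\geq 1$; the same identity holds for $a_{\infty,k}$, so $\sum_{\tau}\Delta_{T_{k-1}}(\tau)=0$ and hence $\Delta_{t}\equiv 0$ for all $t\geq T_{k}:=T_{k-1}+T^{\prime}$. Unrolling the recursion yields an explicit bound of order $T_{k}=O(k^{2}+kL)$.

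The main obstacle is carrying out the drainage estimate on a countably infinite state space without finite-matrix spectral tools; the argument above dodges this by simultaneously exploiting the finite support of $\Delta_{T_{k-1}}$ and the fact that the maximum path length $L$ furnishes a uniform drainage time across that finite support. A secondary point is that the inductive hypothesis must be applied uniformly across the state space --- but since only the finitely many states reachable from $\sigma_{0}$ with at most $k-1$ arrivals can contribute nonzero lower-order coefficients, uniformity reduces to stabilizing at finitely many points simultaneously, which follows automatically from the pointwise statement.
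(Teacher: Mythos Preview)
Your argument is correct and close in spirit to the paper's, though cleaner in execution. Both rest on the same two facts: only finitely many states carry a nonzero $p^{k}$-coefficient, and under the no-arrival kernel $K_{0}$ any such state drains to $\sigma_{0}$ in bounded time. The paper packages the second fact as ``the states reachable with $\leq k$ arrivals form a DAG under $K_{0}$'' and runs an inner induction along that DAG; you instead iterate $K_{0}$ directly on the discrepancy $\Delta_{t}$ and kill the residue at $\sigma_{0}$ via the normalization $\sum_{\tau}a_{\cdot,k}(\tau)=[p^{k}]1=0$. Your version is more transparent and yields the explicit bound $T_{k}=O(k^{2}+kL)$ that the paper only alludes to. (Minor caveat: your drainage estimate $T'=k+L+1$ tacitly assumes every nonempty queue ejects a packet each step, which GHP does not guarantee since incoming hot potatoes block queues; nonetheless $T'\leq k+L$ does hold for the ring by a slightly more careful count, and $T'\leq kL$ holds for a general network with maximum path length $L$.)

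One step deserves more care: you assert that $a_{\infty,k}(\sigma)=0$ whenever $\sigma$ has more than $k$ packets, and that $\sum_{\sigma}a_{\infty,k}(\sigma)=0$. For the finite-time coefficients $a_{t,k}$ both claims are immediate, but for the \emph{stationary} coefficients they amount to $\pi(\sigma)=O(p^{m(\sigma)})$ as $p\to 0$ and to interchanging $[p^{k}]$ with the infinite sum $\sum_{\sigma}\pi(\sigma)=1$ --- neither of which you justify. The paper makes the same assertion (in the paragraph preceding the theorem and again in the closing ``use the same double-induction'' remark) without a full proof, so you are in good company. If you want to sidestep this entirely, replace $\Delta_{t}=a_{t,k}-a_{\infty,k}$ by the increment $\delta_{t}=a_{t+1,k}-a_{t,k}$, which trivially has finite support and zero sum; the identical drainage argument then shows $a_{t,k}$ stabilizes in finite time, and identifying the limit with $a_{\infty,k}$ becomes a separate step via the balance equations, exactly as the paper's final paragraph suggests.
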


\proof
We can prove this result by a double induction.  First, we induct on
the degree of the coefficients.  If we consider the $p^{0}$ order
term, observe that it is always one for state $\sigma_{0}$, and always
zero for all other states.  This establishes the base case.  For an
arbitrary degree $k$, there are several different terms that
contribute to it.  The probability of state $\sigma$ is the weighted
sum of all the different possible paths into it.  Now, take the
collection of states $S$ that are reachable with $k$ packet arrivals.
Because packets have maximum path lengths, the state space must empty
in a bounded amount of time.  Therefore, there are no loops among the
states in $S$ (or the state space could cycle through the loop for an
unbounded amount of time.)  It follows that $S$ forms a directed
acyclic graph.

Using the natural ordering on DAGs gives a partial well-ordering,
so if we can deal with all the base cases, we can induct on 
the structure.  (This is the second induction in the proof.)
The base case consists of the states in $S$ that can only be reached
by states with $k-1$ or fewer arrivals.  By induction,
the degree $k-1$ and  lower coefficients will converge in
a finite amount of time; since a packet arrival amounts
to multiplying the probability by $p$, which shifts over
the coefficients, then it follows that the degree
$k$ terms in the base cases will converge in a finite amount
of time.

For the other states in $S$, observe that they are reachable either by
states lower in the partial well-ordering, or by the insertion of new
packets.  By our inductive assumption on $S$, the coefficients of the
states lower in the partial well-ordering converge in a finite amount
of time.  By our induction on the degree of the coefficient, the prior
states that require packet insertions also contribute coefficients
converge in a finite amount of time.  Therefore, their sum will
converge.

Unfortunately, we're not quite done.  As $t\rightarrow \infty$,
our sample path will converge to the expected value; this follows
from ergodicity.  If the convergence were uniform in some
neighborhood of zero, then the result above would immediately
give us the value of the Taylor coefficients of the expected value
itself.  However, it's not clear how to prove uniformity of
convergence, so I'll use a different approach.

As mentioned above, the zero order coefficients are always correct;
the coefficient is one if there are no packets, and zero otherwise.
Using the same double-induction as above, we can show that all
the probabilities of the stationary distribution equal the 
values we have calculated in finite time.
\EOP

If we examine the preceding proof more carefully, it is possible to
estimate the speed of convergence for a ring fairly tightly.
The convergence is quite rapid, so calculating the Taylor
coefficients is practical.

What do these coefficients look like, anyway?  Well,
suppose that we define a new variable, $s=\frac{p}{N-1}$,
and expand in $s$, instead of $p$.  (Clearly, 
Theorem~\ref{computer theorem} applies to $s$, too.)
Whenever a packet arrives, it chooses its particular destination
with probability $s$.  With a little thought, it becomes
clear that the coefficients will all be integers.

We are now sitting in the catbird seat, computationally speaking.
We can set up the finite number of states that have
non-zero coefficients of degree less than $k+1$, store only
this finite set of coefficients per state, and calculate
the probability for a finite amount of time.  Since
the coefficients will all be integers, there won't be
any rounding errors.  We will then have calculated
our stationary probabilities exactly!

Moreover, we can calculate other quantities, like the expected queue
length.  If we're interested in the first $k+1$ terms of the expected
total queue length, we can just calculate the stationary probability
for the finite number of states with non-zero coefficients of degree
less than $k+1$, and then add them together (weighed by their queue
length).

\section{The Payoff}

After proving Theorem~\ref{computer theorem} and writing a body of
code to perform the calculations inherent in the proof, I
was able to determine the Taylor expansions that show
up in Subsection~\ref{explicit subsection}.  As I demonstrate
in that subsection, I can use the results to make various
conclusions about the stationary distributions.

However, the conclusions are mostly negative (the expected
queue length is not absolutely monotonic, nor is it
a small-degree rational function.)  Is it possible to 
get more positive results from these values?

It certainly is.  I calculated the Taylor expansions for the
stationary distribution of several states in a standard 3-node
Bernoulli ring.  By observing the Taylor coefficients, I recognized
some of the rational functions that show up in Chapter~\ref{exact
chapter}.  Without exact Taylor coefficients, it would have been
impossible to guess the functions.  Running the program on larger
nonstandard rings, with $L=2$, I noticed that the marginal
distributions were unchanged.  These results lead me to guess 
Equation~\ref{stationary equations} for the stationary distribution.

Once I had guessed the stationary distribution, I still had a fair
amount of work to do in proving that it held for all $N$.  However, I
never would have tried to prove something like Theorem~\ref{L=2 GHP
theorem} without the evidence from the Taylor expansions pointing 
the way.

\section{Real World Details}

In practice, these calculations took about 1 gigabyte of memory, ran a
couple of days on 500 MHz processors, and gave, for instance, $k=18$
places of accuracy for the 4 node standard Bernoulli ring.  The
limiting resource for my efforts was always the available memory (RAM)
of the machine I was working on.  Therefore, it was important to reduce
the size of the state space, and the information held at each state.

The most dramatic method of reducing the state space is by 
not specifying the destination of packets that are still waiting
in queue.  Because the Greedy Hot Potato algorithm never returns
a packet to queue, queued packets are all stochastically
identical.  Leaving them unspecified amounts to an exponential
reduction in state size.

There are a host of small issues to deal with (for instance, how
should I deal with overflow, when the coefficients become larger than
the $2^{31}$ bit signed integers on a typical machine?), but from a
mathematical point of view, they aren't really interesting enough to
relate.

\bibliography{thesis}

\begin{thebibliography}{10}

\bibitem{Andrews}
Matthew Andrews, Baruch Awerbuch, Antonio Fern\'{a}ndez, Jon Kleinberg, Tom
  Leighton, and Zhiyong Liu.
\newblock Universal stability results for greedy contention-resolution
  protocols.
\newblock In {\em Proceedings of the 37th Annual Symposium on Foundations of
  Computer Science (FOCS '96)}. IEEE Computer Society Press, 1996.

\bibitem{Bernstein}
S.~Bernstein.
\newblock Sur la d\'{e}finition et les propri\'{e}t\'{e}s des fonctions
  analytiques d'une variable r\'{e}elle.
\newblock {\em Math. Ann.}, 75:449--468, 1914.

\bibitem{Bertsimas_et_al}
Dimitris Bertsimas, David Gamarnik, and John Tsitsiklis.
\newblock Performance of multiclass markovian queueing networks via piecewise
  linear lyapunov functions.
\newblock {\em Forthcoming}, February 2000.

\bibitem{kn:j8}
A.~Borodin, J.~Kleinberg, P.~Raghavan, M.~Sudan, and D.~Williamson.
\newblock Adversarial queueing theory.
\newblock In {\em Proceedings of the 28th Annual ACM Symposium on the Theory of
  Computing (STOC '96)}. ACM, 1996.

\bibitem{Bramson_fifo_1}
Maury Bramson.
\newblock Instability of fifo queueing networks.
\newblock {\em Ann. Appl. Prob}, 4:414--431, 1994.

\bibitem{Bramson_fifo_2}
Maury Bramson.
\newblock Instability of fifo queueing networks with quick service times.
\newblock {\em Ann. Appl. Prob}, 4:693--718, 1994.

\bibitem{Bramson_fifo_3}
Maury Bramson.
\newblock Two badly behaved queueing networks.
\newblock In {\em Stochastic Networks}, volume~71, pages 105--116. Springer,
  1995.

\bibitem{Bramson96}
Maury Bramson.
\newblock Convergence of equilibria for fluid models of fifo queueing networks.
\newblock {\em Queueing Systems}, 22:5--45, 1996.

\bibitem{Bramson98}
Maury Bramson.
\newblock Stability of two families of queueing networks and a discussion of
  fluid limits.
\newblock {\em Queueing Systems}, 28:7--31, 1998.

\bibitem{bremaud}
Pierre Br\'{e}maud.
\newblock {\em Point Processes and Queues: Martingale Dynamics}.
\newblock Springer-Verlag, 1981.

\bibitem{cam}
Hasan \c{C}am.
\newblock Rearrangeability of (2n-1)-stage shuffle-exchange networks.
\newblock Forthcoming.

\bibitem{Chen}
Hong Chen.
\newblock Fluid approximations and stability of multiclass queueing networks:
  work-conserving disciplines.
\newblock {\em The Annals of Applied Probability}, 5(3):637--665, 1995.

\bibitem{stoch_net}
Hong Chen and David Yao.
\newblock Stable priority disciplines for multiclass networks.
\newblock In Paul Glasserman, Karl Sigman, and David Yao, editors, {\em
  Stochastic Networks: Stability and Rare Events}, chapter~2, pages 27--39.
  Springer-Verlag, 1996.

\bibitem{Leighton95}
E.~Coffman, E.~Gilbert, A.~Greenberg, F.~T. Leighton, P.~Robert, and
  A.~Stolyar.
\newblock Queues served by a rotating ring.
\newblock {\em Commun. Statist. -- Stochastic Models}, 11(3):371--394, 1995.

\bibitem{Leighton98}
E.~Coffman, N.~Kahale, and F.~T. Leighton.
\newblock Processor-ring communication: A tight asymptotic bound on packet
  waiting times.
\newblock {\em SIAM J. Comput}, 27(5):1221--1236, October 1998.

\bibitem{crovella2}
Mark~E. Crovella and Azer Bestavros.
\newblock Self-similarity in world wide web traffic: Evidence and possible
  causes.
\newblock {\em IEEE/ACM Transactions on Networking}, 5(6):835--846, December
  1997.

\bibitem{crovella}
Mark~E. Crovella and Lester Lipsky.
\newblock Long-lasting transient conditions in simulations with heavy-tailed
  workloads.
\newblock In {\em Proceedings of the 1997 Winter Simulation Conference},
  December 1997.

\bibitem{Cruz1}
Rene~L. Cruz.
\newblock A calculus for network delay, part i: Network elements in isolation.
\newblock {\em IEEE Transactions on Information Theory}, 37(1):113--131,
  January 1991.

\bibitem{Cruz2}
Rene~L. Cruz.
\newblock A calculus for network delay, part ii: Network analysis.
\newblock {\em IEEE Transactions on Information Theory}, 37(1):132--141,
  January 1991.

\bibitem{Dai}
J.~G. Dai.
\newblock On positive harris recurrence of multiclass queueing networks: A
  unified approach via fluid limit models.
\newblock {\em The Annals of Applied Probability}, 5(1):49--77, 1995.

\bibitem{Dai_Vande_Vate}
J.~G. Dai and J.~H. Vande~Vate.
\newblock The stability of two-station multi-type fluid networks.
\newblock {\em Operations Research}, December 1998.

\bibitem{Dai_and_Weiss}
J.~G. Dai and G.~Weiss.
\newblock Stability and instability of fluid models for reentrant lines.
\newblock {\em Mathematics of Operations Research}, 21(1):115--134, February
  1996.

\bibitem{Dai_and_Meyn}
Jim Dai and Sean Meyn.
\newblock Stability and convergence of moments for multiclass queueing networks
  via fluid limit models.
\newblock {\em IEEE Transactions on Automatic Control}, 40(11):1889--1904,
  November 1995.

\bibitem{Dudley}
R.~M. Dudley.
\newblock {\em Real Analysis and Probability}.
\newblock Chapman and Hall, 1989.

\bibitem{feige}
Uriel Feige.
\newblock Nonmonotonic phenomena in packet routing.
\newblock In {\em Proceedings of the 31st Annual ACM Symposium on the Theory of
  Computing (STOC)}. ACM, 1999.

\bibitem{foster}
F.~G. Foster.
\newblock On the stochastic matrices associated with certain queueing
  processes.
\newblock {\em Ann. Math. Statist.}, 24:355--360, 1953.

\bibitem{kn:b3}
Robert Gallager.
\newblock {\em Discrete Stochastic Processes}.
\newblock Kluwer Academic Publishers, Boston, 1995.

\bibitem{Gamarnik}
David Gamarnik.
\newblock Stability of adversarial queues via fluid models.
\newblock In {\em Proceedings of the 39th Annual Symposium on Foundations of
  Computer Science (FOCS '98)}. IEEE Computer Society Press, 1998.

\bibitem{georgiadis1}
L.~Georgiadis and L.~Tassiulas.
\newblock Any work-conserving policy stabilizes the ring with spatial re-use.
\newblock {\em IEEE/ACM Transactions on Networking}, 4(2):205--208, 1996.

\bibitem{Goel}
Ashish Goel.
\newblock Stability of networks and protocols in the adversarial queueing model
  for packet routing.
\newblock {\em Internal Stanford paper}, pages 1--8, June 1997.

\bibitem{mor}
Mor Harchol-Balter.
\newblock {\em Network Analysis Without Exponentiality Assumptions}.
\newblock PhD thesis, University of California at Berkeley, August 1996.

\bibitem{hwang}
Frank~K. Hwang.
\newblock {\em The mathematical theory of nonblocking switching networks}.
\newblock World Scientific, Singapore, 1998.

\bibitem{kahale95}
Nabil Kahale and Tom Leighton.
\newblock Greedy dynamic routing on arrays.
\newblock In {\em Proceedings 6th Annual ACM-SIAM Symposium on Discrete
  Algorithms}, pages 558--566, Philadelphia, 1995. SIAM.

\bibitem{kleinrock}
Leonard Kleinrock.
\newblock {\em Queueing Systems Volume I: Theory}.
\newblock John Wiley \& Sons, New York, 1975.

\bibitem{lawler}
Gregory Lawler.
\newblock {\em Introduction to Stochastic Processes}.
\newblock Chapman and Hall, New York, 1995.

\bibitem{leighton}
F.~Thomson Leighton.
\newblock {\em Introduction to Parallel Algorithms and Architectures: Arrays
  $\bullet$ Trees $\bullet$ Hypercubes}.
\newblock Morgan Kaufmann Publishers, Inc., San Mateo, CA, 1992.

\bibitem{malyshev}
V.~A. Malyshev and M.~V. Menshikov.
\newblock Ergodicity, continuity, and analyticity of countable markov chains.
\newblock {\em Trans. Moscow Math. Soc.}, 39:3--48, 1979 (translated 1981).

\bibitem{Meyn_and_Tweedie}
S.~P. Meyn and R.~L. Tweedie.
\newblock {\em Markov chains and stochastic stability}.
\newblock Springer-Verlag, 1993.

\bibitem{kn:j6}
Michael Mitzenmacher.
\newblock Bounds on the greedy routing algorithm for array networks.
\newblock In {\em Proceedings of the 6th Annual ACM Symposium on Parallel
  Algorithms and Architectures (SPAA 94)}, pages 346--353. ACM, 1994.

\bibitem{modiano}
Eytan Modiano and Anthony Ephremides.
\newblock A simple analysis of average queueing delay in tree networks.
\newblock {\em IEEE Transactions on Information Theory}, 42(2), March 1996.

\bibitem{pippenger}
Nicholas Pippenger.
\newblock Superconcentrators.
\newblock {\em SIAM J. Comput.}, 6(2):298--304, June 1977.

\bibitem{richtel}
Matt Richtel.
\newblock Yahoo blames a hacker attack for a lengthy service failure.
\newblock {\em The New York Times}, page C 11, February 8, 2000.

\bibitem{rudin}
Walter Rudin.
\newblock {\em Real and Complex Analysis}.
\newblock McGraw-Hill, New York, third edition, 1987.

\bibitem{Seidman}
T.~I. Seidman.
\newblock `first come, first serve' can be unstable!
\newblock {\em IEEE Trans. Automatic Control}, 39:2166--2171, 1994.

\bibitem{kn:j7}
G.~Stamoulis and J.~Tsitsiklis.
\newblock The efficiency of greedy routing in hypercubes and butterflies.
\newblock In {\em Journal of the ACM}, 1991.

\bibitem{Widder}
D.~V. Widder.
\newblock {\em The Laplace Transform}.
\newblock Princeton University Press, 1946.

\bibitem{optical_networks}
Yuanyuan Yang, Jianchao Wang, and Yi~Pan.
\newblock Permutation capability of optical multistage interconnection
  networks.
\newblock {\em Journal of Parallel and Distributed Computing}, 60:72--90, 2000.

\bibitem{zazanis}
Michael~A. Zazanis.
\newblock Analyticity of poisson-driven stochastic systems.
\newblock {\em Adv. Appl. Prob}, 24:532--541, 1992.

\end{thebibliography}

\printindex

\end{document}